\crefname{equation}{}{}
\newtheorem{proposition}{Proposition}[section]
\newtheorem{theorem}[proposition]{Theorem}
\newtheorem{definition}[proposition]{Definition}
\newtheorem{corollary}[proposition]{Corollary}
\newtheorem{lemma}[proposition]{Lemma}
\newtheorem{remark}[proposition]{Remark}
\numberwithin{equation}{section}
\newtheorem{example}[proposition]{Example}
\newcommand*\samethanks[1][\value{footnote}]{\footnotemark[#1]}
\newcounter{proofcounter}
\newcounter{stepcounter}[proofcounter]
\newcommand{\step}[2][]{
  \ifnum\value{stepcounter}>0 \medskip\fi
  \refstepcounter{stepcounter}
  \noindent\emph{\textbf{Step \thestepcounter.} #2}
  \ifstrempty{#1}{}{\,\label{#1}}
}
\crefname{stepcounter}{Step}{Steps}
\Crefname{stepcounter}{Step}{Steps}
\title{Small-time local control of a Schrödinger equation: \\ a negative and a positive quadratic result}
\author{Karine Beauchard\texorpdfstring{\thanks{Univ Rennes, ENS Rennes, INRIA, CNRS, IRMAR - UMR 6625, F-35000 Rennes, France}}{},
Fr\'ed\'eric Marbach\texorpdfstring{\thanks{DMA, École normale supérieure, Université PSL, CNRS, 75005 Paris, France}}{},
Thomas Perrin\texorpdfstring{\samethanks[1]}{}}
\newcommand{\N}{\mathbb{N}}
\newcommand{\C}{\mathbb{C}}
\newcommand{\R}{\mathbb{R}}
\newcommand{\Z}{\mathbb{Z}}
\newcommand{\dd}{\,\mathrm{d}}
\newcommand{\eps}{\varepsilon}
\newcommand{\ad}{\operatorname{ad}}
\newcommand{\pv}{\operatorname{pv}}
\newcommand{\sign}{\operatorname{sgn}}
\newcommand{\supp}{\operatorname{supp}}
\newcommand{\vect}{\operatorname{span}}
\newcommand{\sphere}{\mathbb{S}}
\newcommand{\tgt}{\mathsf{T}_{\varphi_0} \mathbb{S}}
\newcommand{\Hall}{\Theta}
\newcommand{\Hpv}{\Theta_\mathrm{pv}}
\newcommand{\Hreg}{\Theta_\mathrm{reg}}
\newcommand{\sobC}[1]{\widetilde{H}^{#1}((0,T);\C)}
\newcommand{\nsob}[2]{\left\lVert #1 \right\rVert_{\widetilde{H}^{#2}}}
\newcommand{\PH}{P_{\mathcal{H}}}
\begin{document}

\maketitle

\begin{abstract}
    We study the small-time local controllability (STLC) of a bilinear Schrödinger equation with Neumann boundary conditions near its ground state.
    We focus on the degenerate case where the linearized system is not controllable, necessitating a second-order analysis.
    We prove two complementary results.
    The negative result provides a new PDE instance of Sussmann's classical quadratic obstruction, corresponding to a non-vanishing Lie bracket.
    The positive result appears to be the first to establish STLC at the quadratic order for a physical PDE with a single scalar control.

    Both proofs rely on a Fourier-based approach, which is crucial because the integral kernel of the second-order term lacks the regularity required by standard integration-by-parts arguments.
    Along the way, we develop tools valid in a more general setting to analyze such quadratic forms.
    In particular, we prove results that allow for the multiplication of a kernel by a modulation function.
    %; we also give interpretations of some of our estimates based on uncertainty principles.
\end{abstract}

\setcounter{tocdepth}{1}
\tableofcontents

\bigskip
\bigskip
\bigskip

\noindent
\textbf{MSC classes:} 
93B05, 93C20, 35Q41

\noindent
\textbf{Keywords:}
small-time local controllability, bilinear Schrödinger equation, degenerate linearization, quadratic-order controllability, scalar-input system

\newpage
\section{Introduction}

In this paper, we consider the following Schrödinger equation:
\begin{equation}
    \label{eq:psi}
    \begin{cases}
        i \partial_t \psi(t,x) = - \partial_x^2 \psi(t,x) - u(t) \mu(x) \psi(t,x)
        & \quad t \in (0,T), \quad x \in (0,1),
        \\
        \partial_x \psi(t,0) = \partial_x \psi(t,1) = 0
        & \quad t \in (0,T).
    \end{cases}
\end{equation}
In this system, $\mu \in H^2((0,1);\R)$ is a fixed function depending only on $x$, $u(t)$ is a real-valued control and the state $\psi(t,x)$ is a complex-valued wave function.

From the modeling perspective, the most common case is $\mu(x) = x - \frac 12$, which corresponds to a non-relativistic charged quantum particle evolving in a space-uniform electric field of time-varying amplitude $u(t)$.
The choice $\mu(x) = x - \frac 12$ also arises after a change of variables for the case of a moving potential well (or box), where $u(t)$ is the acceleration of the box (see \cite{Rouchon2003}).
In some contexts, allowing for a more general space-dependent $\mu(x)$ makes sense (see e.g.\ \cite[eq.\ (5)]{DionKellerAtabekBandrauk1999} concerning the interaction between a polarized laser and a hydrogen cyanide molecule, where~$\mu$ represents the permanent dipole moment of the molecule and $u(t)$ the intensity of the laser's electric field).

In such contexts, the Schrödinger equation is typically endowed with Dirichlet boundary conditions (see e.g.\ \cite{BeauchardLaurent2010} for a sufficient condition relying on the linear theory and \cite{Coron2006} for a first quadratic obstruction).
Here, we focus on Neumann boundary conditions, which, while perhaps less intuitive from a modeling perspective, offer an elegant toy problem for developing a general methodology for analyzing quadratic obstructions to controllability and for establishing positive quadratic results.

\subsection{Notations and well-posedness}

We consider $L^2((0,1);\C)$ equipped with its usual Hermitian inner product $\langle \phi, \psi \rangle := \int_0^1 \phi \overline{\psi}$.
We define the self-adjoint unbounded operator $A := - \Delta$ on
\begin{equation}
    \label{eq:H2N}
    \operatorname{Dom}(A) = H^2_N((0,1);\C) := \left\{ \varphi \in H^2((0,1);\C) ; \enskip \varphi'(0) = \varphi'(1) = 0 \right\}.
\end{equation}
Its eigenvalues $(\lambda_j)_{j \in \N}$ and eigenvectors $(\varphi_j)_{j \in \N}$ are respectively given by
\begin{equation}
    \label{eq:lambda_j}
    \lambda_j := (j\pi)^2
    \qquad \text{and} \qquad
    \varphi_j(x) :=
    \begin{cases}
        1 & \text{ if } j = 0, \\
        \sqrt{2} \cos (j \pi x) & \text{ if } j \geq 1.
    \end{cases}
\end{equation}
The family $(\varphi_j)_{j\in \N}$ is an orthonormal basis of $L^2((0,1);\C)$, $\varphi_0$ is the \emph{ground state}.
Let
\begin{equation}
    \sphere := \left\{ \varphi \in L^2((0,1);\C) ; \enskip \| \varphi \|_{L^2} = 1 \right\}.
\end{equation}
With these notations, system \eqref{eq:psi} is well-posed in the following sense.

\begin{proposition}
    \label{p:WP}
    Let $\mu \in H^2((0,1);\R)$.
    For any $T > 0$ and $u \in L^2((0,T);\R)$ there exists a unique mild solution $\psi \in C^0([0,T];H^2_N \cap \sphere) \cap H^1((0,T);L^2)$ to \eqref{eq:psi} with initial data $\psi(0) = \varphi_0$.
    Moreover, $\| \psi \|_{L^\infty H^2_N} \leq C_\mu(T,\|u\|_{L^2})$ where $C_\mu$ is a non-decreasing function of $T$ and $\|u\|_{L^2}$.
\end{proposition}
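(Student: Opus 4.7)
The plan is to combine standard semigroup well-posedness in $L^2$ with a Fourier-based regularity gain, in the spirit of Beauchard--Laurent, to upgrade the solution to $H^2_N$.

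First, I would establish $L^2$ well-posedness via Banach fixed point applied to the mild formulation
\[
\psi(t) = e^{-iAt}\varphi_0 + i\int_0^t e^{-iA(t-s)}\,u(s)\,\mu\,\psi(s)\dd s.
\]
Since $e^{-iAt}$ is unitary on $L^2$ and $\|u(s)\mu\psi(s)\|_{L^2}\le|u(s)|\|\mu\|_{L^\infty}\|\psi(s)\|_{L^2}$ with $u\in L^2$, the map is a contraction on $C^0([0,T_0];L^2)$ for $T_0$ small depending on $\|u\|_{L^2}$, and global existence follows by iteration. The conservation $\|\psi(t)\|_{L^2}=1$ (hence $\psi(t)\in\sphere$) is obtained by a formal energy estimate when $u$ is smooth — the operator $-\partial_x^2-u(t)\mu$ is self-adjoint on $H^2_N$ — and is extended to $u\in L^2$ by approximation $u_n\to u$ in $L^2$, using the $L^2$-stability just established.

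The core of the proof is to upgrade to $\psi\in C^0([0,T];H^2_N)$. I would project the Duhamel formula on the orthonormal basis $(\varphi_j)_{j\in\N}$, obtaining
\[
\langle\psi(t),\varphi_j\rangle = \delta_{j,0}\,e^{-i\lambda_j t} + i\,e^{-i\lambda_j t}\int_0^t e^{i\lambda_j s}\,u(s)\,\langle\mu\psi(s),\varphi_j\rangle\dd s,
\]
and show $\sum_j\lambda_j^2|\langle\psi(t),\varphi_j\rangle|^2<\infty$. Two ingredients produce the required $1/\lambda_j^2$ gain: first, since $\varphi_j$ satisfies the Neumann condition and $-\varphi_j''=\lambda_j\varphi_j$, integrating by parts twice (no boundary terms) gives $\lambda_j\langle\mu\psi,\varphi_j\rangle=\langle-\partial_x^2(\mu\psi),\varphi_j\rangle$, which is bounded in $\ell^2(j)$ by $\|\mu\|_{H^2}\|\psi\|_{H^2_N}$, even though $\mu\psi$ itself fails to satisfy the Neumann condition; second, an integration by parts in $s$ on the oscillating integral $\int u(s)e^{i\lambda_j s}\dd s$, performed against a primitive of $u$, provides an additional factor $1/\lambda_j$. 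A contraction argument in the weighted Hilbert space with weight $\lambda_j^2$ then closes the estimate on a short interval, and a bootstrap extends it to $[0,T]$, giving a non-decreasing bound $C_\mu(T,\|u\|_{L^2})$.

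Finally, the time regularity $\psi\in H^1((0,T);L^2)$ is read directly off the equation: $i\partial_t\psi=-\partial_x^2\psi-u\mu\psi$ with first term in $L^\infty((0,T);L^2)$ thanks to $\psi\in L^\infty((0,T);H^2_N)$, and second term bounded by $|u(t)|\|\mu\|_{L^\infty}$ hence in $L^2((0,T);L^2)$. The hard part is the second ingredient above: with $u$ only in $L^2$, the integration by parts in time must be justified in a distributional sense and must be stable under the fixed-point iteration, so that the resulting estimate is genuinely quadratic in $\|u\|_{L^2}$. This is precisely where the Fourier-projection viewpoint is needed, since a direct semigroup estimate on $e^{-iA(t-s)}(u(s)\mu\psi(s))$ in $H^2_N$ does not see the oscillations and loses the required gain.
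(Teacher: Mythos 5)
Your overall strategy — fixed point in $L^2$, norm conservation, then an upgrade to $H^2_N$ via the spectral expansion and $x$-integration by parts — does match the paper's, which cites Beauchard--Laurent and reduces the argument to the smoothing result \cref{lem:smoothing}. However, two of your key assertions are incorrect, and the errors are not cosmetic.

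First, the claim that two integrations by parts in $x$ produce \emph{no} boundary terms is false. Since $\varphi_j'(0)=\varphi_j'(1)=0$, the first boundary term vanishes; but the second integration by parts produces $[(\mu\psi)'\overline{\varphi_j}]_0^1$, and $\varphi_j(0),\varphi_j(1)$ are nonzero. The correct identity (see \eqref{eq:mu-phi_j} applied with $\mu\leftarrow\mu\psi$, or the proof of \cref{lem:smoothing}) is
\begin{equation*}
    \lambda_j\langle\mu\psi,\varphi_j\rangle
    = \sqrt{2}\big((-1)^j(\mu\psi)'(1)-(\mu\psi)'(0)\big)
    - \langle(\mu\psi)'',\varphi_j\rangle .
\end{equation*}
Consequently $(\lambda_j\langle\mu\psi,\varphi_j\rangle)_j$ is \emph{not} $\ell^2(j)$-bounded by $\|\mu\|_{H^2}\|\psi\|_{H^2_N}$ unless $\mu\psi\in H^2_N$: the boundary contribution is $j$-independent, so its square sum diverges. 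This is precisely the difficulty the smoothing lemma must handle, and your argument as written does not see it.

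Second, your proposed ``second ingredient'' — integrating by parts in $s$ against a primitive of $u$ to gain $1/\lambda_j$ — does not work for $u\in L^2$ (antidifferentiating $e^{i\lambda_j s}$ gives the $1/\lambda_j$, but then $u$ must be differentiated, which is unavailable), and in any case it is not what the paper does. Once the boundary terms are retained, the remaining task is to bound $\sum_j\big|\int_0^t e^{i\lambda_j s} u(s)\,b(s)\,\dd s\big|^2$ where $b(s)=(\mu\psi)'(s,0)$ or $(\mu\psi)'(s,1)$ is $j$-independent; this is done by the Bessel-type estimate of \cref{lem:bessel}, $\sum_j\big|\int_0^T g(s)e^{i\lambda_j s}\dd s\big|^2\le(1+T)\|g\|_{L^2}^2$, which uses the quadratic separation of the $\lambda_j=j^2\pi^2$ and requires no oscillatory gain. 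The non-boundary term $\langle(\mu\psi)'',\varphi_j\rangle$ is handled by Parseval alone. The ``hard part'' you identify at the end is thus an artefact of the wrong tool: with the correct $x$-integration-by-parts identity (including its boundary terms) and \cref{lem:bessel}, no distributional time integration by parts is needed, and the estimate is manifestly stable under the fixed-point iteration.
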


\begin{proof}
    This can be proved as in the Dirichlet case (see \cite[Proposition 2]{BeauchardLaurent2010}).
    The proof is based on a fixed-point argument, relying on a smoothing result (see \cref{lem:smoothing}).
\end{proof}

\subsection{Main result}

We will use the following definition, concerning the controllability of \eqref{eq:psi} near the ground state.

\begin{definition}
    \label{Def:STLC}
    Let $\mu \in H^2((0,1);\R)$.
    We say that system \eqref{eq:psi} is \emph{small-time locally controllable from $\varphi_0$} (STLC) when, for any $T > 0$ and $\eps > 0$, there exists $\delta > 0$ such that, for any $\psi^* \in H^2_N \cap \sphere$ such that $\| \psi^* - \varphi_0 \|_{H^2} \leq \delta$, there exists $u \in L^2((0,T);\R)$ with $\|u\|_{L^2} \leq \eps$ such that the associated solution to \eqref{eq:psi} with initial data $\psi(0) = \varphi_0$ satisfies $\psi(T) = \psi^*$.
\end{definition}

Adapting the strategy of \cite{BeauchardLaurent2010}, one may prove the following result.

\begin{proposition}
    \label{p:oui-STLC}
    Let $\mu \in H^2((0,1);\R)$ such that
    \begin{equation}
        \label{eq:hyp-c0}
        \exists c > 0, \quad \forall j \in \N,
        \quad |\langle \mu, \varphi_j \rangle| \geq \frac{c}{\langle j \rangle^2}.
    \end{equation}
    Then system \eqref{eq:psi} is STLC.
\end{proposition}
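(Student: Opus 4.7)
The plan is to adapt the classical Beauchard--Laurent scheme \cite{BeauchardLaurent2010} to the Neumann setting: show that the end-point map is a $C^1$ submersion onto an $H^2_N$-neighbourhood of $\varphi_0$ on the sphere, and conclude via the Banach-space inverse function theorem.

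First, for a fixed $T > 0$, I would consider the end-point map $F_T : L^2((0,T);\R) \to H^2_N \cap \sphere$, $u \mapsto \psi(T)$, well-defined by \cref{p:WP}. A Duhamel expansion combined with the smoothing estimate \cref{lem:smoothing} yields that $F_T$ is of class $C^1$, with differential at $u = 0$ given by $d F_T(0) \cdot v = \Psi(T)$, where $\Psi$ solves the linearized system
\begin{equation*}
    i \partial_t \Psi + \partial_x^2 \Psi = - v(t) \mu(x), \quad \partial_x \Psi(\cdot, 0) = \partial_x \Psi(\cdot, 1) = 0, \quad \Psi(0) = 0.
\end{equation*}
Expanding in the orthonormal basis $(\varphi_j)_{j \in \N}$, Duhamel's formula gives $\langle \Psi(T), \varphi_j \rangle = i \langle \mu, \varphi_j \rangle e^{-i \lambda_j T} \int_0^T v(t) e^{i \lambda_j t} \dd t$.

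Next, I would establish the surjectivity of $dF_T(0)$ onto $H^2_N \cap \tgt$, where $\tgt = \{ \Psi \in L^2 : \operatorname{Re}\langle \Psi, \varphi_0 \rangle = 0 \}$ denotes the real tangent to $\sphere$ at $\varphi_0$. Given a target $\Psi^{\star} = \sum_j d_j \varphi_j \in H^2_N \cap \tgt$, this amounts to finding a real $v \in L^2((0,T);\R)$ such that
\begin{equation*}
    \int_0^T v(t) e^{i \lambda_j t} \dd t = c_j := \frac{d_j\, e^{i \lambda_j T}}{i \langle \mu, \varphi_j \rangle}, \qquad j \in \N,
\end{equation*}
the reality constraint on $v$ being encoded by the symmetry $c_{-j} := \overline{c_j}$. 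Since $\lambda_j = (j\pi)^2$ enjoys the gap $\lambda_{j+1} - \lambda_j \to \infty$, Ingham's inequality provides a bounded right inverse from $\ell^2(\Z)$ to $L^2((0,T);\R)$. Hypothesis~\eqref{eq:hyp-c0} is precisely calibrated so that $|c_j| \lesssim \langle j \rangle^2 |d_j|$, turning the $\ell^2$ condition on $(c_j)$ into $\Psi^{\star} \in H^2_N$; and the tangency condition $\operatorname{Re}\langle \Psi^{\star}, \varphi_0 \rangle = 0$ is exactly what is needed to solve the $j = 0$ equation with a real-valued mean.

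Combining these two ingredients, the inverse function theorem applied to $F_T$ viewed as a map into the $C^1$-submanifold $H^2_N \cap \sphere$ yields constants $C, \eta > 0$ such that every $\psi^{\star} \in H^2_N \cap \sphere$ with $\|\psi^{\star} - \varphi_0\|_{H^2} \leq \eta$ is reached by some $u$ with $\|u\|_{L^2} \leq C \|\psi^{\star} - \varphi_0\|_{H^2}$; after shrinking $\delta$ this is precisely the STLC property of \cref{Def:STLC}. The main obstacle I anticipate is the verification that $F_T$ is genuinely $C^1$ into $H^2_N$ under the mere assumption $\mu \in H^2$: pointwise multiplication by $\mu$ does not preserve Neumann boundary conditions, so the target regularity has to be recovered through the smoothing effect of $e^{it\partial_x^2}$ encapsulated in \cref{lem:smoothing}. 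The moment-problem step, although technical, is essentially routine given the gap condition and~\eqref{eq:hyp-c0}.
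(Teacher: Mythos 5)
Your proposal follows essentially the same route as the paper, which itself just sketches the adaptation of the Beauchard--Laurent scheme: show that $dF_T(0)$ maps onto $H^2_N \cap \tgt$ by solving a trigonometric moment problem (for which hypothesis \eqref{eq:hyp-c0} is precisely calibrated to turn the $H^2_N$ regularity of the target into the $\ell^2$-summability needed for the moment solver), and then invoke the inverse function theorem. Your added remarks on the $j=0$ reality constraint and the need to recover $H^2_N$-regularity of $F_T$ through the smoothing of the free group rather than through $\mu$ itself are correct and are exactly the points the paper delegates to \cref{lem:smoothing} and to \cite[Sections~2.3--2.4]{BeauchardLaurent2010}.
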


\begin{proof}
    Under assumption \eqref{eq:hyp-c0}, one can prove that, for any $T > 0$, the linearized perturbative version of system~\eqref{eq:psi} at $\varphi_0$ (see \eqref{eq:psi1}) is controllable with initial and final states in $H^2_N((0,1);\C) \cap \tgt$ where $\tgt := \{ \varphi \in L^2((0,1);\C) \mid \Re \langle \varphi, \varphi_0 \rangle = 0 \}$ and controls in $L^2((0,T);\R)$.
    Then, one concludes for the nonlinear system thanks to an inverse mapping theorem.
    Both steps are well understood in the Dirichlet setting (see \cite[Sections~2.3 and~2.4]{BeauchardLaurent2010}) and easily translated to our Neumann context.
\end{proof}

\begin{remark}
    Assumption \eqref{eq:hyp-c0} holds generically with respect to $\mu \in H^2((0,1);\R)$.
    Indeed, integrating by parts twice proves that, for all $j \in \N^*$,
    \begin{equation}
        \label{eq:mu-phi_j}
        \langle \mu , \varphi_j \rangle
        = \sqrt{2}\frac{(-1)^j\mu'(1)-\mu'(0)}{(j\pi)^2} - \frac{\langle \mu'',\varphi_j \rangle}{(j\pi)^2}.
    \end{equation}
    Thus, if $\mu \in H^2((0,1);\R)$ is such that $\mu'(1) \pm \mu'(0) \neq 0$ and, for all $j \in \N$, $\langle \mu,\varphi_j\rangle \neq 0$, then~\eqref{eq:hyp-c0} holds using that $\langle \mu'', \varphi_j \rangle = o(1)$ since $\mu \in H^2$.
\end{remark}

We now investigate situations when \eqref{eq:hyp-c0} fails, so that the study of the linearized system is not sufficient to determine the controllability of the nonlinear one.
Let $k \in \N$ and $\mu \in H^2((0,1);\R)$ such that $\langle \mu, \varphi_k \rangle = 0$.
We define the following family of coefficients $(c_j)_{j \in \N} \in \R^\N$ and $a_k \in \R$ by
\begin{equation}
    \label{def:cj_a}
    \forall j \in \N, \quad
    c_j := \langle \mu , \varphi_j \rangle \langle \varphi_j , \mu \varphi_k \rangle
    \qquad \text{and} \qquad
    a_k :=\sum_{j\in\N}\left( \lambda_j-\frac{\lambda_k}{2} \right) c_j.
\end{equation}
We refer to \cref{rk:ak-Lie} and \cref{sec:modulation} for the interpretation of the coefficient $a_k$.
Note that the series defining $a_k$ converges absolutely because \eqref{eq:mu-phi_j} (applied with $\mu$ and $\mu \leftarrow \mu \varphi_k$) implies
\begin{equation}
    \label{cj_asympt}
    c_j= \underset{j \to + \infty}{O}(j^{-4}).
\end{equation}
An accessible version of our main results is the following one (which provides a dichotomy within the class of functions $\mu$ that satisfy \eqref{eq:hyp-c0-k}).

\begin{theorem}
    \label{thm:main}
    Let $k\in\N$ and $\mu \in H^2((0,1);\R)$ such that $\langle \mu, \varphi_k \rangle = 0$.
    \begin{enumerate}
        \item \label{it:thm-1}
        Assume that $a_k \neq 0$. Then system \eqref{eq:psi} is not STLC.
        \item \label{it:thm-2}
        Assume that $a_k = 0$ and
        \begin{equation}
            \label{eq:hyp-c0-k}
            \exists c > 0, \quad \forall j \in \N \setminus \{k\},
            \quad |\langle \mu, \varphi_j \rangle| \geq \frac{c}{\langle j \rangle^2}.
        \end{equation}
        Then system \eqref{eq:psi} is STLC.
    \end{enumerate}
\end{theorem}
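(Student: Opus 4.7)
The plan is to perform a second-order expansion of the solution to \eqref{eq:psi} around $\varphi_0$ and analyze the component along $\varphi_k$, which is exactly the direction missed by the linearized system because $\langle \mu, \varphi_k \rangle = 0$. I write $\psi = \varphi_0 + \psi_1 + \psi_2 + R$, where $\psi_1$ solves the linearized Schrödinger equation with source $-u(t)\,\mu\,\varphi_0$, $\psi_2$ solves the same linear equation with source $-u(t)\,\mu\,\psi_1$, and $R$ is a remainder of size $O(\|u\|_{L^2}^3)$ in a suitable norm, using \cref{p:WP}. The hypothesis $\langle \mu, \varphi_k \rangle = 0$ yields $\langle \psi_1(t), \varphi_k \rangle \equiv 0$, so the leading contribution in the $\varphi_k$-direction is the quadratic one. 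A double Duhamel formula together with the spectral expansion of $\mu$ then rewrites $\langle \psi_2(T), \varphi_k \rangle$ as a quadratic functional $Q_T(u)$ whose integral kernel involves the oscillating phases $e^{i(\lambda_j - \lambda_k)\tau}$ weighted by the coefficients $c_j$ from \eqref{def:cj_a}.

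For the negative result (part~\ref{it:thm-1}), the plan is a Sussmann-type obstruction argument: isolate from $Q_T$ the contribution that the first-order controllable directions cannot compensate, and show it has a definite sign governed by $a_k$. After multiplying the kernel by the modulation $e^{i\lambda_k \tau/2}$, which is the algebraic origin of the $\lambda_k/2$ shift in the definition of $a_k$, the obstructed real or imaginary part of $\langle \psi_2(T), \varphi_k \rangle$ should decompose as $a_k\,\mathcal{C}(u) + \mathcal{R}(u)$, with $\mathcal{C}(u)\geq 0$ coercive in an adapted Sobolev norm of a primitive of $u$ and $\mathcal{R}(u)$ controlled by quantities that the linearized system can set to zero. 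Combined with the cubic bound on $R$, this should yield schematically
\[
\sign(a_k)\,\Im \langle \psi(T), \varphi_k \rangle \geq |a_k|\,\mathcal{C}(u) - C\,\|u\|_{L^2}^3,
\]
which forbids reaching arbitrarily small targets $\psi^*$ with $\sign(a_k)\,\Im\langle \psi^*, \varphi_k \rangle < 0$, thereby contradicting STLC.

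For the positive result (part~\ref{it:thm-2}), the hypothesis $a_k = 0$ kills the coercive drift, while \eqref{eq:hyp-c0-k} ensures, as in \cref{p:oui-STLC}, that the linearized system controls the transverse space $\tgt \cap \varphi_k^\perp$ in the natural $H^2$ topology. I would decompose a prospective target as $\psi^* = \varphi_0 + \alpha\,\varphi_k + \xi$ with $\xi \perp \varphi_k$ and $\alpha \in \C$, both small, and seek a control $u = u_\xi + v$, where $u_\xi$ is provided by the linearized inverse-mapping argument (producing $\xi$ at first order with $\|u_\xi\|_{L^2} \lesssim \|\xi\|_{H^2}$) and $v$ is a quadratic building block of size $|\alpha|^{1/2}$ for which $Q_T(v)$ attains the prescribed value of $\alpha$ with either sign; the sign-flexibility is available precisely because the coercive $a_k$-term has vanished and the rest of $Q_T$ is indefinite in the relevant subspace. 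A fixed-point scheme in appropriate Sobolev-type spaces, absorbing the cross-term $Q_T(u_\xi, v)$ and the cubic remainder $R$, then delivers STLC.

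The central obstacle, and the motivation for the Fourier-based toolbox advertised in the abstract, is the analysis of $Q_T$: its integral kernel $\sum_j c_j\,e^{i(\lambda_j - \lambda_k)\tau}$ is not regular enough at $\tau=0$ to permit the time integrations by parts used in earlier works to transfer derivatives from $u$ onto a primitive of $u$ and expose the $a_k$ coefficient directly. A direct spectral/Fourier treatment of $Q_T$ is therefore needed; the pivotal step is the multiplication of the kernel by a suitable modulation function (morally the phase $e^{i\lambda_k \tau/2}$), which both explains the appearance of $\lambda_k/2$ in \eqref{def:cj_a} and provides the sharp estimates on which the coercivity in part~\ref{it:thm-1} and the sign-flexibility in part~\ref{it:thm-2} ultimately rest.
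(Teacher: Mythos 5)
There is a genuine gap in your plan for the negative part, and it is in the seemingly innocuous line ``$R$ is a remainder of size $O(\|u\|_{L^2}^3)$.'' You correctly observe that the coercive term produced by $Q_T$ and governed by $a_k$ lives in a weak norm of a primitive of $u$ (the paper's $\|u_1\|_{L^2}^2$, with $u_1(t) := \int_0^t u$). But the schematic inequality
\[
\sign(a_k)\,\Im \langle \psi(T), \varphi_k \rangle \geq |a_k|\,\|u_1\|_{L^2}^2 - C\,\|u\|_{L^2}^3
\]
is vacuous: $\|u\|_{L^2}^3$ does not compare favorably to $\|u_1\|_{L^2}^2$ for small controls. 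Take $u(t) = \eps\cos(n t)$ on $(0,T)$; then $\|u\|_{L^2}\sim\eps$ but $\|u_1\|_{L^2}\sim\eps/n$, so for $n \gg \eps^{-1/2}$ the cubic error swamps the quadratic drift and no conclusion follows. This is exactly why the paper cannot use the crude classical bound \eqref{eq:est-psi-phi0-psi1-psi2-H2} and instead develops \cref{sec:remainder}: the change of unknown $\widetilde{\psi} = \psi\,e^{-i u_1\mu}$ converts \eqref{eq:psi} into the auxiliary system \eqref{eq:tilde-psi}, whose source involves $u_1$ rather than $u$, at the price of inhomogeneous Neumann boundary conditions that require the new energy estimate of \cref{lem:inhomo}. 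The resulting cubic remainder bound \eqref{eq:cubic-rem}, namely $|\langle(\psi-\varphi_0-\psi_1-\psi_2)(T),\varphi_k\rangle| \lesssim |u_1(T)|^3 + \|u_1\|_{L^2}^{2+\frac18}\|u\|_{L^2}^{\frac78}$, contains a full factor $\|u_1\|_{L^2}^2$ and is what lets the drift close.

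A secondary but real omission is the treatment of $u_1(T)$. You write that $\mathcal{R}(u)$ is ``controlled by quantities that the linearized system can set to zero,'' but in a non-controllability argument it is the adversary who chooses $u$, so one cannot simply impose $u_1(T)=0$. The quadratic form is only coercive on the hyperplane $\{u_1(T)=0\}$, and the complementary contribution must be absorbed by a closed-loop estimate of the type of \cref{p:closed-loop}, which bounds $|u_1(T)|$ a posteriori in terms of $\|\psi(T)-\varphi_0\|_{L^2}$ and a small multiple of $\|u_1\|_{L^2}$ by exploiting one surviving moment $\langle\mu,\varphi_j\rangle\neq0$. For the positive part, your sketch is closer in spirit but glosses over two nontrivial steps the paper handles via \cref{p:two-signs}--\cref{p:mvt-Im-to-Re}: constructing controls with both prescribed quadratic sign \emph{and} $\psi_1(T)=0$ (otherwise the linear response drowns the quadratic one), and covering the full complex direction $\C\varphi_k$, which requires a tangent-vector/time-concatenation argument rather than mere ``sign flexibility.''
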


\begin{remark}
    \label{rk:k=0}
    When $k = 0$, if $\mu \in H^2((0,1);\R)$ and $\langle \mu, \varphi_0 \rangle = 0$, then system \eqref{eq:psi} is not STLC.
    Indeed, either $\mu = 0$, or $a_0 =\sum_{j\in\N} \lambda_j | \langle \mu , \varphi_j \rangle|^2 >0$ so \cref{it:thm-1} of \cref{thm:main} applies.
    In particular, system \eqref{eq:psi} with $\mu(x) = x - \frac 12$ is not STLC.
\end{remark}

Both statements of \cref{thm:main} are of quadratic nature.
To highlight this, let us state the following sharper versions which we will prove in this work.
In these statements, we denote by $u_1(t) := \int_0^t u$ the primitive of the control.

\begin{theorem}
    \label{thm:negative}
    Let $k\in\N$ and $\mu \in H^2((0,1);\R)$ such that $\langle \mu, \varphi_k \rangle = 0$.
    Assume that $a_k \neq 0$.
    There exist $T^*, C, \nu > 0$ such that, for all $T \in (0,T^*)$, and all $u \in L^2((0,T);\R) $ with $\|u\|_{L^2} \leq 1$, the associated solution to \eqref{eq:psi} with initial data $\psi(0) = \varphi_0$ satisfies
    \begin{equation}
        \label{drift}
        \left| \langle \psi(T) - \varphi_0, \varphi_k \rangle + i a_k \|u_1\|_{L^2}^2 \right| \leq C T^{\nu} \|u_1\|_{L^2}^2 +
        C \|\psi(T)-\varphi_0\|_{L^2}^2.
    \end{equation}
\end{theorem}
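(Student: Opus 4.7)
The plan is a Duhamel expansion of $\psi$ to second order in $u$, followed by an integration by parts that extracts the drift $-ia_k\|u_1\|_{L^2}^2$ from the second-order term, leaving an oscillatory remainder to be estimated by the Fourier-based techniques of the paper.

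I would first pass to the interaction picture, writing $\psi(t,x) = \sum_{j\in\N} \tilde\psi_j(t) \varphi_j(x) e^{-i\lambda_j t}$, which recasts~\eqref{eq:psi} as the infinite ODE system
\[
    \dot{\tilde\psi}_j = i u(t) \sum_{l\in\N} \mu_{jl} e^{i(\lambda_j-\lambda_l)t} \tilde\psi_l, \quad \tilde\psi_j(0) = \delta_{j0},
\]
with $\mu_{jl} := \langle \mu\varphi_l,\varphi_j\rangle$. Iterating Duhamel yields a splitting $\tilde\psi_j(T) = \delta_{j0} + \tilde\psi_j^{(1)}(T) + \tilde\psi_j^{(2)}(T) + \tilde r_j(T)$ with $\tilde\psi_j^{(1)}(T) = i\mu_{j0}\int_0^T u(t) e^{i\lambda_j t}\,dt$ and, for $j=k$,
\[
    \tilde\psi_k^{(2)}(T) = -\sum_{l\in\N} c_l J_l, \quad J_l := \int_0^T u(t) e^{i(\lambda_k-\lambda_l)t} \int_0^t u(s) e^{i\lambda_l s}\,ds\,dt.
\]
The hypothesis $\mu_{k0} = \langle \mu,\varphi_k\rangle = 0$ kills the linear term, so $\langle \psi(T)-\varphi_0,\varphi_k\rangle = e^{-i\lambda_k T}(\tilde\psi_k^{(2)}(T) + \tilde r_k(T))$.

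Next, writing $u = u_1'$ and integrating by parts once in the outer integral of $J_l$ and once in $V_l(t) := \int_0^t u(s) e^{i\lambda_l s}\,ds = u_1(t) e^{i\lambda_l t} - i\lambda_l W_l(t)$, with $W_l(t) := \int_0^t u_1(s) e^{i\lambda_l s}\,ds$, one arrives at
\[
    J_l = \tfrac{1}{2} u_1(T)^2 e^{i\lambda_k T} + i\!\left(\lambda_l - \tfrac{\lambda_k}{2}\right)\!\int_0^T u_1(t)^2 e^{i\lambda_k t}\,dt + R_l,
\]
where $R_l = -i\lambda_l u_1(T) e^{i(\lambda_k-\lambda_l)T} W_l(T) + \lambda_l(\lambda_l-\lambda_k)\int_0^T u_1(t) e^{i(\lambda_k-\lambda_l)t} W_l(t)\,dt$. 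Summing against $c_l$ and invoking the identities $\sum_l c_l = \langle \mu^2,\varphi_k\rangle$ and $\sum_l c_l(\lambda_l-\lambda_k/2) = a_k$ (both absolutely convergent thanks to~\eqref{cj_asympt}), then replacing $e^{i\lambda_k t}$ by $e^{i\lambda_k T}$ at the cost of an $O(\lambda_k T\|u_1\|_{L^2}^2)$ error, produces
\[
    e^{-i\lambda_k T} \tilde\psi_k^{(2)}(T) = -\tfrac{1}{2} u_1(T)^2 \langle \mu^2,\varphi_k\rangle - ia_k\|u_1\|_{L^2}^2 - e^{-i\lambda_k T}\sum_{l\in\N} c_l R_l + O(T\|u_1\|_{L^2}^2).
\]

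Two manageable error sources can then be disposed of. The boundary term $u_1(T)^2 \langle \mu^2,\varphi_k\rangle/2$ is absorbed into $\|\psi(T)-\varphi_0\|_{L^2}^2$: the hypothesis $a_k\neq 0$ forces $\mu$ to be non-constant, hence some $\mu_{j^\star 0}\neq 0$ with $j^\star\geq 1$, and one IBP in $\tilde\psi_{j^\star}^{(1)}(T)$ gives $|u_1(T)|^2 \leq C\|\psi(T)-\varphi_0\|_{L^2}^2 + C T\|u_1\|_{L^2}^2$ (plus higher-order contributions). The cubic remainder $\tilde r_k(T)$ is controlled through the well-posedness/smoothing framework underlying~\cref{p:WP}. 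The main obstacle is the oscillatory sum $\sum_l c_l R_l$: its boundary part carries weights $c_l\lambda_l = O(l^{-2})$ and is tamed via Cauchy--Schwarz and the bound on $u_1(T)$ above, but its bulk part carries weights $c_l\lambda_l(\lambda_l-\lambda_k)$ whose absolute partial sums diverge, reflecting the fact that the natural kernel $K(\tau) := \sum_l c_l e^{-i\lambda_l\tau}$ is $C^1$ but not $C^2$. A third integration by parts is therefore impossible in any classical sense, which is exactly the regularity bottleneck flagged in the abstract, and the estimate must be closed using the Fourier-side machinery with modulation functions developed in the later sections of the paper, yielding $|\sum_l c_l R_l| \leq CT^\nu\|u_1\|_{L^2}^2 + C\|\psi(T)-\varphi_0\|_{L^2}^2$.
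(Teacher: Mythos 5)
Your Duhamel/interaction-picture expansion and the two integrations by parts on $J_l$ are algebraically correct, and they do extract the drift coefficient $a_k = \sum_l c_l(\lambda_l - \lambda_k/2)$ in essentially the same way the paper does (the paper's Fourier-side identity $\widehat u(\omega) = i\omega\widehat{u_1}(\omega)$ and the subtraction of the $-a/\omega^2$ piece from $\widehat K$ are the frequency-domain analogues of your two IBPs). You also correctly diagnose the obstruction: the kernel $K$ is only $C^1$, so a third IBP is unavailable. However, there are two genuine gaps in the way you close the argument.

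First, you defer the estimate of $\sum_l c_l R_l$ to ``the Fourier-side machinery with modulation functions developed in the later sections.'' But the paper never produces this object: it estimates the full modulated quadratic form $Q_k(u\rho_k,u\overline{\rho_k})$ directly in the Fourier domain, decomposing $\widehat K$ into Dirac masses plus $-a/\omega^2 + \Theta_{\mathrm{pv}}(\omega)/\omega^2 + \Theta_{\mathrm{reg}}(\omega)/\omega^2$ and bounding each piece separately on the subspace $\mathcal H$, then using the multiplication-by-a-kernel machinery of \cref{sec:multiply} to handle the modulation from $K$ to $K_k$. Your decomposition separates out the drift in the time domain first, leaving a remainder whose term-by-term bound diverges (weights $c_l\lambda_l(\lambda_l-\lambda_k) = O(1)$); the sum converges only after cancellation. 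Retrofitting the paper's Fourier estimates to this particular remainder is plausible, but it is not automatic and it is precisely the hard part; as written, your proof is a restatement of the problem rather than a solution of it.

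Second, and more concretely, your treatment of the cubic remainder $\tilde r_k(T)$ and of the boundary term $|u_1(T)|$ does not close. The ``well-posedness/smoothing framework underlying \cref{p:WP}'' gives only $|\tilde r_k(T)| \lesssim \|u\|_{L^2}^3$, but the theorem's right-hand side involves $T^\nu\|u_1\|_{L^2}^2 + \|\psi(T)-\varphi_0\|_{L^2}^2$, and there is no inequality of the form $\|u\|_{L^2}^3 \lesssim T^\nu\|u_1\|_{L^2}^2 + \|\psi(T)-\varphi_0\|_{L^2}^2$ (take $u$ highly oscillatory with $\psi_1(T)\approx 0$: then $\|u_1\|_{L^2}\ll\|u\|_{L^2}$ and $\|\psi(T)-\varphi_0\|_{L^2}$ is only $O(\|u_1\|_{L^2}^2)$, while $\|u\|_{L^2}^3$ remains fixed). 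The same issue appears in your closed-loop bound for $u_1(T)$: the ``higher-order contributions'' you must absorb are quadratic, and bounding them by $\|u\|_{L^2}^2$ does not yield $|u_1(T)|^2\lesssim T\|u_1\|_{L^2}^2 + \|\psi(T)-\varphi_0\|_{L^2}^2$. The paper spends all of \cref{sec:remainder} developing, via the auxiliary unknown $\widetilde\psi=\psi e^{-iu_1\mu}$ and energy estimates for inhomogeneous Neumann boundary data (\cref{lem:inhomo}), remainder bounds of the form $|\langle (\psi-\varphi_0-\psi_1-\psi_2)(T),\varphi_k\rangle| \lesssim |u_1(T)|^3 + \|u_1\|_{L^2}^{2+1/8}\|u\|_{L^2}^{7/8}$ and $\|(\psi-\varphi_0-\psi_1)(T)\|_{L^2}\lesssim \|u_1\|_{L^2}^{3/2}\|u\|_{L^2}^{1/2}+|u_1(T)|^2$. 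These are the estimates that make the argument close, and they are not implied by the ingredients you invoke.
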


\begin{remark}
    \label{rq:imp_motion}
    In particular, for small enough times (e.g.\ such that $C T^\nu \leq |a_k|$), estimate \eqref{drift} prevents $\psi(T)$ from reaching targets of the form
    \begin{equation}
        \psi^*_\varepsilon := \sqrt{1-\varepsilon^2} \varphi_0 + i \sign(a_k)\eps \varphi_k
    \end{equation}
    with $0<\eps\ll1$ because this would entail $\eps \leq 2 C \eps^2$.
\end{remark}

\begin{theorem}
    \label{thm:positive}
    Let $k\in\N^*$ and $\mu \in H^2((0,1);\R)$ such that $\langle \mu, \varphi_k \rangle = 0$.
    Assume that $a_k=0$ and that \eqref{eq:hyp-c0-k} holds.
    There exists $T^* > 0$ such that, for every $T \in (0,T^*)$, there exists $\delta >0$ such that, for every $\psi^* \in H^2_N((0,1);\C) \cap \sphere$ with $\|\psi^*-\varphi_0\|_{H^2} \leq \delta$, there exists $u \in L^2((0,T);\R)$ such that the associated solution to \eqref{eq:psi} with initial data $\psi(0) = \varphi_0$ satisfies $\psi(T) = \psi^*$.
    
    Moreover, we have the estimates
    \begin{align}
        \label{eq:cost-L2}
        \|u\|_{L^2} & \leq C_T |\langle \psi^*,\varphi_k \rangle|^{\frac 12} + C_T \| \psi^*-\varphi_0\|_{H^2_N}, \\
        \label{eq:cost-H-1}
        \|u_1\|_{L^2} & \leq C
        \left(\frac{|\Re \langle \psi^*,\varphi_k \rangle|}{T^2} \right)^{\frac 12} + C
        \left( \frac{|\Im \langle \psi^*,\varphi_k \rangle |}{T} \right)^{\frac 12} + C_T \| \psi^* - \varphi_0 \|_{L^2},
    \end{align}
    where $C$ denotes a time-independent constant and $C_T$ a time-dependent one.
\end{theorem}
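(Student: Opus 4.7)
The overall strategy is to combine a first-order argument for the controllable directions with an explicit quadratic construction in the direction $\varphi_k$. Under \eqref{eq:hyp-c0-k}, the linearized system at $\varphi_0$ is controllable into the codimension-one subspace $E := \{h \in \tgt \cap H^2_N : \langle h, \varphi_k\rangle = 0\}$ of tangent directions, with linear cost $\lesssim \|h\|_{H^2}$. The $E$-components of the target $\psi^* - \varphi_0$ will ultimately be reached via an inverse function argument, producing the $C_T \|\psi^* - \varphi_0\|_{H^2}$ contribution in the cost estimate \eqref{eq:cost-L2}.

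For the obstructed direction, I would analyze the quadratic term of the endpoint map. Expanding $\psi = \varphi_0 + \Psi_1 + \Psi_2 + O(u^3)$ in powers of $u$, one finds that $\langle \Psi_1(T), \varphi_k \rangle = 0$ (since $\langle \mu, \varphi_k\rangle = 0$) and $\langle \Psi_2(T), \varphi_k\rangle = \mathcal{Q}_T(u_1)$ for a quadratic form $\mathcal{Q}_T$ whose integral kernel involves the coefficients $c_j$. Using the paper's modulation machinery, one can decompose $\mathcal{Q}_T$ into a ``diagonal'' piece, which is exactly the drift $-i a_k \|u_1\|_{L^2}^2$ responsible for the obstruction in \cref{thm:negative}, and ``non-resonant'' pieces that can be made to point in any prescribed direction of $\C \varphi_k$. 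Under $a_k = 0$ the diagonal piece disappears, and a careful choice of oscillating controls then allows me to build two explicit one-parameter families $v_R^\alpha$ and $v_I^\beta$ of controls that produce, to leading order, pure displacements $\alpha \varphi_k$ and $i\beta \varphi_k$ respectively, with $\|(v_R^\alpha)_1\|_{L^2} \asymp |\alpha|^{1/2}/T$ and $\|(v_I^\beta)_1\|_{L^2} \asymp |\beta|^{1/2}/T^{1/2}$. The different scalings reflect that producing a purely real displacement requires cancelling both the vanished drift and an $O(T^2)$ imaginary remainder, whereas a purely imaginary displacement only needs to cancel $O(T)$-sized real terms.

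With these building blocks, I would close the argument via a fixed-point / inverse-function scheme. Setting $\alpha := \Re\langle\psi^*, \varphi_k\rangle$ and $\beta := \Im\langle\psi^*, \varphi_k\rangle$, I would look for a control of the form $u = v_R^\alpha + v_I^\beta + w$, where $w$ is a small correction to be determined so that $\psi(T) = \psi^*$ exactly. The map $w \mapsto \psi(T; v_R^\alpha + v_I^\beta + w) - \psi^*$ is of class $C^1$ from $L^2$ into $H^2_N \cap \tgt$, and its differential at $w=0$ is, up to a small perturbation coming from $v_R^\alpha + v_I^\beta$, the linearized endpoint map around the ground state, which is a surjection onto $E$. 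The $\varphi_k$-projection of the residual is quadratically small in the data and can be reabsorbed by perturbing the amplitudes in $v_R^\alpha$ and $v_I^\beta$, yielding \eqref{eq:cost-L2}--\eqref{eq:cost-H-1}.

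The main obstacle, foreshadowed by the abstract, is the rigorous analysis of $\mathcal{Q}_T$: its kernel is not regular enough to be treated by standard integration by parts, so one must work on the Fourier side and control oscillatory integrals against modulated kernels in order to extract the cancellation $a_k = 0$ and to produce the $v_R^\alpha, v_I^\beta$ with sharp norms. The second delicate point is obtaining the time-uniform constant $C$ in \eqref{eq:cost-H-1}, which forces one to track the $T$-dependence explicitly through the construction of $v_R^\alpha, v_I^\beta$ and through the contraction constant of the fixed point, rather than invoking an abstract inverse function theorem as a black box.
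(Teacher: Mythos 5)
Your outline identifies the right landmarks (linear theory on the codimension-one subspace, a Fourier-side analysis of the quadratic form, an inverse-function / fixed-point closure), but it omits the tangent-vector (time-concatenation) argument, which is the crux of the positive result, and this is a genuine gap. The Fourier analysis \emph{cannot} furnish a family $v_R^\alpha$ producing a purely real leading-order displacement $\alpha\varphi_k$, contrary to what your proposal asserts (``non-resonant pieces that can be made to point in any prescribed direction of $\C\varphi_k$''). Once one enforces $\widehat{u}(\lambda_j)=0$ for all $j$ (both to kill $\psi_1(T)$ and to annihilate the Dirac-mass contribution to $\mathcal{F}(K)$), the quadratic form reduces to $2\pi Q(u,u)=-2i\int_{\R}\Theta\,|\widehat u|^2$, and since $\Theta$ and $|\widehat u|^2$ are real, $Q(u,u)$ is $i$ times a real number. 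Oscillating controls thus yield $\Im\langle\psi_2(T),\varphi_k\rangle=\pm T$, while only $|\Re\langle\psi_2(T),\varphi_k\rangle|\lesssim T^2$ comes out as a byproduct (\cref{p:vect-tgt-Im}). The real directions $\pm\varphi_k$ are obtained in the paper only \emph{indirectly}, by the tangent-vector argument (\cref{p:mvt-Im-to-Re}): one concatenates $u^{+i}$ (giving $\approx +iT_M\varphi_k$), a free wait of duration $\sim T$ that rotates this by $e^{-i\lambda_k(T-T_M)}$, and $u^{-i}$ (giving $\approx -iT_M\varphi_k$), so the $O(T)$ imaginary parts cancel while the rotation leaves an $O(T^2)$ real residue of controllable sign. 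Your explanation of the $1/T$ versus $1/T^{1/2}$ scaling (``cancelling the $O(T^2)$ imaginary remainder'') is therefore not the right picture; the gap reflects that purely real displacement is generated at order $T^2\|u_1\|^2$, imaginary at order $T\|u_1\|^2$, with the former requiring one extra concatenation.

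A secondary concern is the additive ansatz $u=v_R^\alpha+v_I^\beta+w$: the quadratic term then carries cross contributions $Q(v_R,v_I)$, $Q(v,w)$ of size $\|v_1\|_{L^2}\|w_1\|_{L^2}$ which you would have to absorb, and this is delicate precisely because $v$ is not small relative to $w$. The paper avoids this by concatenating in time, $u^z=v^z\diamond w^z$, so that (\cref{lem:concat}) the quadratic contributions of $v^z$ and $w^z$ add exactly, up to the free-flow rotation, with no cross term, and $\|u_1^z\|_{L^2}^2=\|v_1^z\|_{L^2}^2+\|w_1^z\|_{L^2}^2$ since the primitives have disjoint supports. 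This decoupling is what allows the Brouwer fixed-point argument on the complex parameter $z$ to close with the stated $T$-uniform constant $C$ in the first two terms of \eqref{eq:cost-H-1}.
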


\begin{remark}
    The cost estimate \eqref{eq:cost-L2} involving $|\langle \psi^*,\varphi_k \rangle|^{\frac 12}$ is typical of results where the controllability stems from the quadratic order (see e.g.\ \cite[(1.34) in Section 1.6]{BeauchardMarbach2018} for a parabolic system in small time, or \cite[Theorem 1.3]{Nguyen2025} for a Korteweg--de Vries system in large time).

    When $a_k \neq 0$, \eqref{drift} states that the amplitude of the unwanted motion in the direction $- i a_k \varphi_k$ is given by $\|u_1\|_{L^2}^2$.
    The sharp cost estimate \eqref{eq:cost-H-1} proves that, when $a_k = 0$, one can move in the directions $\pm i \varphi_k$ with an amplitude scaling like $T \| u_1 \|_{L^2}^2$, and in the directions $\pm \varphi_k$ with an amplitude $T^2 \|u_1\|_{L^2}^2$ (motions in the other directions are governed by the linear theory).
\end{remark}

\subsection{Heuristic and comparison with previous results}
\label{sec:heuristic}

\subsubsection{An ODE prototype, drifts and Lie brackets}
\label{sec:prototype}

The root cause of \cref{thm:negative} (\cref{it:thm-1} of \cref{thm:main}) is embedded in the following caricatural finite-dimensional control-system example:
\begin{equation}
    \label{eq:x-W1}
    \begin{cases}
        \dot{x}_1 = u \\
        \dot{x}_2 = x_1 \\
        \dot{x}_3 = x_1^2 - x_2^2 - x_1^3 - 2 u x_1.
    \end{cases}
\end{equation}
Let $u_1(t) := \int_0^t u$ be the primitive of the control.
In the sequel, this function will implicitly be considered as a function defined on $(0,T)$, and $\|u_1\|_{L^2}$ will thus denote $\|u_1\|_{L^2(0,T)}$.
Explicit integration from $x(0)=0$ yields
\begin{equation}
    \label{eq:x-W1-drift}
    \begin{split}
        x_3(T) & = \int_0^T u_1^2 - \int_0^T \left(\int_0^t u_1\right)^2 \dd t - \int_0^T u_1^3 - (u_1(T))^2 \\
        & \geq \left(1-T^2-\sqrt{T}\|u\|_{L^2}\right) \|u_1\|_{L^2}^2 - (u_1(T))^2 \\
        & \geq c_0 \| u_1 \|_{L^2}^2 - (u_1(T))^2
    \end{split}
\end{equation}
for any $c_0 \in (0,1)$, for small enough times and controls.
Since $x_1(T) = u_1(T)$, all final states in the ``half-space'' $\{ x \in \R^3 ; \enskip x_3 + x_1^2 < 0 \}$ are not reachable, so that system \eqref{eq:x-W1} is not STLC at~$0$.
In particular, one cannot reach any target of the form $(0,0,-\delta)$ for $\delta > 0$.

This example is a particular case of the following more general result, proved by Sussmann in \cite[Proposition 6.3]{Sussmann1983} (see also \cite[Theorem 1.10]{BeauchardMarbach2022} for a modern proof with sharper hypotheses).

\begin{proposition}
    \label{p:sussmann}
    Let $f_0, f_1$ be real-analytic vector fields on $\R^d$ with $f_0(0) = 0$.
    Consider the control system $\dot{x} = f_0 + u f_1$.
    Let $S_1 \subset \R^d$ denote the controllable vector subspace for the linearized system at $0$.
    If $[f_1,[f_1,f_0]](0) \notin S_1$, then the nonlinear system is not STLC at $0$.
\end{proposition}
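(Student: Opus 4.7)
The plan is to generalize the explicit computation leading to \eqref{eq:x-W1-drift} to the general analytic setting. I would pick a linear functional $\ell \in (\R^d)^*$ vanishing on $S_1$ with $\ell\bigl([f_1,[f_1,f_0]](0)\bigr) > 0$, and the goal is to establish, for small $T$ and small $\|u\|_{L^2}$, a coercive one-sided inequality of the form
\[
    \ell(x(T)) \geq c_0 \|u_1\|_{L^2}^2 - C (u_1(T))^2 - o(\|u_1\|_{L^2}^2),
\]
which, combined with the fact that $\ell$ only sees directions outside $S_1$, would prevent reaching a half-space of targets near $0$, exactly in the spirit of \cref{rq:imp_motion}.

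First I would move to the interaction picture with respect to the drift: let $\Phi^{f_0}_s$ denote the flow of $f_0$ and set $y(t) := \Phi^{f_0}_{-t}(x(t))$. Then $y$ satisfies $\dot y = u(t) F(t, y)$, where $F(t, \cdot) := (\Phi^{f_0}_{-t})_* f_1$ admits a Taylor expansion in $t$ near $0$ whose coefficients are the iterated brackets $\ad^n_{f_0} f_1$. Expanding $y(T)$ as a Chen--Fliess series in $u$, every first-order-in-$u$ contribution lies in $\vect\{\ad^n_{f_0} f_1(0) : n \geq 0\} \subseteq S_1$ and is therefore annihilated by $\ell$.

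Next, I would isolate the quadratic order. The second-order term of the Chen--Fliess series is a double integral $\iint_{0 \leq s \leq t \leq T} u(t) u(s) (\ldots) \dd s \dd t$ whose integrand, Taylor-expanded in $(s,t)$ at $0$, is built from brackets of $f_1$ with iterated $\ad^n_{f_0} f_1$. After integration by parts to replace $u$ by $u_1$ and repeated use of the Jacobi identity, all components orthogonal to $S_1$ collapse, modulo cubic-in-$u$ remainders and boundary terms scaling like $(u_1(T))^2$, to
\[
    \ell(x(T)) = \tfrac{1}{2} \ell\bigl([f_1,[f_1,f_0]](0)\bigr) \int_0^T u_1^2 \dd t + O\bigl((u_1(T))^2 + \sqrt{T}\|u\|_{L^2}\|u_1\|_{L^2}^2\bigr).
\]
Absorbing the cubic remainder for small $T$ and small $\|u\|_{L^2}$ would yield the desired bound.

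The main obstacle, already visible in \eqref{eq:x-W1-drift}, is the boundary contribution $(u_1(T))^2$: it is not controlled by $\|u\|_{L^2}$ and can dominate $\|u_1\|_{L^2}^2$. The remedy is to combine $\ell(x(T))$ with a suitable quadratic polynomial of $x(T)$ itself (playing the role of $(u_1(T))^2 = x_1(T)^2$ in the prototype) to form a nonlinear functional $\Phi(x) = \ell(x) + Q(x)$ whose value at solutions is coercively bounded below by a positive multiple of $\|u_1\|_{L^2}^2$, while $\Phi(x^*) < 0$ for suitable targets $x^*$ near $0$ pointing in a direction transverse to $S_1$ opposite to $[f_1,[f_1,f_0]](0)$. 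This functional-obstruction construction, carried out with sharper hypotheses in \cite[Theorem 1.10]{BeauchardMarbach2022}, concludes the proof of non-STLC.
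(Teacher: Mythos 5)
The paper does not prove \cref{p:sussmann}: it is quoted from the literature, with a pointer to \cite{Sussmann1983} for the original argument and to \cite[Theorem 1.10]{BeauchardMarbach2022} for a modern proof under weaker hypotheses, so there is no internal proof to compare against. Your sketch follows the strategy of that second reference, which is also the one the paper's heuristic in \cref{sec:prototype} suggests: pass to the interaction picture, expand the end-point as a Chen--Fliess series, kill the first-order moments by choosing $\ell \perp S_1$, extract the coefficient $\tfrac12\,\ell\bigl([f_1,[f_1,f_0]](0)\bigr)$ of $\|u_1\|_{L^2}^2$ from the quadratic layer, treat the higher brackets as $o(\|u_1\|_{L^2}^2)$, and eliminate the $(u_1(T))^2$ boundary term by a closed-loop functional obstruction. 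That is the right route, and the coefficient you announce checks out against the prototype \cref{eq:x-W1-drift}.

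The decisive computations, however, are asserted rather than carried out. (i) When you expand $\iint_{0\le s\le t\le T} u(t)u(s)\,\ell\bigl(DF(t,0)F(s,0)\bigr)\,\dd s\,\dd t$ against $F(t,0)=\sum_n \tfrac{t^n}{n!}\,\ad_{f_0}^n f_1(0)$, the $\|u_1\|_{L^2}^2$ coefficient comes from the antisymmetric (bracket) part of the total-degree-one terms, while the symmetric parts and the degree-zero term produce the boundary contributions in $u_1(T),u_2(T),\dots$, and the degree $\ge 2$ terms give an $O(T\|u_1\|_{L^2}^2)$ remainder which is missing from your error bound (it is harmless since it is $o(\|u_1\|_{L^2}^2)$ as $T\to 0$, but it belongs there); the sentence ``after integration by parts and repeated use of the Jacobi identity, all components orthogonal to $S_1$ collapse'' hides all of this. (ii) The closed-loop step needs an actual lemma showing that there is a linear functional $m$ with $m(x(T))=u_1(T)+O\bigl(T^{1/2}\|u_1\|_{L^2}+\|u\|_{L^2}^2\bigr)$; this relies on $f_1(0)\neq 0$, which is indeed forced by the hypothesis (if $f_1(0)=0$ then $S_1=\{0\}$ and $[f_1,[f_1,f_0]](0)=0$), but you should say so. Neither point is a wrong turn, and both are worked out in \cite{BeauchardMarbach2022}; as written, though, your argument is an outline that defers the two load-bearing estimates to that reference.
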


We refer to the situation of \eqref{eq:x-W1-drift} (or more generally of \cref{p:sussmann}) as a ``quadratic obstruction to controllability'', involving a ``drift'' quantified by the (squared) $H^{-1}$ Sobolev norm of the control.
For ODEs, the quadratic obstructions to controllability have been classified in \cite{BeauchardMarbach2018,BeauchardMarbach2022} and are linked with the $H^{-k}$ norms of the control for $1 \leq k \leq \dim S_1$, and with iterated Lie brackets of length $2k+1$.

These quadratic obstructions of integer order are proved on the equation \eqref{eq:psi} in \cref{sec:drift_integer}, for functions $\mu$ that appropriately vanish at the boundary (in particular $\mu'(0)=\mu'(1)=0$, which is incompatible with \eqref{eq:hyp-c0-k}, see \eqref{eq:mu-phi_j}). They have also been observed for other PDEs (see \cite{BeauchardMorancey2014,Bournissou2023_Quad,Coron2006} for Schrödinger with Dirichlet boundary conditions, \cite{BeauchardMarbach2020} for a parabolic model, \cite{CoronKoenigNguyen2024} for a fluid mechanics model, and \cite{NiuXiang2025} for a Korteweg--de Vries system).

The proof of \cref{thm:negative} involves the same main terms and arguments as in \eqref{eq:x-W1-drift}:
\begin{itemize}
    \item obtaining a coercivity estimate $\geq \| u_1 \|_{L^2}^2$ for the quadratic terms when $T \ll 1$,
    \item obtaining a bound for the cubic terms of the form $o(\| u_1 \|_{L^2}^2)$ when $\|u\|_{L^2} \leq 1$,
    \item a geometric description of unreachable targets to handle terms involving $u_1(T)$.
\end{itemize}

\begin{remark}
    \label{rk:ak-Lie}
    Under appropriate regularity assumptions on $\mu$, the assumptions $\langle \mu, \varphi_k \rangle = 0$ and $a_k \neq 0$ of \cref{thm:negative} precisely correspond to the Lie bracket condition of \cref{p:sussmann}.
    
    For instance, if $\mu \in H^2((0,1);\R)$ and $\mu'(0)=\mu'(1)=0$ (i.e.\ $\mu \in H^2_N$), then
    \begin{equation}
        \begin{split}
            2 a_k & =
            \sum_{j\in\N} \big( \lambda_j + (\lambda_j-\lambda_k) \big) \langle \mu ,\varphi_j\rangle \langle \varphi_j,\mu\varphi_k\rangle
            \\ & =
            \sum_{j\in\N} \big( \langle \mu ,A \varphi_j\rangle \langle \varphi_j,\mu\varphi_k\rangle +
            \langle \mu ,\varphi_j\rangle \langle (A-\lambda_k) \varphi_j,\mu\varphi_k\rangle \big)
            \\ & =
            \langle A \mu ,\mu\varphi_k\rangle +
            \langle \mu ,(A-\lambda_k)(\mu\varphi_k)\rangle
            = - \langle [\mu,[\mu,A]]\varphi_0 , \varphi_k \rangle,
        \end{split}
    \end{equation}
    where the third line results from integrations by parts using $\mu'(0)=\mu'(1)=0$.
    
    The assumption $\langle \mu , \varphi_k \rangle = 0$ implies that the controllable space~$S_1$ of the linearized system~\eqref{eq:psi1} is contained in $\vect \{ \varphi_j \mid j \neq k \}$.
    The assumption $a_k \neq 0$ corresponds to $[\mu,[\mu,A]]\varphi_0 \notin S_1$.
\end{remark}

\subsubsection{Power-series expansion for the Schrödinger equation}
\label{sec:power-series}

As is usual for such results, we perform a power series expansion of the state with respect to the control, writing $\psi = \varphi_0 + \psi_1 + \psi_2 + \dotsc$ where $\varphi_0$ is the equilibrium around which we are studying controllability, $\psi_1$ is the solution to the linearized system
\begin{equation}
    \label{eq:psi1}
    \begin{cases}
        i \partial_t \psi_1(t,x) = - \partial_x^2 \psi_1(t,x) - u(t) \mu(x) \varphi_0(x)
        & \quad t \in (0,T), \quad x \in (0,1),
        \\
        \partial_x \psi_1(t,0) = \partial_x \psi_1(t,1) = 0
        & \quad t \in (0,T),
    \end{cases}
\end{equation}
and $\psi_2$ is the quadratic order expansion, solution to
\begin{equation}
    \label{eq:psi2}
    \begin{cases}
        i \partial_t \psi_2(t,x) = - \partial_x^2 \psi_2(t,x) - u(t) \mu(x) \psi_1(t,x)
        & \quad t \in (0,T), \quad x \in (0,1),
        \\
        \partial_x \psi_2(t,0) = \partial_x \psi_2(t,1) = 0
        & \quad t \in (0,T),
    \end{cases}
\end{equation}
with initial data $\psi_1(0) = \psi_2(0) = 0$ when $\psi(0) = \varphi_0$.

Applying the Duhamel formula to \eqref{eq:psi1} and \eqref{eq:psi2} yields
\begin{align}
    \label{linearise_explicit}
    \psi_1(t) & = i \int_0^t u(s) e^{-i A(t-s)} \mu \varphi_0 \dd s = i \sum_{j \in \N} \langle \mu , \varphi_j \rangle \left( \int_0^t u(s) e^{-i \lambda_j (t-s)} \dd s \right) \varphi_j,
    \\ \label{eq:psi2-phik}
    \langle \psi_2(T), \varphi_k \rangle
    & = i \int_0^T u(t) e^{-i\lambda_k(T-t)} \langle \mu \psi_1(t), \varphi_k \rangle \dd t.
\end{align}
Using the coefficients $c_j$ of \eqref{def:cj_a}, and $\lambda_0 = 0$, one has when $k = 0$ (see \cref{sec:modulation} for $k > 0$),
\begin{equation}
    \label{eq:psi2-eq-half-Q0}
    \begin{split}
        \langle \psi_2(T), \varphi_0 \rangle
        & = - \sum_{j \in \N} c_j \int_0^T u(t) \int_0^t u(s) e^{-i \lambda_j(t-s)} \dd s \dd t \\
        & = - \iint_{0 < s < t < T} \Big( \sum_{j \in \N} c_j e^{-i \lambda_j |t-s|} \Big) u(s) u(t) \dd s \dd t.
    \end{split}
\end{equation}
Using symmetry to extend the integral to the square $(s,t) \in (0,T)^2$, and using that the control is real-valued, we obtain $\langle \psi_2(T), \varphi_0 \rangle = - \frac 1 2 Q(u,u)$ where we define, for $u, v \in L^2((0,T);\C)$,
\begin{equation}
    \label{eq:Q_0}
    Q(u,v)
    := \int_0^T \int_0^T K(t-s) u(s) \bar{v}(t) \dd s \dd t
    = \langle K \star u, v \rangle_{L^2}
\end{equation}
where
\begin{equation}
    \label{eq:K_0}
    K(\sigma) := \sum_{j \in \N} c_j e^{-i\lambda_j|\sigma|}.
\end{equation}
By \eqref{cj_asympt} this series converges absolutely and $K \in L^{\infty}(\R;\C)$.
One must then study the complex valued bilinear form \eqref{eq:Q_0} to attempt to exhibit a drift generalizing the form \eqref{eq:x-W1-drift}.

\subsubsection{Usual approach using integrations by parts}
\label{sec:no-ipp}

When faced with a quadratic form such as \eqref{eq:Q_0}, the usual approach to isolate the $H^{-1}$ drift term is to perform two integrations by parts to force the presence of $u_1(t) = \int_0^t u$ instead of $u$.
For instance, assuming that $K \in C^0(\R;\C)$ enjoys sufficient regularity\footnote{The equality and estimate given in \eqref{eq:Q-IPP} for example hold when $K \in C^0(\R;\C)$ is such that $K \in H^2(-T,0) \cap H^2(0,T)$, but not necessarily $H^2(-T,T)$ so that one can have $K'(0^+) \neq K'(0^-)$.
See \cref{lem:IPP-Armand} for precise assumptions and detailed integrations by parts, also valid for non-convolution kernels.} on $\R \setminus \{0\}$, one gets
\begin{equation}
    \label{eq:Q-IPP}
    \begin{split}
        Q(u,u) & = (K'(0^-)-K'(0^+)) \int_0^T u_1^2 + K(0) (u_1(T))^2 - \int_0^T \int_0^T K''(t-s) u_1(t) u_1(s) \dd s \dd t \\
        & \qquad \qquad \qquad + u_1(T) \int_0^T u_1(t) (K'(T-t) - K'(t-T)) \dd t \\
        & = (K'(0^-)-K'(0^+)) \|u_1\|_{L^2}^2 + O \left( |u_1(T)|^2 + T \|u_1\|_{L^2}^2 \right).
    \end{split}
\end{equation}
In particular, if $(K'(0^-)-K'(0^+)) \neq 0$, when $T \ll 1$, one obtains a drift quantified by $\|u_1\|_{L^2}^2$ (up to terms involving $u_1(T)$, which can be handled as in example \eqref{eq:x-W1}).
This key step is ubiquitous in the proof of such results linked with integer-order quadratic obstructions: see \cref{subsec:coer_app} for the system \eqref{eq:psi}, \cite[Lemma 3.1]{BeauchardMorancey2014} and \cite[Proposition 5.1]{Bournissou2023_Quad} for the Schrödinger equation with Dirichlet boundary conditions, \cite[Proposition 3.3]{BeauchardMarbach2020} for a parabolic system, \cite[Lemma 3.8]{CoronKoenigNguyen2024} for a water-tank system.

If $\mu$ has an infinite number of non-vanishing Fourier coefficients $\langle \mu,\varphi_j\rangle$, one cannot differentiate twice the kernel $K$ defined in \eqref{eq:K_0}.
Indeed, taking into account \eqref{eq:lambda_j} and \eqref{cj_asympt}, the derivative $K'$ of this kernel
is heuristically similar to the Riemann function $\sigma \mapsto \sum_{j=1}^{\infty} j^{-2} \sin (j^2 \sigma)$, which is differentiable almost nowhere (see \cite[Theorem 4.31]{Hardy1916} and \cite{Gerver1970,Gerver1971}).

Thus, one must look for a method avoiding two integrations by parts to highlight the drift.
The method we therefore develop could also be used to weaken the assumptions of existing results (for example, \cite[Theorem 1.3, case $n=1$]{Bournissou2023_Quad} assumes a condition of the form $\sum j^{4} |c_j| < \infty$, while our strategy could probably be adapted to conclude when one only has $\sum j^2 |c_j| < \infty$).

\subsubsection{Fourier-based approach for quadratic obstructions}

To circumvent this difficulty, we will avoid integrations by parts and perform our estimates in the Fourier frequency domain.
Indeed, benefiting from the convolution structure of \eqref{eq:Q_0}, heuristically, one expects to be able to obtain from Plancherel's theorem that
\begin{equation}
    \label{eq:Q-F(K)-intro}
    Q(u,u) = \frac{1}{2 \pi} \int_\R \widehat{K}(\omega) |\widehat{u}(\omega)|^2 \dd \omega.
\end{equation}
Then, one can derive properties on the coercivity of $Q$ by studying the asymptotic behavior of $\widehat{K}$.
Formally, one would for example expect that, if $\widehat{K}(\omega) \sim \omega^{-2}$ as $|\omega| \to \infty$, one would obtain an $H^{-1}$ Sobolev norm.
This is for example what happens for some critical lengths of the Korteweg--de Vries system (see \cite[Lemma 4.2]{NiuXiang2025}).

Such Fourier-based approaches to quadratic obstructions, or variations thereof, have been used in \cite{BeauchardMarbach2020} to obtain $H^{-s}$ drifts for parabolic systems, in \cite{CoronKoenigNguyen2022} and \cite{Nguyen2025} to obtain $H^{-2/3}$ and $H^{-1/6}$ drifts for Korteweg--de Vries systems, and in \cite{NguyenTran2025} to obtain an $H^{-5/4}$ drift in finite time for a Burgers system.
For this last system, the $H^{-5/4}$ drift had already been established in \cite{Marbach2018} using an analysis in the time domain relying on weakly singular integral operators.

In this paper, we encounter difficulties linked with the fact that the most natural expression of $\widehat{K}$ is only a distribution, which involves Dirac masses and principal values (see \cref{sec:F(K)}) so that the asymptotic $\widehat{K}(\omega) \sim \omega^{-2}$ is not valid near these singularities.
Nevertheless, we show that these local perturbations can be neglected as $T \to 0$.
%In \cref{sec:uncertainty}, we explain how such estimates can be interpreted as stemming from uncertainty principles.

\subsubsection{An intuition of the positive result}
\label{sec:positive-heuristic}

When $a_k = 0$, the key difficulty lies in proving that it is possible to move approximately in the directions $\pm i \varphi_k$.
Hence the core of the proof of \cref{thm:positive} consists in constructing, for any $T \in (0,1]$, controls $u^{\pm i} \in L^2((0,T);\R)$ such that $Q(u^{\pm i}, u^{\pm i}) \approx \pm i T \|u^{\pm i}_1\|_{L^2}^2$.
From \eqref{eq:Q-F(K)-intro}, for this to be possible, we need $\Im \widehat{K}$ to change sign.
Moreover, for $T \ll 1$, since the control is supported in $[0,T]$, its Fourier transform will involve high frequencies and have a wide support.
So $\Im \widehat{K}$ not only needs to change sign, but it needs to do so up to $\pm \infty$, and achieve both signs on arbitrarily large intervals.
Heuristically, to achieve a motion of amplitude $T \|u_1\|_{L^2}^2$, we will need $\pm \Im \widehat{K}(\omega) \geq T \omega^{-2}$ on intervals much larger than $1/T$.
One can then choose a control oscillating at the median frequency of such an interval, say $\omega_0$, to achieve the desired sign.

Such considerations were exactly what motivated the construction in \cite[Section 5]{BeauchardMarbach2018} where the first two authors designed an abstract nonlinearity such that $\widehat{K}(\omega) \approx (\sin (\ln (1+|\omega|))) / \omega^2$ which precisely takes positive and negative values on increasingly larger intervals as $|\omega| \to \infty$.
Here, we prove in \cref{sec:approximate} that the same situation occurs -- although this time, the nonlinearity is more physically reasonable, as it is just given by the bilinearity in the Schrödinger model.

The controls $u^{\pm i}$ must also leave the linearized system invariant, i.e.\ they must guarantee that $\psi_1(T) = 0$ (recall \eqref{linearise_explicit}).
Indeed, otherwise, for small controls (which are required by \cref{Def:STLC} of STLC), the linear movement would dominate the quadratic one, so we would not be achieving (approximate) motions in the directions $\pm i \varphi_k$.
More formally, the controls must be chosen within the kernel of the map $u \mapsto \psi_1(T)$.
This corresponds to the requirement that $\widehat{u}(\lambda_j) = 0$ for all $j \in \N$.
Our construction ensures this by choosing $\omega_0$ far from the $\lambda_j$.

Once motions in the directions $\pm i \varphi_k$ are established, those in the directions $\pm \varphi_k$ follow from a standard tangent vector argument (see \cref{p:mvt-Im-to-Re}).
For control-affine systems in finite dimension $\dot{x} = f_0 + u f_1$ with $f_0(0) = 0$, it is well-known that if $\pm \xi$ are tangent vectors (i.e.\ one can move infinitesimally in these directions for small times), then the same holds for $\pm Df_0(0) \xi$ (see \cite[Theorem 6]{HermesKawski1987} or \cite[Property (P2)]{BianchiniStefani1986}).
This argument has been adapted to several PDEs (see \cite[Section 6.4]{Bournissou2024} or \cite[Proposition 4.8]{Gherdaoui2025} for Schrödinger with Dirichlet boundary conditions, or \cite[Section 4.3]{ChowdhuryErvedoza2019} for Navier--Stokes).
Here $- i A$ plays the role of $f_0$ and $- i A (\pm i \varphi_k) = \mp \lambda_k \varphi_k$.

\subsubsection{Contributions}

The contributions of the present work to the control theory literature are the following:
\begin{itemize}
    \item Exhibit in \cref{thm:negative} yet another instance of Sussmann's historical quadratic ODE obstruction of \cref{p:sussmann} in the context of PDEs (after \cite{Coron2006} for Schrödinger with Dirichlet boundary conditions, \cite{BeauchardMarbach2020} for a parabolic model, \cite{CoronKoenigNguyen2024} for a fluid mechanics model, and \cite{NiuXiang2025} for a Korteweg--de Vries system).
    \item Establish in \cref{thm:positive} the first positive scalar-control small-time quadratic controllability result for a reasonable physical system (the previous examples of \cite[Theorems 5 and 6]{BeauchardMarbach2018} relied on cooked-up academic nonlinearities).
    \item Give a general method to transfer estimates obtained on a quadratic form given by a kernel $K$ to one given by a kernel $\chi K$ (see \cref{sec:multiply}), which we expect to be re-usable to simplify the study of other systems.
    %\item Provide interpretations in terms of uncertainty principles of some of the estimates which are used in this context to neglect localized singularities in the kernel.
    \item Derive energy estimates for the Schrödinger equation with inhomogeneous Neumann boundary conditions in \cref{sec:remainder}.
\end{itemize}

\subsection{Plan of the paper}

The cornerstone of this work is \cref{sec:quad}, where we express the reference quadratic form $Q$ in the Fourier domain, and analyze its properties.
We use this to prove \cref{thm:negative} in \cref{sec:negative} and \cref{thm:positive} in \cref{sec:positive}.
Both of these sections rely on a general methodology derived in \cref{sec:multiply} which allows to estimate the difference between the reference quadratic form $Q$ (corresponding to $k=0$) and the modulated one $Q_k$ (see \cref{sec:modulation}).
They also rely on an estimate of the cubic remainder in the state's expansion, derived in \cref{sec:remainder}, using an auxiliary system.
%\cref{sec:uncertainty} explains how some estimates of \cref{sec:negative} can be interpreted in a wider setting as stemming from uncertainty principles.
Finally, appendices gather the proof of higher-order obstructions (\cref{sec:drift_integer}) and useful well-known or folklore material (\cref{sec:appendix}).

\subsection{Notations}
\label{sec:notations}

Throughout this text, the symbol $\lesssim$ will denote inequalities which hold up to a constant that may change from line to line but which is independent of $T$ and $u$.

For $\nu \in \R$, we endow the Sobolev space $H^\nu(\R;\C)$ with its usual Hilbert norm
\begin{equation}
    \| u \|_{H^\nu(\R)} := \left( \frac{1}{2 \pi} \int_{\R} \langle \omega \rangle^{2\nu} |\widehat{u}(\omega)|^2 \dd\omega\right)^{\frac{1}{2}},
\end{equation}
where we use the Japanese bracket notation $\langle \omega \rangle:=(1+\omega^2)^{\frac 12}$ for $\omega \in \R$ and
\begin{equation}
    \label{eq:hat-u}
    \widehat{u}(\omega) := \int_\R u(t) e^{-i\omega t} \dd t.
\end{equation}
In the sequel, we implicitly identify functions defined on $(0,T)$ with their extension by $0$ to $\R$.
In particular, Fourier transforms always denote the one defined by \eqref{eq:hat-u} after extension by zero, and similarly for convolutions.
For controls defined on $(0,T)$, we will use the following Sobolev space.

\begin{definition}
    \label{def:Htilde-nu}
    Let $T > 0$ and $\nu \in \R$.
    Let $\sobC{\nu}$ denote the closure (for the norm $\| \cdot \|_{H^\nu(\R)}$) of $C^\infty_c((0,T);\C)$ in $H^\nu(\R;\C)$.
    For $u \in L^2((0,T);\C) \cap \sobC{\nu}$, there holds
    \begin{equation}
        \label{eq:Hnu}
        \nsob{u}{\nu} = \| \widetilde{u} \|_{H^\nu(\R)}
    \end{equation}
    where $\widetilde{u}$ denotes the extension of $u$ by $0$ on $\R \setminus (0,T)$.
\end{definition}

This space (which differs from $H^\nu_0((0,T);\C)$ for some values of $\nu$) is the natural Sobolev space for our computations involving Fourier transforms, and is for example defined and studied in \cite[Section 3]{McLean2000}, \cite[Section 1.3]{Grisvard1985}, \cite[Section 4.3]{ChandlerWildeHewettMoiola2015}.

In particular, for $\nu > 0$ such that $\nu \notin \frac{1}{2} + \N$, $\sobC{\nu} = H^\nu_0((0,T);\C)$ (see \cite[Corollary 1.4.4.5]{Grisvard1985}) and for all $\nu > 0$, $\sobC{-\nu}$ is the dual of $H^\nu((0,T);\C)$ (see \cite[Theorem 3.30]{McLean2000}).

\subsection{Frequency modulation of the reference quadratic form}
\label{sec:modulation}

In \cref{sec:power-series}, to avoid burdening the heuristic, we only presented the computation of $\langle \psi_2(T), \varphi_k \rangle$ in the case $k = 0$, leading to the reference quadratic form $Q$ of \eqref{eq:Q_0} defined by its reference kernel~$K$ of \eqref{eq:K_0}.
This reference case, which is sufficient to understand the core issues, is the subject of \cref{sec:quad}.
The general case will then stem from it.

Let us explain what changes for $k \neq 0$.
Starting again from \eqref{linearise_explicit} and \eqref{eq:psi2-phik}, we obtain, using the coefficients $c_j$ defined by \eqref{def:cj_a} and the function $\rho_k$ defined by $\rho_k(t):=e^{i \lambda_k t / 2}$,
\begin{equation}
    \label{eq:psi2=Qk}
    \begin{split}
        \langle \psi_2(T), \varphi_k e^{-i\lambda_k T} \rangle
        & = - \sum_{j \in \N} c_j \int_0^T u(t) e^{i\lambda_k t} \int_0^t u(s) e^{-i \lambda_j(t-s)} \dd s \dd t
        \\ & = - \iint_{0 < s < t < T} \Big( \sum_{j \in \N} c_j e^{i \frac{\lambda_k}{2}|t-s|} e^{-i \lambda_j |t-s|} \Big) (u\rho_k)(t) (u\rho_k)(s) \dd s \dd t
        \\ & = - \frac{1}{2} Q_k(u \rho_k, u \overline{\rho_k}),
    \end{split}
\end{equation}
where, as after \eqref{eq:psi2-eq-half-Q0}, we use that $u(t) \in \R$, symmetry, and we define, for $u, v \in L^2((0,T);\C)$,
\begin{align}
    \label{eq:Q_k}
    Q_k(u,v) & := \int_0^T \int_0^T K_k(t-s) u(s) \bar{v}(t) \dd s \dd t, \\
    \label{eq:K_k}
    K_k(\sigma) & := \exp \left(i \frac{\lambda_k}{2} |\sigma|\right) K(\sigma).
\end{align}
The frequency modulation in the arguments of $Q_k$ (i.e.\ the fact that $Q_k$ is evaluated at $(u \rho_k, u \overline{\rho_k})$ instead of $(u,u)$ in \eqref{eq:psi2=Qk}) is immaterial (see \cref{lem:Q_k-conj-best}).

The frequency modulation in the kernel $K_k$ with respect to $K$ is more interesting.
We dedicate \cref{sec:multiply} to the general abstract topic of transferring estimates obtained on a quadratic form given by a reference kernel $K$ to estimates on a modulated quadratic form associated with $\chi K$ for a given multiplier~$\chi$.
We use this abstract formalism to estimate $Q_k(u,v) - Q(u,v)$ in \cref{p:Qk-Q}.

In particular, this modulation explains the formula for the coefficient $a_k$ of \eqref{def:cj_a}.
Indeed, we learned from the heuristic \eqref{eq:Q-IPP} that it is the jump of the derivative of the kernel at $0$ which governs the $H^{-1}$ drift of a system.
We check that
\begin{equation}
    K_k'(0^-)-K_k'(0^+)
    = (K'(0^-)-K'(0^+)) - i \lambda_k K(0)
    = i \sum_{j \in \N} (2 \lambda_j - \lambda_k) c_j
    = 2 i a_k.
\end{equation}
In particular, we can write
\begin{equation}
    \label{eq:a-ak}
    a_k = a - \frac{\lambda_k}{2} K(0) \quad \text{where} \quad
    a := \sum_{j \in \N} \lambda_j c_j
\end{equation}
and $a$ is the coefficient driving the $H^{-1}$ coercivity of $Q$, while $a_k$ is the one for $Q_k$.

Recall that $a_k$ also has an interpretation in terms of Lie brackets given in \cref{rk:ak-Lie}.

\section{Study of the quadratic form}
\label{sec:quad}

In this section, we study the reference quadratic form $Q(u,v)$ defined in \eqref{eq:Q_0} from the convolution kernel $K$ of \eqref{eq:K_0}.
We start by computing the Fourier transform of $K$, to express $Q$ in the frequency domain (see \cref{Prop:FQenFourier}).
We then decompose this Fourier transform into several parts, and analyze their properties in preparation for the subsequent sections.
Unless otherwise stated, all functions and distributions in this section are complex-valued.

\subsection{Expression of the kernel in the frequency domain}
\label{sec:F(K)}

The function $K$ (see \eqref{eq:K_0}, \eqref{def:cj_a}, \eqref{eq:lambda_j}) does not belong to $L^1(\R)$, but defines a tempered distribution because $K \in L^{\infty}(\R) \subset \mathcal{S}'(\R)$.
Thus its Fourier transform $\mathcal{F}(K)$ is well defined in $\mathcal{S}'(\R)$.
The goal of this section is to prove the following explicit expression of $\mathcal{F}(K)$:
\begin{equation}
    \mathcal{F}(K) =
    \pi \sum_{j\in\N} c_j (\delta_{\lambda_j}+\delta_{-\lambda_j})-2i \sum_{j\in\N}\frac{ \lambda_j c_j }{\lambda_j^2-\omega^2},
\end{equation}
where the right-hand side contains a sum of Dirac masses and principal value distributions (see \cref{Prop:Kchapeau} for a precise statement).

\begin{remark}
    The fact that $\mathcal{F}(K)$ is merely a distribution and not an integrable function comes from the lack of decay of $K$ at $\pm \infty$.
    However, the definition of $Q$ in \eqref{eq:Q_0} only involves the values of $K(\sigma)$ for $\sigma \in [-T,T]$.
    Hence, one would keep the same quadratic form by replacing $K$ with $K_T := K \cdot \mathbbm{1}_{[-T,T]} \in L^2(\R)$, which thus has an $L^2(\R)$ Fourier transform.
    However, up to the necessary precautions implied by working with distributions, we find that working with $K$ (and not $K_T$) allows a better comprehension of the different contributions composing the kernel, and more explicit computations.
    These are particularly useful to obtain the positive result in \cref{sec:positive}.
\end{remark}

We need the following definitions and preliminary results.

\begin{definition}[Dilation, translation, Fourier transform]
    \label{Def:Distrib}
    Let $\alpha \in \R$ and $\lambda > 0$.
    For a function $f \in L^1(\R)$, we denote by $D_{\lambda}f$, $\tau_{\alpha} f$ and $\widehat{f}$ the functions defined by
    \begin{equation}
        D_{\lambda} f (t) := \sqrt{\lambda}f(\lambda t), \qquad
        \tau_{\alpha} f (t):=f(t-\alpha), \qquad
        \widehat{f}(\omega):=\int_{\R} f(t) e^{-i\omega t} \dd t.
    \end{equation}
    For a tempered distribution $S \in \mathcal{S}'(\R)$, we denote by $D_{\lambda} S$, $\tau_{\alpha} S$ and $\mathcal{F}(S)$ the tempered distributions defined by their action on Schwartz functions as
    \begin{equation}
        \label{eq:distributions-1}
        \langle D_{\lambda} S , \phi \rangle_{\mathcal{S'},\mathcal{S}}
        :=
        \langle S , D_{\lambda^{-1}} \phi \rangle_{\mathcal{S'},\mathcal{S}},
        \enskip
        \langle \tau_{\alpha} S , \phi \rangle_{\mathcal{S'},\mathcal{S}}
        :=
        \langle S , \tau_{-\alpha} \phi \rangle_{\mathcal{S'},\mathcal{S}},
        \enskip
        \langle \mathcal{F}(S) , \phi \rangle_{\mathcal{S'},\mathcal{S}}
        :=
        \langle S , \widehat{\phi} \rangle_{\mathcal{S'},\mathcal{S}}
    \end{equation}
    and then the following equalities hold in $\mathcal{S}'(\R)$:
    \begin{equation}
        \label{eq:distributions-2}
        \mathcal{F}(D_{\lambda} S)=D_{\lambda^{-1}} \mathcal{F}(S),
        \qquad
        \mathcal{F}(\tau_{\alpha} S)=e^{-i \alpha \cdot} \mathcal{F}(S),
        \qquad
        \mathcal{F}(e^{i \alpha \cdot} S)=\tau_{\alpha} \mathcal{F}(S).
    \end{equation}
\end{definition}

\begin{definition}[Principal value]
    The distribution $\pv(\frac{1}{\omega}) \in \mathcal{S}'(\R)$ is defined by
    \begin{equation}
        \label{def:vp}
        \langle \pv\left(\frac{1}{\omega}\right) , \phi \rangle_{\mathcal{S'},\mathcal{S}} :=
        \lim_{\eps \to 0} \int_{|\omega|>\eps} \frac{\phi(\omega)}{\omega} \dd \omega = \int_0^{\infty} \frac{\phi(\omega)-\phi(-\omega)}{\omega} \dd \omega
    \end{equation}
    and satisfies (see e.g.\ \cite[eq.\ (VII.7.19)]{Schwartz1966})
    \begin{equation}
        \label{Fourier(vp)}
        \mathcal{F} \left(\pv\left(\frac{1}{\omega}\right) \right)=-i\pi \sign(\cdot).
    \end{equation}
\end{definition}

\begin{lemma}
    \label{Lem:Fourier1}
    For $\lambda > 0$,
    \begin{equation}
        \label{F(exp)}
        \mathcal{F}\left( e^{-i\lambda| \cdot |} \right)
        =\pi (\delta_{\lambda}+\delta_{-\lambda})
        - \frac{2i\lambda}{\lambda^2-\omega^2}
    \end{equation}
    in the following sense: for every $\phi \in \mathcal{S}(\R)$,
    \begin{equation}
        \label{eq:F(exp)-limit}
        \langle \mathcal{F}\left( e^{-i\lambda| \cdot |} \right) , \phi \rangle_{\mathcal{S'},\mathcal{S}}
        = \pi (\phi(\lambda)+\phi(-\lambda))
        - 2 i \lambda
        \lim_{\eps \to 0}
        \int_{|\omega \pm \lambda|>\eps} \frac{\phi(\omega)}{\lambda^2-\omega^2} \dd \omega.
    \end{equation}
\end{lemma}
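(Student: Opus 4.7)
The plan is a classical regularization argument. I would introduce the damped family $g_\eps(t) := e^{-(\eps+i\lambda)|t|}$ for $\eps > 0$, which lies in $L^1(\R)$ with $|g_\eps| \leq 1$ and converges pointwise to $e^{-i\lambda|\cdot|}$. Dominated convergence against Schwartz test functions yields $g_\eps \to e^{-i\lambda|\cdot|}$ in $\mathcal{S}'(\R)$, and by continuity of the Fourier transform on $\mathcal{S}'(\R)$ the task reduces to computing $\widehat{g_\eps}$ explicitly and passing to the distributional limit.

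Splitting the integral at $t = 0$ and using $\Re(\eps + i\lambda) = \eps > 0$ to ensure convergence at infinity, one obtains
\[
\widehat{g_\eps}(\omega) = \frac{1}{\eps + i(\lambda+\omega)} + \frac{1}{\eps + i(\lambda-\omega)}.
\]
Rationalizing each fraction separates real and imaginary parts into a sum of two Poisson kernels and a sum of two conjugate Poisson kernels, respectively centered at $-\lambda$ and $+\lambda$:
\[
\widehat{g_\eps}(\omega) = \frac{\eps}{\eps^2 + (\lambda+\omega)^2} + \frac{\eps}{\eps^2 + (\lambda-\omega)^2} - i\left(\frac{\lambda+\omega}{\eps^2 + (\lambda+\omega)^2} + \frac{\lambda-\omega}{\eps^2 + (\lambda-\omega)^2}\right).
\]

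As $\eps \to 0^+$, the real part (a sum of Poisson kernels of mass $\pi$) converges in $\mathcal{S}'(\R)$ to $\pi(\delta_\lambda + \delta_{-\lambda})$, while each conjugate Poisson kernel converges in $\mathcal{S}'(\R)$ to a translated principal value (classical Sokhotski--Plemelj). The algebraic identity
\[
\pv\frac{1}{\omega+\lambda} - \pv\frac{1}{\omega-\lambda} = 2\lambda \cdot \pv\frac{1}{\lambda^2 - \omega^2},
\]
understood with symmetric $\eps$-excisions around both poles, gives the imaginary contribution $-2i\lambda\,\pv\frac{1}{\lambda^2-\omega^2}$ and hence \eqref{F(exp)}. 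The explicit pairing \eqref{eq:F(exp)-limit} then follows by testing against $\phi \in \mathcal{S}(\R)$: the Dirac part yields $\pi(\phi(\lambda) + \phi(-\lambda))$, and the principal-value part yields the limit integral over $\{|\omega \pm \lambda| > \eps\}$.

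I do not anticipate a real obstacle here, as this is a textbook computation. The only mild subtlety is to check that the two principal-value excisions at $+\lambda$ and $-\lambda$ may be packaged into the single symmetric excision $|\omega \pm \lambda| > \eps$ of \eqref{eq:F(exp)-limit}. This is automatic because $\lambda > 0$ ensures the two poles are distinct, so that each singularity may be handled independently on any compact set, and the $\eps$ parameter governing the two excisions can be taken equal without affecting the limit.
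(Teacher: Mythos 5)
Your proof is correct, but it follows a genuinely different route from the paper. The paper decomposes $e^{-i|\cdot|} = \cos(\cdot) - i\sin|\cdot|$, directly identifies $\mathcal{F}(\sin|\cdot|)$ as $(\tau_{-1}-\tau_{+1})\pv(\tfrac{1}{\omega})$ via the known transform \eqref{Fourier(vp)} and Fourier inversion on $\mathcal{S}'$, and then handles a general $\lambda > 0$ by a scaling argument which also produces the excision form \eqref{eq:F(exp)-limit} through an explicit change of variables. Your approach instead regularizes by the damped family $g_\eps(t) = e^{-(\eps+i\lambda)|t|}$, computes $\widehat{g_\eps}$ in closed form, and reads off the $\eps \to 0^+$ limit via the Poisson and conjugate Poisson kernel limits (Sokhotski--Plemelj). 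The trade-off is roughly this: the paper's proof is shorter because it assumes $\mathcal{F}(\pv\tfrac{1}{\omega}) = -i\pi\sign$ as a black box and needs a separate scaling step, whereas your argument is more self-contained (it never invokes that table entry) and handles all $\lambda > 0$ at once, at the cost of having to carefully justify that the two translated conjugate-Poisson limits combine into the single symmetric excision appearing in \eqref{eq:F(exp)-limit}. You correctly flag and resolve that last point; the algebra $\tfrac{1}{\omega+\lambda} - \tfrac{1}{\omega-\lambda} = \tfrac{2\lambda}{\lambda^2-\omega^2}$ and the fact that $\lambda > 0$ separates the two poles are exactly what is needed. Either argument would serve.
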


\begin{proof}
    \step[st:Fourier1:1]{We first prove that $\mathcal{F}(e^{-i|\cdot|})=\pi(\delta_{1}+\delta_{-1}) - i (\tau_{-1}-\tau_{+1}) \pv\left(\frac{1}{\omega}\right)$ in $\mathcal{S}'(\R)$.}
    We have $e^{-i|\cdot|}=\cos(\cdot) - i \sin|\cdot|$ and $\mathcal{F}(\cos)=\pi(\delta_{1}+\delta_{-1})$ thus it suffices to prove that
    $\mathcal{F} ( \sin |\cdot| ) = (\tau_{-1}-\tau_{+1}) \pv\left(\frac{1}{\omega}\right)$ in $\mathcal{S}'(\R)$.
    By \eqref{eq:distributions-2} and \eqref{Fourier(vp)}, the Fourier transform of the right-hand side is $2 \pi \sin |\cdot|$.
    The Fourier inversion formula on $\mathcal{S}'(\R)$ gives the result since $\sin |\cdot|$ is even.

    \step{Scaling argument.}
    By \eqref{eq:distributions-1}, \eqref{eq:distributions-2} and \cref{st:Fourier1:1},
    \begin{equation}
        \langle \mathcal{F}\left( e^{-i\lambda| \cdot |} \right) , \phi \rangle_{\mathcal{S'},\mathcal{S}} =
        \langle \mathcal{F}\left( e^{-i| \cdot |} \right) , \phi (\lambda\cdot) \rangle_{\mathcal{S'},\mathcal{S}}
        = \pi (\phi(\lambda)+\phi(-\lambda)) - i L_\lambda \phi
    \end{equation}
    where $L_\lambda \phi := \langle (\tau_{-1} - \tau_{+1}) \pv \left(\frac 1 \omega\right) , \phi(\lambda \cdot) \rangle_{\mathcal{S}',\mathcal{S}}$ so that, recalling \cref{def:vp},
    \begin{equation}
        \begin{split}
            L_\lambda \phi & =
            \lim_{\eps \to 0} \left(
            \int_{|\omega|>\eps} \frac{1}{\omega} \phi(\lambda(\omega+1)) \dd \omega
            - \int_{|\omega|>\eps} \frac{1}{\omega} \phi(\lambda(\omega-1)) \dd \omega \right)
            \\ & = \lim_{\eps \to 0} \left(
            \int_{|y-\lambda|>\eps} \frac{\phi(y)}{y-\lambda}
            \dd y
            - \int_{|y+\lambda|>\eps} \frac{\phi(y)}{y+\lambda}
            \dd y \right)
            \\&= \lim_{\eps \to 0}
            \int_{|y\pm \lambda|>\eps} \left(\frac{1}{y-\lambda}
            -
            \frac{1}{y+\lambda}\right) \phi(y) \dd y.
        \end{split}
    \end{equation}
    The second equality results from the change of variables and the third one from the dominated convergence theorem.
\end{proof}

\begin{remark}
    As in \eqref{def:vp}, the limit in the right-hand side of \eqref{eq:F(exp)-limit} has an explicit expression.
    For $\eps > 0$ small enough, one can write
    \begin{equation}
        \label{j:lim_delta}
        \lim_{\delta \to 0} \int_{|\omega \pm \lambda|>\delta} \frac{\phi(\omega)}{\lambda^2-\omega^2} \dd \omega
        =
        \int_{|\omega \pm \lambda|>\eps} \frac{\phi(\omega)}{\lambda^2-\omega^2} \dd \omega
        + \int_0^{\eps}
        \frac{\zeta(\omega)-\zeta(-\omega)}{\omega}
        \dd \omega
    \end{equation}
    where
    \begin{equation}
        \label{def:zetaj}
        \zeta(\omega):=-\frac{\phi(\lambda+\omega)}{2\lambda+\omega}
        + \frac{\phi(-\lambda+\omega)}{2\lambda-\omega}.
    \end{equation}
    This equality holds for every $\eps \in (0,\lambda)$, so that the denominators in $\zeta$ do not vanish when $\omega \in (0,\eps)$.
\end{remark}

\begin{proposition}
    \label{Prop:Kchapeau}
    For every $\phi \in \mathcal{S}(\R)$,
    \begin{equation}
        \label{eq:Kchapeau}
        \langle \mathcal{F}(K) , \phi \rangle_{\mathcal{S}',\mathcal{S}}=
        \pi \sum_{j\in\N} c_j (\phi(\lambda_j)+\phi(-\lambda_j))-2i
        \int_{\R} \phi(\omega) \Hall(\omega) \dd \omega
    \end{equation}
    where
    \begin{equation}
        \label{eq:Theta}
        \Hall(\omega) := \sum_{j\in\N}\frac{ \lambda_j c_j }{\lambda_j^2-\omega^2},
    \end{equation}
    so that, by \eqref{cj_asympt}, $\Hall \in L^1_{\mathrm{loc}}(\R \setminus \{ \pm \lambda_j \mid j \in \N \};\R)$, and the integral in the right-hand side of \eqref{eq:Kchapeau} must be understood in the sense of a convergent integral (not a Lebesgue one), which is the limit as $\eps \to 0$ of the Lebesgue integral over $F_\varepsilon$ where
    \begin{equation}
        \label{eq:F-eps}
        F_{\eps} := \big\{ \omega \in\R \mid \forall j \in \N, |\omega \pm \lambda_j| \geq \eps\big\}.
    \end{equation}
\end{proposition}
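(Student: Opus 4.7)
The plan is to compute $\mathcal{F}(K)$ term-by-term by writing $K = \sum_j c_j e^{-i\lambda_j|\cdot|}$, applying \cref{Lem:Fourier1} to each summand, and identifying the resulting infinite series of Dirac masses and principal values with the right-hand side of \eqref{eq:Kchapeau}.

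First, since $|c_j| = O(j^{-4})$ by \eqref{cj_asympt}, the series defining $K$ converges normally in $L^\infty(\R)$, hence in $\mathcal{S}'(\R)$. Continuity of $\mathcal{F}$ on $\mathcal{S}'(\R)$ then gives $\mathcal{F}(K) = \sum_j c_j \mathcal{F}(e^{-i\lambda_j|\cdot|})$ in $\mathcal{S}'(\R)$, where the $j=0$ term is simply $c_0 \cdot 2\pi\delta_0$ (consistent with \cref{Lem:Fourier1} in the limit $\lambda \to 0$, and contributing only to the Dirac sum since $\lambda_0 = 0$). Pairing with $\phi \in \mathcal{S}(\R)$ and invoking \cref{Lem:Fourier1}, the sum splits into a Dirac contribution $\pi\sum_j c_j(\phi(\lambda_j)+\phi(-\lambda_j))$, which is absolutely convergent because $\sum_j |c_j| < \infty$ and $\phi$ is bounded and so matches the first term of \eqref{eq:Kchapeau}, and a principal-value contribution $-2i \sum_j c_j \lambda_j I_j(\phi)$, with $I_j(\phi) := \lim_{\delta \to 0} \int_{|\omega \pm \lambda_j|>\delta} \phi(\omega)/(\lambda_j^2-\omega^2)\, \dd\omega$.

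It remains to match $\sum_j c_j \lambda_j I_j(\phi)$ with $\lim_{\eps \to 0}\int_{F_\eps} \phi\,\Hall$. For fixed $\eps > 0$, the lower bound $|\lambda_j^2 - \omega^2| \geq \tfrac{\eps}{2}(\lambda_j + |\omega|)$ on $F_\eps$ gives absolute, uniform convergence of the series defining $\Hall$ on $F_\eps$ and justifies Fubini, so that $\int_{F_\eps} \phi\,\Hall = \sum_j \lambda_j c_j J_j(\eps)$ with $J_j(\eps) := \int_{F_\eps} \phi(\omega)/(\lambda_j^2-\omega^2)\, \dd\omega$. For each fixed $j$, $J_j(\eps) \to I_j(\phi)$ as $\eps \to 0$: the difference from $\int_{|\omega \pm \lambda_j|>\eps}$ is an integral over the $\eps$-neighborhoods of $\pm \lambda_k$ for $k \neq j$, on which the integrand is bounded and the measure tends to zero. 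Interchanging $\lim_\eps$ and $\sum_j$ then concludes.

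The main obstacle is precisely this final interchange, which requires an $\eps$-uniform summable bound $|\lambda_j c_j J_j(\eps)| \leq s_j$. The plan is to split $\R$ into the zones $\{|\omega| \leq \lambda_j/2\}$, $\{\lambda_j/2 \leq |\omega| \leq 2\lambda_j\}$ and $\{|\omega| \geq 2\lambda_j\}$: in the outer two, $|\lambda_j^2 - \omega^2|^{-1}$ is controlled by $4/(3\lambda_j^2)$ and $4/(3\omega^2)$ respectively, so Schwartz decay of $\phi$ yields contributions bounded uniformly in $\eps$ by $C(\phi)/\lambda_j^2$ and $C(\phi)/\lambda_j$. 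In the middle zone I plan to exploit the principal-value cancellation through the decomposition $1/(\lambda_j^2-\omega^2) = (1/(\lambda_j-\omega) + 1/(\lambda_j+\omega))/(2\lambda_j)$ together with the explicit expression \eqref{j:lim_delta}--\eqref{def:zetaj} applied to each $\lambda_k$ to be removed from $F_\eps$, which produces an $\eps$-uniform bound of order $\|\phi\|_{C^1}/\lambda_j$. Altogether $|J_j(\eps)| \leq C(\phi)$ uniformly, so $|\lambda_j c_j J_j(\eps)| = O(j^{-2})$ is summable and dominated convergence delivers the claim.
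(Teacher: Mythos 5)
Your proposal follows the same overall architecture as the paper: expand $K$ term by term, apply \cref{Lem:Fourier1}, identify the Dirac and principal-value contributions, then prove the key identity $\lim_{\eps\to0}\int_{F_\eps}\phi\,\Hall = \sum_j \lambda_j c_j I_j(\phi)$ via Fubini on $F_\eps$ followed by an interchange of $\lim_{\eps\to0}$ with $\sum_j$. Where you genuinely diverge is in that last interchange. You aim for an $\eps$-uniform bound $|J_j(\eps)| \leq C(\phi)$ (via a three-zone split in $\omega$ and the Schwartz decay of $\phi$) plus the pointwise convergence $J_j(\eps)\to I_j(\phi)$, and conclude by dominated convergence for series. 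The paper instead controls the \emph{difference} directly: it uses \eqref{j:lim_delta}--\eqref{def:zetaj} applied only to the singular point $\pm\lambda_j$, plus the elementary bound $|\lambda_j^2-\omega^2|\geq 1$ on $J_j^\eps$, to get $|J_j(\eps)-I_j(\phi)|\leq\|\phi\|_{L^1(\R\setminus F_\eps)}+\eps\|\phi\|_{W^{1,\infty}}$ uniformly in $j$; since this vanishes as $\eps\to0$ and $\sum_j|\lambda_j c_j|<\infty$, the result follows at once, without any zone splitting and using only $L^1$ and $W^{1,\infty}$ control on $\phi$. Your route is sound but heavier, and one point in the sketch is imprecise: invoking \eqref{j:lim_delta}--\eqref{def:zetaj} ``for each $\lambda_k$ to be removed from $F_\eps$'' misrepresents the situation, since for $k\neq j$ the integrand $\phi(\omega)/(\lambda_j^2-\omega^2)$ is smooth at $\pm\lambda_k$ and no principal-value cancellation is needed there; those balls are simply discarded using $|\lambda_j^2-\omega^2|\gtrsim 1$ on $J_j^\eps$.
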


\begin{proof}[Proof of Proposition \ref{Prop:Kchapeau}]
    By \cref{eq:K_0} and \cref{Lem:Fourier1}, it suffices to prove that
    \begin{equation}
        \label{2lim=}
        \lim_{\eps \to 0} \int_{F_{\eps}} \phi(\omega) \sum_{j\in\N}\frac{ \lambda_j c_j }{\lambda_j^2-\omega^2} \dd \omega
        =
        \sum_{j\in\N} \lambda_j c_j \lim_{\delta \to 0}
        \int_{|\omega \pm \lambda_j|>\delta} \frac{\phi(\omega)}{\lambda_j^2-\omega^2} \dd \omega.
    \end{equation}
    Note that the series in the right-hand side of \eqref{2lim=} necessarily converges:
    the series defining $K$ converges in $L^{\infty}(\R)$ thus also in $\mathcal{S}'(\R)$ and so does the series defining $\mathcal{F}(K)$. Therefore, we just need to prove the convergence as $\eps \to 0$.

    For every $\eps>0$, Fubini's theorem proves that
    \begin{equation}
        \label{int_F_eps}
        \int_{F_{\eps}} \phi(\omega) \sum_{j\in\N}\frac{ \lambda_j c_j }{\lambda_j^2-\omega^2} \dd \omega =
        \sum_{j\in\N} \lambda_j c_j \int_{F_{\eps}} \frac{\phi(\omega)}{\lambda_j^2-\omega^2} \dd \omega
    \end{equation}
    because, using \eqref{cj_asympt},
    \begin{equation}
        \int_{F_{\eps}} |\phi(\omega)| \sum_{j\in\N}\frac{ |\lambda_j c_j| }{|\lambda_j^2-\omega^2|} \dd \omega \leq \frac{1}{\eps^2} \| \phi \|_{L^1} \sum_{j\in\N} |\lambda_j c_j| < \infty.
    \end{equation}
    Let $\eps \in (0,1)$ and $j \in \N$.
    The following unions are disjoint:
    \begin{equation}
        \{ \omega\in\R;|\omega\pm\lambda_j|\geq\eps \}=F_{\eps} \sqcup J_j^{\eps}
        \qquad \text{ where } \qquad
        J_j^{\eps}:=
        \underset{k \neq j}{\sqcup}
        \{\omega \in\R;|\omega\pm\lambda_k|< \eps\}.
    \end{equation}
    Using \eqref{int_F_eps} and \eqref{j:lim_delta}, we obtain
    \begin{equation}
        \label{ineq}
        \begin{split}
            \Bigg| \int_{F_{\eps}} & \phi(\omega) \sum_{j\in\N}\frac{ \lambda_j c_j }{\lambda_j^2-\omega^2} \dd \omega -
            \sum_{j\in\N} \lambda_j c_j \lim_{\delta \to 0}
            \int_{|\omega \pm \lambda_j|>\delta} \frac{\phi(\omega)}{\lambda_j^2-\omega^2} \dd \omega \Bigg|
            \\ & \leq
            \sum_{j\in\N} |\lambda_j c_j|\Bigg|
            \int_{F_{\eps}} \frac{ \phi(\omega) }{\lambda_j^2-\omega^2} \dd \omega
            - \int_{|\omega \pm \lambda_j|\geq\eps} \frac{\phi(\omega)}{\lambda_j^2-\omega^2} \dd \omega
            - \int_0^{\eps}
            \frac{\zeta_j(y)-\zeta_j(-y)}{y}
            \dd y
            \Bigg|
            \\ & \leq
            \sum_{j\in\N} |\lambda_j c_j|\Bigg(
            \int_{J_j^{\eps}}
            \frac{|\phi(\omega)|}{|\lambda_j^2-\omega^2|} \dd \omega
            +
            \int_0^{\eps}
            \frac{|\zeta_j(y)-\zeta_j(-y)|}{y}
            \dd y
            \Bigg)
            \\ & \leq
            \sum_{j\in\N} |\lambda_j c_j| \Bigg( \| \phi \|_{L^1(\R \setminus F_\eps)} + \eps \|\phi\|_{W^{1,\infty}(\R)}
            \Bigg)
        \end{split}
    \end{equation}
    where we used $|\lambda_j^2 - \omega^2| \geq 1$ for $\omega \in J_j^\varepsilon$ and the estimate (see \eqref{def:zetaj}), for $y \in (0,\eps)$ and $j \in \N^*$,
    \begin{equation}
        \label{borne_zeta'}
        \frac{|\zeta_j(y)-\zeta_j(-y)|}{y} \leq 2 \|\zeta'\|_{L^{\infty}(0,\eps)} \leq 4 \left( \frac{\|\phi'\|_{L^2}}{|\lambda_j|} + \frac{\|\phi\|_{L^{\infty}}}{|\lambda_j|^2} \right) \leq
        \frac{8}{\pi^2} \|\phi\|_{W^{1,\infty}}.
    \end{equation}
    The dominated convergence theorem proves that the last right-hand side of \eqref{ineq} tends to $0$ as $\eps \to 0$, which proves \eqref{2lim=} and concludes the proof.
\end{proof}

\subsection{Expression of the quadratic form in the frequency domain}

We can now deduce the following expression for the complex-valued bilinear form~$Q$ of \eqref{eq:Q_0}.

\begin{proposition}
    \label{Prop:FQenFourier}
    Let $T>0$.
    For $u,v \in C^{\infty}_c((0,T);\C)$,
    \begin{equation}
        \label{FQ_Fourier}
        2 \pi Q(u,v) =
        \pi \sum_{j\in\N} c_j
        \left( (\widehat{u} \overline{\widehat{v}}) ( \lambda_j)
        + (\widehat{u} \overline{\widehat{v}}) (- \lambda_j)
        \right)
        - 2 i
        \int_{\R} \Hall(\omega)
        \widehat{u} (\omega) \overline{\widehat{v}} (\omega) \dd\omega
    \end{equation}
    where
    the integral of the right-hand side must be understood in the sense of a convergent integral (not a Lebesgue one), which is the limit as $\eps \to 0$ of the Lebesgue integral over $F_{\eps}$ (see \eqref{eq:F-eps}).
\end{proposition}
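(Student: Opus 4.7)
The strategy is to reduce the double integral defining $Q(u,v)$ to the evaluation of $K$ (as a tempered distribution) against a single Schwartz test function, and then apply \cref{Prop:Kchapeau}.

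First, extend $u,v$ by $0$ to obtain $\tilde u, \tilde v \in C_c^\infty(\R;\C) \subset \mathcal{S}(\R)$. I would use the change of variables $(s,t) \mapsto (s,\sigma)$ with $\sigma = t-s$ in \eqref{eq:Q_0} to write
\begin{equation}
    Q(u,v) = \int_\R K(\sigma) \phi(\sigma) \dd \sigma = \langle K, \phi \rangle_{\mathcal{S}',\mathcal{S}},
    \qquad \text{where} \qquad
    \phi(\sigma) := \int_\R \tilde u(y-\sigma)\, \overline{\tilde v(y)} \dd y.
\end{equation}
Since $\tilde u$ and $\tilde v$ have compact support in $(0,T)$, the function $\phi$ is smooth, supported in $[-T,T]$, hence $\phi \in \mathcal{S}(\R)$.

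The next step is to compute $\widehat{\phi}$. Writing $\phi(\sigma) = \langle \tau_\sigma \tilde u, \tilde v \rangle_{L^2}$ and applying Plancherel to the inner product, I would obtain
\begin{equation}
    \phi(\sigma) = \frac{1}{2\pi} \int_\R e^{-i\omega \sigma} \widehat{u}(\omega) \overline{\widehat{v}(\omega)} \dd \omega,
\end{equation}
which identifies $\phi$ as the inverse Fourier transform of $\widehat{u}\,\overline{\widehat{v}}$ evaluated at $-\sigma$. Equivalently, writing $\check{\phi}(\sigma) = \phi(-\sigma)$, one has $\widehat{\check{\phi}}(\omega) = \widehat{u}(\omega)\overline{\widehat{v}(\omega)}$.

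Then I would use the definition of the Fourier transform of a tempered distribution to transfer the pairing: using $\phi = \mathcal{F}\bigl(\tfrac{1}{2\pi} \widehat{\check{\phi}}\bigr)$, which follows from Fourier inversion on $\mathcal{S}(\R)$, we get
\begin{equation}
    2\pi Q(u,v) = 2\pi \langle K, \phi \rangle_{\mathcal{S}',\mathcal{S}} = \langle \mathcal{F}(K), \widehat{\check{\phi}} \rangle_{\mathcal{S}',\mathcal{S}} = \langle \mathcal{F}(K), \widehat{u}\, \overline{\widehat{v}} \rangle_{\mathcal{S}',\mathcal{S}}.
\end{equation}
Since $u,v \in C_c^\infty((0,T);\C)$, the function $\widehat{u}\,\overline{\widehat{v}}$ lies in $\mathcal{S}(\R)$, so \cref{Prop:Kchapeau} applies directly to this test function and yields the desired identity \eqref{FQ_Fourier}, with the integral against $\Hall$ interpreted exactly in the convergent sense described there. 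The only care required is bookkeeping around the Fourier conventions (the factor $2\pi$ and the reflection $\check{\phi}$), and checking that $\widehat{u}\,\overline{\widehat{v}}$ is a legitimate Schwartz test function for the distributional pairing, both of which are straightforward.
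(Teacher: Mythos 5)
Your proof is correct. It arrives at the same key intermediate identity the paper uses, namely $2\pi Q(u,v) = \langle \mathcal{F}(K), \widehat{u}\,\overline{\widehat{v}}\rangle_{\mathcal{S}',\mathcal{S}}$, but by a slightly different bookkeeping. The paper writes $Q(u,v) = \langle K \star u, \overline{v}\rangle_{\mathcal{S}',\mathcal{S}}$, passes to the Fourier side, and invokes the convolution theorem $\mathcal{F}(K\star u) = \mathcal{F}(K)\,\widehat{u}$ for a tempered distribution convolved with a compactly supported one. You instead group the two control factors into a single Schwartz test function $\phi$ (the cross-correlation of $u$ and $v$) and pair $K$ directly against $\phi$, so that only the defining identity $\langle \mathcal{F}(K), \psi\rangle_{\mathcal{S}',\mathcal{S}} = \langle K, \widehat{\psi}\rangle_{\mathcal{S}',\mathcal{S}}$ for Schwartz $\psi$, together with Fourier inversion, is needed. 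This sidesteps the $\mathcal{S}'\star\mathcal{E}'$ convolution theorem (a standard but not entirely trivial fact) at the modest cost of tracking the reflection $\check{\phi}$ and the $2\pi$ normalization; the paper's route is a touch more compact, yours is a touch more elementary. Both then conclude by applying \cref{Prop:Kchapeau} to the Schwartz function $\widehat{u}\,\overline{\widehat{v}}$.
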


\begin{proof}
    The function $K \star u$ is bounded because $K \in L^{\infty}(\R)$ and $u \in L^1(\R)$, thus $K \star u \in \mathcal{S}'(\R)$.
    Since $v \in \mathcal{S}(\R)$, we have
    \begin{equation}
        2 \pi Q(u,v)
        = 2 \pi \langle K \star u, \overline{v} \rangle_{\mathcal{S}',\mathcal{S}}
        = \langle \mathcal{F}(K \star u), \overline{\widehat{v}} \rangle_{\mathcal{S}',\mathcal{S}}.
    \end{equation}
    Since $K \in \mathcal{S}'(\R)$ and $u \in \mathcal{E}'(\R)$ (i.e.\ $u$ is a distribution with compact support), we have $\mathcal{F}(K\star u)=\mathcal{F}(K) \widehat{u}$ in $\mathcal{S}'(\R)$.
    Thus $2 \pi Q(u,v) = \langle \mathcal{F}(K), \widehat{u} \overline{\widehat{v}} \rangle_{\mathcal{S}',\mathcal{S}}$ and
    \cref{Prop:Kchapeau} gives the conclusion.
\end{proof}

\subsection{Decomposition of the Fourier kernel}

Formula \eqref{FQ_Fourier} already separates two different contributions to the quadratic form $Q$: one involving Dirac masses at the $\pm \lambda_j$, and one defined by integration of the Fourier transforms against the Fourier kernel $\Theta$ defined in \eqref{eq:Theta}.
To analyze $Q$, we must further decompose $\Theta$.
The following example yields a good intuition.

\begin{example}
    Assume that $\mu \in H^2((0,1);\R)$ is such that $c_0 = 0$ and for all $j \in \N^*$, $c_j = \pi^2 / j^4$ (which is consistent with \eqref{cj_asympt}).
    Then, for $\omega \in \R \setminus \{ \pm \lambda_j \mid j \in \N \}$,
    \begin{equation}
        \label{eq:Theta-example}
        \Hall(\omega)
        = \sum_{j \in \N^*} \frac{1}{j^2} \frac{1}{j^4-(\omega/\pi^2)^2}
        = - \frac{\pi^6}{6 \omega^2} - \frac{\pi^6 \cot( \sqrt{\omega})}{4 \omega^2 \sqrt{\omega}} + \frac{\pi^6 \coth(\sqrt{\omega})}{4 \omega^2 \sqrt{\omega}}.
    \end{equation}
    This equality can be proved by decomposing the rational function with respect to $j$, then recognizing the Mittag-Leffler series for $\cot$ and $\coth$.
\end{example}

To mimic the three components in \eqref{eq:Theta-example}, we write for $\omega \in \R \setminus \{ \pm \lambda_j \mid j \in \N \}$,
\begin{equation}
    \label{eq:Theta=3}
    \Hall(\omega) = - \frac{a}{\omega^2} + \frac{\Hpv(\omega)}{\omega^2} + \frac{\Hreg(\omega)}{\omega^2}
\end{equation}
where $a := \sum_{j \in \N^*} \lambda_j c_j$ was already defined in \eqref{eq:a-ak} and $\Hpv$ isolates, within each frequency region, the dominant singularity.
More precisely,
\begin{equation}
    \label{eq:H_pv-j_om}
    \Hpv(\omega) := \frac{1}{2} \frac{\lambda_{j_\omega}^2 c_{j_\omega}}{\lambda_{j_\omega} - |\omega|}
    \quad \text{where} \quad
    j_\omega := \left\lfloor \sqrt{\frac{1}{4} + \frac{|\omega|}{\pi^2}} + \frac 12 \right\rfloor.
\end{equation}
This corresponds to setting $j_\omega = j$ for $|\omega| \in [\pi^2 (j^2-j), \pi^2 (j^2+j))$ for all $j \in \N$, so that $\vert \omega \vert \approx \lambda_{j_\omega}$.
This corresponds roughly to keeping only the term for $j = j_\omega$ within the sum \eqref{eq:Theta}, up to a lower-order term.
Eventually, $\Hreg$ is the remainder.

The first term in \eqref{eq:Theta=3}, scaling like $\omega^{-2}$ will yield the $H^{-1}$ drift when its coefficient $a \neq 0$.
The second term contains localized singularities at $\pm \lambda_j$.
The associated contribution will be negligible with respect to the first term for small enough times when $a \neq 0$.
However, when $a = 0$, we will exploit their change of sign in \cref{sec:approximate} to construct controls achieving $Q(u,u) = \pm 1$.
The third term will be estimated in a weaker norm (heuristically the $H^{-5/4}$ norm in the case \eqref{eq:Theta-example}), and will always be negligible.

In the following subsections, we derive estimates for each part.

\subsection{Subspace of controls}

Recall that, for a given control $u \in L^2((0,T);\C)$ we denote by $u_1(t) := \int_0^t u$ its primitive vanishing at $0$.
To avoid technicalities, we will establish all the results of the next subsections only for controls in the following subspace (i.e.\ of zero average on $(0,T)$)
\begin{equation}
    \label{eq:cH}
    \mathcal{H}:=\{ u \in L^2((0,T);\C) \mid u_1(T)=0 \}.
\end{equation}
This will be sufficient for our purpose thanks to \cref{lem:Q_k-Proj} and thanks to closed-loop estimates, which allow to estimate $u_1(T)$ from the state (see \cref{sec:closed-loop}).

For $u \in \mathcal{H}$ and $\omega \in \R$, an integration by parts yields
\begin{equation}
    \label{eq:hat-u=hat-u1}
    \widehat{u}(\omega) = i \omega \widehat{u_1}(\omega),
\end{equation}
since the boundary term vanishes thanks to $u_1(T) = 0$.

We will use the notation $\PH$ to denote the orthogonal projection on $\mathcal{H}$, i.e.\ for $u \in L^2((0,T);\C)$, $\PH u := u - u_1(T)/T$.
We have $(\PH u)_1(t) = u_1(t) - u_1(T) \frac t T$.
Thus, using the reverse triangular inequality, for $\nu \geq 0$ and $T \in (0,1]$,
\begin{equation}
    \label{eq:puu1-u1}
    |\nsob{(\PH u)_1}{-\nu} - \nsob{u_1}{-\nu}|
    \leq |u_1(T)| \nsob{\frac{t}{T}}{-\nu}
    \leq |u_1(T)| \left\Vert \frac{t}{T} \right\Vert_{L^2}
    \leq T^{\frac12} |u_1(T)|.
\end{equation}

\subsection{Estimates for the singular part}

Here, we focus on the part of the kernel's Fourier transform involving principal values.
We start by estimating precisely the contribution of the principal value integral for frequencies at distance at most $\delta/T$ of the eigenvalues (see \cref{p:Hpv-bound}) then prove a continuity result for the full contribution associated with $\Hpv$ (see \cref{p:Hpv-easy-bound}).

We use the notation $\lesssim$ of \cref{sec:notations} for constants independent of time and controls.

\begin{proposition}
    \label{p:Hpv-bound}
    There exists $C > 0$ such that, for all $T, \delta > 0$ and $u, v \in L^2((0,T);\C)$,
    \begin{equation}
        \label{eq:Hpv-bound}
        \Bigg| \int_{\R \setminus F_{\delta /T}} \Hpv(\omega) \widehat{u}(\omega) \overline{\widehat{v}}(\omega) \dd \omega \Bigg|
        \leq C T \delta \|u\|_{L^2}\|v\|_{L^2} + C \nsob{u}{-\frac 14} \nsob{v}{-\frac 14}
    \end{equation}
    where, as in \eqref{eq:F-eps}, $\R \setminus F_{\delta / T} = \{ \omega \in \R; \enskip ||\omega| - \lambda_{j_\omega}| \leq \delta / T \}$.
\end{proposition}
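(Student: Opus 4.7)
The plan is to exploit the principal-value cancellation at each singular point $\pm\lambda_j$ of $\Hpv$ by first reducing everything to a Fourier-side expression involving $h := u\star \widetilde v$, with $\widetilde v(t) := \overline{v(-t)}$, so that $\widehat{h}(\omega) = \widehat{u}(\omega)\overline{\widehat{v}(\omega)}$ and $h$ is supported in $(-T,T)$. Concretely, the integration domain $\R\setminus F_{\delta/T}$ is the union of the intervals $J_j^\pm := \{\omega : |\omega\mp\lambda_j| < \delta/T\}$, which are disjoint for $j$ large enough (the finitely many collision cases for small $j$ can be grouped separately). On each $J_j^\pm$ the function $\Hpv(\omega)$ reduces to $\mp \tfrac12 \lambda_j^2 c_j/(\omega\mp\lambda_j)$, so the integral must be read as a principal value, and the identity $\operatorname{pv}\int_{-a}^a F(s)/s\,ds = \int_0^a [F(s)-F(-s)]/s\,ds$ would be used to replace $F = \widehat h$ by its odd part and invoke the Fourier inversion formula.

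The exchange of integrations then produces the explicit formula
\[
\operatorname{pv}\int_{|\omega-\lambda_j|<\delta/T} \frac{\widehat{u}(\omega)\overline{\widehat{v}(\omega)}}{\lambda_j-\omega}\,d\omega = 2i\int_{-T}^{T} h(t)\,\operatorname{sgn}(t)\,\Phi\!\left(\tfrac{\delta|t|}{T}\right) e^{-i\lambda_j t}\,dt,
\]
where $\Phi(x) := \int_0^x \frac{\sin\sigma}{\sigma}\,d\sigma$ is uniformly bounded and satisfies $|\Phi(x)|\leq \min(x,\pi/2)$. An analogous formula holds for $J_j^-$. Using the elementary bounds $|\Phi(\delta|t|/T)|\leq \delta|t|/T\leq \delta$ and $\|h\|_{L^\infty}\leq \|u\|_{L^2}\|v\|_{L^2}$, together with $|t|\leq T$ on the support of $h$, the per-$j$ contribution is bounded by $C\,T\delta\,|\lambda_j^2 c_j|\,\|u\|_{L^2}\|v\|_{L^2}$. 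This is exactly the shape of the first term in the target inequality.

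The difficulty, however, is that $\sum_j \lambda_j^2 |c_j|$ diverges in general (only $\sum_j \lambda_j^{3/2-\varepsilon}|c_j|$ converges, by \eqref{cj_asympt}), so a naive summation would give an infinite constant. To overcome this I would truncate at some fixed $j_0$ independent of $T$ and $\delta$: the partial sum $\sum_{j\leq j_0}\lambda_j^2|c_j|$ is finite and contributes a term of the expected form $CT\delta\|u\|_{L^2}\|v\|_{L^2}$. For $j>j_0$, the key is to use that on $I_j$ one has $\langle\omega\rangle\sim \lambda_j$, so that Cauchy--Schwarz on the Fourier-side integral combined with the disjointness of the $I_j$'s yields
\[
\sum_{j>j_0}\lambda_j^2|c_j|\,|\widehat{g_\delta}(\lambda_j)| \lesssim \bigl(\textstyle\sum_j \lambda_j^{3/2-\varepsilon}|c_j|\bigr)\cdot \|u\|_{\widetilde H^{-1/4}}\|v\|_{\widetilde H^{-1/4}},
\]
after trading the missing decay in $\lambda_j$ against the weight $\langle\omega\rangle^{-1/2}$ appearing in the $\widetilde H^{-1/4}$ norm and using $\sum_j \int_{I_j}\langle\omega\rangle^{-1/2}|\widehat u|^2\,d\omega \leq 2\pi \|u\|_{\widetilde H^{-1/4}}^2$.

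The main obstacle is the divergence of $\sum_j \lambda_j^2|c_j|$: the two regimes must be balanced so that the truncation threshold $j_0$ is independent of $T$ and $\delta$, and the large-$j$ remainder must be organized to produce a clean $\widetilde H^{-1/4}$ bound rather than a weaker local-in-$j$ estimate. Carefully preserving the principal-value cancellation in the large-$j$ range, rather than replacing the integrand by its absolute value (which would be non-integrable), is the technically delicate point.
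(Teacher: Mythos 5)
Your time-domain reformulation is correct and elegant: writing $h := u\star\widetilde v$ with $\widetilde v(t):=\overline{v(-t)}$ and using the sine-integral identity, one does obtain
\[
\operatorname{pv}\!\int_{|\omega-\lambda_j|<\delta/T}\frac{\widehat{u}(\omega)\overline{\widehat{v}}(\omega)}{\lambda_j-\omega}\,d\omega
= 2i\int_{-T}^{T} h(t)\operatorname{sgn}(t)\,\Phi\!\left(\tfrac{\delta|t|}{T}\right)e^{-i\lambda_j t}\,dt,
\]
and hence a per-$j$ bound of the shape $T\delta\,|\lambda_j^2 c_j|\,\|u\|_{L^2}\|v\|_{L^2}$. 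However, the rest of the argument does not go through, for two compounding reasons.

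First, the truncation at a \emph{fixed} $j_0$ is structurally insufficient. The regime in which the windows overlap (so $\Hpv$ is \emph{not} equal to $\mp\tfrac12\lambda_j^2 c_j/(\omega\mp\lambda_j)$ on the full interval $|\omega\mp\lambda_j|<\delta/T$; one needs $\delta/T\leq\pi^2 j$) extends up to $j\sim\delta/(\pi^2 T)$, which is unbounded as $T\to0$ or $\delta\to\infty$. More importantly, even where the windows are disjoint, there are $\sim 1/T$ indices below the Fourier-significant scale, and summing the per-$j$ bound $T\delta\,\|u\|\|v\|$ over them yields $\sim\delta\,\|u\|\|v\|$, losing the crucial factor of $T$. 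So the divergence of $\sum\lambda_j^2|c_j|$ is not a finite-$j$ pathology; the bad regime moves with $T$.

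Second, your proposed tail inequality is incoherent as written (the weight $\lambda_j^2|c_j|$ on the left cannot be traded for $\lambda_j^{3/2-\varepsilon}|c_j|$ on the right, because the localized pv integral over $I_j$ carries no intrinsic decay in $\lambda_j$: the decay must come from $\widehat u,\widehat v$), and more to the point, the organization is \emph{inverted} relative to what works. In the paper's proof, the $\widetilde H^{-1/4}$ term arises from the \emph{low}-frequency range $j<J$ with $J\approx 1/T$: on the adapted disjoint intervals $(\lambda_j-\delta_j,\lambda_j+\delta_j)$ with $\delta_j:=\min(\pi^2 j,\delta/T)$, a Taylor-expansion bound on the pv integral produces $\int_{I_j}|\phi'|$, which after summation gives $\|\phi'\|_{L^1(0,\pi^2 J^2)}$, and only then is the low-frequency estimate $1\lesssim T^{-1}\langle\omega\rangle^{-1/2}$ (valid precisely because $\omega\leq\pi^2 J^2\approx T^{-2}$) used to turn this into $\nsob{u}{-1/4}\nsob{v}{-1/4}$. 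The \emph{high}-frequency range $j\geq J$ then produces the $T\delta$ term, via the alternate Taylor bound $\frac{2\delta}{T}|\phi'(\lambda_j)|$ averaged over the now-disjoint windows $[\lambda_j\pm1/T]$. Attempting the opposite assignment (time-domain $T\delta$ bound for small $j$, $\widetilde H^{-1/4}$ bound for large $j$), as you do, cannot close.
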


\begin{proof}
    To lighten the computations, we focus on the region $\omega \geq 0$ and introduce $\phi(\omega) := \widehat{u}(\omega) \overline{\widehat{v}}(\omega)$.
    Recalling the definition of $j_\omega$ in \eqref{eq:H_pv-j_om}, letting $\delta_j := \min \{ \pi^2 j, \delta / T \}$, the intervals $(\lambda_j - \delta_j, \lambda_j + \delta_j)$ are disjoint and we have
    \begin{equation}
        \label{eq:Q0-sing-1}
        \int_{\R_+ \setminus F_{\delta/T}} \frac{\lambda_{j_\omega}^2 c_{j_\omega}}{\lambda_{j_\omega} - |\omega|} \phi(\omega) \dd \omega
        = \sum_{j=1}^\infty \lambda_j^2 c_j \int_{\lambda_j-\delta_j}^{\lambda_j+\delta_j} \frac{\phi(\omega)}{\lambda_j - \omega} \dd \omega,
    \end{equation}
    where, here and in the sequel of this proof, all such integrals are to be understood as convergent principal value integrals as in \cref{Prop:Kchapeau}.
    Using a Taylor expansion (see \cref{lem:pv_bound}), one has
    \begin{equation}
        \Big| \int_{\lambda_j-\delta_j}^{\lambda_j+\delta_j} \frac{\phi(\omega)}{\lambda_j - \omega} \dd \omega \Big| \leq \int_{\lambda_j-\delta_j}^{\lambda_j+\delta_j} |\phi'| + 2\delta_j \int_{\lambda_j-\delta_j}^{\lambda_j+\delta_j} |\phi''|.
    \end{equation}
    Let $J := \lceil 1 / T \rceil$.
    By \cref{cj_asympt}, for $j \in \N$, $|\lambda_j^2 c_j | \lesssim 1$.
    Thus
    \begin{equation}
        \label{eq:Q0-sing-2}
        \sum_{j=1}^{J-1} \Bigg| \lambda_j^2 c_j \int_{\lambda_j-\delta_j}^{\lambda_j+\delta_j} \frac{\phi(\omega)}{\lambda_j - \omega} \dd \omega \Bigg|
        \lesssim \| \phi' \|_{L^1(0,\pi^2 J^2)} + 2 \frac{\delta}{T} \| \phi'' \|_{L^1}.
    \end{equation}
    For $j \geq J$, using a Taylor expansion (see \cref{lem:pv_bound}), we write
    \begin{equation}
        \Bigg| \int_{\lambda_j-\delta_j}^{\lambda_j+\delta_j} \frac{\phi(\omega)}{\lambda_j - \omega} \dd \omega \Bigg|
        \leq \frac{2\delta}{T} |\phi'(\lambda_j)| + \frac{\delta}{T} \int_{\lambda_j-\delta_j}^{\lambda_j+\delta_j} |\phi''|
    \end{equation}
    By Taylor expansion, for every $\omega \in \R$ and $M > 0$
    \begin{equation}
        |\phi'(\omega)| \leq \frac{1}{2M} \int_{\omega-M}^{\omega+M} |\phi'| + \int_{\omega-M}^{\omega+M}|\phi''|.
    \end{equation}
    In particular, with $\omega = \lambda_j$ and $M = 1/T$,
    \begin{equation}
        |\phi'(\lambda_j)| \leq \frac{T}{2} \int_{\lambda_j-1/T}^{\lambda_j+1/T} |\phi'| + \int_{\lambda_j-1/T}^{\lambda_j+1/T} |\phi''|.
    \end{equation}
    Since $j \geq J \geq 1/T$, the intervals $[\lambda_j-1/T,\lambda_j+1/T]$ are disjoint.
    Thus,
    \begin{equation}
        \label{eq:Q0-sing-3}
        \sum_{j \geq J} \Bigg| \lambda_j^2 c_j \int_{\lambda_j-\delta_j}^{\lambda_j+\delta_j} \frac{\phi(\omega)}{\lambda_j - \omega} \dd \omega \Bigg|
        \lesssim \delta \| \phi' \|_{L^1} + \frac{\delta}{T} \| \phi'' \|_{L^1}.
    \end{equation}
    It is straightforward to check that $\| \phi' \|_{L^1} \leq 4 \pi T \|u \|_{L^2} \| v \|_{L^2}$ and $\| \phi'' \|_{L^1} \leq 8 \pi T^2 \| u \|_{L^2} \| v \|_{L^2}$ using for example that $\partial_\omega \widehat{u}(\omega) = - i \widehat{t u(t)}(\omega)$ and similar formulas for $v$ and second-order derivatives.
    Hence, gathering \eqref{eq:Q0-sing-1}, \eqref{eq:Q0-sing-2} and \eqref{eq:Q0-sing-3},
    \begin{equation}
        \label{eq:Q0-sing-4}
        \begin{split}
            \Bigg| \int_{\R_+ \setminus F_{\delta /T}} \Hpv(\omega) \widehat{u}(\omega) \overline{\widehat{v}}(\omega) \dd \omega \Bigg|
            & \lesssim \| \phi' \|_{L^1(0,\pi^2 J^2)} + \delta T \| u \|_{L^2} \| v \|_{L^2} \\
            & \lesssim (1+\delta) T \| u \|_{L^2} \| v \|_{L^2}.
        \end{split}
    \end{equation}
    Moreover, using that, for $\omega \in [0,\pi^2 J^2]$, $1 \lesssim T^{-1} \langle \omega \rangle^{-\frac 12}$,
    \begin{equation}
        \begin{split}
            \| \phi' \|_{L^1(0,\pi^2 J^2)}
            \leq
            \frac{1}{T} \int_\R \frac{|\phi'(\omega)|}{\langle \omega \rangle^{\frac 12}} \dd \omega
            & \lesssim \frac1T \nsob{u}{-\frac14}\nsob{tv(t)}{-\frac14}
            + \frac1T \nsob{tu(t)}{-\frac14}\nsob{v}{-\frac14} \\
            & \lesssim \nsob{u}{-\frac14}\nsob{v}{-\frac14}
        \end{split}
    \end{equation}
    thanks to \cref{lem:mul-t}.
    Eventually, combined with \eqref{eq:Q0-sing-4}, this yields \eqref{eq:Hpv-bound}.
\end{proof}

The proof of \cref{p:Hpv-bound} actually entails the following continuity result.

\begin{corollary}
    \label{p:Hpv-CT}
    There exists $C > 0$ such that, for all $T > 0$ and $u, v \in L^2((0,T);\C)$,
    \begin{equation}
        \label{eq:Hpv-CT}
        \Bigg| \int_{\R} \Hpv \widehat{u} \overline{\widehat{v}}\Bigg| \leq C T \|u\|_{L^2}\|v\|_{L^2}.
    \end{equation}
\end{corollary}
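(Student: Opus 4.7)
The plan is to obtain \eqref{eq:Hpv-CT} by re-using intermediate steps in the proof of \cref{p:Hpv-bound} with the specialization $\delta = 1$, and by supplementing the argument with a direct pointwise estimate on the region $F_{1/T}$ (which is not covered by the proposition's statement). Since $\Hpv$ is even in $\omega$, by symmetry it suffices to control the integral over $\R_+$, then double.

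I would split $\int_{\R_+} = \int_{\R_+ \cap F_{1/T}} + \int_{\R_+ \setminus F_{1/T}}$. For the second piece, I would \emph{not} invoke the statement \eqref{eq:Hpv-bound} itself: the $\nsob{u}{-\frac14}\nsob{v}{-\frac14}$ term it contains is not absorbable into $C T \|u\|_{L^2}\|v\|_{L^2}$ in general (one only has $\nsob{u}{-\frac14} \leq \|u\|_{L^2}$, which produces a $T$-free contribution). Instead, I would re-use the cruder intermediate bound
\[
    \left| \int_{\R_+ \setminus F_{\delta/T}} \Hpv \widehat{u}\overline{\widehat{v}} \right|
    \lesssim (1+\delta) T \|u\|_{L^2}\|v\|_{L^2}
\]
which is exactly the second line of \eqref{eq:Q0-sing-4} — this is an upper bound obtained in the course of the proof of \cref{p:Hpv-bound} before invoking the Sobolev refinement, relying only on the direct $L^1$ bounds $\|\phi'\|_{L^1(\R)} \lesssim T \|u\|_{L^2}\|v\|_{L^2}$ and $\|\phi''\|_{L^1(\R)} \lesssim T^2 \|u\|_{L^2}\|v\|_{L^2}$. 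Specializing this to $\delta = 1$ gives the desired bound.

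For the integral over $F_{1/T}$, the definition of this set imposes $\bigl||\omega| - \lambda_{j_\omega}\bigr| \geq 1/T$, while \eqref{cj_asympt} combined with $\lambda_j = (j\pi)^2$ ensures that $(\lambda_j^2 c_j)_{j \in \N}$ is bounded. The explicit formula \eqref{eq:H_pv-j_om} for $\Hpv$ then forces $\|\Hpv\|_{L^\infty(F_{1/T})} \lesssim T$. Cauchy--Schwarz combined with Plancherel's identity (noting that since $u, v$ are extended by zero outside $(0,T)$, one has $\|\widehat{u}\|_{L^2(\R)} = \sqrt{2\pi}\|u\|_{L^2}$) then yields $\lesssim T\|u\|_{L^2}\|v\|_{L^2}$ for this piece as well. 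Summing the two pieces (and their $\R_-$ counterparts) proves \eqref{eq:Hpv-CT}.

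The only subtlety — and the reason the result is presented as stemming from the \emph{proof} rather than from the \emph{statement} of \cref{p:Hpv-bound} — is to resist the temptation of directly applying \eqref{eq:Hpv-bound} and to instead locate the cruder $(1+\delta)T$-type estimate embedded in its derivation. Once this is done, the pointwise control on $F_{1/T}$ is essentially immediate.
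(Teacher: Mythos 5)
Your proof is correct and follows essentially the same route as the paper's: split the integral into $F_{1/T}$ and its complement, use the pointwise bound $|\Hpv| \lesssim T$ on $F_{1/T}$ together with Cauchy--Schwarz and Plancherel, and re-use the intermediate estimate \eqref{eq:Q0-sing-4} (rather than the statement of \cref{p:Hpv-bound}) with $\delta = 1$ for $\R \setminus F_{1/T}$. The observation that the statement of \cref{p:Hpv-bound} would leave an unwanted $\nsob{u}{-\frac14}\nsob{v}{-\frac14}$ term is exactly why the paper also cites the proof (via \eqref{eq:Q0-sing-4}) rather than the statement; your phrasing that this term is "$T$-free" is slightly imprecise since \cref{lem:H-beta-T-beta} would give a $T^{1/2}$ factor, but that is still too weak for small $T$, so your conclusion stands.
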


\begin{proof}
    For $u, v \in L^2((0,T);\C)$, we write, for $T > 0$:
    \begin{equation}
        \int_{\R} \Hpv \widehat{u} \overline{\widehat{v}} =
        \int_{F_{1/T}} \Hpv \widehat{u} \overline{\widehat{v}} + \int_{\R \setminus F_{1/T}} \Hpv \widehat{u} \overline{\widehat{v}},
    \end{equation}
    where $F_{1/T}$ is defined in \eqref{eq:F-eps}.
    By \cref{cj_asympt}, for $j \in \N$, $|\lambda_j^2 c_j | \lesssim 1$.
    Hence, we deduce from the definition~\eqref{eq:H_pv-j_om} of $\Hpv$ that, for all $\omega \in F_{1/T}$, $| \Hpv(\omega) | \lesssim T$.
    Thus,
    \begin{equation}
        \int_{F_{1/T}} | \Hpv \widehat{u} \overline{\widehat{v}} |
        \lesssim T \int_\R | \widehat{u} \overline{\widehat{v}} | \lesssim T \|u\|_{L^2}\|v\|_{L^2}.
    \end{equation}
    From the proof of \cref{p:Hpv-bound} with $\delta = 1$, we have (recall \eqref{eq:Q0-sing-4}):
    \begin{equation}
        \left| \int_{\R \setminus F_{1/T}} \Hpv \widehat{u} \overline{\widehat{v}} \right| \lesssim T \|u\|_{L^2}\|v\|_{L^2}.
    \end{equation}
    Combining these estimates yields the conclusion.
\end{proof}

Thus, $\Hpv(\omega)/\omega^2$ yields a bilinear operator continuous for the $H^{-1}$ topology.

\begin{corollary}
    \label{p:Hpv-easy-bound}
    There exists $C > 0$ such that, for all $T > 0$ and $u, v \in \mathcal{H}$,
    \begin{equation}
        \label{eq:Hpv-easy-bound}
        \Bigg| \int_{\R} \frac{\Hpv(\omega)}{\omega^2} \widehat{u}(\omega) \overline{\widehat{v}}(\omega) \dd \omega \Bigg|
        \leq C T \|u_1\|_{L^2}\|v_1\|_{L^2}.
    \end{equation}
\end{corollary}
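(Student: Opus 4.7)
The plan is a direct reduction to \cref{p:Hpv-CT} via the integration-by-parts identity \eqref{eq:hat-u=hat-u1} available on the subspace $\mathcal{H}$.

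Concretely, for $u, v \in \mathcal{H}$, one has $u_1(T) = v_1(T) = 0$, so both $u_1$ and $v_1$ are continuous functions on $[0,T]$ vanishing at the endpoints and in particular belong to $L^2((0,T);\C)$. The identity \eqref{eq:hat-u=hat-u1} gives $\widehat{u}(\omega) = i\omega \widehat{u_1}(\omega)$ and, upon conjugation, $\overline{\widehat{v}}(\omega) = -i\omega \overline{\widehat{v_1}}(\omega)$, so that for every $\omega$ outside the singular set $\{ \pm \lambda_j \mid j \in \N \}$ of the integrand,
\begin{equation}
    \frac{\Hpv(\omega)}{\omega^2} \widehat{u}(\omega) \overline{\widehat{v}}(\omega) = \Hpv(\omega) \widehat{u_1}(\omega) \overline{\widehat{v_1}}(\omega).
\end{equation}

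Since both sides are to be understood as convergent integrals in the sense of \cref{Prop:Kchapeau}, and the integrands coincide pointwise away from the shared singular set (the apparent singularity of the left-hand side at $\omega = 0$ being cancelled by the factor $\omega^2$ produced by the substitution), the left-hand side of \eqref{eq:Hpv-easy-bound} equals $\left\vert \int_\R \Hpv \widehat{u_1} \overline{\widehat{v_1}} \right\vert$. It remains to apply \cref{p:Hpv-CT} to the pair $(u_1, v_1) \in L^2((0,T);\C)^2$, which immediately yields the desired bound $C T \|u_1\|_{L^2} \|v_1\|_{L^2}$. No genuine obstacle arises: the proof is essentially a one-line invocation of the preceding continuity result, and the only mild care required is to confirm that the convergent-integral interpretation is preserved by the pointwise identity above, which is automatic since multiplication by the smooth factor $\omega^2$ affects neither the regularization mechanism near $\pm \lambda_j$ nor the removability at $\omega = 0$.
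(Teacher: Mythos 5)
Your proof is correct and takes essentially the same route as the paper: reduce via \eqref{eq:hat-u=hat-u1} to $\Hpv(\omega)\widehat{u_1}(\omega)\overline{\widehat{v_1}}(\omega)$ and then invoke \cref{p:Hpv-CT} for $(u_1,v_1)\in L^2((0,T);\C)^2$. The extra care you take in verifying the conjugation and the compatibility with the convergent-integral interpretation is harmless and matches the intent of the paper's one-line argument.
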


\begin{proof}
    For $u, v \in \mathcal{H}$ and $\omega \in \R$, one has $\widehat{u}(\omega)\overline{\widehat{v}}(\omega) = \omega^2 \widehat{u_1}(\omega)\overline{\widehat{v_1}}(\omega)$ by \eqref{eq:hat-u=hat-u1}, so \cref{p:Hpv-CT} immediately implies estimate~\eqref{eq:Hpv-easy-bound}.
\end{proof}

\subsection{Estimates for the Dirac part}

We prove a continuity estimate for the part of $Q$ involving Dirac masses at frequencies $\pm \lambda_j$.

\begin{proposition}
    \label{p:Dirac-bound}
    There exists $C > 0$ such that, for all $T > 0$ and $u, v \in \mathcal{H}$,
    \begin{equation}
        \bigg| \sum_{j\in\N} c_j
        \left( (\widehat{u} \overline{\widehat{v}}) ( \lambda_j)
        + (\widehat{u} \overline{\widehat{v}}) (- \lambda_j)
        \right) \bigg|
        \leq C T \| u_1 \|_{L^2} \| v_1 \|_{L^2} + C \nsob{u_1}{-\frac14} \nsob{v_1}{-\frac14}.
    \end{equation}
\end{proposition}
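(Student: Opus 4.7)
The plan is to parallel the strategy used in the proof of \cref{p:Hpv-bound}, first reducing to an estimate in terms of $u_1, v_1$ via an integration by parts, then splitting the sum at $J := \lceil 1/T \rceil$ to handle the high and low frequency regimes separately.

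Since $u, v \in \mathcal{H}$, the boundary conditions $u_1(0) = u_1(T) = 0$ (and similarly for $v_1$) give $\widehat{u}(\omega) = i\omega \widehat{u_1}(\omega)$ by integration by parts, as in \eqref{eq:hat-u=hat-u1}. Therefore
\begin{equation}
    \sum_{j \in \N} c_j \bigl[(\widehat{u}\,\overline{\widehat{v}})(\lambda_j) + (\widehat{u}\,\overline{\widehat{v}})(-\lambda_j)\bigr] = \sum_{j \geq 1} c_j \lambda_j^2 \bigl[\psi(\lambda_j) + \psi(-\lambda_j)\bigr],
\end{equation}
where $\psi := \widehat{u_1}\overline{\widehat{v_1}}$ and the $j = 0$ term vanishes since $\lambda_0 = 0$. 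By \eqref{cj_asympt} we have $|c_j \lambda_j^2| \lesssim 1$ uniformly in $j$, so it suffices to bound $\sum_{j \geq 1} |\psi(\pm\lambda_j)|$.

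For the high-frequency part $j \geq J$, the eigenvalues are separated by $\lambda_{j+1} - \lambda_j \geq 2\pi^2/T$, so intervals $I_j$ of width $1/T$ centered at $\lambda_j$ are disjoint. A Taylor mean-value estimate yields $|\psi(\lambda_j)| \leq T\int_{I_j} |\psi| + \int_{I_j} |\psi'|$. Summing and using Plancherel together with Cauchy--Schwarz gives $\|\psi\|_{L^1} \lesssim \|u_1\|_{L^2}\|v_1\|_{L^2}$, while $\|\psi'\|_{L^1} \lesssim T\|u_1\|_{L^2}\|v_1\|_{L^2}$ (using $\widehat{u_1}'(\omega) = -i\widehat{tu_1}(\omega)$ and $\|tu_1\|_{L^2} \leq T\|u_1\|_{L^2}$). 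This produces the contribution $T\|u_1\|_{L^2}\|v_1\|_{L^2}$.

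For the low-frequency part $1 \leq j < J$, the eigenvalues lie in $[0, \pi^2 J^2] \subseteq [0, \pi^2/T^2]$, where the crucial comparison $1 \leq (\pi/T)\langle \omega \rangle^{-1/2}$ holds (exactly as in the proof of \cref{p:Hpv-bound}). I would apply a mean-value estimate with intervals $I_j$ of width $\delta_j$ chosen to balance disjointness against the recovery of the weight $\langle \omega \rangle^{-1/2}$, then use Cauchy--Schwarz and \cref{lem:mul-t} (which provides $\nsob{tu_1}{-1/4} \lesssim T\nsob{u_1}{-1/4}$) to recover the contribution $\nsob{u_1}{-1/4}\nsob{v_1}{-1/4}$.

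The main obstacle is the low-frequency regime: unlike in \cref{p:Hpv-bound}, where the principal-value integrals naturally produce the derivative $\phi'$ (whose estimation via the $t$-multiplication trick of \cref{lem:mul-t} generates an extra factor of $T$ that absorbs the $1/T$ coming from the comparison $1 \leq (\pi/T)\langle \omega \rangle^{-1/2}$), the Dirac masses act by pointwise evaluation $\psi(\lambda_j)$ with no such built-in derivative. Choosing $\delta_j$ appropriately (small enough that the $I_j$ stay disjoint, large enough that $\delta_j^{-1}$ replaces the pointwise weight by an integrable one) and carefully organizing the resulting remainder terms involving $\psi'$ is the technical crux of the proof.
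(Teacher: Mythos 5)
Your reduction to $\sum_j \lambda_j^2 c_j\,\bigl[\psi(\lambda_j)+\psi(-\lambda_j)\bigr]$ with $\psi:=\widehat{u_1}\overline{\widehat{v_1}}$, followed by $|\lambda_j^2 c_j|\lesssim 1$, is exactly right and matches the paper; the statement then amounts to the auxiliary inequality $\sum_{j\geq 1}|\psi(\lambda_j)|\lesssim T\|u_1\|_{L^2}\|v_1\|_{L^2} + \nsob{u_1}{-\frac14}\nsob{v_1}{-\frac14}$ (which the paper isolates as \cref{lem:bessel-best}).

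However, you stop short of actually proving the low-frequency estimate, declaring the choice of $\delta_j$ and the handling of the $\psi'$ remainder to be an unresolved ``technical crux.'' This is a genuine gap in your writeup, but it has a clean resolution that you almost state: take $\delta_j\propto j$ for \emph{all} $j\geq 1$, i.e.\ $I_j:=(\lambda_j-j\pi^2,\lambda_j+j\pi^2)$. These intervals are pairwise disjoint because $\lambda_{j+1}-\lambda_j=(2j+1)\pi^2$, and on $I_j$ one has $|\omega|\lesssim j^2$, so $1/j\lesssim\langle\omega\rangle^{-\frac12}$. The averaged Taylor estimate
\begin{equation}
    |\psi(\lambda_j)| \leq \frac{1}{2\pi^2 j}\int_{I_j}|\psi| + \int_{I_j}\bigl(|\widehat{u_1}\,\partial_\omega\overline{\widehat{v_1}}| + |\partial_\omega\widehat{u_1}\,\overline{\widehat{v_1}}|\bigr)
\end{equation}
then sums, by disjointness, to $\nsob{u_1}{-\frac14}\nsob{v_1}{-\frac14}$ for the first group and to $\|u_1\|_{L^2}\|t v_1\|_{L^2}+\|t u_1\|_{L^2}\|v_1\|_{L^2}\leq 2T\|u_1\|_{L^2}\|v_1\|_{L^2}$ for the second. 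This single uniform choice makes the entire split at $J=\lceil 1/T\rceil$ unnecessary, and the $\psi'$ contribution is absorbed into the $T\|u_1\|\|v_1\|$ term by the elementary bound $\|t u_1\|_{L^2}\leq T\|u_1\|_{L^2}$ --- there is no need to invoke \cref{lem:mul-t} here. In short: your structure is borrowed from \cref{p:Hpv-bound} where the low/high split and \cref{lem:mul-t} are genuinely needed (because the $1/T$ factor coming from $1\lesssim T^{-1}\langle\omega\rangle^{-\frac12}$ must be cancelled by a gain of $T$ from $t$-multiplication), but for the Dirac part the width-$j$ intervals let you bypass both.
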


\begin{proof}
    First, by \eqref{eq:hat-u=hat-u1}, $(\widehat{u} \overline{\widehat{v}})(\omega) = \omega^2 (\widehat{u_1} \overline{\widehat{v_1}})(\omega)$, so that
    \begin{equation}
        \bigg| \sum_{j\in\N} c_j
        \left( (\widehat{u} \overline{\widehat{v}}) ( \lambda_j)
        + (\widehat{u} \overline{\widehat{v}}) (- \lambda_j)
        \right) \bigg|
        =
        \bigg| \sum_{j\in\N} \lambda_j^2 c_j
        \left( (\widehat{u_1} \overline{\widehat{v_1}}) ( \lambda_j)
        + (\widehat{u_1} \overline{\widehat{v_1}}) (- \lambda_j)
        \right) \bigg|.
    \end{equation}
    Moreover, by \eqref{eq:lambda_j} and \eqref{cj_asympt}, $\lambda_j^2 c_j = O(1)$.
    Hence the claimed estimate follows directly from \cref{lem:bessel-best} (see below) applied to $u_1$ and $v_1$.
\end{proof}

\begin{lemma}
    \label{lem:bessel-best}
    There exists $C > 0$ such that, for all $T > 0$ and $u, v \in L^2((0,T);\C)$,
    \begin{equation}
        \sum_{j=0}^{+\infty} |(\widehat{u}\overline{\widehat{v}})(\lambda_j)| \leq C T \| u \|_{L^2} \| v \|_{L^2} + C \nsob{u}{-\frac14}\nsob{v}{-\frac14}.
    \end{equation}
\end{lemma}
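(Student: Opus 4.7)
My strategy is to split the sum at the threshold $J := \lceil 1/T \rceil$, bounding the tail $j \geq J$ by an Ingham/Plancherel--Polya inequality (the eigenvalues are then well separated at scale $\gtrsim 1/T$), and bounding the head $j < J$ by a Paley--Wiener mean-value inequality combined with a weighted overlap count.

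For $j \geq J$, the gaps $\lambda_{j+1} - \lambda_j = (2j+1)\pi^2 \gtrsim 1/T$ place the sequence $(\lambda_j)_{j\geq J}$ in the regime where classical Ingham-type inequalities apply to functions supported in $(0,T)$. Hence
\[
\sum_{j \geq J} |\widehat u(\lambda_j)|^2 \lesssim T \|u\|_{L^2}^2,
\]
and the analogous bound holds for $v$. A Cauchy--Schwarz in $j$ then produces the first contribution $C T \|u\|_{L^2} \|v\|_{L^2}$.

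For $j < J$, I integrate the identity $\tfrac{d}{d\omega}|\widehat u(\omega)|^2 = 2\operatorname{Re}(\widehat u'(\omega) \overline{\widehat u(\omega)})$ on $I_j := (\lambda_j - 1/T, \lambda_j + 1/T)$, average in $\omega$, and apply Young's inequality together with $\widehat u'(\omega) = -i\,\widehat{tu(t)}(\omega)$ to obtain a Paley--Wiener mean-value estimate
\[
|\widehat u(\lambda_j)|^2 \leq C T \int_{I_j} |\widehat u(\omega)|^2 \,d\omega + \frac{C}{T} \int_{I_j} |\widehat{tu(t)}(\omega)|^2 \,d\omega.
\]
Summing over $j < J$ and introducing the overlap count $N(\omega) := \#\{j < J : \omega \in I_j\}$, this becomes
\[
\sum_{j<J} |\widehat u(\lambda_j)|^2 \leq C T \int_\R N(\omega) |\widehat u(\omega)|^2 \,d\omega + \frac{C}{T} \int_\R N(\omega) |\widehat{tu(t)}(\omega)|^2 \,d\omega.
\]
The crucial ingredient is the weighted overlap bound
\[
N(\omega) \leq \frac{C}{T}\langle\omega\rangle^{-1/2} \qquad \text{for all } \omega \in \R,
\]
obtained by counting the $\lambda_k = (k\pi)^2$ in intervals of length $2/T$: for $|\omega|$ away from zero the local gap is $\sim 2\pi\sqrt{|\omega|}$ so $N(\omega) \lesssim 1 + 1/(T\sqrt{|\omega|})$, while near $\omega = 0$ one has the uniform bound $N(\omega) \lesssim 1/\sqrt T$; both fit in $(C/T)\langle\omega\rangle^{-1/2}$ for $T \lesssim 1$. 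Combined with the multiplier estimate $\nsob{tu(t)}{-1/4} \lesssim T \nsob u{-1/4}$ of \cref{lem:mul-t}, this yields $\sum_{j<J} |\widehat u(\lambda_j)|^2 \lesssim \nsob u{-1/4}^2$, and Cauchy--Schwarz in $j$ delivers the second contribution $C \nsob u{-1/4} \nsob v{-1/4}$.

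The main obstacle is the verification of the weighted overlap bound $N(\omega) \leq (C/T)\langle\omega\rangle^{-1/2}$, which requires a careful case analysis separating the regime $|\omega| \gtrsim 1$ where the continuous density $\sim 1/\sqrt{|\omega|}$ is valid, from the regime $|\omega| \lesssim 1$ where the naive density formula diverges but the actual number of eigenvalues within $1/T$ of $0$ is only $O(1/\sqrt T)$.
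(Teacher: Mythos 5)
Your proposal is correct in spirit but takes a genuinely different route from the paper. The paper does not split the sum in $j$ at all: it introduces the $T$-independent intervals $I_j := (\lambda_j - j\pi^2,\, \lambda_j + j\pi^2)$, which are automatically disjoint because $\lambda_j = (j\pi)^2$, and averages the Taylor identity $\widehat{u}\overline{\widehat{v}}(\lambda_j) = \widehat{u}\overline{\widehat{v}}(\omega) + \int_\omega^{\lambda_j} \partial_\omega(\widehat{u}\overline{\widehat{v}})$ over $I_j$. The normalizing factor $1/(2\pi^2 j)$ then produces both contributions in one stroke: the main term carries the weight $1/j \lesssim \langle\omega\rangle^{-1/2}$ on $I_j$, giving the $\nsob{u}{-1/4}\nsob{v}{-1/4}$ piece after summing over disjoint intervals, while the error term, freed of the $1/j$ factor, is estimated by Cauchy--Schwarz and $\|t u(t)\|_{L^2} \leq T\|u\|_{L^2}$, giving the $T\|u\|_{L^2}\|v\|_{L^2}$ piece. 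No threshold $J$, no Ingham inequality, no overlap count, and the estimate is uniform in $T > 0$ with no case distinction.

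Your version replaces this with two separate classical ingredients (an Ingham/Plancherel--Polya bound with a $T$-dependent constant for the tail, and a Paley--Wiener mean-value plus a weighted overlap count for the head), both glued at the $T$-dependent threshold $J \sim 1/T$. This works, and the overlap bound $N(\omega) \leq (C/T)\langle\omega\rangle^{-1/2}$ you identify as the crux is indeed correct after the case analysis you sketch, because the intervals $I_j$ with $j < J$ are confined to $\omega \lesssim 1/T^2$, where the $+1$ in the naive count $1 + 1/(T\sqrt{\omega})$ is absorbed. Two points you should still tighten: (i) the Ingham constant must be verified to scale like $T$ when the separation is $\gtrsim 1/T$, which is standard but worth stating precisely; (ii) your argument is written for $T \lesssim 1$ (where the overlap case analysis is performed), so the regime $T \gtrsim 1$, where $J = 1$ and the head reduces to the single term $j = 0$ handled by Cauchy--Schwarz, should be noted separately. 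What the paper's choice of $T$-independent, automatically disjoint intervals buys is precisely the elimination of the threshold, the overlap count, and the external Ingham input, yielding a shorter and fully uniform argument; your version, in exchange, connects the estimate to familiar harmonic-analysis machinery.
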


\begin{proof}
    By Cauchy--Schwarz, each summand is bounded by $T \|u\|_{L^2} \|v\|_{L^2}$.
    In particular, we can safely omit the term $j = 0$.
    For $j \geq 1$ and any $\omega \in \R$, write by Taylor expansion:
    \begin{equation}
        \widehat{u}\overline{\widehat{v}}(\lambda_j)
        = \widehat{u}\overline{\widehat{v}}(\omega) + \int_\omega^{\lambda_j} (\widehat{u} \partial_\omega \overline{\widehat{v}} + \partial_\omega \widehat{u} \overline{\widehat{v}})
    \end{equation}
    Averaging over $\omega \in I_j := (\lambda_j-j\pi^2,\lambda_j+j\pi^2)$, one has
    \begin{equation}
        \begin{split}
            \widehat{u}\overline{\widehat{v}}(\lambda_j)
            & = \frac{1}{2 \pi^2 j} \int_{I_j} \widehat{u}\overline{\widehat{v}}(\omega) + \frac{1}{2 \pi^2 j} \int_{I_j} \int_\omega^{\lambda_j} (\widehat{u} \partial_\omega \overline{\widehat{v}} + \partial_\omega \widehat{u} \overline{\widehat{v}}), \quad \text{so} \\
            |\widehat{u}\overline{\widehat{v}}(\lambda_j)| & \leq \frac{1}{2 \pi^2 j} \int_{I_j} |\widehat{u} \overline{\widehat{v}}| + \int_{I_j} (|\widehat{u} \partial_\omega \overline{\widehat{v}}| + |\partial_\omega \widehat{u} \overline{\widehat{v}}|).
        \end{split}
    \end{equation}
    Since $\lambda_j = (j\pi)^2$, the intervals $I_j$ are disjoint.
    Moreover, there exists $c > 0$ such that, for any $j \in \N^*$ and $\omega \in I_j$, $1/j \leq c \langle\omega\rangle^{-\frac 12}$.
    Hence, since $\partial_\omega \widehat{u}(\omega) = \mathcal{F}[-itu(t)](\omega)$, we have
    \begin{equation}
        \sum_{j=1}^{+\infty} |\widehat{u}\overline{\widehat{v}}(\lambda_j)|
        \leq \frac{c}{\pi} \nsob{u}{-\frac14}\nsob{v}{-\frac14} + 4 \pi (\| u \|_{L^2} \| t v(t) \|_{L^2} + \| t u(t) \|_{L^2} \| v \|_{L^2}),
    \end{equation}
    which concludes the proof using $\| t u(t) \|_{L^2} \leq T \|u \|_{L^2}$.
\end{proof}

\subsection{Estimate of the regular part}

Here, we focus on the regular part $\Hreg$ of the kernel, for which we first provide a pointwise estimate (see \cref{p:Hreg-local}) which entails a continuity estimate for the associated bilinear operator in a weak norm (see \cref{p:Hreg-Hnu}).

\begin{proposition}
    \label{p:Hreg-local}
    There exists $C > 0$ such that, for all $\omega \in \R$,
    \begin{equation}
        | \Hreg(\omega) | \leq C \frac{\ln (1+ \langle \omega \rangle)}{\langle \omega \rangle^{\frac 12}}.
    \end{equation}
\end{proposition}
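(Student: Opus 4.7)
The plan is to reduce $\Hreg$ to an explicit convergent series from which the dominant pole near $\omega \approx \pm \lambda_{j_\omega}$ has been subtracted, and then estimate this series term by term. From \eqref{eq:Theta=3} together with the definition of $a$, I would first derive the identity
\begin{equation*}
    \omega^2 \Hall(\omega) + a = \frac{1}{2} \sum_{j \geq 1} \lambda_j^2 c_j \left( \frac{1}{\lambda_j - \omega} + \frac{1}{\lambda_j + \omega} \right)
\end{equation*}
by substituting $\omega^2/(\lambda_j^2 - \omega^2) = -1 + \lambda_j^2/(\lambda_j^2 - \omega^2)$ termwise in \eqref{eq:Theta} and performing a partial-fractions decomposition; the rearrangement is legitimate thanks to $|\lambda_j^2 c_j| \lesssim 1$, which follows from \eqref{cj_asympt}. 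By evenness in $\omega$ I may assume $\omega \geq 0$ and set $N := j_\omega$. The $j = N$ term of the series equals exactly $\Hpv(\omega) + \lambda_N^2 c_N / (2(\lambda_N + \omega))$, so subtracting $\Hpv$ cancels the singularity and leaves
\begin{equation*}
    \Hreg(\omega) = \frac{\lambda_N^2 c_N}{2(\lambda_N + \omega)} + \frac{1}{2} \sum_{\substack{j \geq 1 \\ j \neq N}} \lambda_j^2 c_j \left( \frac{1}{\lambda_j - \omega} + \frac{1}{\lambda_j + \omega} \right).
\end{equation*}

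The first summand is $O(1/\langle\omega\rangle)$, well below the target. For the tail, the key step is the separation estimate
\begin{equation*}
    |\lambda_j - \omega| \gtrsim N \, |j - N| \qquad \text{for every } j \in \N \setminus \{N\},
\end{equation*}
which follows from a short case analysis: the definition \eqref{eq:H_pv-j_om} of $j_\omega$ forces $|\omega - \lambda_N| \leq \pi^2 N$, and the exact formula $\lambda_j - \lambda_N = \pi^2 (j - N)(j + N)$ shows that this offset is dominated by the eigenvalue gap, the borderline case being $|j - N| = 1$ where one compares $\pi^2 N$ with $\pi^2(2N \pm 1)$. Combining this with the trivial bound $\lambda_j + \omega \geq \lambda_j \gtrsim j^2$ and $|\lambda_j^2 c_j| \lesssim 1$, I would split the tail into $1 \leq j \leq 2N$ (yielding the harmonic sum $\sum_{1 \leq |k| \leq N} 1/(N|k|) \lesssim (\ln N)/N$) and $j > 2N$ (yielding $\sum_{j > 2N} 1/j^2 \lesssim 1/N$). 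Altogether, $|\Hreg(\omega)| \lesssim (\ln N)/N$, and since $N \asymp 1 + \langle\omega\rangle^{1/2}$, this is exactly the announced bound.

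The main obstacle is the separation estimate in the borderline case $|j - N| = 1$: the eigenvalue gap $\pi^2(2N \pm 1)$ only barely beats the oscillation bound $\pi^2 N$, and the resulting harmonic-type sum over indices close to $N$ is precisely what produces the logarithmic factor in the target bound. Once this case is dealt with, the remaining estimates are routine, and the symmetric case $\omega \leq 0$ follows immediately since $\Hreg$ is an even function of $\omega$.
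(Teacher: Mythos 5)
Your proof is correct, and the algebraic preparation is exactly the same as in the paper: both write $\Hreg(\omega) = \omega^2\Hall(\omega) + a - \Hpv(\omega) = \sum_j \lambda_j^3 c_j/(\lambda_j^2-\omega^2) - \Hpv(\omega)$, isolate the $j = j_\omega$ term (which cancels $\Hpv$ up to an $O(\langle\omega\rangle^{-1})$ remainder), and then estimate the tail $\sum_{j\neq j_\omega}\lambda_j^3 c_j/(\lambda_j^2-\omega^2)$. The difference is in how the tail is estimated. The paper rescales by setting $s_j := j\pi/|\omega|^{1/2}$, rewrites each summand in terms of the auxiliary function $g(s) := s^2|s^4-1|^{-1}$, checks that $g$ is monotone on each side of its pole, and applies a series-integral comparison; the logarithm emerges from $\int_0^{1-c/\sqrt{\omega}} (1-s)^{-1}\,\mathrm{d}s$, and the nearest neighbours $j = j_\omega \pm 1$ have to be removed and bounded separately. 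You instead stay discrete: you prove the uniform separation $|\lambda_j - \omega| \gtrsim j_\omega\,|j - j_\omega|$ (valid for \emph{all} $j\neq j_\omega$, including the nearest neighbours), then split into $j\leq 2j_\omega$ — a harmonic sum $\sum_k 1/(j_\omega k)\lesssim (\ln j_\omega)/j_\omega$ — and $j>2j_\omega$ — a convergent $\sum 1/j^2\lesssim 1/j_\omega$. This is arguably slightly cleaner, as it avoids introducing $g$ and dispenses with the separate treatment of $j = j_\omega\pm 1$. One small thing worth saying explicitly: for small $\omega$ (where $j_\omega = 1$ and $\ln j_\omega = 0$), your final bound should be read as $\ln(1+j_\omega)/j_\omega$, which is what both the harmonic-sum estimate and the $j>2j_\omega$ tail actually give, and which matches the claimed $\ln(1+\langle\omega\rangle)/\langle\omega\rangle^{1/2}$; the paper handles this low-frequency regime ($0\leq\omega<\pi^2/2$) by a separate direct boundedness argument.
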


\begin{proof}
    By symmetry, we can assume that $\omega \geq 0$.
    
    We start with the case of low frequencies $0 \leq \omega < \pi^2 / 2$.
    Hence $j_\omega = 1$.
    For $\omega \in [0,\pi^2/2]$, $|\lambda_j - \omega| \geq \pi^2/2$ for all $j \in \N^*$.
    By \eqref{cj_asympt}, $\sum |\lambda_j c_j| < +\infty$.
    Thus $\Hall$ is bounded on $[0,\pi^2/2]$.
    On this interval, $|\Hpv(\omega)| \leq \lambda_1^2 |c_1|/\pi^2$.
    Hence $\Hreg(\omega) = \omega^2 \Hall(\omega) + a - \Hpv(\omega)$ is bounded on $[0,\pi^2/2]$.

    We now assume that $\omega \geq \frac{\pi^2}{2}$.
    In particular $\langle \omega \rangle \approx j_\omega^2$.
    Since $a = \sum_{j \in \N^*} \lambda_j c_j$, we write
    \begin{equation}
        |\Hreg(\omega)|
        \leq
        \Bigg| \sum_{j\neq j_\omega} \lambda_j c_j \Bigg( \frac{\omega^2}{\lambda_j^2 - \omega^2} + 1 \Bigg) \Bigg|
        + \Bigg| \frac{\lambda_{j_\omega}^3 c_{j_\omega}}{\lambda_{j_\omega}^2 - \omega^2} - \frac{1}{2} \frac{\lambda_{j_\omega}^2 c_{j_\omega}}{\lambda_{j_\omega} - |\omega|} \Bigg|.
    \end{equation}
    For the second term, we write, using \eqref{cj_asympt}, there exists $C_0 > 0$ such that
    \begin{equation}
        \Bigg| \frac{\lambda_{j_\omega}^3 c_{j_\omega}}{\lambda_{j_\omega}^2 - \omega^2} - \frac{1}{2} \frac{\lambda_{j_\omega}^2 c_{j_\omega}}{\lambda_{j_\omega} - |\omega|} \Bigg|
        = \Bigg| \frac{\lambda_{j_{\omega}}^2 c_{j_\omega}}{2(\lambda_{j_\omega} + \omega)} \Bigg|
        \leq C_0 \langle \omega \rangle^{-1}.
    \end{equation}
    Thus, to conclude, it suffices to prove that there exists $C > 0$ such that, for all $\omega \geq \frac{\pi^2}{2}$,
    \begin{equation}
        \left|
        \sum_{j\neq j_\omega} \lambda_j c_j \left( \frac{\omega^2}{\lambda_j^2 - \omega^2}
        + 1 \right)
        \right|
        \leq C \langle \omega \rangle^{-\frac 12} \ln (1+\langle \omega \rangle).
    \end{equation}
    Since $\lambda_j c_j = O(j^{-2})$, there exists $C_1 > 0$ such that
    \begin{equation}
        \left|
        \sum_{j \neq j_\omega} \lambda_j c_j \left( \frac{\omega^2}{\lambda_j^2 - \omega^2}
        + 1 \right) \right|
        \leq \sum_{j \neq j_\omega} \frac{C_1}{j^2} \left|\frac{\omega^2}{\pi^4 j^4-\omega^2}+1\right|
        = \frac{\pi C_1}{|\omega|^{\frac 1 2}} \frac{\pi}{|\omega|^{\frac 12}} \sum_{j \neq j_\omega} g\left(\frac{j\pi}{|\omega|^{\frac 12}}\right)
    \end{equation}
    where $g(s) := s^2 |s^4-1|^{-1}$.

    The function $g$ is increasing on $[0,1)$ and decreasing on $(1,\infty)$.
    Using a classical series-integral comparison argument, we have
    \begin{equation}
        \frac{\pi}{|\omega|^{\frac 12}} \sum_{j = 1}^{j_\omega - 2} g\left(\frac{j \pi}{|\omega|^{\frac 12}}\right)
        \leq \int_{\pi|\omega|^{-\frac 12}}^{\pi(j_\omega - 1) |\omega|^{-\frac 12}} g(s) \dd s.
    \end{equation}
    From \eqref{eq:H_pv-j_om}, we obtain
    \begin{equation}
        \frac{\pi(j_\omega-1)}{|\omega|^{\frac 12}} \leq \sqrt{1+\alpha^2} - \alpha \leq 1 - \frac{\alpha}{2} = 1 - \frac{\pi}{4 |\omega|^{\frac 12}}
    \end{equation}
    where we used the notation $\alpha := \pi / (2 |\omega|^{\frac 12}) \in (0,1)$, and the intermediate inequality is valid for $\alpha \in (0,1)$.
    Since $g(s) \leq \frac{1}{1-s}$ for $s \in (0,1)$, we obtain
    \begin{equation}
        \begin{split}
            \frac{\pi}{|\omega|^{\frac 12}} \sum_{j = 1}^{j_\omega - 2} g\left(\frac{j\pi}{|\omega|^{\frac 12}}\right)
            & \ \leq \int_0^{\pi (j_\omega - 1) |\omega|^{-\frac 12}} \frac{\dd s}{1 - s} = - \ln\left( 1- \frac{\pi(j_\omega-1)}{|\omega|^{\frac 12}} \right) \\
            & \leq \ln\left( \frac{4 |\omega|^{\frac 12}}{\pi} \right) \leq C_2 \ln (1+\langle\omega\rangle),
        \end{split}
    \end{equation}
    for some absolute constant $C_2 > 0$.
    We had to exclude the term $j = j_\omega - 1$. Using the same estimates, we find
    \begin{equation}
        \frac{\pi}{|\omega|^{\frac 12}} g\left(\frac{(j_\omega-1)\pi}{|\omega|^{\frac 12}}\right)
        \leq \frac{\pi}{|\omega|^{\frac 12}} \frac{4 |\omega|^{\frac 12}}{\pi}
        \leq 4.
    \end{equation}
    Eventually, proceeding similarly we can estimate the term for $j = j_\omega + 1$, and the sum for $j \geq j_\omega + 2$ by a series-integral comparison, using the fact that $g$ is integrable at infinity.
\end{proof}

\begin{corollary}
    \label{p:Hreg-Hnu}
    Let $\nu \in [0,\frac 14)$.
    There exists $C > 0$ such that, for all $T > 0$ and $u, v \in \mathcal{H}$,
    \begin{equation}
        \Bigg| \int_\R \frac{\Hreg(\omega)}{\omega^2} \widehat{u}(\omega) \overline{\widehat{v}}(\omega) \dd \omega \Bigg| \leq C \nsob{u_1}{-\nu} \nsob{v_1}{-\nu}.
    \end{equation}
\end{corollary}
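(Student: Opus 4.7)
The plan is to combine the pointwise bound $|\Hreg(\omega)| \leq C \ln(1+\langle\omega\rangle)\,\langle\omega\rangle^{-1/2}$ from \cref{p:Hreg-local} with a weighted Cauchy--Schwarz argument in the frequency domain. The first step is to exploit the assumption $u, v \in \mathcal{H}$ via identity \eqref{eq:hat-u=hat-u1}: writing $\widehat{u}(\omega) = i\omega \widehat{u_1}(\omega)$ and similarly for $v$, the factor $\omega^{-2}$ is absorbed and the integral reduces to
$$I := \int_\R \Hreg(\omega)\,\widehat{u_1}(\omega)\,\overline{\widehat{v_1}}(\omega)\,\dd\omega,$$
which is absolutely convergent because $\widehat{u_1}, \widehat{v_1} \in L^2(\R;\C)$ and $\Hreg \in L^\infty(\R;\R)$ by \cref{p:Hreg-local}.

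Next, I would insert the weights $\langle\omega\rangle^{-\nu}$ that define the $\widetilde{H}^{-\nu}$ norm via \eqref{eq:Hnu}, splitting the integrand as
$$\Hreg(\omega)\,\widehat{u_1}(\omega)\,\overline{\widehat{v_1}}(\omega) = \bigl(\Hreg(\omega) \langle\omega\rangle^{2\nu}\bigr) \cdot \bigl(\langle\omega\rangle^{-\nu} \widehat{u_1}(\omega)\bigr) \cdot \bigl(\langle\omega\rangle^{-\nu} \overline{\widehat{v_1}}(\omega)\bigr).$$
The pointwise bound from \cref{p:Hreg-local} controls the first factor by $C \ln(1+\langle\omega\rangle)\, \langle\omega\rangle^{2\nu - 1/2}$. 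The hypothesis $\nu < 1/4$ gives $2\nu - 1/2 < 0$, so the polynomial decay dominates the logarithmic growth and this prefactor is uniformly bounded on $\R$; this is precisely the only place where the strict inequality $\nu < 1/4$ is used.

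Finally, Cauchy--Schwarz applied to the remaining two weighted Fourier factors yields
$$|I| \leq C\, \Bigl(\int_\R \langle\omega\rangle^{-2\nu} |\widehat{u_1}|^2\Bigr)^{1/2} \Bigl(\int_\R \langle\omega\rangle^{-2\nu} |\widehat{v_1}|^2\Bigr)^{1/2} \lesssim \nsob{u_1}{-\nu}\,\nsob{v_1}{-\nu},$$
which is the desired estimate. The argument is essentially routine once the structure is set up; the only conceptual point is recognizing that the apparent $\omega^{-2}$ singularity is killed by the $\mathcal{H}$ constraint, and that the threshold $\nu = 1/4$ arises sharply from balancing the $\langle\omega\rangle^{-1/2}$ decay of \cref{p:Hreg-local} against the symmetric weights $\langle\omega\rangle^{-\nu}$ placed on each Fourier factor.
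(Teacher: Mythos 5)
Your proposal is correct and follows the same route as the paper: use \eqref{eq:hat-u=hat-u1} to absorb the $\omega^{-2}$, invoke the pointwise bound from \cref{p:Hreg-local}, dominate the logarithm by $\langle\omega\rangle^{1/2-2\nu}$ (where $\nu<\tfrac14$ enters), and conclude by Cauchy--Schwarz with the definition of $\nsob{\cdot}{-\nu}$. The only cosmetic difference is that you distribute the weight symmetrically as $\langle\omega\rangle^{-\nu}$ on each Fourier factor before Cauchy--Schwarz, while the paper keeps $|\widehat{u_1}\overline{\widehat{v_1}}|$ together and splits at the very end; the content is identical.
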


\begin{proof}
    By \eqref{eq:hat-u=hat-u1}, for $u,v \in \mathcal{H}$ and $\omega \in \R$, $(\widehat{u} \overline{\widehat{v}})(\omega) = \omega^2 (\widehat{u_1} \overline{\widehat{v_1}})(\omega)$.
    Hence, by \cref{p:Hreg-local}, there exists $C > 0$ such that
    \begin{equation}
        \Bigg| \int_\R \frac{\Hreg(\omega)}{\omega^2} \widehat{u}(\omega) \overline{\widehat{v}}(\omega) \dd \omega \Bigg|
        \leq C \int_\R \frac{\ln(1+\langle\omega\rangle)}{\langle\omega\rangle^{\frac12}} |(\widehat{u_1} \overline{\widehat{v_1}})(\omega)| \dd \omega.
    \end{equation}
    Thus, fixing any $\nu \in [0,\frac 14)$, there exists $C > 0$ such that $\ln(1+\langle\omega\rangle) \leq C \langle\omega\rangle^{\frac12-2\nu}$ for all $\omega \in \R$, which concludes the proof recalling \cref{def:Htilde-nu}.
\end{proof}

\subsection{Final estimate of the bilinear form}

We gather the previous results to obtain a precise estimate for the bilinear form applied to controls chosen in the subspace $\mathcal{H}$.

\begin{proposition}
    \label{p:Q-a}
    Let $\nu \in [0,\frac 14)$.
    There exists $C>0$ such that, for all $T > 0$, and all $u,v \in \mathcal{H}$,
    \begin{equation}
        \label{eq:bound-Q-a}
        | Q(u,v) - 2 i a \langle u_1, v_1 \rangle | \leq C T \| u_1 \|_{L^2} \| v_1 \|_{L^2} + C \nsob{u_1}{-\nu} \nsob{v_1}{-\nu}.
    \end{equation}
\end{proposition}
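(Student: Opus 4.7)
The plan is to assemble the proposition by combining the Fourier representation of $Q$ with the three-part decomposition of the kernel~$\Hall$ developed in the previous subsections. Starting from \cref{Prop:FQenFourier}, I would first restrict to $u,v \in C^\infty_c((0,T);\C) \cap \mathcal{H}$ (the general case in $\mathcal{H}$ follows by density, since each contribution on either side of the identity will be shown to be continuous for the $L^2$ topology). Then, splitting $\Hall$ according to \eqref{eq:Theta=3}, one writes
\begin{equation}
    2\pi Q(u,v) = \pi \sum_{j\in\N} c_j\bigl((\widehat{u}\overline{\widehat{v}})(\lambda_j) + (\widehat{u}\overline{\widehat{v}})(-\lambda_j)\bigr) + 2ia \int_\R \frac{\widehat{u}(\omega)\overline{\widehat{v}(\omega)}}{\omega^2}\dd\omega - 2i\int_\R \frac{\Hpv(\omega)}{\omega^2}\widehat{u}\overline{\widehat{v}} - 2i\int_\R \frac{\Hreg(\omega)}{\omega^2}\widehat{u}\overline{\widehat{v}}.
\end{equation}

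The key step is to identify the drift term. For $u,v \in \mathcal{H}$, identity \eqref{eq:hat-u=hat-u1} yields $\widehat{u}(\omega)\overline{\widehat{v}(\omega)}/\omega^2 = \widehat{u_1}(\omega)\overline{\widehat{v_1}(\omega)}$, and Plancherel's formula then gives
\begin{equation}
    2ia \int_\R \frac{\widehat{u}(\omega)\overline{\widehat{v}(\omega)}}{\omega^2}\dd\omega = 4\pi i a \langle u_1, v_1\rangle_{L^2}.
\end{equation}
After dividing the previous expansion by $2\pi$, this produces exactly the term $2ia \langle u_1, v_1\rangle$ to be matched in \eqref{eq:bound-Q-a}, leaving three error contributions.

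Each remaining term is then bounded by an estimate proved earlier in this section. The Dirac-sum part is controlled by \cref{p:Dirac-bound}, the $\Hpv/\omega^2$ integral by \cref{p:Hpv-easy-bound}, and the $\Hreg/\omega^2$ integral by \cref{p:Hreg-Hnu} for any fixed $\nu \in [0,\tfrac14)$. The $T\|u_1\|_{L^2}\|v_1\|_{L^2}$ contributions coming from the first two estimates combine, and since $\|u_1\|_{\sobC{-1/4}} \leq \|u_1\|_{\sobC{-\nu}}$ whenever $\nu < 1/4$, the $\sobC{-1/4}$ term produced by \cref{p:Dirac-bound} is absorbed into the $\sobC{-\nu}$ term supplied by \cref{p:Hreg-Hnu}. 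Summing yields \eqref{eq:bound-Q-a}.

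I do not anticipate any real obstacle: this is essentially a book-keeping assembly of the preceding estimates, with the only subtlety being the density argument (justified by the $L^2$-continuity of $Q$ via $K\in L^\infty$, together with the $L^2$-continuity of each contribution on the right-hand side, already visible from the bounds of \cref{p:Dirac-bound,p:Hpv-easy-bound,p:Hreg-Hnu}).
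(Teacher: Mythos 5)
Your proposal is correct and follows essentially the same route as the paper: restrict by density to $\mathcal{H}\cap C^\infty_c((0,T);\C)$ using the $L^2$-continuity of $Q$ (coming from $K\in L^\infty$), invoke the Fourier representation of \cref{Prop:FQenFourier}, split $\Hall$ via \eqref{eq:Theta=3}, extract the drift term $2ia\langle u_1,v_1\rangle$ through \eqref{eq:hat-u=hat-u1} and Plancherel, and bound the three error contributions with \cref{p:Dirac-bound}, \cref{p:Hpv-easy-bound} and \cref{p:Hreg-Hnu} respectively. The sign bookkeeping and the absorption of the $\sobC{-1/4}$ term from \cref{p:Dirac-bound} into the $\sobC{-\nu}$ term (using $\nu<\tfrac14$) are handled exactly as the paper does.
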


\begin{proof}
    Going back to its initial definition \eqref{eq:Q_0} in the time domain from $K \in L^\infty(\R;\C)$ (see \eqref{eq:K_0} and \eqref{cj_asympt}), $Q$ is continuous on $L^2 \times L^2$.
    Hence, by density, it suffices to prove the claimed estimate for $u, v \in \mathcal{H} \cap C^\infty_c((0,T);\C)$.
    For such controls, one has the formula \eqref{FQ_Fourier} in Fourier domain.
    By \cref{p:Dirac-bound}, the term involving Dirac masses is bounded by the right-hand side of \eqref{eq:bound-Q-a} with $\nu = \frac 14$.
    Then, decomposing $\Hall$ as in \eqref{eq:Theta=3}, the term involving $\Hpv$ is bounded by \cref{p:Hpv-easy-bound} and the one involving $\Hreg$ by \cref{p:Hreg-Hnu} for any $\nu \in [0,\frac14)$.
    Eventually, recalling \eqref{eq:hat-u=hat-u1}, the term $-a \omega^{-2}$ yields the contribution $-2ia\langle u_1,v_1\rangle$.
\end{proof}

\begin{corollary}
    \label{cor:FQ-L2}
    For all $T > 0$ and $u,v \in \mathcal{H}$, formula \eqref{FQ_Fourier} holds.
\end{corollary}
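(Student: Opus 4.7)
The approach is by density from \cref{Prop:FQenFourier}, which gives \eqref{FQ_Fourier} for $u, v \in C^\infty_c((0,T);\C)$. First I would show that $C^\infty_c((0,T);\C) \cap \mathcal{H}$ is dense in $\mathcal{H}$ for the $L^2$ topology: fix $\chi \in C^\infty_c((0,T);\C)$ with $\int_0^T \chi = 1$; given $u \in \mathcal{H}$, approximate by $u_n \in C^\infty_c((0,T);\C)$ in $L^2$ and then correct the mean via $\widetilde{u}_n := u_n - \left(\int_0^T u_n\right) \chi \in C^\infty_c((0,T);\C) \cap \mathcal{H}$. Since $\int_0^T u_n \to \int_0^T u = 0$, we obtain $\widetilde{u}_n \to u$ in $L^2$.

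It then suffices to prove that both sides of \eqref{FQ_Fourier} depend continuously on $(u,v) \in \mathcal{H}^2$ for the $L^2$ topology. The LHS $2\pi Q(u,v)$ is continuous on $L^2 \times L^2$ since $K \in L^\infty(\R;\C)$. The Dirac sum on the RHS is continuous on $\mathcal{H}^2$ by \cref{p:Dirac-bound}, combined with the elementary estimates $\|u_1\|_{L^2} \leq T \|u\|_{L^2}$ and $\nsob{u_1}{-\frac14} \lesssim \|u_1\|_{L^2}$ (and similarly for $v$).

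For the integral term, I would combine the identity $\widehat{u}\overline{\widehat{v}} = \omega^2 \widehat{u_1}\overline{\widehat{v_1}}$ from \eqref{eq:hat-u=hat-u1} with the decomposition \eqref{eq:Theta=3} to split, for each $\eps > 0$,
\begin{equation*}
\int_{F_\eps} \Hall(\omega) \widehat{u}(\omega) \overline{\widehat{v}(\omega)} \dd \omega = -a \int_{F_\eps} \widehat{u_1}\overline{\widehat{v_1}} \dd \omega + \int_{F_\eps} \Hpv \widehat{u_1}\overline{\widehat{v_1}} \dd \omega + \int_{F_\eps} \Hreg \widehat{u_1}\overline{\widehat{v_1}} \dd \omega,
\end{equation*}
and pass to the limit $\eps \to 0$ in each piece. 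The first converges absolutely by Plancherel to $-2\pi a \langle u_1, v_1\rangle$, which is $L^2$-continuous. The third is absolutely convergent by \cref{p:Hreg-local} (via Cauchy--Schwarz using $\widehat{u_1}, \widehat{v_1} \in L^2(\R)$) and is $L^2$-continuous by \cref{p:Hreg-Hnu}. The second is a principal-value integral around the $\pm\lambda_j$; its convergence as $\eps \to 0$ and its $L^2$-continuity are exactly the content of \cref{p:Hpv-easy-bound}.

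The main (minor) subtlety is the existence of this last principal value for non-smooth $u, v \in \mathcal{H}$, but this is already packaged in the estimate of \cref{p:Hpv-easy-bound}. Once the three pieces are confirmed $L^2$-continuous, passing to the limit $n \to \infty$ in \eqref{FQ_Fourier} applied to $(\widetilde{u}_n, \widetilde{v}_n) \to (u, v)$ completes the proof.
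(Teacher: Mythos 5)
Your proof is correct and takes essentially the same approach as the paper's, which is a one-line density argument (referring back to \cref{p:Q-a} and the $L^2$-continuity of each term). You spell out the mean-correction density argument and the term-by-term passage to the limit in detail, but the underlying idea — that $Q$ is $L^2$-continuous via $K \in L^\infty$ and every piece of the right-hand side is $L^2$-continuous on $\mathcal{H}^2$ by \cref{p:Dirac-bound}, \cref{p:Hpv-easy-bound} and \cref{p:Hreg-Hnu} — is exactly the paper's.
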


\begin{proof}
    As in the proof of \cref{p:Q-a} we can argue by density since we proved that all terms in \eqref{FQ_Fourier} are continuous for the $L^2$ topology.
\end{proof}

\section{Proof of the negative result}
\label{sec:negative}

We prove \cref{thm:negative} with the following steps.
From \cref{sec:quad}, we first prove in \cref{sec:Q-coercive} that, when $a_k \neq 0$, the quadratic form $Q_k$ is coercive on $\mathcal{H}$.
We deduce a coercivity property for the second-order expansion of the state $\psi_2(T)$ in \cref{sec:psi2-coercive}, up to a term involving $u_1(T)$.
We prove a closed-loop estimate on $|u_1(T)|$ in \cref{sec:closed-loop}.
We conclude the proof in \cref{sec:proof-negative}.

We use the notation $\lesssim$ of \cref{sec:notations} for constants independent of time and controls.

\subsection{Coercivity of the quadratic form}
\label{sec:Q-coercive}

The detailed study of \cref{sec:quad} yields the following estimate for the reference quadratic form~$Q$.

\begin{proposition}
    \label{p:Q0-coerc}
    Let $\nu \in [0,\frac12)$.
    There exists $C > 0$ such that, for all $T \in (0,1]$ and $u,v \in \mathcal{H}$,
    \begin{equation}
        \label{eq:Q0-coerc}
        \big| Q(u,v) - 2 i a \langle u_1, v_1 \rangle \big| \leq C T^\nu \| u_1 \|_{L^2} \| v_1 \|_{L^2}.
    \end{equation}
\end{proposition}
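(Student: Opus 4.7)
The plan is to upgrade \cref{p:Q-a} by converting the $\widetilde{H}^{-\nu}$ Sobolev norm of the remainder into a $T^{\nu}$-gain in front of the $L^2$ norm. The key auxiliary ingredient is the elementary interpolation bound
\begin{equation}
\nsob{f}{-\mu} \leq C_\mu\, T^{\mu}\,\|f\|_{L^2}
\end{equation}
valid for every $\mu \in [0,\tfrac12)$, $T\in(0,1]$, and $f\in L^2((0,T);\C)$, with $C_\mu$ independent of $T$ and $f$. The factor of two between the threshold $\tfrac14$ in \cref{p:Q-a} and the threshold $\tfrac12$ in the present statement is exactly accounted for by the fact that this interpolation trades one power of $T$ for two powers of regularity.

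To prove this interpolation, I would use \cref{def:Htilde-nu} to reduce to the control of $\int_\R \langle\omega\rangle^{-2\mu}|\widehat{\widetilde{f}}(\omega)|^2\dd\omega$, where $\widetilde{f}$ denotes the extension of $f$ by zero on $\R\setminus(0,T)$, and split the integration at $|\omega|=1/T$. For the low-frequency region, the pointwise bound $|\widehat{\widetilde{f}}(\omega)|^2\leq T\|f\|_{L^2}^2$ given by Cauchy--Schwarz combined with $\int_{|\omega|\leq 1/T}\langle\omega\rangle^{-2\mu}\dd\omega \lesssim T^{2\mu-1}$ (valid for $\mu<\tfrac12$ and $T\leq 1$) yields a contribution bounded by $T^{2\mu}\|f\|_{L^2}^2$. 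For the high-frequency region, the uniform bound $\langle\omega\rangle^{-2\mu}\leq T^{2\mu}$ (since $|\omega|\geq 1/T$) combined with Plancherel's identity yields the same bound.

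Given $\nu\in[0,\tfrac12)$, I would then set $\nu_0:=\nu/2\in[0,\tfrac14)$ and apply \cref{p:Q-a} with exponent $\nu_0$ to obtain
\begin{equation}
\bigl| Q(u,v) - 2ia\langle u_1,v_1\rangle\bigr| \leq C\,T\,\|u_1\|_{L^2}\|v_1\|_{L^2} + C\,\nsob{u_1}{-\nu_0}\nsob{v_1}{-\nu_0}.
\end{equation}
Since $u,v\in\mathcal{H}$, the primitives $u_1,v_1$ lie in $L^2((0,T);\C)$ and their zero extensions are continuous (as $u_1(T)=v_1(T)=0$), so the interpolation inequality applies and produces a contribution of the form $C\,T^{2\nu_0}\,\|u_1\|_{L^2}\|v_1\|_{L^2}=C\,T^{\nu}\,\|u_1\|_{L^2}\|v_1\|_{L^2}$. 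Finally, since $T\leq 1$ and $\nu<1$, the residual $C\,T\,\|u_1\|_{L^2}\|v_1\|_{L^2}$ is dominated by $C\,T^{\nu}\,\|u_1\|_{L^2}\|v_1\|_{L^2}$, which closes the proof.

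There is no real obstacle: every step is soft, and the only thing to be careful about is the bookkeeping of exponents, namely that the constraint $\nu_0<\tfrac14$ from \cref{p:Q-a} translates precisely into $\nu<\tfrac12$ in the current statement. This threshold $\tfrac12$ is moreover natural, as it is the critical regularity at which $\sobC{\cdot}$ ceases to coincide with the usual $H^{\cdot}_0$ scale.
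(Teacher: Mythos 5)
Your proposal is correct and follows the paper's route at the level of the proposition: apply \cref{p:Q-a} with exponent $\nu/2 \in [0,\tfrac14)$, then convert $\nsob{u_1}{-\nu/2}\nsob{v_1}{-\nu/2}$ into $T^\nu\|u_1\|_{L^2}\|v_1\|_{L^2}$ and absorb the residual $T\|u_1\|_{L^2}\|v_1\|_{L^2}$ using $T\le T^\nu$. The conversion step is exactly the paper's \cref{lem:H-beta-T-beta}; where you differ is in its proof. You rederive it by splitting the frequency integral at $|\omega|=1/T$ and using Cauchy--Schwarz (low frequencies) and Plancherel (high frequencies), while the paper uses the Hölder interpolation $\nsob{u}{-\nu}\le\nsob{u}{-1}^\nu\nsob{u}{0}^{1-\nu}$, proving $\nsob{u}{-1}\le T\|u\|_{L^2}$ first on $\mathcal H$ via $\widehat u=i\omega\widehat{u_1}$ and then extending by orthogonal projection. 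Your frequency-splitting argument is arguably more self-contained and avoids the projection step; the paper's statement of the lemma is however valid for all $T>0$ whereas your argument (as written) uses $T\le1$ in the low-frequency integral, which is harmless here since the proposition itself restricts to $T\in(0,1]$. One cosmetic remark: the parenthetical about the zero extension of $u_1$ being continuous is not needed --- the interpolation bound applies to any $L^2$ function, and $L^2(0,T)\subset\sobC{-\nu_0}$ for $\nu_0\ge0$ since the negative Sobolev norm is weaker than the $L^2$ norm.
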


\begin{proof}
    This is a straightforward consequence of \cref{p:Q-a}.
    By \eqref{eq:bound-Q-a},
    \begin{equation}
        \big| Q(u,v) - 2 i a \langle u_1, v_1 \rangle \big|
        \lesssim T \| u_1 \|_{L^2} \| v_1 \|_{L^2} + \nsob{u_1}{-\frac{\nu}{2}} \nsob{v_1}{-\frac{\nu}{2}}
        \lesssim T^\nu \| u_1 \|_{L^2} \| v_1 \|_{L^2}
    \end{equation}
    thanks to \cref{lem:H-beta-T-beta} and $T \leq T^\nu$ (for $T \leq 1$).
\end{proof}

\begin{remark}
    \label{rk:Q-positive}
    Estimate \eqref{eq:Q0-coerc} proves that, when $a \neq 0$, for $T$ small enough, the quadratic form $\Im (Q)$ has a sign on the hyperplane $\mathcal{H}$ (see \eqref{eq:cH}).
    For instance, if $a>0$, then $\Im(Q)$ is positive on $\mathcal{H}$ when $T$ is small enough:
    \begin{equation}
        \forall u \in \mathcal{H}, \qquad \Im(Q(u,u)) \geq (2a-C T^{\nu})\|u_1\|_{L^2}^2 > 0.
    \end{equation}
    First, it is necessary to restrict to the hyperplane $\mathcal{H}$ i.e.\ this quadratic form is not positive on $L^2((0,T);\R)$.
    Indeed, for example when $k = 0$ (so that $c_j = |\langle \mu,\varphi_j\rangle|^2$) and $u = \mathbbm{1}_{[0,T]}$ (which does not belong to $\mathcal{H}$ since $u_1(T)=T$), an explicit calculation gives
    \begin{equation}
        \Im Q(u,u)
        = 2 \sum_{j \geq 1} \frac{|\langle \mu, \varphi_j \rangle|^2}{\lambda_j^2} \left( \sin (\lambda_j T) - \lambda_j T \right)
        \leq 2 \left(\sin(\lambda_1 T) - \lambda_1 T \right) \sum_{j \geq 1} \frac{\left\vert \left\langle \mu, \varphi_j \right\rangle \right\vert^2}{\lambda_j^2} < 0.
    \end{equation}
    Moreover, it is also necessary to consider small values of $T$, i.e.\ this quadratic form is not positive on $\mathcal{H}$ when $T$ is large.
    Consider again the case $k = 0$ (so that $c_j = |\langle \mu,\varphi_j\rangle|^2$) for simplicity.
    For every $T>0$, the function $u(t) := (t - \frac{T}{2})\mathbbm{1}_{[0,T]}(t)$ belongs to $\mathcal{H}$ and an explicit computation gives, for every $\lambda > 0$,
    \begin{equation}
        \label{eq:int-exp_T-t}
        \begin{split}
            \int_0^T \int_0^T \sin \left( - \lambda \vert t - s \vert \right) & \left( t - \frac{T}{2} \right) \left( s - \frac{T}{2} \right) \dd s \dd t \\
            = & - \frac{T^{3}}{6 \lambda} - \frac{T^{2} \sin{\left( \lambda T \right)}}{2 \lambda^{2}} - \frac{2 T \cos{\left(\lambda T \right)}}{\lambda^{3}} + \frac{2 \sin{\left(\lambda T \right)}}{\lambda^{4}}.
        \end{split}
    \end{equation}
    Thus, there exists $c_0 > 0$ such that, for $T \gg 1$,
    \begin{equation}
        \Im Q(u,u) \leq \sum_{j \geq 1} \left\vert \left\langle \mu, \varphi_j \right\rangle \right\vert^2 \left( - \frac{T^{3}}{6 \lambda_j} + \frac{T^{2}}{2 \lambda_j^{2}} + \frac{2T}{\lambda_j^{3}} + \frac{2}{\lambda_j^{4}} \right) \leq - c_0 T^3.
    \end{equation}
\end{remark}

\cref{p:Q0-coerc} is an estimate on the $H^{-1}$ continuity and coercivity of the reference quadratic form $Q$ associated with the kernel $K(t-s)$.
Using the abstract toolbox exposed in \cref{sec:multiply}, we can transfer this property to the modulated quadratic form $Q_k$ defined in \eqref{eq:Q_k} (see \cref{sec:modulation}).

\begin{corollary}
    \label{p:Qk-coerc}
    Let $\nu \in [0,\frac12)$.
    There exists $C > 0$ such that, for all $T \in (0,1]$ and $u,v \in \mathcal{H}$,
    \begin{equation}
        | Q_k(u,v) - 2 i a_k \langle u_1, v_1 \rangle | \leq C T^\nu \| u_1 \|_{L^2} \| v_1 \|_{L^2}.
    \end{equation}
\end{corollary}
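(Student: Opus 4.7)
The plan is to derive \cref{p:Qk-coerc} by combining the reference estimate \cref{p:Q0-coerc} on $Q$ with the abstract transfer result \cref{p:Qk-Q} established in \cref{sec:multiply}, which quantifies how the modulation $K_k = \chi K$, with $\chi(\sigma) := e^{i \lambda_k |\sigma|/2}$, modifies the $H^{-1}$ coercivity coefficient while preserving the same $T^\nu$ remainder scale.

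First, I would use the identity $a_k = a - \frac{\lambda_k}{2} K(0)$ recalled in \eqref{eq:a-ak} to split
\begin{equation}
Q_k(u,v) - 2 i a_k \langle u_1, v_1 \rangle = A(u,v) + B(u,v),
\end{equation}
where $B(u,v) := Q(u,v) - 2 i a \langle u_1, v_1 \rangle$ and $A(u,v) := Q_k(u,v) - Q(u,v) + i \lambda_k K(0) \langle u_1, v_1 \rangle$. The term $B$ is directly controlled by \cref{p:Q0-coerc}, yielding $|B(u,v)| \leq C T^\nu \|u_1\|_{L^2} \|v_1\|_{L^2}$ for any $\nu \in [0, \tfrac 12)$.

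For the term $A$, the key observation — matching the integration-by-parts heuristic \eqref{eq:Q-IPP} — is that the multiplier $\chi$ satisfies $\chi(0) = 1$ with derivative jump $\chi'(0^-) - \chi'(0^+) = -i \lambda_k$ at the origin. Formally, this jump produces exactly the correction $-i \lambda_k K(0)$ to the leading $\|u_1\|_{L^2}^2$ coefficient when passing from $K$ to $\chi K$, so that the effective $H^{-1}$ coefficient shifts from $K'(0^-) - K'(0^+) = 2 i a$ for $K$ to $K_k'(0^-) - K_k'(0^+) = 2 i a_k$ for $K_k$, consistent with \cref{sec:modulation}. The rigorous content of \cref{p:Qk-Q} is precisely that this explicit shift captures $Q_k - Q$ up to a remainder of the same scale $T^\nu \|u_1\|_{L^2} \|v_1\|_{L^2}$; invoking it on $u, v \in \mathcal{H}$ yields $|A(u,v)| \leq C T^\nu \|u_1\|_{L^2} \|v_1\|_{L^2}$, and combining with the bound on $B$ closes the estimate.

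The main obstacle is not in this corollary itself — once both ingredients are in hand, its proof is a three-line algebraic combination — but in the preceding abstract framework of \cref{sec:multiply}. There, one must justify that modulating a kernel by a function $\chi$ with a derivative jump at the origin reshapes the $H^{-1}$ coercivity coefficient in the expected way, despite the fact that $K$ itself lacks the regularity required to run the classical twice-ipp argument of \eqref{eq:Q-IPP}. This is exactly the Fourier-based philosophy advertised in \cref{sec:heuristic}: the transfer must proceed in the frequency domain, reading the jump-at-zero correction from the asymptotics of $\widehat{\chi - 1}$ against the decomposition \eqref{eq:Theta=3} of $\Theta$. Once \cref{p:Qk-Q} is secured, \cref{p:Qk-coerc} drops out immediately.
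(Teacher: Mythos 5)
Your decomposition — writing $Q_k(u,v) - 2ia_k\langle u_1,v_1\rangle = A + B$ with $A := Q_k - Q + i\lambda_k K(0)\langle u_1,v_1\rangle$ and $B := Q - 2ia\langle u_1,v_1\rangle$, via the identity $2a_k = 2a - \lambda_k K(0)$, then invoking \cref{p:Q0-coerc} for $B$ and \cref{p:Qk-Q} for $A$ — is exactly the paper's argument. One small imprecision: \cref{p:Qk-Q} does \emph{not} directly bound $A$ by $T^\nu\|u_1\|_{L^2}\|v_1\|_{L^2}$; it gives $|A| \lesssim T^2\|u_1\|_{L^2}\|v_1\|_{L^2} + \nsob{u_1}{-\nu'}\nsob{v_1}{-\nu'}$ with $\nu' \in [0,\tfrac14)$, and you still need to apply \cref{lem:H-beta-T-beta} (taking $\nu' = \nu/2$) to convert $\nsob{u_1}{-\nu/2}\nsob{v_1}{-\nu/2} \lesssim T^\nu\|u_1\|_{L^2}\|v_1\|_{L^2}$, together with $T^2 \leq T^\nu$ for $T \leq 1$, as the paper does.
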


\begin{proof}
    By \cref{p:Qk-Q},
    \begin{equation}
        \begin{split}
            \big| Q_k(u,v) - Q(u,v) + i \lambda_k K(0) \langle u_1, v_1 \rangle \big| & \lesssim T^2 \| u_1 \|_{L^2} \| v_1 \|_{L^2} + \nsob{u_1}{-\frac{\nu}{2}} \nsob{v_1}{-\frac{\nu}{2}} \\
            & \lesssim T^\nu \| u_1 \|_{L^2} \| v_1 \|_{L^2}
        \end{split}
    \end{equation}
    by \cref{lem:H-beta-T-beta} and using $T^2 \leq T^\nu$ for $T \leq 1$.
    
    Then \eqref{eq:Q0-coerc} of \cref{p:Q0-coerc} and \eqref{eq:a-ak} ($2 a_k = 2 a - \lambda_k K(0)$) conclude.
\end{proof}

\subsection{Coercivity of the second-order expansion}
\label{sec:psi2-coercive}

Using the notations of \cref{sec:modulation}, we now transfer the coercivity of $Q_k$ (defined in \eqref{eq:Q_k} from the kernel $K_k$ of \eqref{eq:K_k}), to the second-order expansion of the state.
We must handle two subtleties: first, \cref{p:Qk-coerc} is only valid for controls in $\mathcal{H}$ and second, the formula for $\langle \psi_2(T),\varphi_k\rangle$ involves $Q_k$ evaluated at controls with a modulated phase.
We establish general lemmas which will be re-used in \cref{sec:positive}.

\begin{lemma}
    \label{lem:Q_k-Proj}
    Let $\nu \in [0,\frac12)$.
    There exists $C > 0$ such that, for all $T \in (0,1]$ and $v \in L^2((0,T);\C)$,
    \begin{equation}
        \label{eq:Q_k-Proj}
        | Q_k(v,\bar{v}) - Q_k(\PH v,\PH \bar{v}) | \leq C T^2 \| v_1 \|_{L^2}^2 + C \nsob{v_1}{-\nu}^2 + C |v_1(T)|^2,
    \end{equation}
    where $\PH v$ is the orthogonal projection of $v$ onto $\mathcal{H}$, i.e.\ $\PH v = v - v_1(T)/T$.
\end{lemma}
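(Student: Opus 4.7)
The plan is to write $v = \PH v + c$ with $c := w\mathbbm{1}_{(0,T)}$ and $w := v_1(T)/T$, expand the quadratic form, and separately estimate the two correction terms. Introducing the symmetric bilinear form $B_k(u_1, u_2) := \int_0^T \int_0^T K_k(t-s) u_1(s) u_2(t) \dd s \dd t$ (symmetric by evenness of $K_k$), I observe that $Q_k(v, \bar v) = B_k(v,v)$ and $Q_k(\PH v, \PH \bar v) = B_k(\PH v, \PH v)$ (using $\overline{\PH \bar v} = \PH v$), so the bilinear expansion gives
\[
Q_k(v, \bar v) - Q_k(\PH v, \PH \bar v) = 2 B_k(\PH v, c) + B_k(c, c).
\]
The diagonal term is direct: $K_k \in L^\infty(\R)$ by \eqref{cj_asympt}, so $|B_k(c, c)| \leq T^2 |w|^2 \|K_k\|_\infty \lesssim |v_1(T)|^2$.

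For the cross term, Fubini yields $B_k(\PH v, c) = w \int_0^T \PH v(s) F(s) \dd s$ where $F(s) := \int_0^T K_k(t - s) \dd t = \int_{-s}^{T-s} K_k(\sigma) \dd \sigma$; evenness of $K_k$ gives $F'(s) = K_k(s) - K_k(T-s) =: g(s)$. Using $(\PH v)_1(0) = (\PH v)_1(T) = 0$, integration by parts yields
\[
B_k(\PH v, c) = -w \int_0^T (\PH v)_1(s) g(s) \dd s.
\]

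The crux is to estimate $\|g\|_{H^\nu((0,T))}$ sharply in $T$. The formal series $K_k'(\sigma) = -i\sign(\sigma) \sum_j (\lambda_j - \lambda_k/2) c_j e^{-i(\lambda_j - \lambda_k/2)|\sigma|}$ converges in $L^\infty$ since $|(\lambda_j - \lambda_k/2) c_j| = O(j^{-2})$ by \eqref{cj_asympt}, so $K_k$ is Lipschitz on $\R$. This yields $|g(s)| \leq \|K_k'\|_\infty |2s - T|$ and $|g'(s)| \leq 2\|K_k'\|_\infty$ on $(0,T)$, hence $\|g\|_{L^2((0,T))} \lesssim T^{3/2}$, $\|g\|_{H^1((0,T))} \lesssim T^{1/2}$, and by interpolation $\|g\|_{H^\nu((0,T))} \lesssim T^{(3-2\nu)/2}$. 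Since $\sobC{-\nu}$ is the dual of $H^\nu((0,T))$, the duality pairing combined with \eqref{eq:puu1-u1} gives
\[
|B_k(\PH v, c)| \lesssim |w| T^{(3-2\nu)/2} \bigl(\nsob{v_1}{-\nu} + T^{1/2} |v_1(T)|\bigr) \lesssim T^{(1-2\nu)/2} |v_1(T)| \nsob{v_1}{-\nu} + |v_1(T)|^2.
\]
Since $\nu \in [0, 1/2)$ and $T \in (0,1]$, $T^{(1-2\nu)/2} \leq 1$, and Young's inequality on $|v_1(T)| \nsob{v_1}{-\nu}$ closes the estimate: $|B_k(\PH v, c)| \lesssim \nsob{v_1}{-\nu}^2 + |v_1(T)|^2$, which together with the diagonal bound proves \eqref{eq:Q_k-Proj} (the $CT^2 \|v_1\|_{L^2}^2$ term being a convenient redundant upper bound). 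The main obstacle is the refined $L^2$ estimate of $g$: the extra $T$ in $\|g\|_{L^2} \lesssim T^{3/2}$ (versus the crude $T^{1/2}$ bound from $g \in L^\infty$) comes from the Lipschitz continuity of $K_k$ combined with the antisymmetry encoded in $|g(s)| \lesssim |2s - T|$, and is essential---without it the final estimate degrades to $T \|v_1\|_{L^2}^2$ rather than the sharper $\nsob{v_1}{-\nu}^2$.
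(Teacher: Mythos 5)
Your proof is correct and takes a genuinely different route from the paper's. Both start from the algebraic fact that the difference is a quadratic polynomial in the constant $v_1(T)/T$, but the expansions differ: you write $v = \PH v + c$ and obtain the cross term $B_k(\PH v, c)$, whereas the paper expands $\PH v$ around $v$ and obtains the cross term $Q_k(v,\mathbbm{1})$. This matters. Your cross term pairs the kernel against $\PH v$, whose primitive vanishes at \emph{both} endpoints, so a single integration by parts produces $-w\int_0^T (\PH v)_1\,g$ with $g(s) = K_k(s) - K_k(T-s)$ and \emph{no} boundary terms. Everything then reduces to the pointwise bound $|g(s)| \lesssim |2s-T|$ and $|g'| \lesssim 1$, which only use Lipschitz continuity of $K_k$. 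The paper instead integrates by parts in $Q_k(v,\mathbbm{1})$, where the boundary term at $T$ does not vanish, and then handles $\int_0^T v_1(s)\int_0^T K_k'(t-s)\dd t\dd s$ by the finer decomposition $K_k'(\sigma) = -ia_k\sign\sigma + O(|\sigma|^{1/2})$; they must then estimate both pieces separately (the $\sign$ piece via duality against $\nsob{1-2t/T}{\nu}$, the Hölder remainder crudely). Your route avoids the Hölder-$\tfrac12$ splitting entirely, and as a bonus you prove the slightly stronger statement with the $T^2\|v_1\|_{L^2}^2$ term dropped.

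One point you should flag: the step ``by interpolation $\|g\|_{H^\nu(0,T)} \lesssim T^{(3-2\nu)/2}$'' is not automatic, because the interpolation constant for the intrinsic/quotient $H^\nu(0,T)$ norm is a priori $T$-dependent. This is fixable, and the cleanest way is to bypass interpolation on $(0,T)$ altogether. The formula $K_k(t) - K_k(T-t)$ makes sense on all of $\R$, so take a cutoff $\eta \in C^\infty_c((-1,2))$ with $\eta \equiv 1$ on $[0,1]$ and set $G(t) := (K_k(t) - K_k(T-t))\,\eta(t/T)$; this extends $g$. Then $|G(t)| \lesssim |2t-T| \lesssim T$ and $|G'(t)| \lesssim 1$ on $\supp G \subset (-T,2T)$, which yields $\|G\|_{L^2(\R)} \lesssim T^{3/2}$ and $\|G\|_{H^1(\R)} \lesssim T^{1/2}$ with constants depending only on $\|K_k'\|_{L^\infty}$ and $\eta$. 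Now interpolation on $\R$ is exact and $T$-free: by Hölder in Fourier, $\|G\|_{H^\nu(\R)} \leq \|G\|_{L^2(\R)}^{1-\nu}\|G\|_{H^1(\R)}^{\nu} \lesssim T^{(3-2\nu)/2}$, and since $\|g\|_{H^\nu(0,T)}$ (quotient norm, which is what is dual to $\sobC{-\nu}$) is at most $\|G\|_{H^\nu(\R)}$, you are done. With that justification supplied, your argument is complete.
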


\begin{proof}
    Using the symmetry of \eqref{eq:Q_k}, \eqref{eq:K_k} and \eqref{eq:K_0},
    \begin{equation}
        \begin{split}
            Q_k(\PH v,\PH \bar{v}) - Q_k(v,\bar{v})
            & = 2 Q_k(v,\PH \bar{v} - \bar{v}) + Q_k(\PH v-v,\PH \bar{v} - \bar{v}) \\
            & = - \frac{2 v_1(T)}{T} Q_k(v,\mathbbm{1}) + \frac{(v_1(T))^2}{T^2} Q_k(\mathbbm{1},\mathbbm{1}).
        \end{split}
    \end{equation}
    Since $K_k \in L^\infty(\R;\C)$, $|Q_k(\mathbbm{1},\mathbbm{1})| \leq T^2 \| K_k \|_{L^\infty}$.
    From \eqref{eq:Q_k},
    \begin{equation}
        Q_k(v,\mathbbm{1}) = v_1(T) \int_0^T K_k(t-T) \dd t + \int_0^T v_1(s) \int_0^T K_k'(t-s) \dd t \dd s.
    \end{equation}
    The first term is bounded by $T |v_1(T)| \| K_k \|_{L^\infty}$.
    For the second term, we write
    \begin{equation}
        K_k'(\sigma) + i a_k \sign \sigma
        = i (\sign \sigma) \sum_{j \in \N} c_j \left(\lambda_j - \frac{\lambda_k}{2} \right) \left(1 - e^{-i (\lambda_j - \frac{\lambda_k}{2})|\sigma|} \right).
    \end{equation}
    Using that $|1-e^{-i\theta}| \leq \min (2,|\theta|)$, \eqref{eq:lambda_j} and \eqref{cj_asympt}, and splitting the sum at $j \approx |\sigma|^{-\frac12}$
    \begin{equation}
        |K_k'(\sigma) + i a_k \sign \sigma| \leq c \sum_{j\in\N} \frac{\min (1, (1+j^2) |\sigma|)}{1+j^2}
        \leq c^2 |\sigma|^{\frac12}
    \end{equation}
    for some $c > 0$.
    Thus, by Cauchy--Schwarz,
    \begin{equation}
        \int_0^T v_1(s) \int_0^T K_k'(t-s) \dd t \dd s = - i a_k \int_0^T v_1(s) (T-2s) \dd s + O(T^2 \|v_1\|_{L^2}).
    \end{equation}
    The first term is bounded by $T |a_k| \nsob{v_1}{-\nu} \nsob{1-2t/T}{\nu}$.
    By \eqref{eq:norm-1-H+nu} and \cref{lem:mul-t}, we have $\nsob{1-2t/T}{\nu} \lesssim 1$ for $T \in [0,1)$ and $\nu \in (0,\frac 12)$, which concludes the proof.
\end{proof}

\begin{lemma}
    \label{p:Qk-continuous}
    Let $\nu \in [0,\frac14)$.
    There exists $C> 0$ such that, for all $T \in (0,1]$, and $u,v \in \mathcal{H}$,
    \begin{equation}
        \label{eq:Qk-bound}
        | Q_k(u,v) | \leq C (|a_k|+T) \| u_1 \|_{L^2} \| v_1 \|_{L^2} + C \nsob{u_1}{-\nu} \nsob{v_1}{-\nu}.
    \end{equation}
\end{lemma}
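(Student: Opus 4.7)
The plan is to derive this continuity estimate directly from the previously established estimates for the reference form $Q$ and for the modulation discrepancy $Q_k - Q$, following the same strategy as in the proof of \cref{p:Qk-coerc} but this time keeping the two error contributions (the $L^2 \times L^2$ one and the $\widetilde{H}^{-\nu} \times \widetilde{H}^{-\nu}$ one) separate rather than absorbing them together.

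Concretely, fix $\nu \in [0,\frac{1}{4})$. First, I would invoke \cref{p:Q-a} with this same $\nu$ to obtain
\begin{equation*}
| Q(u,v) - 2 i a \langle u_1, v_1 \rangle | \leq C T \| u_1 \|_{L^2} \| v_1 \|_{L^2} + C \nsob{u_1}{-\nu} \nsob{v_1}{-\nu}.
\end{equation*}
Second, \cref{p:Qk-Q} (applied with a suitably chosen outer parameter, exactly as in the proof of \cref{p:Qk-coerc}) provides an analogous bound for $| Q_k(u,v) - Q(u,v) + i \lambda_k K(0) \langle u_1, v_1 \rangle |$, with a $CT^2$ prefactor on the $L^2 \times L^2$ term and the same $C \nsob{u_1}{-\nu} \nsob{v_1}{-\nu}$ on the Sobolev one. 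Adding these two inequalities and using the identity $2 a_k = 2 a - \lambda_k K(0)$ from \eqref{eq:a-ak} yields
\begin{equation*}
| Q_k(u,v) - 2 i a_k \langle u_1, v_1 \rangle | \leq C T \| u_1 \|_{L^2} \| v_1 \|_{L^2} + C \nsob{u_1}{-\nu} \nsob{v_1}{-\nu},
\end{equation*}
where I have used $T^2 \leq T$ for $T \in (0,1]$ to merge the two $L^2 \times L^2$ contributions into a single one.

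The conclusion \eqref{eq:Qk-bound} then follows immediately by the triangle inequality together with the Cauchy--Schwarz estimate $|\langle u_1, v_1 \rangle| \leq \|u_1\|_{L^2}\|v_1\|_{L^2}$, which pulls the extra summand $2 |a_k| \|u_1\|_{L^2}\|v_1\|_{L^2}$ out of the absolute value. No substantial obstacle is expected: the lemma is essentially a direct repackaging of \cref{p:Q-a}, \cref{p:Qk-Q}, and \eqref{eq:a-ak}. The only minor technical point is matching the Sobolev exponent when invoking \cref{p:Qk-Q}, since its weak-norm error term is naturally stated with a halved index (as can be seen in the proof of \cref{p:Qk-coerc}); one therefore invokes it with outer parameter $2\nu \in [0,\frac{1}{2})$ so that its Sobolev contribution is exactly $\nsob{u_1}{-\nu} \nsob{v_1}{-\nu}$, compatible with the choice made in the application of \cref{p:Q-a}.
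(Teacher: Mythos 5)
Your main argument is correct and is exactly the paper's approach: combine \cref{p:Q-a}, \cref{p:Qk-Q} and the identity $2a_k = 2a - \lambda_k K(0)$ from \eqref{eq:a-ak}, absorb $T^2 \leq T$, then peel off the $2|a_k|\,|\langle u_1,v_1\rangle|$ term by Cauchy--Schwarz.

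However, the final paragraph about ``matching the Sobolev exponent'' rests on a misreading and would actually break the argument if taken literally. \cref{p:Qk-Q} is stated for $\nu \in [0,\frac14)$ and, with input parameter $\nu$, produces the error term $\nsob{u_1}{-\nu}\nsob{v_1}{-\nu}$ --- no halving occurs in its statement. The halving you observed appears only in the \emph{proof of \cref{p:Qk-coerc}}, and for an unrelated reason: that corollary allows $\nu \in [0,\frac12)$, so the authors apply \cref{p:Qk-Q} with inner parameter $\nu/2 \in [0,\frac14)$ before upgrading to $T^\nu$ via \cref{lem:H-beta-T-beta}. Here, \cref{p:Qk-continuous} itself requires $\nu \in [0,\frac14)$, which is the same range as \cref{p:Qk-Q}, so you should simply apply \cref{p:Qk-Q} with the \emph{same} $\nu$. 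Invoking it with $2\nu$ would (i) produce $\nsob{u_1}{-2\nu}\nsob{v_1}{-2\nu}$, not $\nsob{u_1}{-\nu}\nsob{v_1}{-\nu}$, and (ii) violate the hypothesis $2\nu < \frac14$ whenever $\nu \geq \frac18$. Dropping that remark makes the proof both correct and shorter.
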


\begin{proof}
    This is a direct consequence of \cref{p:Q-a}, \cref{p:Qk-Q} and \eqref{eq:a-ak}.
\end{proof}

\begin{lemma}
    \label{lem:Q_k-conj-best}
    Let $\nu \in [0,\frac14)$.
    There exists $C > 0$ such that, for all $T \in (0,1]$ and $u \in L^2((0,T);\R)$,
    \begin{equation}
        | Q_k(u \rho_k, u \overline{\rho_k}) - Q_k(\PH u,\PH u) | \leq C T (|a_k|+T) \| u_1 \|_{L^2}^2 + C \nsob{u_1}{-\nu}^2 + C |u_1(T)|^2,
    \end{equation}
    where $\rho_k(t) = \exp (i \lambda_k t / 2)$ and $\PH u$ is the orthogonal projection of $u$ onto $\mathcal{H}$.
\end{lemma}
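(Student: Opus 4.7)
The strategy is to apply \cref{lem:Q_k-Proj} to the complex-valued control $v := u \rho_k$ (for which $\bar v = u \overline{\rho_k}$ since $u$ is real-valued), then to compare the projection $\PH v$ of the modulated control with the modulation $(\PH u) \rho_k$ of the projected control, and finally to handle the remaining purely modulational difference between $Q_k((\PH u) \rho_k, (\PH u) \overline{\rho_k})$ and $Q_k(\PH u, \PH u)$.

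\emph{Step 1.} Setting $v := u \rho_k$, \cref{lem:Q_k-Proj} gives
\begin{equation*}
    | Q_k(u \rho_k, u \overline{\rho_k}) - Q_k(\PH v, \PH \bar v) | \leq C T^2 \|v_1\|_{L^2}^2 + C \nsob{v_1}{-\nu}^2 + C |v_1(T)|^2.
\end{equation*}
One integration by parts produces $v_1(t) = u_1(t) \rho_k(t) - \tfrac{i \lambda_k}{2} \int_0^t u_1 \rho_k$, from which $|v_1(T)| \lesssim |u_1(T)| + T^{1/2} \|u_1\|_{L^2}$ and $\|v_1\|_{L^2} \lesssim \|u_1\|_{L^2}$; the weak-norm bound $\nsob{v_1}{-\nu} \lesssim \nsob{u_1}{-\nu} + T \|u_1\|_{L^2}$ follows from the multiplication estimates of \cref{sec:multiply} together with the smoothness of $\rho_k$. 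After substitution, the right-hand side above is absorbed into the target bound.

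\emph{Step 2.} Since $\PH v - (\PH u) \rho_k = T^{-1}(u_1(T) \rho_k - v_1(T))$ has $L^\infty$ norm bounded by $C T^{-1}(|u_1(T)| + T^{1/2} \|u_1\|_{L^2})$, expanding $Q_k(\PH v, \PH \bar v)$ bilinearly around $((\PH u) \rho_k, (\PH u) \overline{\rho_k})$ and using the crude bound $|Q_k(f, g)| \leq \|K_k\|_{L^\infty} \|f\|_{L^1} \|g\|_{L^1}$ yields
\begin{equation*}
    \bigl| Q_k(\PH v, \PH \bar v) - Q_k((\PH u) \rho_k, (\PH u) \overline{\rho_k}) \bigr| \leq C \bigl( |u_1(T)|^2 + T \|u_1\|_{L^2}^2 \bigr).
\end{equation*}

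\emph{Step 3 (main step).} Setting $w := \PH u \in \mathcal{H}$, it remains to estimate
\begin{equation*}
    Q_k(w \rho_k, w \overline{\rho_k}) - Q_k(w, w) = \int_0^T \int_0^T K_k(t-s) w(s) w(t) \bigl( \rho_k(s) \rho_k(t) - 1 \bigr) \dd s \dd t.
\end{equation*}
Factoring $\rho_k(s) \rho_k(t) - 1 = \rho_k(s) (\rho_k(t) - 1) + (\rho_k(s) - 1)$ splits this into two subterms in each of which the modulating factor $\rho_k - 1$ vanishes linearly at zero in a single variable. One integration by parts in that variable (using $w_1(T) = 0$ and $(\rho_k - 1)(0) = 0$ to kill the boundary contributions at both endpoints) transfers the integration onto $w_1$ and leaves a kernel of the form $K_k$ (or $K_k'$) multiplied by a smooth, bounded modulation factor in one variable. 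The $K_k'$ contribution is analyzed through the decomposition of $\mathcal{F}(K_k)$ of \cref{sec:quad}: the jump of $K_k'$ at $\sigma = 0$, equal to $2 i a_k$ by \eqref{eq:a-ak}, contributes the $T |a_k| \|u_1\|_{L^2}^2$ piece, the principal-value part is controlled by $T \|w_1\|_{L^2}^2$ (via \cref{p:Hpv-easy-bound}), and the regular part is controlled by $\nsob{w_1}{-\nu}^2$ (via \cref{p:Hreg-Hnu}); the $K_k$ contribution is bounded by $T^2 \|w_1\|_{L^2}^2$ using only $\|K_k\|_{L^\infty} < \infty$.

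The \textbf{main obstacle} is Step 3: the integrand $K_k(t-s) (\rho_k(s) \rho_k(t) - 1)$ is not of convolution type in $(s, t)$, so the Fourier-based machinery of \cref{sec:quad} does not apply directly. The factorization above reduces matters to bilinear forms in which only one variable carries the non-convolutional modulation, allowing a single integration by parts—rather than the two that are globally unavailable since $K_k$ is not twice differentiable—while still extracting the expected $\|u_1\|_{L^2}^2$ scaling with the correct $|a_k|$ prefactor.
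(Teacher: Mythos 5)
Your proposal follows the paper for Step 1 but diverges after that, and both divergences have gaps that would need to be filled.

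\textbf{What the paper actually does.} After applying \cref{lem:Q_k-Proj} to $v := u\rho_k$, the paper does not compare $\PH v$ with $(\PH u)\rho_k$; it sets $w := v - u = u(\rho_k-1)$ and uses the exact linear identity
\begin{equation*}
    Q_k(\PH v, \PH\bar v) - Q_k(\PH u, \PH u) = Q_k(\PH w, \PH\bar v) + Q_k(\PH u, \PH\bar w),
\end{equation*}
valid since $\PH v = \PH u + \PH w$. Both cross terms live in $\mathcal{H}\times\mathcal{H}$, so they are estimated wholesale by the ready-made continuity bound \cref{p:Qk-continuous}, which already produces the precise mix $(|a_k|+T)\|\cdot_1\|_{L^2}\|\cdot_1\|_{L^2} + \nsob{\cdot_1}{-\nu}\nsob{\cdot_1}{-\nu}$. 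The gain of $T$ comes from the elementary observation that $\rho_k - 1 = O(T)$ in $L^\infty(0,T)$, whence $\|(\PH w)_1\|_{L^2} \lesssim T(\|u_1\|_{L^2} + |u_1(T)|)$. No further kernel analysis is needed.

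\textbf{Gap in your Step 2.} The crude bound $|Q_k(f,g)| \le \|K_k\|_{L^\infty}\|f\|_{L^1}\|g\|_{L^1}$ cannot produce the claimed estimate. In each cross term of the bilinear expansion around $((\PH u)\rho_k, (\PH u)\overline{\rho_k})$, one slot is the small difference $\PH v - (\PH u)\rho_k$ but the other is $\PH v$ or $(\PH u)\rho_k$, whose $L^1$ norm is controlled by $T^{1/2}\|u\|_{L^2}$ — not by $\|u_1\|_{L^2}$. Since $\|u\|_{L^2}$ does not appear in the target estimate, this does not close. You also cannot fall back on \cref{p:Qk-continuous}, because neither $(\PH u)\rho_k$ nor the difference belongs to $\mathcal{H}$.

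\textbf{Gap in your Step 3.} The multiplier $\rho_k(s)\rho_k(t) - 1 = e^{i\lambda_k(s+t)/2} - 1$ vanishes only at the corner $(s,t)=(0,0)$, not on the diagonal $s=t$; it is therefore \emph{not} of the type handled by the multiplier lemmas of \cref{sec:multiply}. After your single integration by parts in one variable, the integrand is of the form $w_1(s)w(t)$ (or $w(s)w_1(t)$) against a kernel carrying the modulation factor in one variable only. The resulting kernel is no longer a function of $t-s$, so formulas like \cref{FQ_Fourier} and the estimates \cref{p:Hpv-easy-bound}, \cref{p:Hreg-Hnu} (which presuppose a convolution kernel paired diagonally against $|\widehat{u_1}|^2$ in Fourier) do not apply. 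Passing the remaining $w$ to $w_1$ requires a second integration by parts, which hits $K_k''$ — exactly the derivative the paper's whole framework is designed to avoid. In Fourier language, multiplication by $\rho_k(s)-1$ shifts $\widehat{w}$ by $\lambda_k/2$ on one side only, so the quadratic form becomes an off-diagonal pairing $\widehat{w}(\omega-\lambda_k/2)\overline{\widehat{w}(\omega)}$, outside the scope of the estimates established in \cref{sec:quad}. Making this rigorous would amount to rebuilding a non-diagonal variant of that whole section, which the paper's simpler choice of $w := v - u$ (rather than $w := \PH u$) circumvents entirely.
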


\begin{proof}
    Let $v := u \rho_k$ and $w := v - u$.
    Write
    \begin{equation}
        \label{eq:Qk-conj-best-0}
        Q_k(v,\bar{v}) - Q_k(\PH u,\PH u)
        = Q_k(v,\bar{v}) - Q_k(\PH v, \PH \bar{v})
        + Q_k(\PH w,\PH \bar{v}) + Q_k(\PH u,\PH \bar{w}).
    \end{equation}
    First, by \cref{lem:Q_k-Proj}, one can estimate $Q_k(v,\bar{v}) - Q_k(\PH v, \PH \bar{v})$ as in \eqref{eq:Q_k-Proj}.
    Integrating by parts,
    \begin{equation}
        v_1 = u_1 \rho_k - h_1
        \quad \text{where} \quad
        h := u_1 \rho_k'.
    \end{equation}
    By Cauchy--Schwarz, $\|h_1\|_{L^2} \lesssim T \|u_1\|_{L^2}$.
    Hence $\|v_1\|_{L^2} \lesssim \| u_1 \|_{L^2}$ and $\nsob{v_1}{-\nu} \lesssim \nsob{u_1}{-\nu} + T \|u_1\|_{L^2}$.
    One has $|h_1(T)| \leq \nsob{u_1}{-\nu} \nsob{\rho_k'}{\nu}$ and $\nsob{\rho_k'}{\nu} \lesssim 1$ for $T \in (0,1]$ by \eqref{eq:norm-eit-H+nu}.
    Hence $|v_1(T)| \lesssim |u_1(T)| + \nsob{u_1}{-\nu}$.
    Thus,
    \begin{equation}
        \label{eq:Qk-conj-best-1}
        | Q_k(v,\bar{v}) - Q_k(\PH v,\PH \bar{v}) | \lesssim T^2 \| u_1 \|_{L^2}^2 + \nsob{u_1}{-\nu}^2 + |u_1(T)|^2.
    \end{equation}
    Second, by \cref{p:Qk-continuous},
    \begin{align}
        \label{eq:qkpwpv-1}
        |Q_k(\PH w,\PH \bar{v})| & \lesssim (|a_k|+T) \| (\PH w)_1 \|_{L^2} \| (\PH v)_1 \|_{L^2} + \nsob{(\PH w)_1}{-\nu} \nsob{(\PH v)_1}{-\nu}, \\
        \label{eq:qkpwpv-2}
        |Q_k(\PH u,\PH \bar{w})| & \lesssim (|a_k|+T) \| (\PH u)_1 \|_{L^2} \| (\PH w)_1 \|_{L^2} + \nsob{(\PH u)_1}{-\nu} \nsob{(\PH w)_1}{-\nu}.
    \end{align}
    Terms involving $\PH u$ are estimated by \eqref{eq:puu1-u1}.
    For $w$, integrations by parts lead to
    \begin{equation}
        (\PH w)_1(t) = u_1(t) (\rho_k(t)-1) - \int_0^t u_1 \rho_k' + \frac{t}{T} u_1(T)(1-\rho_k(T)) + \frac{t}{T}\int_0^T u_1 \rho_k'.
    \end{equation}
    Hence, since $\|\rho_k-1\|_{L^\infty(0,T)} \lesssim T$,
    \begin{equation}
        \label{eq:pw1-u1}
        \nsob{(\PH w)_1}{-\nu} \leq \|(\PH w)_1\|_{L^2} \lesssim T \|u_1\|_{L^2} + T |u_1(T)|.
    \end{equation}
    Terms involving $\PH v$ will be estimated by triangular inequality from $\PH u$ and $\PH w$.
    Hence, gathering \eqref{eq:qkpwpv-1}, \eqref{eq:qkpwpv-2}, \eqref{eq:puu1-u1} and \eqref{eq:pw1-u1},
    \begin{equation}
        \label{eq:Qk-conj-best-2}
        |Q_k(\PH w,\PH \bar{v})| + |Q_k(\PH u,\PH \bar{w})| \lesssim T (|a_k|+T) \|u_1\|_{L^2}^2 + \nsob{u_1}{-\nu}^2 + |u_1(T)|^2.
    \end{equation}
    Eventually, the result follows from \eqref{eq:Qk-conj-best-0}, \eqref{eq:Qk-conj-best-1} and \eqref{eq:Qk-conj-best-2}.
\end{proof}

\begin{corollary}
    \label{cor:psi2-coerc}
    Let $\nu \in [0,\frac12)$.
    There exists $C > 0$ such that, for all $T \in (0,1]$, $u \in L^2((0,T);\R)$,
    \begin{equation}
        \label{eq:psi2-coerc}
        \left| \langle \psi_2(T), \varphi_k \rangle + i a_k \| u_1 \|_{L^2}^2 \right| \leq C T^\nu \|u_1\|_{L^2}^2 + C |u_1(T)|^2.
    \end{equation}
\end{corollary}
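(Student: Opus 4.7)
The plan is to chain together the three preceding results, starting from the explicit formula \eqref{eq:psi2=Qk} and propagating estimates through progressively simpler approximations of the quadratic form, while carefully tracking the error terms generated at each step.

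First, I would use the sesquilinearity of the inner product to rewrite \eqref{eq:psi2=Qk} as
\begin{equation}
    \langle \psi_2(T), \varphi_k \rangle = -\tfrac12 e^{-i\lambda_k T} Q_k(u\rho_k, u\overline{\rho_k}).
\end{equation}
Next, I would apply \cref{lem:Q_k-conj-best} with some auxiliary $\nu' \in (0,\frac14)$ (chosen so that $\nu' = \nu/2$ when $\nu > 0$, or any fixed small value when $\nu=0$) to replace $Q_k(u\rho_k, u\overline{\rho_k})$ by $Q_k(\PH u, \PH u)$. Controlling $\nsob{u_1}{-\nu'}^2 \lesssim T^{2\nu'}\|u_1\|_{L^2}^2 \leq T^\nu \|u_1\|_{L^2}^2$ via \cref{lem:H-beta-T-beta}, and absorbing $T(|a_k|+T) \leq C T^\nu$ for $T\in(0,1]$, the error is $O(T^\nu \|u_1\|_{L^2}^2 + |u_1(T)|^2)$.

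Then, \cref{p:Qk-coerc} applied to $\PH u \in \mathcal{H}$ gives $Q_k(\PH u, \PH u) = 2 i a_k \|(\PH u)_1\|_{L^2}^2 + O(T^\nu \|(\PH u)_1\|_{L^2}^2)$. Using \eqref{eq:puu1-u1} with the exponent $\nu = 0$, we get $|\|(\PH u)_1\|_{L^2} - \|u_1\|_{L^2}| \leq T^{1/2}|u_1(T)|$, and squaring with Young's inequality yields $\|(\PH u)_1\|_{L^2}^2 = \|u_1\|_{L^2}^2 + O(T\|u_1\|_{L^2}^2 + |u_1(T)|^2)$. Combining these steps, we obtain
\begin{equation}
    Q_k(u\rho_k, u\overline{\rho_k}) = 2 i a_k \|u_1\|_{L^2}^2 + O\bigl( T^\nu \|u_1\|_{L^2}^2 + |u_1(T)|^2\bigr).
\end{equation}

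Finally, the elementary estimate $|e^{-i\lambda_k T} - 1| \lesssim T \leq T^\nu$ lets us discard the overall phase factor modulo an $O(T^\nu \|u_1\|_{L^2}^2)$ error, giving $\langle \psi_2(T), \varphi_k\rangle = -i a_k \|u_1\|_{L^2}^2 + O(T^\nu \|u_1\|_{L^2}^2 + |u_1(T)|^2)$, which is exactly \eqref{eq:psi2-coerc}. No step presents a real difficulty since all the hard analysis (the Fourier-domain coercivity of $Q_k$, the kernel modulation argument, and the interplay between $u\rho_k$ and $\PH u$) has already been carried out in the preceding lemmas; the only bookkeeping to watch is that \cref{lem:Q_k-conj-best} requires $\nu < \frac14$ whereas the target allows $\nu < \frac12$, which is handled by the trade-off $\nsob{u_1}{-\nu'} \lesssim T^{\nu'}\|u_1\|_{L^2}$ described above.
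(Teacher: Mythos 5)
Your proof is correct and follows essentially the same route as the paper: rewrite $\langle \psi_2(T), \varphi_k \rangle$ via \eqref{eq:psi2=Qk}, apply \cref{lem:Q_k-conj-best} with parameter $\nu/2$ to pass to $Q_k(\PH u, \PH u)$, invoke \cref{p:Qk-coerc}, compare $\|(\PH u)_1\|_{L^2}^2$ with $\|u_1\|_{L^2}^2$ via \eqref{eq:puu1-u1} and Young, absorb $\nsob{u_1}{-\nu/2}^2$ using \cref{lem:H-beta-T-beta}, and discard the phase factor $e^{-i\lambda_k T}$. The only cosmetic difference is that you explicitly flag the corner case $\nu = 0$; the paper does not bother, and indeed for $\nu = 0$ the estimate is immediate from the $L^2$-continuity of $Q_k$.
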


\begin{proof}
    By \eqref{eq:psi2=Qk} of \cref{sec:modulation}, $\langle \psi_2(T), \varphi_k e^{-i\lambda_k T}\rangle = - \frac 1 2 Q_k(u \rho_k, u \overline{\rho_k})$.

    By \cref{lem:Q_k-conj-best}, 
    \begin{equation}
        |Q_k(u \rho_k, u \overline{\rho_k}) - Q_k(\PH u, \PH u)| \lesssim T \|u_1\|_{L^2}^2 + \nsob{u_1}{-\frac{\nu}{2}}^2 + |u_1(T)|^2.
    \end{equation}
    By \cref{p:Qk-coerc} applied to $(\PH u, \PH u)$,
    \begin{equation}
        \big| Q_k(\PH u, \PH u) - 2 i a_k \| (\PH u)_1 \|_{L^2}^2 \big| \lesssim T^\nu \|(\PH u)_1\|_{L^2}^2.
    \end{equation}
    By \eqref{eq:puu1-u1} and Young's inequality,
    \begin{equation}
        \big| \|u_1\|_{L^2}^2 - \| (\PH u)_1 \|_{L^2}^2 \big| \leq T \|u_1\|_{L^2}^2 + (1+T) |u_1(T)|^2.
    \end{equation}
    Hence,
    \begin{equation}
        \begin{split}
            \left| \langle \psi_2(T), \varphi_k e^{-i\lambda_k T}\rangle + i a_k \| u_1 \|_{L^2}^2 \right| & \lesssim T^\nu \|u_1\|_{L^2}^2 + \nsob{u_1}{-\frac{\nu}{2}}^2 + |u_1(T)|^2 \\
            & \lesssim T^\nu \|u_1\|_{L^2}^2 + |u_1(T)|^2
        \end{split}
    \end{equation}
    thanks to \cref{lem:H-beta-T-beta} (applied to $u_1$).
    The conclusion follows by writing $|e^{i \lambda_k T} - 1| \lesssim T$.
\end{proof}

\subsection{Closed-loop estimate}
\label{sec:closed-loop}

As illustrated in \cref{rk:Q-positive}, the quadratic form is not positive definite on $L^2((0,T);\R)$ but only on the subspace of controls such that $u_1(T) = 0$.
As for the basic ODE example \eqref{eq:x-W1} of the introduction, this still prevents small-time local controllability, because $u_1(T)$ can be estimated in a ``closed-loop'' manner from the state of the system (so it cannot be chosen freely independently on the target).
We refer to \cite[Section 1.6.4]{BeauchardMarbach2022} for more details on this notion.

\begin{proposition}
    \label{p:closed-loop}
    Let $k\in\N$ and $\mu \in H^2((0,1);\R)$ such that $a_k \neq 0$ (see \eqref{def:cj_a}).
    There exists $T^* > 0$ such that, for any $T \in (0,T^*)$ and $u \in L^2((0,T);\R)$ such that $\|u\|_{L^2} \leq 1$,
    \begin{equation}
        |u_1(T)| \lesssim T^{\frac 12} \|u_1\|_{L^2}
        + \|\psi(T)-\varphi_0\|_{L^2}.
    \end{equation}
\end{proposition}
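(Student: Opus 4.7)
The strategy is to recover $u_1(T)$ from a single Fourier coefficient of $\psi(T) - \varphi_0$. Since $a_k \neq 0$ forces $\mu \not\equiv 0$, one can choose $j^* \in \N$ with $\langle \mu, \varphi_{j^*} \rangle \neq 0$. Heuristically, \eqref{linearise_explicit} combined with one integration by parts (using $u_1(0) = 0$) gives $\langle \psi_1(T), \varphi_{j^*} \rangle = i \langle \mu, \varphi_{j^*} \rangle u_1(T) + O(T^{1/2} \|u_1\|_{L^2})$. The plan is to transfer this identity to the full nonlinear state $\psi(T) - \varphi_0$, picking up only errors of the required form.

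Project the Duhamel formula for \eqref{eq:psi} on $\varphi_{j^*}$:
\begin{equation*}
    \langle \psi(T) - \varphi_0, \varphi_{j^*} \rangle = i \int_0^T u(t) e^{-i \lambda_{j^*} (T-t)} \langle \mu \psi(t), \varphi_{j^*} \rangle \dd t,
\end{equation*}
and integrate by parts using $u = u_1'$. The boundary term at $t = T$ is $i u_1(T) \langle \mu \psi(T), \varphi_{j^*} \rangle$. A direct Duhamel argument on $\langle \psi, \varphi_0 \rangle$, using $A \varphi_0 = 0$ together with $\|\psi\|_{L^2} = 1$ and $\|u\|_{L^2} \leq 1$, gives $\|\psi(T) - \varphi_0\|_{L^2} \lesssim T^{1/4}$, so for $T$ small this coefficient is a small perturbation of $\langle \mu, \varphi_{j^*} \rangle$.

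The bulk term involves $\int_0^T u_1(t) \, \partial_t [ e^{-i \lambda_{j^*}(T-t)} \langle \mu \psi, \varphi_{j^*} \rangle ] \dd t$. Using $\partial_t \psi = -i A \psi + i u \mu \psi$, the phase contribution and the $A \psi$ contribution are uniformly bounded in $t$ thanks to $\|\psi\|_{L^\infty H^2} \leq C$ from \cref{p:WP}; hence Cauchy--Schwarz yields a bound $C T^{1/2} \|u_1\|_{L^2}$. The remaining piece has the form $\int_0^T u(t) u_1(t) g(t) \dd t$ with $g$ bounded, and is handled by the key trick $u u_1 = \frac{1}{2} (u_1^2)'$: a second integration by parts produces a boundary term of order $|u_1(T)|^2$ and a bulk term of order $\|u_1\|_{L^2}^2$, since $g'(t)$ is again of the form $C(1 + |u(t)|)$.

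Finally, from $\|u\|_{L^2} \leq 1$ and $T \leq 1$ one has $\|u_1\|_{L^2}, |u_1(T)| \leq T^{1/2}$, whence $\|u_1\|_{L^2}^2 \leq T^{1/2} \|u_1\|_{L^2}$ and $|u_1(T)|^2 \leq T^{1/2} |u_1(T)|$. Collecting all the errors and the $O(T^{1/4})$ correction to the coefficient of $u_1(T)$ leads to
\begin{equation*}
    \tfrac{1}{2} |\langle \mu, \varphi_{j^*} \rangle| \, |u_1(T)| \leq \|\psi(T) - \varphi_0\|_{L^2} + C T^{1/2} \|u_1\|_{L^2} + C (T^{1/4} + T^{1/2}) |u_1(T)|,
\end{equation*}
and choosing $T^*$ small enough to absorb the last term on the left gives the conclusion. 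The main obstacle is the nonlinear term $\int u u_1 g$: without the second integration by parts it would only be controlled by $\|u\|_{L^2} \|u_1\|_{L^2} \leq \|u_1\|_{L^2}$, which does not vanish as $T \to 0$. The trick $u u_1 = \frac{1}{2} (u_1^2)'$ converts it into the harmless terms $|u_1(T)|^2$ and $\|u_1\|_{L^2}^2$.
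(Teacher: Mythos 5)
Your proof is correct and takes a genuinely different route from the paper's. The paper invokes the quadratic remainder estimate \eqref{eq:quad-rem} of \cref{cor:quad-rem} to replace $\psi(T)-\varphi_0$ by $\psi_1(T)$ up to an error of size $T^{1/2}(|u_1(T)|+\|u_1\|_{L^2})$, then reads off $u_1(T)$ from the $j$-th Fourier mode of $\psi_1(T)$ via a single integration by parts. You instead work directly with the nonlinear Duhamel formula, project on a mode $\varphi_{j^*}$ with $\langle\mu,\varphi_{j^*}\rangle\neq 0$, and perform two integrations by parts. The key point is that after the first integration by parts, the derivative of $\langle\mu\psi,\varphi_{j^*}\rangle$ produces (via $\partial_t\psi=i\partial_x^2\psi+iu\mu\psi$) a term $u\langle\mu^2\psi,\varphi_{j^*}\rangle$, which paired with $u_1$ yields $\int_0^T uu_1 g$ with $g$ bounded; the identity $uu_1=\tfrac12(u_1^2)'$ converts this into boundary and bulk pieces of order $|u_1(T)|^2$, $\|u_1\|_{L^2}^2$ and $\|u_1\|_{L^\infty}\|u_1\|_{L^2}\|u\|_{L^2}$, all absorbable since $\|u_1\|_{L^\infty}\leq T^{1/2}\|u\|_{L^2}\leq T^{1/2}$ under the standing hypotheses. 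What this buys: your proof is self-contained, relying only on the a priori bound $\|\psi\|_{L^\infty H^2}\lesssim 1$ from \cref{p:WP}, and bypasses the auxiliary-system and inhomogeneous-Neumann machinery of \cref{sec:remainder} used to establish \cref{cor:quad-rem}. The paper's route is shorter for this particular proposition but outsources the hard estimate to \cref{cor:quad-rem}, which is reused elsewhere anyway. A minor imprecision on your side: the bound $\|\psi(T)-\varphi_0\|_{L^2}\lesssim T^{1/4}$ is weaker than what Duhamel directly gives, namely $\lesssim T^{1/2}\|u\|_{L^2}$, but this only improves the estimate.
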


\begin{proof}
    If $T \leq 1$, by the quadratic remainder estimate \eqref{eq:quad-rem} of \cref{cor:quad-rem} (see \cref{sec:estimates-quad}), and the assumption that $\|u\|_{L^2} \leq 1$,
    \begin{equation}
        \label{O1bis}
        \| \psi(T)-\varphi_0-\psi_1(T) \|_{L^2}
        \lesssim T^{\frac 12} |u_1(T)| + T^{\frac 12} \|u_1\|_{L^2}.
    \end{equation}
    Since $a_k \neq 0$, one has $\mu \neq 0$.
    Fix $j \in \N$ such that $\langle \mu,\varphi_j \rangle \neq 0$.
    By \eqref{linearise_explicit},
    \begin{equation}
        \langle (\psi-\varphi_0-\psi_1)(T),\varphi_j \rangle
        =
        \langle \psi(T)-\varphi_0,\varphi_j \rangle - i \langle \mu,\varphi_j\rangle \int_0^T u(t) e^{-i\lambda_j(T-t)} \dd t.
    \end{equation}
    Thus \eqref{O1bis} and $\langle \mu,\varphi_j \rangle \neq 0$ imply that
    \begin{equation}
        \label{estim_moment}
        \left| \int_0^T u(t) e^{i\lambda_j t } \dd t \right| \lesssim
        \|\psi(T)-\varphi_0\|_{L^2} + T^{\frac 12} |u_1(T)| + T^{\frac 12} \|u_1\|_{L^2}.
    \end{equation}
    Using integration by parts and the Cauchy--Schwarz inequality, we get
    \begin{equation}
        |u_1(T)| = \left| \int_0^T \big ( u(t) + i \lambda_j u_1(t) \big) e^{i \lambda_j t} \dd t \right|
        \lesssim
        \left| \int_0^T u(t) e^{i\lambda_j t} \dd t \right|+ T^{\frac 12} \|u_1\|_{L^2}.
    \end{equation}
    Thus incorporating \eqref{estim_moment} in the previous estimate gives the conclusion for small enough times.
\end{proof}

\subsection{Conclusion of the proof}
\label{sec:proof-negative}

We can now conclude the proof of \cref{thm:negative}.

\begin{proof}
    We write
    \begin{equation}
        \langle \psi(T) - \varphi_0, \varphi_k \rangle
        = \langle \psi_1(T), \varphi_k \rangle
        + \langle \psi_2(T), \varphi_k \rangle
        + \langle (\psi - \varphi_0 - \psi_1 - \psi_2)(T), \varphi_k \rangle.
    \end{equation}
    By assumption, $\langle \mu, \varphi_k \rangle = 0$, so $\langle \psi_1(T), \varphi_k \rangle = 0$ by \eqref{linearise_explicit}.
    Thanks to \cref{cor:psi2-coerc} with $\nu = \frac 18$,
    \begin{equation}
        \left| \langle \psi_2(T), \varphi_k \rangle + i a_k \| u_1 \|_{L^2}^2 \right| \lesssim T^{\frac 18} \|u_1\|_{L^2}^2 + |u_1(T)|^2.
    \end{equation}
    By the cubic remainder estimate \eqref{eq:cubic-rem} of \cref{cor:cubic-rem} (see \cref{sec:estimates-cub}), for $T \leq 1$ and $\|u\|_{L^2} \leq 1$,
    \begin{equation}
        | \langle (\psi - \varphi_0 - \psi_1 - \psi_2)(T), \varphi_k \rangle | \lesssim |u_1(T)|^3 + T^{\frac 18} \| u_1 \|_{L^2}^2.
    \end{equation}
    Eventually, if $T < T^*$ given by \cref{p:closed-loop},
    \begin{equation}
        |u_1(T)|^3 \lesssim |u_1(T)|^2 \lesssim T \|u_1\|_{L^2}^2 + \|\psi(T)-\varphi_0\|_{L^2}^2.
    \end{equation}
    Gathering these estimates yields the claimed drift estimate \eqref{drift} with $\nu = \frac 18$.
\end{proof}

\section{Proof of the positive result}
\label{sec:positive}

We prove \cref{thm:positive}.
In all this section, we therefore assume that $k \in \N^*$ is fixed with $\langle \mu, \varphi_k \rangle = 0$, $a_k = 0$ (see \cref{def:cj_a}) and that assumption \eqref{eq:hyp-c0-k} holds (see \cref{rk:k=0} for the case $k = 0$).
The core of the proof lies in the analysis of the quadratic form $Q$ to achieve motions in the directions $\pm i \varphi_k$ (see \cref{sec:approximate}).
The other parts of the argument are relatively standard (motion in the directions $\pm \varphi_k$, control in projection, and nonlinear fixed-point argument with estimate of the cost).

\subsection{Prequel: solvability of moment problems}

Using assumption \eqref{eq:hyp-c0-k}, most of the components of the state will be controlled at the linearized level.
Starting from \eqref{linearise_explicit}, this typically relies on the solvability of moment problems.

\begin{definition}
    Let $m \in \Z$.
    Let $h^m_r(\N;\C)$ be the set of sequences $(d_j)_{j \in \N}$ such that $d_0 \in \R$ and
    \begin{equation}
        \|d\|_{h^m} := \| \langle j \rangle^{m} d_j \|_{\ell^2} < +\infty.
    \end{equation}
\end{definition}

Given a $d \in h^0_r(\N;\C)$, a \emph{moment problem} consists in finding a control $u$ such that
\begin{equation}
    \label{eq:moments}
    \forall j \in \N, \qquad \int_0^T u(t) e^{- i \lambda_j t} \dd t = d_j.
\end{equation}
It is known that this problem is solvable and that for $m \in \Z$, the $H^m$ norm of $u$ scales like the $h^{2m}$ norm of $d$.
We will need the following results, which are particular cases of \cite[Theorem 3.10]{Bournissou2023_Lin}.

\begin{proposition}
    \label{p:Megane-1}
    For every $T>0$, there exists $C_T>0$ and a continuous linear map $L_T:h^0_r(\N;\C) \rightarrow L^2((0,T);\R)$ such that, for every $d \in h^0_r(\N;\C)$, the control $u:=L_T(d)$ satisfies \eqref{eq:moments},
    \begin{equation}
        \|u\|_{L^2} \leq C_T \|d\|_{h^0}
        \qquad \text{and} \qquad
        \| u_1 \|_{L^2} + |u_1(T)| \leq C_T \|d\|_{h^{-2}}.
    \end{equation}
\end{proposition}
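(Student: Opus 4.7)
The plan is to realize this statement as a classical trigonometric moment problem solvable via a biorthogonal family, exploiting the quadratic growth $\lambda_j=(j\pi)^2$.

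\emph{Riesz basis of exponentials.} First, I observe that $\lambda_{j+1}-\lambda_j=(2j+1)\pi^2\to+\infty$, so the extended sequence $\Lambda:=\{\pm\lambda_j:j\in\N\}$ satisfies a uniform gap condition that actually blows up. By a generalized Ingham theorem, for every $T>0$ the family $(e^{-i\lambda t})_{\lambda\in\Lambda}$ is a Riesz basis of its closed linear span in $L^2((0,T);\C)$. This yields a biorthogonal family $(g_\lambda)_{\lambda\in\Lambda}\subset L^2((0,T);\C)$ such that $\int_0^T g_\lambda(t)e^{-i\mu t}\,dt=\delta_{\lambda,\mu}$. By replacing $g_\lambda$ with $\tfrac12(g_\lambda+\overline{g_{-\lambda}})$ one may moreover assume the conjugation symmetry $\overline{g_\lambda}=g_{-\lambda}$.

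\emph{Definition of $L_T$.} Given $d\in h^0_r(\N;\C)$, set
\begin{equation*}
L_T(d) := d_0\, g_0 + \sum_{j\geq 1}\bigl(d_j\, g_{\lambda_j} + \overline{d_j}\, g_{-\lambda_j}\bigr).
\end{equation*}
The symmetry $g_{-\lambda}=\overline{g_\lambda}$ combined with $d_0\in\R$ forces $\overline{L_T(d)}=L_T(d)$, so $u:=L_T(d)\in L^2((0,T);\R)$. Biorthogonality yields the moment conditions \eqref{eq:moments}, and the Riesz basis property gives $\|u\|_{L^2}\leq C_T\|d\|_{h^0}$.

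\emph{Primitive estimate and main obstacle.} For the bound on $u_1$, an integration by parts in \eqref{eq:moments} gives, for $j\geq 1$,
\begin{equation*}
d_j = u_1(T)\, e^{-i\lambda_j T} + i\lambda_j \int_0^T u_1(t)\, e^{-i\lambda_j t}\,dt,
\end{equation*}
so, up to the boundary contribution, $u_1$ solves a moment problem with data $d_j/(i\lambda_j)$, which lies in $h^0$ precisely when $d\in h^{-2}$. The hard part is to guarantee that the \emph{same} map $L_T$ realizes both continuity properties simultaneously: a generic biorthogonal family produced by the abstract Riesz basis argument is continuous $h^0\to L^2$ but need not be well-behaved at the level of primitives. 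The remedy, which is the content of \cite[Theorem~3.10]{Bournissou2023_Lin}, is to construct the biorthogonal family with additional $L^2$-control on primitives. By duality this is equivalent to an observability-type inequality for the adjoint free Schrödinger equation expressed in a weighted norm, which follows again from the Ingham-type gap argument applied in the dual setting; the bound on $|u_1(T)|$ is obtained as a by-product since $u_1(T)$ is a continuous linear functional of $d$ through the constructed map.
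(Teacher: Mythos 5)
Your proposal is correct and essentially mirrors the paper, which states this proposition as a direct particular case of \cite[Theorem 3.10]{Bournissou2023_Lin} without an independent proof; the Riesz-basis/biorthogonal framework you sketch is a faithful account of what underlies that cited theorem, and you correctly isolate that the simultaneous $h^0\to L^2$ and $h^{-2}\to \|u_1\|_{L^2}$ bounds for the \emph{same} map $L_T$ are precisely what must be imported from the reference rather than deduced from the bare Riesz-basis property. One simplification worth noting: since $\lambda_0=0$, the $j=0$ moment condition forces $u_1(T)=\int_0^T u=d_0$, so $|u_1(T)|\le\|d\|_{h^{-2}}$ is immediate, and this same observation bounds the boundary term $u_1(T)e^{-i\lambda_j T}$ in your integration-by-parts identity by $|d_0|$, which is already part of the $h^{-2}$ data; you therefore do not need to argue that $u_1(T)$ is ``obtained as a by-product'' of the construction.
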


\begin{proposition}
    \label{p:Megane+1}
    For every $T > 0$, there exists $C_T > 0$ and a continuous linear map $L_T : h^2_r(\N;\C) \to H^1_0((0,T);\R)$ such that, for every $d \in h^2_r(\N;\C)$, the control $u := L_T(d)$ satisfies \eqref{eq:moments},
    \begin{equation}
        \label{eq:Megane+1}
        \|u\|_{L^2} \leq C_T \|d\|_{h^0}
        \qquad \text{and} \qquad
        \|u\|_{H^1_0} \leq C_T \|d\|_{h^{2}}.
    \end{equation}
\end{proposition}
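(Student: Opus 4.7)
The plan is to reduce the $H^1_0$-regular moment problem to the $L^2$-regular one already handled by \cref{p:Megane-1}, via the bijection between $H^1_0((0,T);\R)$ and the mean-zero subspace of $L^2((0,T);\R)$ given by $u \leftrightarrow u'$.

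For $u \in H^1_0$ with $v := u'$, integration by parts gives, for every $j \geq 1$,
\begin{equation}
    \int_0^T u(t)\, e^{-i\lambda_j t} \dd t = \frac{1}{i \lambda_j} \int_0^T v(t)\, e^{-i\lambda_j t} \dd t,
\end{equation}
so prescribing $u$'s moments for $j \geq 1$ amounts to prescribing $v$'s moments to equal $i \lambda_j d_j$. Since $\lambda_j = (j\pi)^2$, the shifted sequence $\tilde d \in h^0_r(\N;\C)$ defined by $\tilde d_0 := 0$ and $\tilde d_j := i\lambda_j d_j$ for $j \geq 1$ satisfies $\|\tilde d\|_{h^0} \lesssim \|d\|_{h^2}$. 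Applying \cref{p:Megane-1} to $\tilde d$ produces $v_0 \in L^2$ of mean zero with $\|v_0\|_{L^2} \leq C_T \|d\|_{h^2}$, whose antiderivative $u_0(t) := \int_0^t v_0$ lies in $H^1_0$ with $\|u_0\|_{H^1_0} \leq C_T \|d\|_{h^2}$ and solves \eqref{eq:moments} for every $j \geq 1$. To fix the zeroth moment, I would add a fixed correction $\bigl(d_0 - \int_0^T u_0\bigr)\, \phi$, where $\phi \in H^1_0((0,T);\R)$ is chosen once and for all so that $\int_0^T \phi = 1$ and $\int_0^T \phi(t) e^{-i\lambda_j t} \dd t = 0$ for all $j \geq 1$. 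Such a $\phi$ exists (with finite $H^1_0$ norm) thanks to the non-completeness of $(e^{-i\lambda_j t})_{j\geq 1}$ in $L^2((0,T);\C)$, a classical consequence of the gap condition $\lambda_{j+1}-\lambda_j \to \infty$ via Paley--Wiener theory.

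The main obstacle is that the strategy above, while yielding $\|u\|_{H^1_0} \leq C_T \|d\|_{h^2}$, does not automatically produce the companion estimate $\|u\|_{L^2} \leq C_T \|d\|_{h^0}$ for the same linear map $L_T$: the $L^2$ norm scales only as $\|u\|_{L^2} \lesssim T \|v_0\|_{L^2} \lesssim T \|d\|_{h^2}$, which is weaker than required. To obtain both estimates simultaneously, one should rather construct an explicit biorthogonal family $(\sigma_j)_{j\in\N} \subset H^1_0((0,T);\R)$ to $(e^{-i\lambda_j t})_{j\in\N}$, forming a Riesz sequence in $L^2$ with appropriate $H^1_0$-scaling in $j$, and define $L_T(d) := \sum_j d_j \sigma_j$. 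The construction of such a family, with sharp scalings calibrated to deliver both bounds in \eqref{eq:Megane+1} simultaneously, is the technical content of \cite[Theorem 3.10]{Bournissou2023_Lin}, which I would invoke directly.
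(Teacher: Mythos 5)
The paper gives no proof of \cref{p:Megane+1} beyond the citation to \cite[Theorem 3.10]{Bournissou2023_Lin}, so your final recommendation to invoke that theorem directly matches what the paper does. However, the reduction you sketch and then abandon actually works, and the obstacle you identify is illusory: you only used the weaker of the two estimates furnished by \cref{p:Megane-1}. Apply \cref{p:Megane-1} to $\tilde d \in h^0_r(\N;\C)$ with $\tilde d_0 := 0$ and $\tilde d_j := i\lambda_j d_j$ for $j \geq 1$. It produces $v_0 := L_T(\tilde d) \in L^2((0,T);\R)$ satisfying, on one hand, $\|v_0\|_{L^2} \leq C_T \|\tilde d\|_{h^0} \lesssim C_T \|d\|_{h^2}$, and on the other hand the finer estimate $\|(v_0)_1\|_{L^2} + |(v_0)_1(T)| \leq C_T \|\tilde d\|_{h^{-2}} \lesssim C_T \|d\|_{h^0}$. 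Setting $u_0 := (v_0)_1$, this function lies in $H^1_0((0,T);\R)$ (the trace at $T$ vanishes because $\tilde d_0 = 0$), solves \eqref{eq:moments} for $j \geq 1$ by the integration by parts you wrote, and both required bounds hold: $\|u_0\|_{H^1_0} = \|v_0\|_{L^2} \leq C_T \|d\|_{h^2}$ and, crucially, $\|u_0\|_{L^2} = \|(v_0)_1\|_{L^2} \leq C_T \|d\|_{h^0}$ --- the latter coming directly from the $h^{-2}$ estimate in \cref{p:Megane-1}, not from the lossy Cauchy--Schwarz bound $\|u_0\|_{L^2} \leq T \|v_0\|_{L^2}$ that led you to give up. The correction by a fixed real-valued $\phi \in H^1_0((0,T);\R)$ with $\int_0^T \phi = 1$ and $\int_0^T \phi(t) e^{-i\lambda_j t} \dd t = 0$ for $j \geq 1$, weighted by $d_0 - \int_0^T u_0 \in \R$ (whose modulus is $\lesssim \|d\|_{h^0}$), preserves both estimates. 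Your reduction therefore yields a complete proof of \cref{p:Megane+1} from \cref{p:Megane-1}, making a separate appeal to \cite{Bournissou2023_Lin} for this statement superfluous.
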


\subsection{Approximate movement in the main direction}
\label{sec:approximate}

Let $T > 0$.
We now use the analysis of \cref{sec:quad} to construct controls $u^{\pm i}$ such that the state moves approximately in the directions $\pm i \varphi_k$.
The goal of this section is to prove \cref{p:vect-tgt-Im}.
Heuristically, we will use a control such that
\begin{equation}
    u_1(t) \propto \frac{1}{T^{\frac12}} \cos (t\omega_0) \chi\left(\frac{t}{T}\right),
\end{equation}
where $\chi$ is a smooth cut-off function and $\omega_0$ is a very carefully chosen frequency, located near (but not too close to) a singularity of $\Hpv$ (depending on the desired sign of the motion).

\begin{lemma}
    \label{lem:chi}
    There exists $\chi \in C^\infty_c((0,1);\R)$ such that $\| \chi \|_{L^2} = 1$ and
    \begin{equation}
        \forall p \in \Z \setminus \{0\}, \qquad \widehat{\chi}(4 p \pi) = 0.
    \end{equation}
\end{lemma}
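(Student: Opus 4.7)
My plan is to reduce the problem to a simple ``antisymmetrization'' trick that automatically zeroes out $\widehat{\chi}$ at the prescribed frequencies. The key observation is that $\widehat{\chi}(4p\pi) = \int \chi(t) e^{-i 4 p \pi t}\dd t$, and the exponentials $e^{-i 4 p \pi t}$ are $\frac{1}{2}$-periodic in $t$; consequently, if $\chi$ is the difference of a function and its translate by $\frac12$, all these moments will vanish simultaneously.

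Concretely, I would pick any nonzero real-valued $\eta \in C^\infty_c((0,\tfrac12);\R)$ and define
\begin{equation}
    \chi(t) := \eta(t) - \eta\!\left(t-\tfrac12\right).
\end{equation}
Then $\chi \in C^\infty_c((0,1);\R)$ because the translate $\eta(\cdot - \tfrac12)$ is supported in $(\tfrac12,1)$. By the translation rule for the Fourier transform,
\begin{equation}
    \widehat{\chi}(\omega) = \bigl(1 - e^{-i\omega/2}\bigr)\,\widehat{\eta}(\omega),
\end{equation}
and for $\omega = 4p\pi$ with $p \in \Z$ one has $e^{-i\omega/2} = e^{-i 2p\pi} = 1$, so $\widehat{\chi}(4p\pi) = 0$. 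In particular the required condition at $p \in \Z \setminus \{0\}$ holds.

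It remains to arrange the normalization $\|\chi\|_{L^2} = 1$. Since $\eta$ and $\eta(\cdot - \tfrac12)$ have disjoint supports inside $(0,1)$, the cross term vanishes and
\begin{equation}
    \|\chi\|_{L^2}^2 = \|\eta\|_{L^2}^2 + \|\eta(\cdot - \tfrac12)\|_{L^2}^2 = 2\|\eta\|_{L^2}^2.
\end{equation}
Thus, choosing $\eta$ with $\|\eta\|_{L^2} = 1/\sqrt{2}$ (which is harmless: any nonzero bump on $(0,\tfrac12)$ can be rescaled) yields the desired $\chi$. There is no genuine obstacle here: the only ``design choice'' is exploiting the periodicity of the target frequencies $\{4p\pi\}_{p \in \Z}$ to turn the vanishing condition into a single algebraic identity on the multiplier $1-e^{-i\omega/2}$.
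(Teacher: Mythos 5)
Your proof is correct and uses a genuinely different, arguably cleaner, construction than the paper's. The paper starts from the indicator function $f = \mathbbm{1}_{[\frac14,\frac34]}$, computes its Fourier transform explicitly to see it vanishes at $4p\pi$, and then mollifies by convolution with a bump to recover smoothness and compact support, using $\widehat{f\star\rho}=\widehat{f}\widehat{\rho}$. You instead start directly with a smooth bump $\eta \in C^\infty_c((0,\tfrac12);\R)$ and antisymmetrize by a half-unit translation, $\chi = \eta - \eta(\cdot - \tfrac12)$, so that the multiplier $1-e^{-i\omega/2}$ kills all the target frequencies automatically; the disjoint supports also make the $L^2$ normalization transparent. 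Your route avoids both the explicit Fourier integral and the mollification step, and makes the role of the arithmetic progression $\{4p\pi\}$ conceptually clearer (it is exactly the zero set of $1-e^{-i\omega/2}$). The paper's version has the minor advantage of producing a concrete $\chi_0$ whose Fourier transform is a product of two explicit functions, which can be convenient if one later wants quantitative decay, but for the purposes of \cref{lem:chi} and its use in \cref{p:two-signs} both constructions serve equally well.
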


\begin{proof}
    Let $f := \mathbbm{1}_{[\frac 14, \frac 34]}$.
    A direct integration shows that
    \begin{equation}
        \widehat{f}(\omega)
        = \int_{\frac 14}^{\frac 34} e^{-i \omega t} \dd t
        = 2 e^{-i \frac{\omega}{2}} \frac{\sin (\omega / 4)}{\omega}.
    \end{equation}
    In particular, for any $p \in \Z \setminus \{0\}$, $\widehat{f}(4 p \pi) = 0$.
    Now take $\rho \in C^\infty_c(\R;\R)$ a smooth mollifier with $\supp \rho \subset [-\frac 18, \frac 18]$.
    Let $\chi_0 := f \star \rho$.
    By standard properties of convolution, $\chi_0 \in C^\infty$, $\supp \chi_0 \subset [\frac 18, \frac 78] \subset (0,1)$, and $\widehat{\chi_0} = \widehat{f} \widehat{\rho}$.
    Thus, for every $p \in \Z \setminus \{0\}$, $\widehat{\chi_0}(4 p \pi) = 0$.
    Eventually, setting $\chi := \chi_0 / \| \chi_0 \|_{L^2}$ concludes the proof.
\end{proof}

\begin{lemma}
    \label{lem:e-i-omega0-H-s}
    Let $\nu \in [0,\frac 12)$ and $\chi \in H^1_0((0,1);\R)$.
    There exists $C(\chi,\nu) > 0$ such that, for all $T > 0$ and $\omega_0 \geq 1/T$,
    \begin{equation}
        \nsob{T^{-\frac 12} e^{i \omega_0 t} \chi(t/T)}{-\nu} \leq C(\chi,\nu) \omega_0^{-\nu}.
    \end{equation}
\end{lemma}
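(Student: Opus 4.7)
Set $u(t) := T^{-1/2} e^{i\omega_0 t}\chi(t/T)$, which is supported in $(0,T)$. The plan is to analyze $u$ directly in the frequency domain. A direct computation using the change of variable $s = t/T$ gives $\widehat{u}(\omega) = T^{1/2}\widehat{\chi}\bigl(T(\omega-\omega_0)\bigr)$, hence, by \cref{def:Htilde-nu},
\begin{equation*}
    \nsob{u}{-\nu}^2 = \frac{1}{2\pi}\int_\R \langle\omega\rangle^{-2\nu}\, T\,\bigl|\widehat{\chi}\bigl(T(\omega-\omega_0)\bigr)\bigr|^2\, d\omega.
\end{equation*}
I would split according to the size of $\omega_0$. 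In the easy regime $\omega_0 \leq 2$, the trivial bound $\nsob{u}{-\nu} \leq \|u\|_{L^2} = \|\chi\|_{L^2(0,1)}$ combined with $\omega_0^{-\nu} \geq 2^{-\nu}$ suffices.

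In the interesting regime $\omega_0 > 2$ I would exploit both the oscillation and the constraint $\omega_0 T \geq 1$. Since $\chi \in H^1_0((0,1);\R)$, the zero-extension to $\R$ lies in $H^1(\R)$ with $\chi' \in L^1(\R)$, and the identity $\widehat{\chi'}(\xi) = i\xi\widehat\chi(\xi)$ yields the pointwise bound $|\widehat\chi(\xi)| \leq C_\chi/|\xi|$. Splitting the frequency axis at $|\omega-\omega_0| = \omega_0/2$, the main region $\omega \in [\omega_0/2, 3\omega_0/2]$ gives a contribution controlled by $(\omega_0/2)^{-2\nu}\|\chi\|_{L^2}^2 \lesssim \omega_0^{-2\nu}$ via Plancherel applied to $\widehat{\chi}$. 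On the tail $|\omega - \omega_0| > \omega_0/2$, the pointwise decay of $\widehat\chi$ yields
\begin{equation*}
    T\bigl|\widehat\chi\bigl(T(\omega-\omega_0)\bigr)\bigr|^2 \leq \frac{C_\chi^2}{T(\omega-\omega_0)^2},
\end{equation*}
so the tail contribution is bounded by $T^{-1}\int_{|\omega-\omega_0|>\omega_0/2}\langle\omega\rangle^{-2\nu}(\omega-\omega_0)^{-2}\, d\omega$. I would further split this tail into the three subregions $\omega > 3\omega_0/2$, $0 < \omega < \omega_0/2$, and $\omega < 0$. In each subregion, $\nu < 1/2$ ensures integrability of the relevant weight on intervals of length $\omega_0$ (yielding a factor $\omega_0^{1-2\nu}$) and direct computation gives a contribution of order $\omega_0^{-1-2\nu}/T$. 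The factor $1/(T\omega_0) \leq 1$, coming from $\omega_0 \geq 1/T$, then upgrades this to the target bound $\omega_0^{-2\nu}$.

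The main technical obstacle lies in the subregion $\omega < 0$, where the weight $\langle\omega\rangle^{-2\nu}$ offers no decay as $|\omega| \to \infty$. Here, the argument relies on the bound $(\omega_0-\omega)^{-2} = (\omega_0 + |\omega|)^{-2}$ combined with a splitting at $|\omega| = \omega_0$ and the integrability estimate $\int_0^{\omega_0}\langle y\rangle^{-2\nu}\, dy \lesssim \omega_0^{1-2\nu}$ (which uses $\nu < 1/2$), to secure a contribution of order $\omega_0^{-1-2\nu}/T \lesssim \omega_0^{-2\nu}$. The other two subregions are handled similarly, with $(\omega-\omega_0)^{-2} \lesssim \omega^{-2}$ in the first and $(\omega_0-\omega)^{-2} \lesssim \omega_0^{-2}$ in the second.
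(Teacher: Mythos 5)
Your proof is correct. The overall strategy matches the paper's: pass to the Fourier side via $\widehat u(\omega) = T^{1/2}\widehat\chi(T(\omega-\omega_0))$, then combine the $L^2$ control of $\widehat\chi$ near the peak at $\omega_0$ with the pointwise decay $|\widehat\chi(\xi)|\lesssim 1/|\xi|$ (coming from $\chi' \in L^2$) away from it, and use $T\omega_0 \geq 1$ to convert the leftover factor $1/T$ into the target scale $\omega_0^{-2\nu}$.

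The paper's route is a bit cleaner in its decomposition. After the change of variables $\sigma = T(\omega-\omega_0)$, the paper first bounds the weight $\langle\omega_0+\sigma/T\rangle \geq |T\omega_0 + \sigma|/T$ (dropping the Japanese bracket entirely), and then splits at $|T\omega_0+\sigma| = T\omega_0/2$, which in the original variable is $|\omega| = \omega_0/2$. This gives exactly two regions, no $\omega_0 \leq 2$ case, and each region is dispatched in one line. You instead keep the weight $\langle\omega\rangle^{-2\nu}$ and split at $|\omega-\omega_0| = \omega_0/2$, which forces three further subcases in the tail (one per sign/size of $\omega$) plus the separate $\omega_0 \leq 2$ regime. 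All of your subregion estimates check out — in particular the bound $\int_0^{\omega_0}\langle y\rangle^{-2\nu}\,dy \lesssim_\nu \omega_0^{1-2\nu}$ does use $\nu < \tfrac12$ correctly, and $(\omega-\omega_0)^{-2}\lesssim \omega^{-2}$ is valid on $\omega \geq 3\omega_0/2$ since there $\omega - \omega_0 \geq \omega/3$. So the two arguments buy you the same bound; the paper's upfront replacement of the bracket weight just trades some case analysis for a single stroke.
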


\begin{proof}
    Let $f(t) := T^{-\frac 12} e^{i \omega_0 t} \chi(t/T)$.
    Its Fourier transform is $\widehat{f}(\omega) = T^{\frac 12} \widehat{\chi}(T(\omega - \omega_0))$.
    Using the definition \eqref{eq:Hnu} and the change of variables $\sigma = T(\omega - \omega_0)$, the squared norm is
    \begin{equation}
        \nsob{f}{-\nu}^2
        = \frac{1}{2 \pi} \int_{\R} \frac{|\widehat{\chi}(\sigma)|^2 }{\langle \omega_0 + \sigma/T \rangle^{2\nu}} \dd\sigma
        \leq \frac{T^{2\nu}}{2 \pi} \int_{\R} \frac{|\widehat{\chi}(\sigma)|^2}{|A + \sigma|^{2\nu}} \dd\sigma,
    \end{equation}
    using $\langle \omega_0 + \sigma/T \rangle \geq |A+\sigma|/T$, where $A := T \omega_0 \geq 1$.
    We split the integral in two regions.

    First, when $|A+\sigma| \geq A/2$, we have $|A + \sigma|^{-2\nu} \leq 2^{2\nu} A^{-2\nu}$.
    Hence
    \begin{equation}
        \int_{|A+\sigma| \geq A/2} \frac{|\widehat{\chi}(\sigma)|^2}{|A + \sigma|^{2\nu}} \dd\sigma
        \leq 2^{2\nu} A^{-2\nu} \| \widehat{\chi} \|_{L^2}^2.
    \end{equation}
    Second, when $|A+\sigma| \leq A/2$, $|\sigma| \geq A/2$.
    Since $\chi \in H^1_0((0,1);\R)$, for all $\sigma \in \R$, $|\widehat{\chi}(\sigma)| \leq \| \chi' \|_{L^2} / |\sigma|$.
    Thus
    \begin{equation}
        \int_{|A+\sigma| \leq A/2} \frac{|\widehat{\chi}(\sigma)|^2}{|A + \sigma|^{2\nu}} \dd\sigma
        \leq \left(\frac{\| \chi' \|_{L^2}}{A/2}\right)^2 \int_{|y| < A/2} \frac{\dd y}{|y|^{2\nu}}
        = \frac{4 \| \chi' \|_{L^2}^2}{A^2} \frac{2^{2\nu} A^{1-2\nu}}{1-2\nu},
    \end{equation}
    where we used that $\nu < \frac 12$ to obtain the convergence of the integral.
    Recalling that $A \geq 1$, we conclude the proof with $C(\chi,\nu) := 2^{\nu} \| \chi \|_{L^2} + 2^{\nu} \| \chi' \|_{L^2} (1-2\nu)^{-\frac 12}$.
\end{proof}

\begin{proposition}
    \label{p:two-signs}
    There exists $C > 0$ such that the following property holds.
    For all $\nu \in (0,\frac12)$, $T > 0$ and $\eta > 0$, there exist $V^\pm \in C^\infty_c((0,T);\R)$ with $\| V^\pm \|_{L^2} \leq C$ such that
    \begin{equation}
        \label{eq:two-signs}
        \frac{1}{2\pi} \int_\R \Hpv |\widehat{V^\pm}|^2 = \pm T,
        \qquad
        \nsob{V^\pm}{-\nu} \leq \eta
        \qquad \text{and} \qquad
        \sum_{j\in \N} \langle j \rangle^4 | \widehat{V^\pm}(\lambda_j) |^2 \leq \eta^2.
    \end{equation}
\end{proposition}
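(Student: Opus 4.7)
The plan is to build $V^\pm$ as modulated wave packets
\[
V^\pm(t) := \alpha^\pm T^{-\frac12} \cos(\omega_0^\pm t)\,\chi(t/T),
\]
with $\chi \in C^\infty_c((0,1);\R)$ from \cref{lem:chi}, an index $j_0\in\N^*$, a non-zero integer $p$, and frequencies $\omega_0^\pm$ placed symmetrically on either side of $\lambda_{j_0}$ at distance $4p\pi/T$, all to be chosen. The Fourier transform
\[
\widehat{V^\pm}(\omega)=\tfrac{\alpha^\pm T^{1/2}}{2}\bigl(\widehat{\chi}(T(\omega-\omega_0^\pm))+\widehat{\chi}(T(\omega+\omega_0^\pm))\bigr)
\]
is then sharply concentrated around $\pm\omega_0^\pm$ on scale $1/T$, and the zero $\widehat{\chi}(4p\pi)=0$ from \cref{lem:chi} kills the dominant contribution at $\omega=\lambda_{j_0}$. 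The two opposite offsets $\mp 4p\pi/T$ exploit the sign change of $\Hpv$ across its pole at $\lambda_{j_0}$ to produce both signs.

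First I would extract an infinite sequence of indices $j_0$ along which $\lambda_{j_0}^2|c_{j_0}|$ remains bounded below by a positive constant, by unfolding the identity $\langle\varphi_j,\mu\varphi_k\rangle=\tfrac12(\langle\mu,\varphi_{|j-k|}\rangle+\langle\mu,\varphi_{j+k}\rangle)$ for $j\in\N^*$, $j\neq k$, together with \eqref{eq:mu-phi_j} and the hypothesis \eqref{eq:hyp-c0-k}. Given such a $j_0$ with $s_0:=\sign(c_{j_0})$, I set $\omega_0^+:=\lambda_{j_0}-s_0\cdot 4p\pi/T$ and $\omega_0^-:=\lambda_{j_0}+s_0\cdot 4p\pi/T$ for a non-zero integer $p$.

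The core computation is the estimate of $\int_\R \Hpv|\widehat{V^\pm}|^2$. After localizing to a neighborhood of $\pm\omega_0^\pm$, the change of variable $\sigma=T(\omega-\omega_0^\pm)$ and the explicit formula $\Hpv(\omega)=\lambda_{j_0}^2 c_{j_0}/(2(\lambda_{j_0}-|\omega|))$ valid in the region where $j_\omega=j_0$ produce the leading-order quantity $\pm(\alpha^\pm)^2\lambda_{j_0}^2|c_{j_0}|T\cdot I_p/8$, where
\[
I_p:=\int_\R \frac{|\widehat{\chi}(\sigma)|^2}{4p\pi-\sigma}\,d\sigma
\]
is an ordinary Lebesgue integral, since the double zero of $|\widehat{\chi}|^2$ at $\sigma=4p\pi$ cancels the denominator's pole. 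For $|p|$ large enough, $I_p$ has a definite sign and a lower bound, so setting $(\alpha^\pm)^2:=16\pi/(\lambda_{j_0}^2|c_{j_0}|I_p)$ — bounded thanks to the control on $\lambda_{j_0}^2|c_{j_0}|$ — yields $\int\Hpv|\widehat{V^\pm}|^2=\pm 2\pi T$ at leading order. The cross-term near $\mp\omega_0^\pm$, the contribution of $\Hpv$ away from $\pm\omega_0^\pm$, and the discrepancy between $\Hpv$ and its principal singular piece are controlled via \cref{p:Hpv-bound} and are negligible for $j_0$ and $T^{-1}$ large enough; the resulting small remainder is absorbed by a one-dimensional continuity argument that slightly perturbs $\alpha^\pm$.

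Finally, \cref{lem:e-i-omega0-H-s} directly gives $\|V^\pm\|_{H^{-\nu}}\lesssim\lambda_{j_0}^{-\nu}$. The Schwartz decay of $\widehat{\chi}$ combined with the gap structure of the spectrum shows that $\sum_j\langle j\rangle^4|\widehat{V^\pm}(\lambda_j)|^2$ decays faster than any polynomial in $j_0$, the term $j=j_0$ being annihilated by the choice of $\omega_0^\pm$. Both smallness conditions are then met by choosing $j_0$ sufficiently large depending on $\eta$, while $\|V^\pm\|_{L^2}$ stays uniformly bounded since $\alpha^\pm$ is. The main obstacle I anticipate is step one: establishing the infinite sequence of $j_0$ with $\lambda_{j_0}^2|c_{j_0}|$ bounded below from \eqref{eq:hyp-c0-k} alone, which requires ruling out pathological cancellations between $\langle\mu,\varphi_{|j-k|}\rangle$ and $\langle\mu,\varphi_{j+k}\rangle$; a minor secondary obstacle is verifying that $I_p\neq 0$ for a suitable $p$, which for $|p|$ large reduces to $I_p\sim\|\widehat{\chi}\|_{L^2}^2/(4p\pi)\neq 0$.
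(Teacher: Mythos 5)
Your proposal takes essentially the same route as the paper: a modulated cosine wave packet $\alpha^\pm T^{-1/2}\cos(\omega_0^\pm t)\,\chi(t/T)$ with $\omega_0^\pm$ offset from $\lambda_{j_0}$ by a fixed multiple of $4\pi/T$ (the paper's $\omega_0 = \lambda_{j_0} + \eps_0\beta/T$ with $\beta\in 4\pi\N^*$, $\eps_0\in\{\pm1\}$), the zero of $\widehat\chi$ at $4p\pi$ absorbing the principal value and killing the dominant moment at $j=j_0$, and \cref{lem:e-i-omega0-H-s}, \cref{p:Hpv-bound}, \cref{lem:chi} handling the remainders. The one substantive gap is your anticipated obstacle, which in fact is not one: by \cref{lem:J-mu} the lower bound $|\lambda_j^2 c_j|\geq a_\mu$ holds for \emph{all} $j\geq J_\mu$, not merely a subsequence. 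Applying \eqref{eq:mu-phi_j} directly to $\mu\varphi_k$ (rather than unfolding $\varphi_j\varphi_k$) shows that the leading asymptotics of both $\langle\mu,\varphi_j\rangle$ and $\langle\mu\varphi_k,\varphi_j\rangle$ are governed by $a_\pm:=\mu'(1)\pm\mu'(0)$, and hypothesis \eqref{eq:hyp-c0-k} forces $a_\pm\neq 0$; even in your product-formula route there is no cancellation, since $(-1)^{j-k}=(-1)^{j+k}$ makes the leading numerators of $\langle\mu,\varphi_{|j-k|}\rangle$ and $\langle\mu,\varphi_{j+k}\rangle$ agree in sign.
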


\begin{proof}
    Let $\chi$ be given by \cref{lem:chi}.
    In the sequel of this proof, we will denote by $C_\chi$ various constants depending only on $\chi$.
    In particular, since $\chi \in C^\infty_c((0,1);\R)$, $| \widehat{\chi}(\omega) | \leq C_\chi \langle \omega \rangle^{-4}$.

    For $T > 0$, let $\omega_0 := \lambda_{j_0} + \frac{\eps_0\beta}{T}$ where $j_0 \in \N^*$, $\beta > 0$ and $\eps_0 \in \{ \pm 1 \}$ are to be chosen later.
    Define
    \begin{equation}
        u(t) := 2 T^{-\frac 12} \cos (t \omega_0) \chi (t/T).
    \end{equation}
    Thus,
    \begin{equation}
        \label{eq:hat-u-cos}
        \widehat{u}(\omega) = T^{\frac 12} \widehat{\chi}(T \omega + T \omega_0) + T^{\frac 12} \widehat{\chi}(T \omega - T \omega_0).
    \end{equation}
    We claim that one can choose $\omega_0$ in such a way that $u$ will satisfy the requirements for $V^\pm$.
    First, one has $\|u\|_{L^2(0,T)} \leq 2 \|\chi\|_{L^2(0,1)} = 2$.
    We now study the three items of \eqref{eq:two-signs}.

    \step[st:two-signs-1]{Smallness of weak Sobolev norms of the candidate control.}
    For any fixed $\nu \in (0,\frac 12)$, by \cref{lem:e-i-omega0-H-s}, there exists a constant $C(\chi,\nu)$ such that, for all $T > 0$ and $\omega_0 \geq 1/T$,
    \begin{equation}
        \nsob{u}{-\nu} \leq 2 C(\chi,\nu) \omega_0^{-\nu}.
    \end{equation}
    Thus, for any fixed $(T, \varepsilon_0,\beta,\eta)$, for $j_0$ large enough, $\nsob{u}{-\nu} \leq \eta$.
    In particular, for any fixed $(T,\varepsilon_0,\beta)$, for $j_0$ large enough, $\nsob{u}{-\frac14}^2 \leq T/\beta^2$ (which will be used in \cref{st:two-signs-2}).

    \step[st:two-signs-2]{Computation of $\Hpv$.
    We prove that there exists a constant $M(\chi,\mu)$ such that, for any fixed tuple $(T,\varepsilon_0,\beta)$ with $\beta > 2^{\frac 13}$, for $j_0$ large enough,}
    \begin{equation}
        \label{eq:step2}
        \Bigg| \frac{1}{2\pi} \int_\R \Hpv |\widehat{u}|^2 + \varepsilon_0 \lambda_{j_0}^2 c_{j_0} \frac{T}{\beta} \Bigg| \leq \frac{M T}{\beta^2}.
    \end{equation}
    Recalling the notation \eqref{eq:F-eps}, write
    \begin{equation}
        \label{eq:Hpv-b2T}
        \int_\R \Hpv |\widehat{u}|^2 =
        \int_{F_{1/(\beta^2T)}} \Hpv |\widehat{u}|^2 +
        \int_{\R \setminus F_{1/(\beta^2T)}} \Hpv |\widehat{u}|^2.
    \end{equation}
    We now estimate both terms.
    \begin{itemize}
        \item \emph{Regions close to the poles.}
        By \cref{p:Hpv-bound} with $\delta = 1/\beta^2$, there exists $C_1 > 0$ such that
        \begin{equation}
            \label{eq:step2-close-1}
            \Bigg| \int_{\R \setminus F_{1/(\beta^2T)}} \Hpv |\widehat{u}|^2 \Bigg| \leq \frac{C_1 T}{\beta^2} \|u\|_{L^2}^2 + C_1 \nsob{u}{-\frac14}^2
            \leq \frac{5 C_1 T}{\beta^2}
        \end{equation}
        for $j_0$ large enough (for fixed $(T, \varepsilon_0, \beta)$), using $\|u\|_{L^2} \leq 2$ and \cref{st:two-signs-1}.

        \item \emph{Regions far from the poles.}
        We focus on the first term of \eqref{eq:Hpv-b2T}.
        Let $I_0 := [\omega_0 - \frac{\beta}{2T}, \omega_0 + \frac{\beta}{2T}]$.
        First, let us prove that there exists $C_2 > 0$ such that
        \begin{equation}
            \label{eq:step2-far-1}
            \Bigg| \int_{F_{1/(\beta^2 T)}} \Hpv |\widehat{u}|^2
            - \lambda_{j_0}^2 c_{j_0} \int_{I_0} \frac{|\widehat{u}(\omega)|^2}{\lambda_{j_0}-\omega} \dd \omega \Bigg|
            \leq \frac{C_2 T}{\beta^2}.
        \end{equation}
        Taking $\beta > 2^{\frac 13}$ and $j_0>\frac{3\beta}{2\pi^2 T}$, we have $I_0 \subset F_{1/(\beta^2T)}$ and $\Hpv(\omega) = \frac 12 \lambda_{j_0}^2 c_{j_0} / (\lambda_{j_0} - \omega)$ on $I_0$.

        By \eqref{cj_asympt}, there exists $A_\mu > 0$ such that, for all $j \in \N$, $|\lambda_j^2 c_j| \leq A_\mu$.
        In particular, by \eqref{eq:H_pv-j_om}, for $\omega \in F_{1/(\beta^2T)}$, $|\Hpv(\omega)|\leq A_\mu \beta^2 T$.
        Hence, using that $\Hpv |\widehat{u}|^2$ is even,
        \begin{equation}
            \Bigg| \int_{F_{1/(\beta^2 T)}} \Hpv |\widehat{u}|^2
            - \lambda_{j_0}^2 c_{j_0} \int_{I_0} \frac{|\widehat{u}(\omega)|^2}{\lambda_{j_0}-\omega} \dd \omega \Bigg|
            \leq A_\mu \beta^2 T \int_{||\omega| - \omega_0| \geq \frac{\beta}{2T}} |\widehat{u}(\omega)|^2 \dd \omega.
        \end{equation}
        Since $\chi \in C^\infty_c((0,1);\R)$, using \eqref{eq:hat-u-cos},
        \begin{equation}
            \int_{||\omega| - \omega_0| \geq \frac{\beta}{2T}} |\widehat{u}(\omega)|^2 \dd \omega
            \leq 4 \int_{|\omega| \geq \frac{\beta}{2}} |\widehat{\chi}(\omega)|^2 \dd \omega
            \leq \frac{C_\chi}{\beta^4}.
        \end{equation}
        Combining these two estimates yields \eqref{eq:step2-far-1} with $C_2 := A_\mu C_\chi$.

        Second, using again the decay of $\widehat{\chi}$, \eqref{eq:hat-u-cos} and Cauchy--Schwarz, one proves that there exists $C_3 > 0$ such that
        \begin{equation}
            \label{eq:step2-far-2}
            \Bigg| \int_{I_0} \frac{|\widehat{u}(\omega)|^2}{\lambda_{j_0}-\omega} \dd \omega -
            \int_{I_0} \frac{T |\widehat{\chi}(T(\omega-\omega_0))|^2}{\lambda_{j_0}-\omega} \dd \omega \Bigg| \leq \frac{C_3 T}{\beta^2}.
        \end{equation}

        Third, by a change of variables,
        \begin{equation}
            \int_{I_0} \frac{T | \widehat{\chi}(T(\omega-\omega_0))|^2}{\lambda_{j_0}-\omega} \dd \omega
            = - \frac{\eps_0 T}{\beta} \int_{-\frac{\beta}{2}}^{\frac{\beta}{2}} \frac{|\widehat{\chi}(\sigma)|^2}{1+\frac{\eps_0\sigma}{\beta}} \dd \sigma.
        \end{equation}
        Using $\left\vert \frac{1}{1+x} - 1 \right\vert \leq 2 \vert x \vert$ for all $\vert x \vert \leq \frac{1}{2}$ and the decay of $\chi$, one finds
        \begin{equation}
            \label{eq:step2-far-3_bis}
            \begin{split}
                \left\vert \int_{I_0} \frac{T | \widehat{\chi}(T(\omega-\omega_0))|^2}{\lambda_{j_0}-\omega} \dd \omega + 2\pi \frac{\eps_0 T}{\beta} \right\vert
                & \leq \frac{T}{\beta} \int_{-\frac{\beta}{2}}^{\frac{\beta}{2}} 2 \frac{\vert \sigma \vert}{\beta} |\widehat{\chi}(\sigma)|^2 \dd \sigma
                + \frac{T}{\beta} \int_{\vert \sigma \vert > \frac{\beta}{2}} |\widehat{\chi}(\sigma)|^2 \dd \sigma
                \\ & \leq \frac{C_4 T}{\beta^2},
            \end{split}
        \end{equation}
        for some $C_4 > 0$.
    \end{itemize}
    Eventually, combining \eqref{eq:step2-close-1}, \eqref{eq:step2-far-1}, \eqref{eq:step2-far-2} and \eqref{eq:step2-far-3_bis} proves \eqref{eq:step2}.

    \step{Choice of the parameters.}
    Using \cref{lem:J-mu}, let $a_\mu > 0$ such that $|\lambda_j^2 c_j| \geq a_\mu$ for $j$ large enough.
    We now fix a number $\beta \in 4 \pi \N^*$ such that $\beta \geq 2 M / a_\mu$.
    Then, for any fixed $T > 0$ and $\varepsilon_0 \in \{ \pm 1 \}$, for $j_0$ large enough, by \eqref{eq:step2}, one has
    \begin{equation}
        \frac{1}{2 \pi} \int_\R \Hpv |\widehat{u}|^2 = - \varepsilon_0 T \left( \frac{\lambda_{j_0}^2 c_{j_0}}{\beta}(1+\delta_u) \right),
    \end{equation}
    where $|\delta_u| \leq \frac 12$.
    The absolute value of the term inside the parenthesis is uniformly bounded below and above.
    Thus, one can define $V^\pm$ by choosing $\varepsilon_0$ and rescaling $u$, yielding a uniform constant $C$ for the $L^2$ norm of $V^\pm$.

    \step{Estimate of the moments sum.}
    To estimate the weighted sum of the moments in \eqref{eq:two-signs}, we must separate the one for $j = j_0$.
    \begin{itemize}
        \item For $j = j_0$, recalling that $\omega_0 = \lambda_{j_0} + \frac{\eps_0 \beta}{T}$, by \eqref{eq:hat-u-cos},
        \begin{equation}
            \widehat{u}(\lambda_{j_0}) = T^{\frac 12} \widehat{\chi}(2 T \lambda_{j_0} + \eps_0 \beta) + T^{\frac 12} \widehat{\chi}(- \eps_0 \beta)
        \end{equation}
        We have chosen $\beta \in 4 \pi \N^*$, so that, by \cref{lem:chi}, $\widehat{\chi}(-\eps_0 \beta) = 0$.
        If $\pi^2 j_0^2 \geq \beta/T$, we have $2T\lambda_{j_0} + \eps_0 \beta \geq T \lambda_{j_0}$, hence $|\widehat{u}(\lambda_{j_0})| \leq C_\chi T^{1/2} \langle T \lambda_{j_0} \rangle^{-4}$.
        Thus $\langle j_0 \rangle^4 |\widehat{u}(\lambda_{j_0})|^2 \leq C_\chi j_0^{4-16} \leq \frac 12 \eta^2$ for $j_0$ large enough.

        \item For $j \neq j_0$, if $j_0 \geq \beta/T$, then $|\lambda_j \pm \omega_0| \geq (j+j_0)$.
        So, using \eqref{eq:hat-u-cos},
        \begin{equation}
            |\widehat{u}(\lambda_{j})|^2 \leq C_\chi T \langle T(j+j_0) \rangle^{-8}.
        \end{equation}
        Hence
        \begin{equation}
            \sum_{j \neq j_0} \langle j \rangle^4 |\widehat{u}(\lambda_{j})|^2
            \leq \frac{C_\chi}{T^{15} j_0^3} \leq \frac 12 \eta^2
        \end{equation}
        for $j_0$ large enough.\qedhere
    \end{itemize}
\end{proof}

\begin{lemma}
    \label{lem:J-mu}
    If $\mu \in H^2((0,1);\R)$ satisfies \eqref{eq:hyp-c0-k}, there exist $a_\mu > 0$ and $J_\mu \in \N$ such that
    \begin{equation}
        \label{eq:Jmu}
        \forall j \geq J_\mu, \quad
        |\lambda_j^2 c_j| \geq a_\mu.
    \end{equation}
\end{lemma}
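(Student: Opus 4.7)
The plan is to express both factors in $\lambda_j^2 c_j = (\lambda_j \langle \mu, \varphi_j\rangle) \cdot (\lambda_j \langle \varphi_j, \mu\varphi_k\rangle)$ via the integration-by-parts identity \eqref{eq:mu-phi_j}, then use assumption \eqref{eq:hyp-c0-k} to force the dominant (boundary) terms to be non-zero along both parities of $j$.

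First, applying \eqref{eq:mu-phi_j} to $\mu$ and rearranging, I would write
\begin{equation}
    \lambda_j \langle \mu, \varphi_j \rangle = \sqrt{2}\bigl((-1)^j \mu'(1) - \mu'(0)\bigr) - \langle \mu'', \varphi_j\rangle,
\end{equation}
where $\langle \mu'', \varphi_j\rangle \to 0$ as $j\to\infty$ since $\mu''\in L^2$. Since $\varphi_k(x) = \sqrt{2}\cos(k\pi x)$ satisfies $\varphi_k'(0)=\varphi_k'(1)=0$ and $\mu\varphi_k \in H^2((0,1);\R)$, the same identity applied to $\mu\varphi_k$ gives $(\mu\varphi_k)'(0) = \sqrt{2}\mu'(0)$ and $(\mu\varphi_k)'(1) = \sqrt{2}(-1)^k\mu'(1)$, hence
\begin{equation}
    \lambda_j \langle \varphi_j, \mu\varphi_k \rangle = 2\bigl((-1)^{j+k} \mu'(1) - \mu'(0)\bigr) - \langle (\mu\varphi_k)'', \varphi_j\rangle,
\end{equation}
with the remainder again tending to $0$.

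Next, assumption \eqref{eq:hyp-c0-k} gives $\lambda_j |\langle \mu,\varphi_j\rangle| \geq c \lambda_j/\langle j\rangle^2 \to c\pi^2 > 0$, so the limit of $\lambda_j\langle \mu,\varphi_j\rangle$ along any subsequence must be bounded away from $0$. Extracting the even and odd subsequences separately, this forces
\begin{equation}
    \mu'(1) - \mu'(0) \neq 0 \quad\text{and}\quad \mu'(1) + \mu'(0) \neq 0.
\end{equation}
In particular, both quantities $\sqrt{2}((-1)^j\mu'(1)-\mu'(0))$ and $2((-1)^{j+k}\mu'(1)-\mu'(0))$ are, independently of the parity of $j$ (and of $k$), bounded below in absolute value by some $\delta > 0$.

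Finally, for $j$ large enough the remainder terms $\langle \mu'',\varphi_j\rangle$ and $\langle (\mu\varphi_k)'',\varphi_j\rangle$ are each of absolute value at most $\delta/2$, so that both $|\lambda_j\langle \mu,\varphi_j\rangle|$ and $|\lambda_j\langle \varphi_j,\mu\varphi_k\rangle|$ are at least $\delta/2$. Multiplying, we obtain $|\lambda_j^2 c_j| \geq \delta^2/4 =: a_\mu$ for all $j\geq J_\mu$, which is the desired conclusion. No step is really an obstacle here; the only subtlety is choosing to apply the integration-by-parts identity \eqref{eq:mu-phi_j} also to $\mu\varphi_k$ (legitimate because $\varphi_k$ vanishes in its derivative at the endpoints, so the boundary data of $\mu\varphi_k$ reduces cleanly to that of $\mu$).
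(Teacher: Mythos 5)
Your proposal is correct and follows essentially the same route as the paper: apply \eqref{eq:mu-phi_j} to both $\mu$ and $\mu\varphi_k$, use \eqref{eq:hyp-c0-k} to extract $\mu'(1)\pm\mu'(0)\neq 0$ from the even and odd subsequences, and let the $o(1)$ remainders be absorbed. You also correctly track the $\sqrt{2}$ boundary factors coming from $\varphi_k(0)=\sqrt{2}$ and $\varphi_k(1)=\sqrt{2}(-1)^k$ (the paper's displayed formula for $\langle\mu\varphi_k,\varphi_j\rangle$ drops a factor of $2$ in the leading term, but this has no effect on the argument since no explicit constant is needed).
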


\begin{proof}
    Since $\mu \in H^2$, $\langle \mu'',\varphi_j\rangle = o(1)$.
    Thus, assumption \eqref{eq:hyp-c0-k} and equality \eqref{eq:mu-phi_j} imply that $a_\pm := \mu'(1) \pm \mu'(0) \neq 0$.
    Applying equality \eqref{eq:mu-phi_j} with $\mu \gets \mu \varphi_k$, we obtain
    \begin{equation}
        \langle \mu \varphi_k, \varphi_j \rangle
        = 2 \frac{(-1)^{j+k}\mu'(1)-\mu'(0)}{(j\pi)^2} - \frac{\langle (\mu \varphi_k)'',\varphi_j \rangle}{(j\pi)^2}.
    \end{equation}
    Since $\mu \varphi_k \in H^2$, $\langle (\mu \varphi_k)'',\varphi_j \rangle = o(1)$.
    Combined with \eqref{eq:mu-phi_j} and the definition \eqref{def:cj_a} of the coefficients $c_j$, this implies that for any fixed $a_\mu < \min \{ a_-^2, a_+^2 \}$, there exists a $J_\mu$ satisfying \eqref{eq:Jmu}.
\end{proof}

\begin{proposition}
    \label{p:two-signs-null-moments}
    Let $\nu \in (0,\frac12)$.
    There exists $C > 0$ such that the following property holds.
    For all $T > 0$ and $\eta > 0$, there exist $U^\pm \in H^1_0((0,T);\R)$ with $\| U^\pm \|_{L^2} \leq C$ such that
    \begin{equation}
        \label{eq:two-signs-null-moments}
        \frac{1}{2\pi} \int_\R \Hpv |\widehat{U^\pm}|^2 = \pm T,
        \qquad
        \nsob{U^\pm}{-\nu} \leq \eta
        \qquad \text{and} \qquad
        \forall j \in \N, \quad \widehat{U^\pm}(\lambda_j) = 0.
    \end{equation}
\end{proposition}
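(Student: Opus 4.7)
\medskip

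The plan is to produce $U^\pm$ by perturbing the controls $V^\pm$ from \cref{p:two-signs} so as to cancel the moments $\widehat{V^\pm}(\lambda_j)$, while preserving the smallness in $H^{-\nu}$ and the sign of the $\Hpv$ integral. The corrector comes from \cref{p:Megane+1}, which realizes any target sequence at the Fourier moments by an $H^1_0$ control whose $L^2$ cost is only governed by the $h^0$ norm of the target.

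Concretely, fix a small parameter $\eta' \in (0, \eta]$ (to be chosen later depending on $T$ and $\eta$), and apply \cref{p:two-signs} with $\eta'$ in place of $\eta$ to obtain $V^\pm \in C^\infty_c((0,T);\R)$ satisfying \eqref{eq:two-signs}. Define the sequence $d^\pm := (\widehat{V^\pm}(\lambda_j))_{j \in \N}$. Since $V^\pm$ is real-valued, $d_0^\pm = \int_0^T V^\pm \in \R$, and the third estimate in \eqref{eq:two-signs} ensures $d^\pm \in h^2_r(\N;\C)$ with $\|d^\pm\|_{h^2} \leq \eta'$. Apply \cref{p:Megane+1} to obtain a control $w^\pm := L_T(d^\pm) \in H^1_0((0,T);\R)$ with $\widehat{w^\pm}(\lambda_j) = d_j^\pm$ for all $j$ and
\begin{equation}
    \|w^\pm\|_{L^2} \leq C_T \|d^\pm\|_{h^0} \leq C_T \eta'.
\end{equation}
Set $\tilde{U}^\pm := V^\pm - w^\pm \in H^1_0((0,T);\R)$. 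By construction, all moments $\widehat{\tilde{U}^\pm}(\lambda_j)$ vanish.

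It remains to verify that the three quantitative requirements of \eqref{eq:two-signs-null-moments} are preserved. For the $H^{-\nu}$ estimate, since $\nu \geq 0$, one has $\nsob{w^\pm}{-\nu} \leq \|w^\pm\|_{L^2} \leq C_T \eta'$, hence $\nsob{\tilde{U}^\pm}{-\nu} \leq (1+C_T)\eta'$. For the sign of the $\Hpv$ integral, expand $|\widehat{\tilde{U}^\pm}|^2 = |\widehat{V^\pm}|^2 - 2\Re(\widehat{V^\pm}\overline{\widehat{w^\pm}}) + |\widehat{w^\pm}|^2$ and use \cref{p:Hpv-CT} together with the uniform bound $\|V^\pm\|_{L^2} \leq C$ to get
\begin{equation}
    \left| \frac{1}{2\pi} \int_\R \Hpv |\widehat{\tilde{U}^\pm}|^2 \mp T \right| \lesssim T \|V^\pm\|_{L^2}\|w^\pm\|_{L^2} + T\|w^\pm\|_{L^2}^2 \leq C'_T \eta'.
\end{equation}

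Choose $\eta' \in (0,\eta]$ small enough (depending on $T$ and $\eta$) so that $(1+C_T)\eta' \leq \eta$ and $C'_T \eta' \leq T/2$; then the integral has the correct sign with magnitude in $[T/2, 3T/2]$. Rescale by the factor $\alpha^\pm := \sqrt{T / |\tfrac{1}{2\pi}\int_\R \Hpv |\widehat{\tilde{U}^\pm}|^2|} \in [\sqrt{2/3}, \sqrt{2}]$ and set $U^\pm := \alpha^\pm \tilde{U}^\pm$. This preserves the vanishing moments and the $H^{-\nu}$ smallness (up to the factor $\sqrt{2}$), produces exactly $\pm T$ for the $\Hpv$ integral, and yields $\|U^\pm\|_{L^2} \leq \sqrt{2}(C+1)$, a bound that is uniform in $T$ and $\eta$. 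The only subtle point in this argument is that the corrector $w^\pm$ has a $T$-dependent $L^2$ cost from \cref{p:Megane+1}, but since we are free to choose $\eta'$ small in a $T$-dependent way while the uniform control $V^\pm$ is unaffected, the final $L^2$ bound on $U^\pm$ remains uniform.
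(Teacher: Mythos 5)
Your proof is correct and takes essentially the same approach as the paper: apply \cref{p:Megane+1} to produce a small $H^1_0$ corrector cancelling the moments of $V^\pm$, estimate the perturbation of the $\Hpv$ integral via \cref{p:Hpv-CT}, and rescale. (A minor remark: your use of the square-root rescaling $\alpha^\pm = \sqrt{T/|\cdot|}$ is the correct one for the quadratic quantity $\int\Hpv|\widehat{\cdot}|^2$; the paper's displayed factor $1/c$ is a slip for $1/\sqrt{c}$.)
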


\begin{proof}
    Fix $\nu \in (0,\frac 12)$.
    Let $C_1 > 0$ be given by \cref{p:two-signs}.
    Let $T > 0$ and $\eta > 0$.
    We construct $U^+$.
    We will consider a control of the form $U^+ := V^+ + W$ where $V^+ \in H^1_0((0,T);\R)$ is given by \cref{p:two-signs} for some $\eta' > 0$ to be chosen later, and $W := L_T(d)$, where $L_T : h^2_r(\N;\C) \to H^1_0((0,T);\R)$ is the map given by \cref{p:Megane+1}, associated with a continuity constant $C_T$ from $h^2 \to H^1_0$ as in \eqref{eq:Megane+1}, and $d_j := - \widehat{V^+}(\lambda_j)$.
    This guarantees that, for all $j \in \N$, $\widehat{U^+}(\lambda_j) = 0$.
    We have, by \eqref{eq:Megane+1} and \eqref{eq:two-signs},
    \begin{equation}
        \| W \|_{H^1} \leq C_T \| d \|_{h^2} \leq C_T \eta'.
    \end{equation}
    Thus, choosing $\eta'$ small enough, by triangular inequality, $\nsob{U^+}{-\nu} \leq \nsob{V^+}{-\nu} + \|W\|_{L^2} \leq \frac{\eta}{2}$.
    Now,
    \begin{equation}
        \int_\R \Hpv |\widehat{U^+}|^2
        = \int_\R \Hpv |\widehat{V^+}|^2 + 2 \Re \int_\R \Hpv \widehat{V^+} \overline{\widehat{W}} + \int_\R \Hpv |\widehat{W}|^2.
    \end{equation}
    By \cref{p:Hpv-CT}, there exists $C_0 > 0$ such that
    \begin{equation}
        \begin{split}
            \Big| 2 \Re \int_\R \Hpv \widehat{V^+} \overline{\widehat{W}} + \int_\R \Hpv |\widehat{W}|^2 \Big|
            & \leq C_0 T \| V^+ \|_{L^2} \| W \|_{L^2} + C_0 T \| W \|_{L^2}^2 \\
            & \leq C_0 T C_1 C_T \eta' + C_0 T (C_T \eta')^2 \leq \pi T
        \end{split}
    \end{equation}
    up to choosing $\eta'$ small enough.
    Hence, there exists $c \in (\frac12,\frac32)$ such that $\frac{1}{2\pi} \int_\R \Hpv |\widehat{U^+}|^2 = c T$.
    Eventually, choosing $U^+ := \frac{1}{\sqrt{c}}(V^+ + W)$ satisfies all conditions of \eqref{eq:two-signs-null-moments} and $C := 2 C_1$.
\end{proof}

\begin{proposition}
    \label{p:vect-tgt-Im}
    There exist $T^*, C > 0$ such that, for all $T \in (0,T^*)$, there exist controls $u^{\pm i} \in L^2((0,T);\R)$ with $u_1^{\pm i}(T) = 0$ satisfying $\| u^{\pm i}_1 \|_{L^2} \leq C$ such that
    \begin{equation}
        \label{eq:vect-tgt-Im}
        \psi_1(T) = 0, \quad
        \Im \langle \psi_2(T), \varphi_k \rangle = \pm T, \quad
        |\Re \langle \psi_2(T), \varphi_k \rangle| \leq C T^2,
    \end{equation}
    where $\psi_1$ and $\psi_2$ are the linear and quadratic order approximations, solutions to \eqref{eq:psi1} and \eqref{eq:psi2}.
\end{proposition}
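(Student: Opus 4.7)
The plan is to take $u^{\pm i}$ proportional to the derivative of the functions $U^\pm$ supplied by \cref{p:two-signs-null-moments}. This choice is natural: by \cref{cor:FQ-L2}, for any real $u \in \mathcal{H}$ with $\widehat{u}(\lambda_j) = 0$ for all $j$, the quadratic form $Q(u,u)$ reduces to $-\frac{i}{\pi}\int_\R \Theta(\omega)|\widehat{u}(\omega)|^2 \dd\omega$, and the identity $|\widehat{u}|^2/\omega^2 = |\widehat{u_1}|^2$ valid on $\mathcal{H}$ makes the sign-controlled quantity $\int \Hpv|\widehat{U^\pm}|^2 = \pm 2\pi T$ supplied by \cref{p:two-signs-null-moments} appear directly in $\langle\psi_2(T), \varphi_k\rangle$, provided we arrange $u_1^{\pm i} = U^\pm$.

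Fix $\nu \in (0, 1/4)$ and apply \cref{p:two-signs-null-moments} with $\eta := T$, producing $U^\pm \in H^1_0((0,T);\R)$ with $\|U^\pm\|_{L^2} \leq C_0$, $\widehat{U^\pm}(\lambda_j) = 0$ for every $j \in \N$, $\nsob{U^\pm}{-\nu} \leq T$, and $\frac{1}{2\pi}\int_\R \Hpv|\widehat{U^\pm}|^2 = \pm T$. Set $\tilde u^{\pm i} := (U^\pm)' \in L^2((0,T);\R)$. Then $\tilde u_1^{\pm i} = U^\pm$, so $\tilde u_1^{\pm i}(T) = 0$ and $\|\tilde u_1^{\pm i}\|_{L^2} \leq C_0$; integration by parts gives $\widehat{\tilde u^{\pm i}}(\omega) = i\omega \widehat{U^\pm}(\omega)$, hence $\widehat{\tilde u^{\pm i}}(\lambda_j) = 0$ for all $j \in \N$ (including $j=0$ since $\lambda_0 = 0$). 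By \eqref{linearise_explicit}, $\psi_1(T) = 0$.

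For the quadratic order, start from $e^{i\lambda_k T}\langle\psi_2(T), \varphi_k\rangle = -\frac{1}{2} Q_k(\tilde u^{\pm i}\rho_k, \tilde u^{\pm i}\bar\rho_k)$ given by \eqref{eq:psi2=Qk}. Chaining \cref{lem:Q_k-conj-best} (noting $\PH \tilde u^{\pm i} = \tilde u^{\pm i}$ since $\tilde u_1^{\pm i}(T) = 0$) with \cref{p:Qk-Q} reduces this to $Q(\tilde u^{\pm i}, \tilde u^{\pm i}) - i\lambda_k K(0)\|U^\pm\|_{L^2}^2$ up to an error controlled by $T^2\|U^\pm\|_{L^2}^2 + \nsob{U^\pm}{-\nu}^2 = O(T^2)$. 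Since $\widehat{\tilde u^{\pm i}}(\lambda_j) = 0$, the Dirac part in the Fourier expression \eqref{FQ_Fourier} (via \cref{cor:FQ-L2}) vanishes; using the decomposition \eqref{eq:Theta=3} together with $|\widehat{\tilde u^{\pm i}}|^2/\omega^2 = |\widehat{U^\pm}|^2$, the $-a/\omega^2$ piece contributes $2ia\|U^\pm\|_{L^2}^2$ which combines with the $-i\lambda_k K(0)\|U^\pm\|_{L^2}^2$ correction to $2ia_k\|U^\pm\|_{L^2}^2 = 0$ via \eqref{eq:a-ak} and the hypothesis $a_k = 0$; the $\Hpv/\omega^2$ piece yields exactly $\mp 2iT$ by construction; and the $\Hreg/\omega^2$ piece is bounded by $C\nsob{U^\pm}{-\nu}^2 \leq CT^2$ via \cref{p:Hreg-Hnu}. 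Collecting, $e^{i\lambda_k T}\langle\psi_2(T), \varphi_k\rangle = \pm iT + O(T^2)$, and expanding $e^{-i\lambda_k T} = 1 + O(T)$ yields $\Im\langle\psi_2(T), \varphi_k\rangle = \pm T + O(T^2)$ and $|\Re\langle\psi_2(T),\varphi_k\rangle| \leq CT^2$.

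I then upgrade these to exact equalities by rescaling: setting $u^{\pm i} := \alpha^\pm \tilde u^{\pm i}$, the value $\Im\langle\psi_2(T), \varphi_k\rangle$ is multiplied by $(\alpha^\pm)^2$ by bilinearity, so for $T$ small one can choose $\alpha^\pm \in (1/2, 2)$ producing $\Im\langle\psi_2(T),\varphi_k\rangle = \pm T$ exactly, while preserving $\psi_1(T) = 0$, $u_1^{\pm i}(T) = 0$, $\|u_1^{\pm i}\|_{L^2} \leq 2C_0$, and $|\Re\langle\psi_2(T),\varphi_k\rangle| \leq 4CT^2$. The main obstacle is the careful algebraic reduction from $Q_k$ on the modulated complex pair $(\tilde u^{\pm i}\rho_k, \tilde u^{\pm i}\bar\rho_k)$ down to the scalar Fourier integral $\int \Hpv|\widehat{U^\pm}|^2$: it requires tracking three layers of approximation (removing $\rho_k$, replacing $K_k$ by $K$, and decomposing $\Theta$), and hinges on the hypothesis $a_k = 0$, which cancels the otherwise dominant $H^{-1}$-coercivity term $2ia_k\|u_1\|_{L^2}^2$.
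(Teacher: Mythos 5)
Your proposal is correct and follows essentially the same route as the paper's proof: take $u := (U^\pm)'$ with $U^\pm$ from Proposition~\ref{p:two-signs-null-moments} with $\eta = T$, verify $\psi_1(T)=0$ via the vanishing moments, chain Lemma~\ref{lem:Q_k-conj-best} and Proposition~\ref{p:Qk-Q} to reduce $Q_k(u\rho_k, u\overline{\rho_k})$ to $Q(u,u)$ up to the $-i\lambda_k K(0)\|u_1\|_{L^2}^2$ correction, then use Corollary~\ref{cor:FQ-L2}, the decomposition~\eqref{eq:Theta=3}, the cancellation $2a - \lambda_k K(0) = 2a_k = 0$, and the bounds on $\Hpv$ and $\Hreg$ to isolate the main term $\pm 2iT$, and finally rescale. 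Your treatment of the rescaling step is a bit more explicit than the paper's (which simply states that one can rescale), but there is no substantive difference in the argument.
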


\begin{proof}
    Let $T \in (0,1)$.
    We use the notation $\lesssim$ of \cref{sec:notations} for constants independent of $T$, $u$.
    
    We construct $u^{+i}$, denoted simply by $u$ in the proof (the construction of $u^{-i}$ is identical).
    We intend to use the control $u := \partial_t U^+$, where $U^+$ is constructed in \cref{p:two-signs-null-moments}, with the choice $\eta = T$.
    In particular, since $U^+ \in H^1_0((0,T);\R)$, $u \in \mathcal{H}$.

    By \cref{linearise_explicit}, one has $\psi_1(T) = 0$ if and only if, for all $j \in \N$, $\langle \mu, \varphi_j \rangle \widehat{u}(\lambda_j) = 0$.
    Since $\widehat{u}(\omega) = i \omega \widehat{U^+}(\omega)$ and $\widehat{U^+}(\lambda_j) = 0$ for all $j \in \N$ by \eqref{eq:two-signs-null-moments}, we get $\psi_1(T) = 0$.

    By \eqref{eq:psi2=Qk} in \cref{sec:modulation}, $\langle \psi_2(T), \varphi_k e^{-i \lambda_k T} \rangle = - \frac 12 Q_k (u \rho_k, u \overline{\rho_k})$ where $\rho_k(t):= \exp (i \lambda_k t / 2)$.
    By \cref{lem:Q_k-conj-best}, using that $a_k = 0$ and that, for $u \in \mathcal{H}$, $u_1(T) = 0$ and $\PH u = u$,
    \begin{equation}
        |Q_k(u\rho_k,u\overline{\rho_k}) - Q_k(u,u)| \lesssim T^2 \|u_1\|_{L^2}^2 + \nsob{u_1}{-\nu}^2.
    \end{equation}
    Applying \cref{p:Qk-Q},
    \begin{equation}
        | Q_k(u,u) - Q(u,u) + i \lambda_k K(0) \langle u_1, u_1 \rangle | \lesssim T^2 \| u_1 \|_{L^2}^2 + \nsob{u_1}{-\nu}^2.
    \end{equation}
    By \eqref{eq:two-signs-null-moments}, for all $j \in \N$, $\widehat{u}(\pm \lambda_j) = \pm i \lambda_j \widehat{U^+}(\pm \lambda_j) = 0$.
    By \cref{cor:FQ-L2} and \eqref{eq:Theta=3}, since $\widehat{u_1} = \widehat{U^+}$,
    \begin{equation}
        2 \pi Q(u,u) = - 2 i \int_\R (-a + \Hpv + \Hreg) |\widehat{U^+}|^2.
    \end{equation}
    By \cref{p:Hreg-Hnu}, $| \int_\R \Hreg |\widehat{U^+}|^2 | \lesssim \nsob{U^+}{-\nu}^2$.
    Since $a_k = 0$, recalling \eqref{eq:a-ak}, we obtain:
    \begin{equation}
        \Big| Q_k(u \rho_k, u \overline{\rho_k}) + \frac{2i}{2\pi} \int_\R \Hpv |\widehat{U^+}|^2 \Big|
        \lesssim T^2 \| U^+ \|_{L^2}^2 + \nsob{U^+}{-\nu}^2.
    \end{equation}
    By \cref{p:two-signs-null-moments}, $\|u_1\|_{L^2} = \|U^+\|_{L^2} \lesssim 1$.
    By \eqref{eq:two-signs-null-moments} with $\eta = T$, $\nsob{U^+}{-\nu} \leq T$ and
    \begin{equation}
        \Big| Q_k(u \rho_k, u \overline{\rho_k}) + 2i T \Big|
        \lesssim T^2.
    \end{equation}
    Recalling \eqref{eq:psi2=Qk} and writing $e^{i \lambda_k T} = 1 + O(T)$, there exists $C > 0$ such that
    \begin{equation}
        \Big| \langle \psi_2(T), \varphi_k \rangle - i T \Big|
        \leq C T^2.
    \end{equation}
    Hence, if $T < T^* := \min \{ 1, \frac{1}{2C} \}$, we can rescale the control to obtain \eqref{eq:vect-tgt-Im} and $\|u_1^{\pm i}\|_{L^2} \lesssim 1$.
\end{proof}

\subsection{Time concatenation to recover the complex direction}
\label{sec:concat}

As described in \cref{sec:positive-heuristic}, now that we are able to move in the directions $\pm i \varphi_k$, we use a classical \emph{tangent vector} argument to move in the directions $\pm \varphi_k$ and in fact any $z \varphi_k$ for $z \in \C$.

In \cref{sec:concat,sec:projection,sec:cost}, when there might be an ambiguity on the control or the initial data, we write $\psi(t;u,\psi_0)$ to denote the solution to \eqref{eq:psi} with a control $u$ and an initial data $\psi_0$.
We use similar notations for the linear and quadratic orders $\psi_1$ and $\psi_2$ solutions to \eqref{eq:psi1} and \eqref{eq:psi2}.

\subsubsection{A concatenation lemma}

\begin{lemma}
    \label{lem:concat}
    Let $T, T' > 0$, $u \in L^2((0,T);\R)$, $v \in L^2((0,T');\R)$.
    If $\psi_1(T;u,0) = 0$, then
    \begin{equation}
        \label{eq:concat-psi2}
        \langle \psi_2(T+T';u \diamond v,0), \varphi_k \rangle =
        \langle \psi_2(T;u,0), \varphi_k \rangle e^{-i\lambda_k T'} +
        \langle \psi_2(T';v,0), \varphi_k \rangle,
    \end{equation}
    where $u \diamond v \in L^2((0,T+T'); \R)$ denotes the time-concatenation of $u$ and $v$.
\end{lemma}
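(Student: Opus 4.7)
The plan is to use the Duhamel formulas for $\psi_1$ and $\psi_2$ on the two sub-intervals $(0,T)$ and $(T,T+T')$, and exploit the hypothesis $\psi_1(T;u,0)=0$ to decouple the dynamics after time $T$.

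First, I would set $w := u \diamond v$ and look at $\psi_1(\cdot;w,0)$. Since $w = u$ on $(0,T)$, we have $\psi_1(T;w,0) = \psi_1(T;u,0) = 0$. For $s \in [0,T']$, Duhamel's formula starting from time $T$ gives
\begin{equation}
    \psi_1(T+s;w,0) = e^{-iAs}\psi_1(T;w,0) + i \int_0^s v(r)\, e^{-iA(s-r)} \mu \varphi_0 \dd r = \psi_1(s;v,0),
\end{equation}
using the vanishing of $\psi_1(T;w,0)$ and the definition \eqref{eq:psi1} of the linearized system.

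Second, I apply the same Duhamel procedure to $\psi_2$, which satisfies \eqref{eq:psi2} with source term $u\mu\psi_1$. Starting again from time $T$, for $s \in [0,T']$,
\begin{equation}
    \psi_2(T+s;w,0) = e^{-iAs}\psi_2(T;w,0) + i \int_0^s v(r)\, e^{-iA(s-r)} \mu \psi_1(T+r;w,0) \dd r.
\end{equation}
Substituting the identity from the first step, the integral term is exactly $\psi_2(s;v,0)$ by definition. Evaluating at $s = T'$ yields
\begin{equation}
    \psi_2(T+T';w,0) = e^{-iAT'}\psi_2(T;u,0) + \psi_2(T';v,0).
\end{equation}

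Third, I take the Hermitian inner product with $\varphi_k$. Since $A$ is self-adjoint and $A\varphi_k = \lambda_k \varphi_k$, the group $e^{-iAT'}$ satisfies $(e^{-iAT'})^*\varphi_k = e^{iAT'}\varphi_k = e^{i\lambda_k T'}\varphi_k$, and $\lambda_k \in \R$ gives
\begin{equation}
    \langle e^{-iAT'}\psi_2(T;u,0), \varphi_k \rangle = \langle \psi_2(T;u,0), e^{i\lambda_k T'}\varphi_k\rangle = e^{-i\lambda_k T'}\langle \psi_2(T;u,0),\varphi_k\rangle,
\end{equation}
which yields \eqref{eq:concat-psi2}. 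There is no real obstacle here: the only subtlety is remembering that the hypothesis $\psi_1(T;u,0)=0$ is precisely what makes the quadratic expansions concatenate additively (up to the free evolution phase), because the quadratic source $u\mu\psi_1$ on $(T,T+T')$ only sees the contribution of $v$.
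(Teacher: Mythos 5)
Your proof is correct and takes essentially the same approach as the paper: both exploit causality on $[0,T]$, the hypothesis $\psi_1(T;u,0)=0$ to match $\psi_1$ with $\psi_1(\cdot;v,0)$ on $[T,T+T']$, and the evolution equation for $\psi_2$ on the second interval. The only cosmetic difference is that you apply Duhamel at the level of the full state $\psi_2$ and project onto $\varphi_k$ at the end, whereas the paper projects first and integrates the resulting scalar ODE.
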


\begin{proof}
    For $t \in [0,T]$, by causality, $\psi_1(t;u \diamond v,0) = \psi_1(t;u,0)$ and $\psi_2(t;u \diamond v,0) = \psi_2(t;u,0)$.
    For $t \in [T,T+T']$, the assumption $\psi_1(T;u,0) = 0$ implies that $\psi_1(t;u \diamond v,0) = \psi_1(t-T;v,0)$.
    For $t \in [T,T+T']$, the evolution equation \eqref{eq:psi2} for $\psi_2$ reads
    \begin{equation}
        \begin{split}
            \frac{\dd}{\dd t} \langle \psi_2(t;u \diamond v,0), \varphi_k \rangle
            & = - i \lambda_k \langle \psi_2(t;u \diamond v,0), \varphi_k \rangle
            + i (u \diamond v)(t) \langle \mu \psi_1(t;u \diamond v,0), \varphi_k \rangle
            \\
            & = - i \lambda_k \langle \psi_2(t;u \diamond v,0), \varphi_k \rangle
            + i v(t-T) \langle \mu \psi_1(t-T;v,0), \varphi_k \rangle,
        \end{split}
    \end{equation}
    which implies \eqref{eq:concat-psi2}.
\end{proof}

\subsubsection{Free evolution of a tangent vector}

\begin{proposition}
    \label{p:mvt-Im-to-Re}
    There exist $T^*, C > 0$ such that, for every $T \in (0,T^*)$, there exist controls $u^{\pm 1} \in L^2((0,T);\R)$ with $u_1^{\pm 1}(T) = 0$ satisfying $\| u^{\pm 1}_1 \|_{L^2} \leq C$ such that
    \begin{equation}
        \psi_1(T) = 0, \quad
        |\Im \langle \psi_2(T), \varphi_k \rangle| \leq C T^3, \quad
        \Re \langle \psi_2(T), \varphi_k \rangle = \pm T^2.
    \end{equation}
\end{proposition}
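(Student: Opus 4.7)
The plan is to implement the tangent vector argument described in \cref{sec:positive-heuristic}: since the directions $\pm i\varphi_k$ are approximately reachable at quadratic order thanks to \cref{p:vect-tgt-Im}, free evolution under the drift $-iA$ will rotate them into $\pm \varphi_k$ to first order in time. The construction will therefore sandwich a piece of zero control between two cancelling $\pm i$-type pulses.

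Fix a small parameter $\alpha \in (0,\frac{1}{2})$ to be tuned, and set $T_1 := \alpha T$, $T_0 := (1-2\alpha)T$. Applying \cref{p:vect-tgt-Im} on time $T_1$ yields controls $\tilde u^{\pm i} \in L^2((0,T_1);\R)$ with $\|\tilde u^{\pm i}_1\|_{L^2} \leq C$, $\psi_1(T_1) = 0$, and
\begin{equation}
    \langle \psi_2(T_1;\tilde u^{\pm i},0), \varphi_k\rangle = \pm i T_1 + r^{\pm}, \qquad r^{\pm} \in \R, \quad |r^{\pm}| \leq C T_1^2.
\end{equation}
Define the concatenation $U := \tilde u^{+i} \diamond 0 \diamond \tilde u^{-i}$, where $0$ stands for zero control on a subinterval of length $T_0$. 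Since $\tilde u^{\pm i}_1(T_1) = 0$, the primitive $U_1$ vanishes at $T_1$, stays zero on $[T_1, T_1+T_0]$, and vanishes again at $T$; so $U \in \mathcal{H}$ with $\|U_1\|_{L^2} \leq \sqrt{2}\,C$. Because $\psi_1$ also vanishes at each junction (by construction at $T_1$ and by the free Schrödinger propagator on $[T_1, T_1+T_0]$), two successive applications of \cref{lem:concat} give
\begin{equation}
    \langle \psi_2(T; U, 0), \varphi_k\rangle = (iT_1 + r^+)\, e^{-i\lambda_k(T_0+T_1)} + (-iT_1 + r^-).
\end{equation}

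Writing $\theta := \lambda_k(T_0+T_1) = \lambda_k(1-\alpha)T$ and expanding $\cos\theta$, $\sin\theta$ around $0$, one separates
\begin{equation}
    \Re \langle \psi_2(T), \varphi_k\rangle = T_1 \sin\theta + r^+\cos\theta + r^- = \lambda_k \alpha(1-\alpha) T^2 + O(\alpha^2 T^2) + O(T^4),
\end{equation}
\begin{equation}
    \Im \langle \psi_2(T), \varphi_k\rangle = T_1(\cos\theta - 1) - r^+ \sin\theta = O(\alpha T^3).
\end{equation}
The crucial quantitative feature is that $r^{\pm}$ are real: they feed into $\Im$ only through the factor $\sin\theta = O(T)$, yielding an error of size $T_1^2 \cdot T = O(\alpha^2 T^3)$ rather than the naive $O(T^2)$. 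Choosing $\alpha$ small depending only on $\lambda_k$ and $C$ (and $T$ small enough to absorb the $O(T^4)$ remainder into the main term) ensures that the real part is positive and comparable to $T^2$.

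To close, rescale $U$ by the uniformly bounded positive factor $s(T)$ such that $s(T)^2 \Re \langle \psi_2(T; U, 0), \varphi_k\rangle = T^2$: since $\psi_1 = 0$ is preserved under scaling and $\psi_2$ is quadratic in $u$, the control $u^{+1} := s(T)\,U$ satisfies $\psi_1(T) = 0$, $\Re\langle \psi_2(T), \varphi_k\rangle = T^2$ and $|\Im\langle \psi_2(T), \varphi_k\rangle| = s(T)^2 \cdot O(T^3) \leq C'T^3$. The opposite sign is produced by the reverse concatenation $\tilde u^{-i} \diamond 0 \diamond \tilde u^{+i}$, whose leading real term flips sign, followed by the same rescaling. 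The main technical point I expect to require care is the $O(T^3)$ bound on the imaginary part: it is delicate because $r^{\pm}$ are of size $O(T^2)$, and the argument relies essentially on their reality --- an aspect that had to be explicitly enforced in the statement of \cref{p:vect-tgt-Im}.
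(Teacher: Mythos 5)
Your proof is correct and follows essentially the same strategy as the paper's: concatenate a $+i$-pulse, a period of free drift, and a $-i$-pulse (each on a fraction of the total time), apply \cref{lem:concat}, exploit the reality of the $O(T^2)$ errors to keep the imaginary part at $O(T^3)$, and rescale. Your parameter $\alpha$ plays the role of the paper's $1/(2M)$, and the observation that the reality of $r^\pm$ is the crux of the estimate matches what the paper implicitly relies on.
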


\begin{proof}
    Let $T^*, C_1 > 0$ given by \cref{p:vect-tgt-Im}.
    Given $T \in (0,T^*)$ and $M \geq 1$, we will apply \cref{p:vect-tgt-Im} with a time $T_M := T/(2M)$.
    We construct $u^{+1} := u^{+i} \diamond 0_{[T_M,T-T_M]} \diamond u^{-i}$.
    First, using the property $u^{\pm i}_1(T_M) = 0$,
    we have $\| u^{+1}_1 \|_{L^2}^2 = \| u^{+i}_1 \|_{L^2}^2 + \| u^{-i}_1 \|_{L^2}^2 \leq 2 C_1^2$.
    Second, using \cref{lem:concat}, we have
    \begin{equation}
        \langle \psi_2(T), \varphi_k \rangle
        = \langle \psi_2(T_M;u^{+i}), \varphi_k \rangle e^{-i \lambda_k (T-T_M)} + \langle \psi_2(T_M;u^{-i}), \varphi_k \rangle.
    \end{equation}
    Thus, by \eqref{eq:vect-tgt-Im},
    \begin{equation}
        \left| \Re \langle \psi_2(T), \varphi_k \rangle
        - T_M \sin (\lambda_k (T-T_M)) \right| \leq 2 C_1 T_M^2.
    \end{equation}
    \begin{equation}
        \left| \Im \langle \psi_2(T), \varphi_k \rangle \right| \leq T_M (1 - \cos (\lambda_k (T-T_M))) + C_1 T_M^2 \left| \sin (\lambda_k(T-T_M)) \right|.
    \end{equation}
    Using $|\sin \theta| \leq \theta$, $|\sin \theta - \theta| \leq \theta^2$ and $|1-\cos \theta| \leq \theta^2$, we obtain
    \begin{equation}
        \begin{split}
            \left| \Re \langle \psi_2(T), \varphi_k \rangle
            - \lambda_k T T_M \right|
            & \leq (2 C_1 + \lambda_k) T_M^2 + T_M \lambda_k^2 T^2
            \\ & \leq \lambda_k T T_M \left( \lambda_k T + \frac{2 C_1 + \lambda_k}{2 \lambda_k M} \right)
            \leq \frac{1}{2} \lambda_k T T_M
        \end{split}
    \end{equation}
    provided that $\lambda_k T \leq \frac 1 4$ (which holds, up to reducing $T^*$) and that $(2C_1+\lambda_k) / (2\lambda_k M) \leq \frac 1 4$ (we use $k \neq 0$ so $\lambda_k \neq 0$, and take $M$ large enough) and also
    \begin{equation}
        \left| \Im \langle \psi_2(T), \varphi_k \rangle \right| \leq T_M \lambda_k^2 T^2 + C_1 T_M^2 \lambda_k T \leq (\lambda_k^2 + C_1 \lambda_k) T^3.
    \end{equation}
    Thus, we obtained
    \begin{equation}
        \Re \langle \psi_2(T), \varphi_k \rangle = \frac{\alpha}{2} \frac{\lambda_k}{2M} T^2
    \end{equation}
    for some $\alpha \in \left[ 1,3 \right]$.
    Eventually, we can use the homogeneity of the linear and quadratic systems $\psi_1$ and $\psi_2$ to conclude using a control of the form $\sqrt{(4M)/(\alpha \lambda_k)} u^{+1}$.
\end{proof}

\subsubsection{Complex quadratic motions along $\varphi_k$}

\begin{proposition}
    \label{p:v^z}
    There exists $T^* > 0$ such that, for all $T \in (0,T^*)$, there exists $C_T > 0$ and a continuous map $v : \C \to L^2((0,T);\R)$ such that, for all $z \in \C$,
    \begin{equation}
        v^z_1(T)=0, \qquad
        \psi_1(T;v^z,0)=0, \qquad
        \langle \psi_2(T;v^z,0), \varphi_k \rangle = z
    \end{equation}
    and one has the estimates
    \begin{equation}
        \|v^z\|_{L^2} \leq C_T |z|^{\frac 12}, \qquad
        \|v^z_1\|_{L^2} \lesssim \left( \frac{|\Re(z)|}{T^2} \right)^{\frac 12} + \left( \frac{|\Im(z)|}{T} \right)^{\frac 12}.
    \end{equation}
\end{proposition}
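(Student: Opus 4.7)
The plan is to build $v^z$ as a two-piece concatenation whose quadratic-order motion along $\varphi_k$ matches $z$ exactly. First, I apply \cref{p:vect-tgt-Im} and \cref{p:mvt-Im-to-Re} with time horizon $T/2$ to obtain four controls $u^{+1}, u^{-1}, u^{+i}, u^{-i} \in L^2((0,T/2);\R)$, each control $u$ among them satisfying $u_1(T/2)=0$, $\psi_1(T/2;u,0)=0$, $\|u_1\|_{L^2} \leq C$, and
\begin{equation}
V_\pm := \langle \psi_2(T/2;u^{\pm 1},0),\varphi_k\rangle = \pm \tfrac{T^2}{4} + O(T^3)\,i, \quad W_\pm := \langle \psi_2(T/2; u^{\pm i},0),\varphi_k\rangle = \pm i \tfrac{T}{2} + O(T^2).
\end{equation}
For signs $(\epsilon, \eta) \in \{\pm 1\}^2$ and scalars $\alpha, \beta \geq 0$ to be determined, set $v_{\epsilon,\eta} := \alpha u^{\epsilon} \diamond \beta u^{\eta i}$ on $[0,T]$. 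Since $\psi_1(T/2;\alpha u^\epsilon,0) = 0$, \cref{lem:concat} yields
\begin{equation}
\langle \psi_2(T; v_{\epsilon,\eta},0),\varphi_k\rangle = \alpha^2 \widetilde V_\epsilon + \beta^2 W_\eta, \quad \widetilde V_\epsilon := e^{-i\lambda_k T/2} V_\epsilon = \epsilon \tfrac{T^2}{4} + O(T^3).
\end{equation}
Writing $(A,B) := (\alpha^2,\beta^2)$ and $z = x + iy$, the equation $\langle\psi_2(T;v_{\epsilon,\eta},0),\varphi_k\rangle = z$ becomes a real $2\times 2$ linear system whose matrix is close to $\operatorname{diag}(\epsilon T^2/4,\eta T/2)$, with determinant $\epsilon\eta T^3/8 + O(T^4)$, invertible for small $T$. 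An explicit solve yields $|A|+|B| \leq C(|x|/T^2 + |y|/T)$.

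The main obstacle is the nonnegativity of $(A,B)$, since only $(A,B) \in \R_{\geq 0}^2$ corresponds to realizable scalars $\alpha,\beta$. I would show that the four conic combinations $\{A \widetilde V_\epsilon + B W_\eta : A,B \geq 0\}$ indexed by $(\epsilon,\eta) \in \{\pm 1\}^2$ are $O(T)$-perturbations of the four closed standard quadrants of $\C$ and hence cover $\C$ for $T$ small enough. For $z$ strictly inside one of these quadrants, the matching sign pair $(\epsilon,\eta)$ gives $A, B \geq 0$ directly from the leading-order diagonal form of the system. For $z$ on or near a coordinate axis, switching $\epsilon \to -\epsilon$ (resp.\ $\eta \to -\eta$) flips the sign of the determinant while essentially preserving the relevant numerator, so at least one of the four sign pairs yields $(A,B) \in \R_{\geq 0}^2$; a short case analysis based on the signs of the small error terms $\Re W_\pm$ and $\Im V_\pm$ makes this precise.

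Finally, the constraints $v^z_1(T) = 0$ and $\psi_1(T;v^z,0) = 0$ are inherited from the corresponding properties of $u^\epsilon$ and $u^{\eta i}$: since $\alpha u^\epsilon_1(T/2) = 0$, the primitive of the concatenation at time $T/2$ is zero, so $v^z_1(T) = \beta u^{\eta i}_1(T/2) = 0$, and analogously for $\psi_1$ by linearity. The cost estimates follow from $\|v^z_1\|_{L^2}^2 \leq C^2 (A+B) \leq C'(|x|/T^2 + |y|/T)$, which yields $\|v^z_1\|_{L^2} \lesssim \sqrt{|x|}/T + \sqrt{|y|/T}$ via $\sqrt{a+b} \leq \sqrt a + \sqrt b$; the bound $\|v^z\|_{L^2} \leq C_T |z|^{1/2}$ follows identically, with the $T$-dependent constant accounting for the $T$-dependent $L^2$-norm of the basic controls $u^{\pm 1}, u^{\pm i}$, which carry high-frequency oscillations whose amplitudes grow as $T$ shrinks.
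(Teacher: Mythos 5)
Your proposal is correct and follows essentially the same route as the paper: both build $v^z$ as a two-piece concatenation $\alpha u^\epsilon \diamond \beta u^{\eta i}$ on $[0,T/2]\cup[T/2,T]$ built from the elementary controls of \cref{p:vect-tgt-Im} and \cref{p:mvt-Im-to-Re}, use \cref{lem:concat} to add the two quadratic contributions, solve for the nonnegative amplitudes, and read off the cost estimates. The only cosmetic difference is that the paper states directly the existence of a unique $(\alpha,\beta,\epsilon,\epsilon')\in\R_+^2\times\{\pm1\}\times\{\pm i\}$ with $z=\alpha b_\epsilon(T)+\beta b_{\epsilon'}(T)$ (the four perturbed quarter-cones covering $\C$), whereas you make the same point by unwinding it as a $2\times2$ real linear system with near-diagonal matrix $\operatorname{diag}(\epsilon T^2/4,\eta T/2)$ and a short sign-selection argument for nonnegativity.
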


\begin{proof}
    Let $T^* > 0$ given by \cref{p:vect-tgt-Im} and \cref{p:mvt-Im-to-Re}.
    
    Let $T \in (0,T^*)$.
    We define the complex numbers
    \begin{equation}
        \label{def:bpmbpmi}
        \begin{aligned}
            b_{\pm 1}(T) & := \frac{1}{T^2} \langle \psi_2(T;u^{\pm 1},0) , \varphi_k \rangle e^{-i \lambda_k T} = \pm 1 + \underset{T \to 0}{O}(T), \\
            b_{\pm i}(T) & := \frac{1}{T} \langle \psi_2(T;u^{\pm i},0) , \varphi_k \rangle = \pm i + \underset{T \to 0}{O}(T).
        \end{aligned}
    \end{equation}
    For $z \in \C$ there exists a unique $(\alpha,\beta,\epsilon,\epsilon') \in \R_+^2 \times \{\pm 1 \} \times \{ \pm i \}$, such that $z=\alpha b_{\epsilon}(T) + \beta b_{\epsilon'}(T)$ and we define the control $v^z \in L^2((0,2T);\R)$ by
    \begin{equation}
        v^z:=\left( \frac{\alpha}{T^2} \right)^{\frac 12} u^{\epsilon}\, \diamond\, \left( \frac{\beta}{T} \right)^{\frac 12} u^{\epsilon'}.
    \end{equation}
    Note that $z \mapsto v^z$ is continuous because $\alpha, \beta$ vanish when $\eps, \eps'$ switch signs.
    
    Since
    $\psi_1(T;\left( \alpha/T^2 \right)^{\frac 12} u^{\epsilon},0)
    = \left( \alpha/T^2 \right)^{\frac 12}
    \psi_1(T;u^{\epsilon},0)
    =0$, by \cref{lem:concat}, we get
    \begin{equation}
        \begin{aligned}
            \langle \psi_2(2T;v^z) , \varphi_k \rangle
            & =
            \frac{\alpha}{T^2} \langle \psi_2(T;u^{\epsilon}) , \varphi_k \rangle e^{-i\lambda_k T}
            + \frac{\beta}{T} \langle \psi_2(T;u^{\epsilon'}) , \varphi_k \rangle
            \\ & = \alpha b_{\epsilon}(T) + \beta b_{\epsilon'}(T)=z.
        \end{aligned}
    \end{equation}
    We have $\|v^z\|_{L^2}\leq C_T(\alpha^{\frac 12}+\beta^{\frac 12}) \leq C_T'|z|^{\frac 12}$. We deduce from the relation
    $z=\alpha b_{\epsilon} + \beta b_{\epsilon'}$ and \eqref{def:bpmbpmi} that
    $\alpha \lesssim |\Re(z)| + T\alpha+T\beta$ and
    $\beta \lesssim |\Im(z)| + T\alpha+T\beta$.
    Thus, for $T$ small enough,
    $\alpha \lesssim |\Re(z)|+T\beta$ and
    $\beta \lesssim |\Im(z)|+T\alpha$.
    Incorporating the second estimate in the first one gives $\alpha \lesssim |\Re(z)|+T|\Im(z)|$ and similarly $|\beta| \lesssim |\Im(z)|+T|\Re(z)|$.
    Finally,
    \begin{equation}
        \|v^z_1\|_{L^2}
        \lesssim
        \left( \frac{|\Re(z)|+T|\Im(z)|}{T^2} \right)^{\frac 12} +
        \left( \frac{|\Im(z)|+T|\Re(z)|}{T} \right)^{\frac 12}
        \lesssim
        \left( \frac{|\Re(z)|}{T^2} \right)^{\frac 12} +
        \left( \frac{|\Im(z)|}{T} \right)^{\frac 12},
    \end{equation}
    which concludes the proof, up to reducing $T^*$.
\end{proof}

\subsection{Control in projection by the linear test}
\label{sec:projection}

Using assumption \eqref{eq:hyp-c0-k}, we will control all directions except $\varphi_k$ using linear theory.
We prove \cref{p:control-proj-NL} which will be used in the proof of \cref{thm:positive}.

\begin{definition}[Norm $N_T$]
    For $u \in L^2((0,T);\R)$, we define
    \begin{equation}
        N_T(u) := |u_1(T)| + \|u_1\|_{L^2}.
    \end{equation}
\end{definition}

The following elementary result will be used several times.

\begin{lemma}
    \label{Lem:NT}
    Let $T,T'>0$, $u \in L^2((0,T);\R)$ and $v \in L^2((0,T');\R)$.
    If $u_1(T)=0$ then
    \begin{equation}
        N_{T+T'}(u \diamond v) \leq N_T(u) + N_{T'}(v).
    \end{equation}
\end{lemma}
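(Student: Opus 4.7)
The plan is to unfold the definitions and compute $w_1$ explicitly for $w := u \diamond v$, exploiting the hypothesis $u_1(T) = 0$ to get a clean expression on the second interval.

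First I would write, for $w = u \diamond v \in L^2((0,T+T');\R)$ and $t \in [0,T+T']$,
\begin{equation}
    w_1(t) = \begin{cases} u_1(t) & \text{if } t \in [0,T], \\ u_1(T) + v_1(t-T) & \text{if } t \in [T,T+T']. \end{cases}
\end{equation}
Using the assumption $u_1(T) = 0$, the second line simplifies to $w_1(t) = v_1(t-T)$ on $[T,T+T']$. Evaluating at $t = T+T'$ gives $w_1(T+T') = v_1(T')$, so the boundary contribution to $N_{T+T'}(w)$ matches exactly the one of $N_{T'}(v)$.

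Next I would compute the $L^2$ norm, which splits cleanly thanks to the decomposition above:
\begin{equation}
    \|w_1\|_{L^2(0,T+T')}^2 = \int_0^T |u_1(t)|^2 \dd t + \int_T^{T+T'} |v_1(t-T)|^2 \dd t = \|u_1\|_{L^2}^2 + \|v_1\|_{L^2}^2.
\end{equation}
The elementary inequality $\sqrt{a^2+b^2} \leq a + b$ for $a,b \geq 0$ then gives $\|w_1\|_{L^2} \leq \|u_1\|_{L^2} + \|v_1\|_{L^2}$.

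Finally, combining the two observations,
\begin{equation}
    N_{T+T'}(w) = |v_1(T')| + \|w_1\|_{L^2} \leq |v_1(T')| + \|u_1\|_{L^2} + \|v_1\|_{L^2} = N_T(u) + N_{T'}(v),
\end{equation}
since $|u_1(T)| = 0$. There is no real obstacle here; the statement is essentially a bookkeeping lemma, and the only substantive ingredient is the vanishing $u_1(T) = 0$, which ensures that the primitive of the concatenation is itself the concatenation of the two primitives.
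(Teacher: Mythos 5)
Your proof is correct and follows essentially the same route as the paper's, which simply notes that $u_1(T)=0$ implies $(u \diamond v)_1 = u_1 \diamond v_1$ and leaves the bookkeeping to the reader; you have merely carried out that bookkeeping in full detail.
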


\begin{proof}
    The assumption $u_1(T)=0$ implies $(u \diamond v)_1 =u_1 \diamond v_1$, which gives the conclusion.
\end{proof}

We introduce the vector space
\begin{equation}
    V_k:=\{ \psi \in L^2((0,1);\C) \mid
    \langle \psi,\varphi_k\rangle=\Re \langle \psi,\varphi_0 \rangle=0 \}
\end{equation}
and the orthogonal projection $\mathbb{P}_k:L^2((0,1);\C) \rightarrow V_k$ defined by
\begin{equation}
    \mathbb{P}_k \psi:=\psi-\langle \psi,\varphi_k\rangle \varphi_k - \Re \langle \psi,\varphi_0 \rangle \varphi_0.
\end{equation}

\begin{proposition}
    \label{p:control-proj-NL}
    For $T \in (0,1]$, there exist $\delta_T, C_T>0$ such that, for all $\psi_0 \in H^2_N((0,1);\C) \cap \sphere$, $\widetilde{\psi}^* \in H^2_N((0,1);\C) \cap V_k$ with $\| \psi_0-\varphi_0 \|_{H^2} + \|\widetilde{\psi}^*\|_{H^2}<\delta_T$, there exists $u \in L^2((0,T);\R)$, depending continuously on $\psi_0$ and $\widetilde{\psi}^*$, such that
    \begin{align}
        & \mathbb{P}_k \psi(T;u,\psi_0)=\widetilde{\psi}^*, \label{cible}
        \\
        & \|u\|_{L^2} \leq C_T \left( \| \psi_0-\varphi_0\|_{H^2} +\|\widetilde{\psi}^*\|_{H^2}\right), \label{estimL2}
        \\
        & N_T(u) \leq C_T \left( \| \psi_0-\varphi_0\|_{L^2} + \|\widetilde{\psi}^*\|_{L^2} \right). \label{estimH-1}
    \end{align}
\end{proposition}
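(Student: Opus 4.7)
The plan is to build a continuous right-inverse $\widetilde{L}_T$ to the linearized projected map $u \mapsto \mathbb{P}_k \psi_1(T;u,0)$ via the moment problem of \cref{p:Megane-1}, reformulate the nonlinear problem as a fixed-point equation on $L^2((0,T);\R)$, and apply Banach's theorem. The lower-order cost estimate~\eqref{estimH-1} will then follow from the finer continuity bound $N_T\circ \widetilde{L}_T \lesssim_T \|\cdot\|_{L^2}$ together with a bilinear form of the quadratic remainder.

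First, from Duhamel's formula~\eqref{linearise_explicit}, imposing $\mathbb{P}_k \psi_1(T;u,0) = \widetilde{\psi}^*$ amounts to matching the moments
\[
\widehat{u}(-\lambda_j) = d_j(\widetilde{\psi}^*) := -i\, e^{i\lambda_j T}\,\frac{\langle \widetilde{\psi}^*, \varphi_j\rangle}{\langle \mu, \varphi_j\rangle},
\quad j \in \N \setminus \{k\},
\]
with $\widehat{u}(-\lambda_k)$ free (the $j=k$ constraint is vacuous since both $\langle \mu,\varphi_k\rangle$ and $\langle \widetilde{\psi}^*,\varphi_k\rangle$ vanish, and the $j=0$ constraint reduces to the real equation $\langle \mu, \varphi_0\rangle\, u_1(T) = \Im\langle \widetilde{\psi}^*,\varphi_0\rangle$). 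Assumption~\eqref{eq:hyp-c0-k} gives $|d_j| \lesssim \langle j\rangle^2 |\langle \widetilde{\psi}^*,\varphi_j\rangle|$, so $\|d\|_{h^0} \lesssim \|\widetilde{\psi}^*\|_{H^2}$ and $\|d\|_{h^{-2}} \lesssim \|\widetilde{\psi}^*\|_{L^2}$. Composing $\widetilde{\psi}^* \mapsto d$ with the map provided by \cref{p:Megane-1} yields $\widetilde{L}_T : H^2_N \cap V_k \to L^2((0,T);\R)$ satisfying both $\|\widetilde{L}_T(\widetilde{\psi}^*)\|_{L^2} \leq C_T \|\widetilde{\psi}^*\|_{H^2}$ and $N_T(\widetilde{L}_T(\widetilde{\psi}^*)) \leq C_T \|\widetilde{\psi}^*\|_{L^2}$.

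Second, I set $Z(u,\psi_0) := \psi(T;u,\psi_0) - \varphi_0 - \psi_1(T;u,0)$, so that the target $\mathbb{P}_k \psi(T;u,\psi_0) = \widetilde{\psi}^*$ is equivalent to the fixed-point equation $u = F(u) := \widetilde{L}_T\bigl(\widetilde{\psi}^* - \mathbb{P}_k Z(u,\psi_0)\bigr)$. Well-posedness (\cref{p:WP}) and energy estimates (\cref{sec:remainder}) yield $Z(0,\varphi_0) = 0$, $\partial_u Z(0,\varphi_0) = 0$, and
\[
\|Z(u,\psi_0)\|_{H^2} \leq C_T\bigl(\|\psi_0 - \varphi_0\|_{H^2}(1+\|u\|_{L^2}) + \|u\|_{L^2}^2\bigr),
\]
together with a matching Lipschitz bound in $u$. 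For $\delta_T$ small enough, $F$ is a $\tfrac12$-contraction on the $L^2$-ball of radius $2 C_T(\|\widetilde{\psi}^*\|_{H^2} + \|\psi_0-\varphi_0\|_{H^2})$, and its unique fixed point $u$ satisfies~\eqref{cible} and~\eqref{estimL2}.

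Finally, to prove~\eqref{estimH-1}, I apply $N_T$ to the identity $u = F(u)$ and use the weaker continuity bound on $\widetilde{L}_T$:
\[
N_T(u) \leq C_T \|\widetilde{\psi}^*\|_{L^2} + C_T \|Z(u,\psi_0)\|_{L^2}.
\]
The crux is a refined $L^2$-remainder estimate
\[
\|Z(u,\psi_0)\|_{L^2} \leq C_T \|\psi_0-\varphi_0\|_{L^2} + C_T \|u\|_{L^2}\, N_T(u),
\]
obtained by combining \cref{cor:quad-rem} (which supplies a sharp quadratic bound at the $L^2$ level when $\psi_0=\varphi_0$) with a continuous-dependence argument for the Schr\"odinger flow at $L^2$ regularity. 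Since $\|u\|_{L^2} \leq 2C_T \delta_T$ from the previous step, shrinking $\delta_T$ once more absorbs $C_T \|u\|_{L^2} N_T(u)$ into the left-hand side and yields~\eqref{estimH-1}. The main obstacle lies in this last refined estimate: one must verify that the $\psi_0$-perturbation enters $\|Z\|_{L^2}$ only through $\|\psi_0-\varphi_0\|_{L^2}$ (not $\|\psi_0-\varphi_0\|_{H^2}$), and that the bilinear $u$-contribution admits the asymmetric splitting $\|u\|_{L^2}\cdot N_T(u)$ without any extra loss coming from the higher-order Duhamel terms that depend on $\psi_0$.
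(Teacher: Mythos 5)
Your proposal is correct and follows essentially the same route as the paper: a moment-problem right-inverse for the linearized projected end-point map, a fixed-point reformulation on $L^2$ (the paper uses the inverse function theorem, you use Banach directly—these are equivalent), and then a second pass through the fixed-point identity using the weaker $N_T$-continuity bound of the right-inverse plus absorption. The one step you flag as the ``main obstacle'' is not one: for a \emph{fixed} control $u$, the map $\psi_0 \mapsto \psi(T;u,\psi_0)$ is linear and unitary on $L^2$ (the bilinear Schrödinger flow conserves the $L^2$ norm), so $\|\psi(T;u,\psi_0)-\psi(T;u,\varphi_0)\|_{L^2}=\|\psi_0-\varphi_0\|_{L^2}$ exactly, and then \cref{cor:quad-rem} applied with initial data $\varphi_0$ supplies the bilinear term, which is $\lesssim N_T(u)^{3/2}$ under $\|u\|_{L^2}\le 1$—strong enough to absorb after one more reduction of $\delta_T$. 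This is precisely how the paper closes the argument.
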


\begin{proof}
    Let $T \in (0,1]$.
    In this proof $C_T$ denotes a constant that may change from one expression to another.
    To lighten the notations of functional spaces, the letter $\psi$ implicitly denotes a complex-valued function on $(0,1)$ and the letter $u$ implicitly denotes a real-valued function $(0,T)$.
    In order to define an end-point map between vector spaces (not involving the manifold $\sphere$), we introduce
    \begin{equation}
        \tgt := \{ \psi \in L^2 \mid \Re \langle \psi,\varphi_0 \rangle =0 \},
        \qquad
        \Omega_0:=\{ \widetilde{\psi}_0 \in H^2_N \cap \tgt \mid \| \widetilde{\psi}_0 \|_{L^2}<1 \}
    \end{equation}
    and for any $\widetilde{\psi}_0 \in \Omega_0$, we define
    $\psi_0:= (1-\|\widetilde{\psi}_0\|_{L^2}^2)^{\frac 12} \varphi_0 + \widetilde{\psi}_0 \in \sphere$.
    The end-point map
    \begin{equation}
        \label{Def:FT}
        F_T :
        \begin{cases}
            \Omega_0 \times L^2 & \to \Omega_0 \times (H^2_N \cap V_k) \\
            (\widetilde{\psi}_0, u) & \mapsto (\widetilde{\psi}_0, \mathbb{P}_k \psi(T;u,\psi_0))
        \end{cases}
    \end{equation}
    is of class $C^1$, satisfies $F_T(0,0)=(0,0)$ and
    \begin{equation}
        DF_T(0,0) :
        \begin{cases}
            (H^2_N \cap \tgt) \times L^2 & \to (H^2_N \cap \tgt) \times (H^2_N \cap V_k) \\
            (\widetilde{\psi}_0, u) & \mapsto (\widetilde{\psi}_0, \mathbb{P}_k \psi_1(T;u,\widetilde{\psi}_0)).
        \end{cases}
    \end{equation}

    \step{We prove that $DF_T(0,0)$ has a linear right inverse, denoted $DF_T(0,0)^{-1}$
    % dF_T(0,0)^{-1}: \left( H^2_N \cap T_\sphere\varphi_0 \right) \times \left(H^2_N \cap V_k\right) \rightarrow \left( H^2_N \cap T_\sphere\varphi_0 \right) \times L^2
    such that, for all $(\widetilde{\psi}_0,\widetilde{\psi}^*) \in (H^2_N \cap \tgt) \times (H^2_N \cap V_k)$, the second component $u$ of $DF_T(0,0)^{-1} \cdot (\widetilde{\psi}_0,\widetilde{\psi}^*)$ satisfies}
    \begin{equation}
        \label{eq:proof-L2-NT-0f}
        \| u \|_{L^2} \leq C_T(\| \widetilde{\psi}_0\|_{H^2} + \|\widetilde{\psi}^* \|_{H^2})
        \quad \text{and} \quad
        N_T(u) \leq C_T(\| \widetilde{\psi}_0\|_{L^2} +
        \|\widetilde{\psi}^* \|_{L^2}).
    \end{equation}
    By \eqref{linearise_explicit}, the equality $DF_T(0,0)\cdot(\widetilde{\psi}_0,u)=(\widetilde{\psi}_0,\widetilde{\psi}^*)$ is equivalent to the moment problem
    \begin{equation}
        \forall j \in \N \setminus \{k\}, \qquad\int_0^T u(t) e^{i\lambda_j t} \dd t = \frac{\langle \widetilde{\psi}^*,\varphi_j\rangle e^{i\lambda_j T}-\langle \widetilde{\psi}_0,\varphi_j\rangle}{i \langle \mu , \varphi_j \rangle},
    \end{equation}
    where, for $j = 0$, the right-hand side is a real number since $\widetilde{\psi}_0 \in \tgt$, $\lambda_0 = 0$ and $\widetilde{\psi}^* \in V_k$.
    Thus the conclusion follows from \cref{p:Megane-1} and \eqref{eq:mu-phi_j} (where we also set $\int_0^T u(t) e^{i \lambda_k t} \dd t = 0$).

    \step{We prove equality \eqref{cible}.}
    The vector subspace
    $\mathcal{V}:=\text{Range}(DF_T(0,0)^{-1})$ is closed in $\left( H^2_N \cap \tgt \right) \times L^2((0,T);\R)$ and $DF_T(0,0)$ is an isomorphism from $\mathcal{V}$ to $(H^2_N \cap \tgt) \times (H^2_N \cap V_k)$.
    By the inverse mapping theorem, there exists
    an open neighborhood $\Omega_0'$ of $(0,0)$ in $\mathcal{V}$,
    an open neighborhood $\Omega_T$ of $(0,0)$ in $(H^2_N \cap \tgt) \times (H^2_N \cap V_k)$ such that $F_T$ is a $C^1$ diffeomorphism from $\Omega_0'$ to $\Omega_T$. 
    One may assume $\Omega_T$ of the form
    \begin{equation}
        \Omega_T:=\{ (\widetilde{\psi}_0,\widetilde{\psi}^*) \in (H^2_N \cap \tgt) \times (H^2_N \cap V_k) \mid
        \| \widetilde{\psi}_0 \|_{H^2_N} + \|\widetilde{\psi}^*\|_{H^2_N}<\delta_T \}
    \end{equation}
    where $\delta_T>0$. In particular (even if it means reducing $\delta_T$), for every $(\psi_0,\widetilde{\psi}^*) \in (H^2_N \cap \sphere) \times (H^2_N \cap V_k)$ such that $\|\psi_0-\varphi_0\|_{H^2_N}+\|\widetilde{\psi}^*\|_{H^2_N}<\delta_T$, then $\widetilde{\psi}_0 := \psi_0-\Re \langle \psi_0,\varphi_0\rangle \varphi_0$ satisfies $(\widetilde{\psi}_0,\widetilde{\psi}^*) \in \Omega_T$ and
    $\psi_0 = (1-\|\widetilde{\psi}_0\|_{L^2}^2)^{\frac 12} \varphi_0 + \widetilde{\psi}_0$, thus, if $u \in L^2$ denotes the second component of $F_T^{-1}(\widetilde{\psi}_0,\widetilde{\psi}^*)$ then
    \eqref{cible} holds, and $u$ depends continuously on $\psi_0$ and $\widetilde{\psi}^*$.

    \step[st:DF-1]{We prove \eqref{estimL2}.}
    The map $F_T^{-1}$ is $C^1$ thus locally Lipschitz: there exists $C_T>0$ such that (even if it means reducing $\delta_T$),
    \begin{equation}
        \|u\|_{L^2} \leq C_T ( \| \widetilde{\psi}_0\|_{H^2} + \|\widetilde{\psi}^*\|_{H^2} ) \leq C_T ( \| \psi_0-\varphi_0\|_{H^2} + \|\widetilde{\psi}^*\|_{H^2} ).
    \end{equation}

    \step[st:DF-2]{We prove \eqref{estimH-1}.}
    In the proof of the inverse mapping theorem, the point $(\widetilde{\psi}_0,u)$ is obtained as a fixed point in $\mathcal{V}$
    \begin{equation}
        (\widetilde{\psi}_0,u)=(\widetilde{\psi}_0,u)-DF_T(0,0)^{-1}\cdot( F_T(\widetilde{\psi}_0,u) - (\widetilde{\psi}_0,\widetilde{\psi}^*) ).
    \end{equation}
    This equality can also be written in the following ways
    \begin{equation}
        \begin{split}
            (0,u) & =(0,u)-DF_T(0,0)^{-1} \cdot ( F_T(\widetilde{\psi}_0,u) - (\widetilde{\psi}_0,\widetilde{\psi}^*) ) \\
            & = - DF_T(0,0)^{-1}\cdot( F_T(\widetilde{\psi}_0,u) - DF_T(0,0) \cdot (0,u) - (\widetilde{\psi}_0,\widetilde{\psi}^*) ) \\
            & = - DF_T(0,0)^{-1} \cdot ( 0,
            \mathbb{P}_k \psi(T;u,\psi_0)-\mathbb{P}_k \psi_1(T;u,0)-\widetilde{\psi}^*).
        \end{split}
    \end{equation}
    Thus, by \cref{st:DF-1} and \eqref{eq:quad-rem} (even if it means reducing $\delta_T$ so that $\|u\|_{L^2}<C_T \delta_T<1$ and $T \leq 1$),
    \begin{equation}
        \begin{aligned}
            N_T(u) & \leq C_T \| \mathbb{P}_k \psi(T;u,\psi_0)-\mathbb{P}_k \psi_1(T;u,0)-\widetilde{\psi}^* \|_{L^2}
            \\ & \leq C_T (\|\psi(T;u,\psi_0)-\varphi_0-\psi_1(T;u,0)\|_{L^2} + \|\widetilde{\psi}^*\|_{L^2} )
            \\ & \leq C_T ( \|\psi(T;u,\psi_0)-\psi(T;u,\varphi_0)\|_{L^2} +
            \|\psi(T;u,\varphi_0)-\varphi_0-\psi_1(T;u,0)\|_{L^2} + \|\widetilde{\psi}^*\|_{L^2})
            \\ & \leq C_T ( \|\psi_0-\varphi_0 \|_{L^2} +
            N_T(u)^{\frac 32} + \|\widetilde{\psi}^*\|_{L^2} ).
        \end{aligned}
    \end{equation}
    Even if it means reducing $\delta_T$ (so that $C_T N_T(u)^{\frac 12} \leq C_T' \|u\|_{L^2}^{\frac 12} \leq C_T'' \delta_T^{\frac 12}<1$), we deduce \eqref{estimH-1}.
\end{proof}

\subsection{Small-time nonlinear control and cost estimate}
\label{sec:cost}

We combine the previous arguments to conclude the proof with the Brouwer fixed-point theorem.

\begin{proof}[Proof of \cref{thm:positive}]
    Let $T^* \in (0,1]$ be given by \cref{p:v^z} and $T \in (0,T^*)$.
    For larger $T$, since $\varphi_0$ is an equilibrium, one can start with a null control.
    
    Let $\delta_T':=\frac 12 \delta_{T/2}>0$, where $\delta_{T/2}$ is given by \cref{p:control-proj-NL}.
    Let $\psi^* \in H^2_N \cap \sphere$ be such that
    $\|\psi^*-\varphi_0\|_{H^2_N}< \delta_T'$. 
    Then $\widetilde{\psi}^*:=\mathbb{P}_k(\psi^*)$ belongs to $H^2_N \cap V_k$ and satisfies $\|\widetilde{\psi}^*\|_{H^2}<\delta_T'$.

    \bigskip

    \step[st:pos-1]{Reformulation of the control problem.}
    For $z \in \C$, consider the control $v^z \in L^2((0,\frac{T}{2});\R)$ given by \cref{p:v^z} (for a time $T \gets \frac T 2$ and a target complex number $z \gets z e^{i \lambda_k \frac T 2}$), which satisfies:
    \begin{equation}
        \label{vz_propriete_T/2}
        \begin{aligned}
            & v^z_1(T/2)=0, \quad
            \psi_1(T/2;v^z,0)=0, \quad
            \langle \psi_2(T/2;v^z,0),\varphi_k \rangle = z e^{i \lambda_k \frac T 2},
            \\
            & \|v^z\|_{L^2} \leq C_T |z|^{\frac 12},
            \qquad
            N_{T/2}(v^z) \lesssim \frac{|\Re(z)|^{\frac 12}}{T} + \frac{|\Im(z)|^{\frac 12}}{T^{\frac 12}} .
        \end{aligned}
    \end{equation}
    Thus, recalling estimate \eqref{eq:est-psi-phi0-psi1-H2}, we obtain
    \begin{equation}
        \|\psi(T/2;v^z,\varphi_0)-\varphi_0\|_{H^2}
        =
        \|\psi(T/2;v^z,\varphi_0)-\varphi_0-\psi_1(T/2;v^z,0)\|_{H^2}
        \leq C_T \|v^z\|_{L^2}^2 \leq C_T |z|.
    \end{equation}
    In particular, if $z$ is small enough, then
    $\|\psi(T/2;v^z,\varphi_0)-\varphi_0\|_{H^2_N}<\delta_T'$.
    Thus, by \cref{p:control-proj-NL}, there exists $w^z \in L^2((0,\frac{T}{2});\R)$ such that
    \begin{gather}
        \mathbb{P}_k \psi(T/2;w^z,\psi(T/2;v^z,\varphi_0))=\widetilde{\psi}^*, \\
        \label{eq:wz-prop-2}
        \|w^z\|_{L^2} \leq C_T (|z| + \|\widetilde{\psi}^*\|_{H^2}), \\
        \label{eq:wz-prop-3}
        N_{T/2}(w^z) \leq C_T (|z| + \|\widetilde{\psi}^*\|_{L^2}).
    \end{gather}
    Let $u^z := v^z \diamond w^z \in L^2((0,T),\R)$.
    Then $\mathbb{P}_k \psi(T;u^z,\varphi_0)=\widetilde{\psi}^*$.
    Thus
    \begin{equation}
        \psi(T;u^z,\varphi_0)=\psi^*
        \qquad \Leftrightarrow \qquad
        \langle \psi(T;u^z,\varphi_0) , \varphi_k \rangle = z^* := \langle \psi^*,\varphi_k \rangle.
    \end{equation}

    \step[st:pos-2]{Let $\theta := \frac{1}{16}$. We prove that, for $z$ small enough}
    \begin{equation}
        \label{eq:step2-psizz}
        \left| \langle \psi(T;u^z,\varphi_0) , \varphi_k \rangle
        -z \right| \leq C_T \left( |z|^{1+\theta} + \|\widetilde{\psi}^*\|_{L^2}^2 \right).
    \end{equation}
    Using \cref{Lem:NT}, \eqref{vz_propriete_T/2} and \eqref{eq:wz-prop-3}, we have
    \begin{equation}
        \label{uz_estim}
        N_T(u^z) \leq
        %N_T(v^z) + N_T(w^z) \leq
        C_T ( |z|^{\frac 12} + \|\widetilde{\psi}^*\|_{L^2}).
    \end{equation}
    Since $\langle \mu, \varphi_k \rangle = 0$, we have $\langle \psi(T;u^z,\varphi_0) , \varphi_k \rangle = A_z + B_z$ where
    \begin{equation}
        A_z:=\langle \psi_2(T;u^z,0) , \varphi_k \rangle,
        \quad B_z:= \langle \psi(T;u^z,\varphi_0) - \varphi_0 - \psi_1(T;u^z,0)-\psi_2(T;u^z,0) , \varphi_k \rangle.
    \end{equation}
    Using \cref{lem:concat} we obtain
    \begin{equation}
        A_z=\langle \psi_2(T/2;v^z,0) , \varphi_k \rangle e^{-i\lambda_k \frac T2} + \langle \psi_2(T/2;w^z,0) , \varphi_k \rangle.
    \end{equation}
    Thus, using \eqref{vz_propriete_T/2}, \eqref{eq:psi2-coerc} and \eqref{eq:wz-prop-3},
    \begin{equation}
        |A_z-z|
        \leq C N_T(w^z)^2
        \leq C_T(|z|^2+\|\widetilde{\psi}^*\|_{L^2}^2).
    \end{equation}
    Using \eqref{eq:cubic-rem} of \cref{cor:cubic-rem} (see \cref{sec:estimates-cub}) since $T \leq 1$ and \eqref{uz_estim} we obtain 
    \begin{equation}
        |B_z| \leq C_T N_T(u^z)^{2+2\theta} \leq C_T (|z|^{1+\theta} + \|\widetilde{\psi}^*\|_{L^2}^{2+2\theta} )
    \end{equation}
    which gives the conclusion.

    \step[st:pos-3]{Brouwer fixed-point theorem.}
    Let $C_T$ be given by \cref{st:pos-2} and $r>0$ such that $C_T r^{1+\theta}<\frac{r}{3}$.
    We may assume that $\|\psi^*-\varphi_0\|_{H^2}$ is small enough so that $C_T \|\widetilde{\psi}^*\|_{L^2}^2<\frac r3$ and $|z^*|<\frac r3$.
    The map
    \begin{equation}
        G :
        \begin{cases}
            \overline{B}_{\C}(0,r) \to \C \\
            z \mapsto z-\langle \psi(T;u^z,\varphi_0),\varphi_k \rangle + z^*
        \end{cases}
    \end{equation}
    is continuous (by continuity of $z \mapsto v^z$ from \cref{p:v^z} and $z \mapsto w^z$ from \cref{p:control-proj-NL}, and continuous dependence of the state on the control and initial data) and takes values in $\overline{B}_{\C}(0,r)$ because
    \begin{equation}
        |G(z)| \leq | z-\langle \psi(T;u^z,\varphi_0),\varphi_k \rangle| + |z^*| \leq C_T ( r^{1+\theta} + \|\widetilde{\psi}^*\|_{L^2}^2)+\frac r3 < r
    \end{equation}
    By the Brouwer fixed-point theorem, there exists $z \in \overline{B}_{\C}(0,r)$ such that $G(z)=z$.
    By \cref{st:pos-1}, this implies that $\psi(T;u^z,\varphi_0)=\psi^*$.

    \step{Proof of the estimates.}
    At the fixed point, we have
    \begin{equation}
        \label{fixed_point}
        z=z^*+z-\langle \psi(T;u^z,\varphi_0),\varphi_k \rangle.
    \end{equation}
    Hence \eqref{eq:step2-psizz} implies that
    \begin{equation}
        \begin{split}
            |z - z^*| 
            & \leq C_T |z|^{1+\theta} + C_T \|\widetilde{\psi}^*\|_{L^2}^2 \\ 
            & \leq C_T |z - z^*|^{1+\theta} + C_T |z^*|^{1+\theta} + C_T \|\widetilde{\psi}^*\|_{L^2}^2 \\
            & \lesssim C_T (2r)^\theta |z - z^*| + C_T |z^*|^{1+\theta} + C_T \|\widetilde{\psi}^*\|_{L^2}^2.
        \end{split}
    \end{equation}
    Up to reducing $r$ so that $(2r)^\theta C_T < 1$, we obtain
    \begin{equation}
        \label{eq:zz*}
        |z - z^*| \lesssim C_T ( |z^*|^{1+\theta} + \|\widetilde{\psi}^*\|_{L^2}^2 ).
    \end{equation}
    Together with \eqref{vz_propriete_T/2} and \eqref{eq:wz-prop-2}, this implies
    \begin{equation}
        \label{eq:cost-L2-precise}
        \|u^z\|_{L^2}
        %\leq \|v^z\|_{L^2}+\|w^z\|_{L^2}
        \leq C_T(|z^*|^{\frac 12} + \|\widetilde{\psi}^*\|_{H^2}).
    \end{equation}
    Using \cref{Lem:NT}, \eqref{vz_propriete_T/2}, \eqref{eq:wz-prop-3} and \eqref{eq:zz*}, we obtain
    \begin{equation}
        \label{eq:cost-H1-precise}
        \begin{split}
            N_T(u^z) & \lesssim \frac{|\Re z |^{\frac 12}}{T} + \frac{|\Im z|^{\frac 12}}{T^{\frac 12}} +C_T ( |z| + \|\widetilde{\psi}^*\|_{L^2}) \\
            & \lesssim \frac{|\Re z^* |^{\frac 12}}{T} + \frac{|\Im z^*|^{\frac 12}}{T^{\frac 12}} +C_T ( |z^*|^{\frac12 + \frac{\theta}{2}} + \|\widetilde{\psi}^*\|_{L^2}) \\
            & \lesssim \frac{|\Re z^* |^{\frac 12}}{T} + \frac{|\Im z^*|^{\frac 12}}{T^{\frac 12}} +C_T \|\widetilde{\psi}^*\|_{L^2}
        \end{split}
    \end{equation}
    for $z^*$ small enough.
    Eventually \eqref{eq:cost-L2-precise} implies \eqref{eq:cost-L2} and \eqref{eq:cost-H1-precise} implies \eqref{eq:cost-H-1}, concluding the proof of \cref{thm:positive}.
\end{proof}

\section{Multiplying kernels by regular functions}
\label{sec:multiply}

In this paragraph, we investigate the following question, in a general abstract setting.
Let $K$ be the kernel of an integral operator, of which we have estimated the operator norm.
Let $\chi$ be a regular function vanishing on the diagonal.
What estimates can we obtain on the operator norm of the integral operator associated with $\chi K$?

In our context, this will enable us in \cref{p:Qk-Q} to transfer the results obtained in \cref{sec:quad} on the reference quadratic form $Q$ of \cref{eq:Q_0} to the quadratic form $Q_k$ of \eqref{eq:Q_k}.

For $T > 0$, we use the following notations:
\begin{equation}
    \label{eq:I-I2-Delta}
    I := (0,T), \quad
    I^2 := (0,T)^2, \quad
    \Delta_{\pm} := \{ (s,t) \in I^2 \mid \pm (t-s) > 0 \}.
\end{equation}

\subsection{Multiplying a bounded operator on \texorpdfstring{$L^2$}{L2}}
\label{sec:mul-L2}

Given a kernel $K$ of a bounded integral operator on $L^2((0,T);\C)$, and a regular function $\chi$ vanishing on the diagonal, we estimate the operator norm of the integral operator associated with $\chi K$.

\begin{proposition}
    \label{lem:mul-chi-L2}
    Let $T > 0$, $K \in L^2(I^2;\C)$ and $C_K$ such that, for all $u,v\in L^2((0,T);\C)$,
    \begin{equation}
        \label{eq:chi-hypo}
        \left| \int_{I^2} K(s,t) u(s) \bar{v}(t) \dd s \dd t \right|
        \leq C_K \| u\|_{L^2} \| v \|_{L^2}.
    \end{equation}
    Let $\chi \in W^{1,\infty}(I^2;\C)$ vanishing on the diagonal.
    Then, for all $u, v \in L^2((0,T);\C)$,
    \begin{equation}
        \label{eq:chi-ccl}
        \left| \int_{I^2} \chi(s,t) K(s,t) u(s) \bar{v}(t) \dd s \dd t \right|
        \leq T C_K \left( \| \partial_1 \chi \|_{L^\infty} + \| \partial_2 \chi \|_{L^\infty} \right) \| u\|_{L^2} \| v \|_{L^2}.
    \end{equation}
\end{proposition}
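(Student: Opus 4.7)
The plan is to exploit the vanishing of $\chi$ on the diagonal together with its Lipschitz regularity to represent $\chi(s,t)$ as a one-dimensional integral (from $(s,t)$ to a nearby point of the diagonal), and then use Fubini to reveal \eqref{eq:chi-hypo} applied to truncated factors. Since the diagonal $\{s=t\}$ has Lebesgue measure zero, the first step is to split the integration domain as $I^2 = \Delta_+ \sqcup \Delta_-$ modulo a null set, where $\Delta_\pm$ are defined in \eqref{eq:I-I2-Delta}. On $\Delta_+$ I use $\chi(s,s)=0$ to write
\[
\chi(s,t) \;=\; \int_s^t \partial_2 \chi(s,\tau)\,\dd\tau \;=\; \int_0^T \mathbbm{1}_{\{s<\tau<t\}}\, \partial_2 \chi(s,\tau)\,\dd\tau,
\]
and symmetrically on $\Delta_-$ I use $\chi(t,t)=0$ to write
\[
\chi(s,t) \;=\; \int_t^s \partial_1 \chi(\sigma,t)\,\dd\sigma \;=\; \int_0^T \mathbbm{1}_{\{t<\sigma<s\}}\, \partial_1 \chi(\sigma,t)\,\dd\sigma.
\]

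Next, I substitute these representations into the left-hand side of \eqref{eq:chi-ccl} and apply Fubini to pull the auxiliary variable out of the $(s,t)$-integral; absolute integrability of the resulting triple integral is immediate from $K \in L^2(I^2;\C)$, $\chi \in W^{1,\infty}(I^2;\C)$ and $u,v \in L^2(I;\C)$ (together with the fact that the auxiliary range $\{\tau\}$ or $\{\sigma\}$ has length $T$). For the $\Delta_+$ piece, for each fixed $\tau \in (0,T)$ the inner double integral reads
\[
\int_{I^2} K(s,t)\, \bigl[\mathbbm{1}_{(0,\tau)}(s)\, \partial_2 \chi(s,\tau)\, u(s)\bigr]\, \bigl[\mathbbm{1}_{(\tau,T)}(t)\, \overline{v(t)}\bigr]\, \dd s\, \dd t,
\]
which is precisely the bilinear form of \eqref{eq:chi-hypo} evaluated at the truncated functions $u_\tau(s) := \mathbbm{1}_{(0,\tau)}(s)\, \partial_2 \chi(s,\tau)\, u(s)$ and $v_\tau(t) := \mathbbm{1}_{(\tau,T)}(t)\, v(t)$. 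Their $L^2$ norms satisfy $\|u_\tau\|_{L^2} \leq \|\partial_2 \chi\|_{L^\infty}\|u\|_{L^2}$ and $\|v_\tau\|_{L^2} \leq \|v\|_{L^2}$, so \eqref{eq:chi-hypo} and integration over $\tau \in (0,T)$ yield the contribution $T\, C_K \|\partial_2 \chi\|_{L^\infty} \|u\|_{L^2}\|v\|_{L^2}$. The $\Delta_-$ piece is handled symmetrically: for each $\sigma$, apply \eqref{eq:chi-hypo} to the factors $\tilde u_\sigma(s) := \mathbbm{1}_{(\sigma,T)}(s)\, u(s)$ and $\tilde v_\sigma(t) := \mathbbm{1}_{(0,\sigma)}(t)\, \overline{\partial_1 \chi(\sigma,t)}\, v(t)$, producing $T\, C_K \|\partial_1 \chi\|_{L^\infty} \|u\|_{L^2}\|v\|_{L^2}$. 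Summing the two contributions gives \eqref{eq:chi-ccl}.

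There is no serious obstacle here; the only point requiring attention is bookkeeping when applying \eqref{eq:chi-hypo}, namely ensuring that after the complex conjugation in the $v$-slot each $\partial_i \chi$ lands on a factor whose $L^2$ norm one actually wants to control. This is precisely why one naturally uses the representation via $\partial_2 \chi$ on $\Delta_+$ (so that the $\chi$-factor depends on $s$, fitting the $u$-slot) and via $\partial_1 \chi$ on $\Delta_-$ (so that the $\chi$-factor depends on $t$, fitting the $v$-slot), rather than trying to rely on a single vertical or horizontal representation throughout.
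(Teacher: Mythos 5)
Your proof is correct and follows essentially the same route as the paper's: split $I^2$ into $\Delta_\pm$, represent $\chi$ on each triangle as an integral of a partial derivative using the vanishing on the diagonal, apply Fubini to pull out the auxiliary variable, and then invoke the continuity hypothesis \eqref{eq:chi-hypo} for each fixed value of that variable. The only cosmetic differences are that the paper packages the truncations via explicit projections $P_\theta$, $Q_\theta$ and treats $\Delta_-$ ``by symmetry,'' whereas you spell out both triangles, and your remark on the conjugation bookkeeping (which side of the bilinear form the derivative factor lands on) is a nice clarification of why $\partial_2\chi$ is chosen on $\Delta_+$ and $\partial_1\chi$ on $\Delta_-$.
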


\begin{proof}
    Since $W^{1,\infty}$ functions are continuous, the assumption that $\chi$ vanishes on the diagonal makes sense.
    Since $\chi \in L^\infty(I^2;\C)$, $K \in L^2(I^2;\C)$ and $u, v \in L^2(I;\C)$, $\chi(s,t) K(s,t) u(s) \bar{v}(t) \in L^1(I^2;\C)$.
    Thus the contributions of $\Delta_{+}$ and $\Delta_{-}$ can be estimated separately; we treat $\Delta_{+}$.
    Using $\chi(s,s) = 0$, we write
    \begin{equation}
        \label{eq:cake}
        \mathbbm{1}_{\Delta_+}(s,t) \chi(s,t)
        = \mathbbm{1}_{s < t} \int_s^t \partial_2 \chi(s,\theta) \dd \theta
        = \int_0^T \mathbbm{1}_{(0,\theta)}(s) \mathbbm{1}_{(\theta,T)}(t) \partial_2 \chi(s,\theta) \dd \theta.
    \end{equation}
    Introduce, for $\theta \in (0,T)$, the projections for $\phi \in L^2((0,T);\C)$,
    \begin{equation}
        \label{eq:proj-PQ}
        (P_\theta \phi)(s) := \mathbbm{1}_{(0,\theta)}(s) \phi(s)
        \quad \text{and} \quad
        (Q_\theta \phi)(t) := \mathbbm{1}_{(\theta,T)}(t) \phi(t).
    \end{equation}
    Then, using \eqref{eq:cake} and Fubini's theorem,
    \begin{equation}
        \label{eq:chi-caked}
        \int_{\Delta_+} \chi(s,t) K(s,t) u(s) \bar{v}(t) \dd s \dd t
        = \int_0^T \int_{I^2} K(s,t) (P_\theta (\partial_2 \chi(\cdot,\theta) u))(s) (Q_\theta \bar{v})(t) \dd s \dd t \dd \theta
    \end{equation}
    For each fixed $\theta \in (0,T)$, assumption \eqref{eq:chi-hypo} yields
    \begin{equation}
        \label{eq:cake-local}
        \begin{split}
            \Bigg| \int_{I^2} K(s,t) (P_\theta (\partial_2 \chi(\cdot,\theta) u))(s) (Q_\theta \bar{v})(t) \dd s \dd t \Bigg|
            & \leq C_K \| P_\theta (\partial_2 \chi(\cdot,\theta) u) \|_{L^2} \| Q_\theta \bar{v} \|_{L^2} \\
            & \leq C_K \| \partial_2 \chi \|_{L^\infty} \| u \|_{L^2} \| \bar{v} \|_{L^2}.
        \end{split}
    \end{equation}
    Substituting estimate \eqref{eq:cake-local} in \eqref{eq:chi-caked} and integrating over $\theta \in (0,T)$ bounds the integral over $\Delta_+$ by $T C_K \| \partial_2 \chi \|_{L^\infty} \| u \|_{L^2} \| v \|_{L^2}$.
    By symmetry, one has an analogous bound with $\|\partial_1 \chi\|_{L^\infty}$ in place of $\|\partial_2 \chi\|_{L^\infty}$ for the integral over $\Delta_-$.
    Combining the two estimates proves \eqref{eq:chi-ccl}.
\end{proof}

\begin{remark}
    Using Cauchy--Schwarz, the hypothesis $K \in L^2(I^2;\C)$ immediately allows to bound left-hand side of \eqref{eq:chi-ccl} by $\| \chi \|_{L^\infty} \| K \|_{L^2} \| u \|_{L^2} \| v \|_{L^2}$, where $\|\chi\|_{L^\infty} \leq T \| \chi\|_{W^{1,\infty}}$ since $\chi$ vanishes on the diagonal.
    The interest of the bound \eqref{eq:chi-ccl} is to involve the operator norm $C_K$ of assumption~\eqref{eq:chi-hypo}, instead of the Hilbert--Schmidt norm $\|K\|_{L^2}$, which can be much larger.
\end{remark}

\begin{remark}
    We choose to present \cref{lem:mul-chi-L2} with the assumption $K \in L^2(I^2;\C)$ in order to be able to manipulate usual Lebesgue integrals.
    However, one could also adopt the following abstract viewpoint.
    Take $K$ a continuous linear form of norm $C_K$ on the projective tensor product $E := L^2(I;\C) \otimes L^2(I;\C)$.
    Then the claim is that $\chi K$ is a continuous linear form of norm at most $T \| \nabla \chi \|_{L^\infty} C_K$ on $E$.
    By duality, this amounts to proving that multiplication by $\chi$ is a linear bounded operator on the completion of $E$, which can be done using the same integral representation of $\chi$ which allows to separate variables.
    An advantage of this abstract viewpoint is that we don't need to assume that $K \in L^2(I^2;\C)$, which circumvents the necessity of a regularization argument in the proof of \cref{p:R-chi} below.
\end{remark}

\subsection{Multiplying a bounded operator on a fractional Sobolev space}
\label{sec:mul-Hnu}

We generalize \cref{lem:mul-chi-L2} to continuous bilinear forms on $\sobC{\nu} \times \sobC{\nu}$.

\begin{proposition}
    \label{lem:mul-chi-Hnu}
    Let $\nu \in (-\frac 12,\frac12)$.
    There exists a constant $C_\nu > 0$ such that the following property holds.
    Let $T > 0$, $K \in L^2(I^2;\C)$ and $C_K$ such that, for all $u,v\in L^2 \cap \sobC{\nu}$,
    \begin{equation}
        \label{eq:mul-chi-Hnu-hyp}
        \left| \int_{I^2} K(s,t) u(s) \bar{v}(t) \dd s \dd t \right|
        \leq C_K \nsob{u}{\nu} \nsob{v}{\nu}.
    \end{equation}
    Let $\chi \in W^{1,\infty}(I^2;\C)$ vanishing on the diagonal.
    Assume that $\chi \in W^{2,\infty}(\Delta_\pm;\C)$ (but not across the diagonal).
    Then, for all $u, v \in L^2 \cap \sobC{\nu}$,
    \begin{equation}
        \label{eq:mul-chi-Hnu-ccl}
        \left| \int_{I^2} \chi(s,t) K(s,t) u(s) \bar{v}(t) \dd s \dd t \right|
        \leq T C_K C_\nu \left( M_\chi + T \| \partial_{12} \chi \|_{L^\infty} \right) \nsob{u}{\nu} \nsob{v}{\nu}
    \end{equation}
    where $M_\chi := \sup_{t \in [0,T]} |\partial_2 \chi(t-0,t)| + \sup_{t\in [0,T]} |\partial_1 \chi (t+0,t)|$.
\end{proposition}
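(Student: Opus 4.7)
The plan is to follow the cake decomposition from the proof of \cref{lem:mul-chi-L2}, refined by a first-order Taylor expansion of $\chi$ around the diagonal. This refinement serves two purposes: it replaces $\|\partial_1\chi\|_{L^\infty}+\|\partial_2\chi\|_{L^\infty}$ by the diagonal trace $M_\chi$ plus a remainder of relative size $T\|\partial_{12}\chi\|_{L^\infty}$, and---crucially for the $\sobC{\nu}$ setting---it arranges matters so that the cake decomposition only ever multiplies $u$ and $v$ by indicator functions, never by non-constant weights.

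I focus on $\Delta_+$, the case of $\Delta_-$ being symmetric with $\partial_1\chi(\sigma+0,\sigma)$ in place of $\psi_+$. Set $\psi_+(\tau) := \partial_2\chi(\tau-0,\tau)$, a continuous function of $\tau\in[0,T]$ by the $W^{2,\infty}(\Delta_+)$ assumption. Using $\chi(s,s)=0$ and expanding $\partial_2\chi(s,\tau) = \psi_+(\tau) - \int_s^\tau \partial_{12}\chi(\sigma,\tau)\dd\sigma$ for $s<\tau$ gives the splitting $\chi = D_+ + R_+$ on $\Delta_+$ with
\begin{equation*}
    D_+(s,t) := \int_s^t \psi_+(\tau)\dd\tau, \qquad R_+(s,t) := -\int_s^t \int_s^\tau \partial_{12}\chi(\sigma,\tau)\dd\sigma\dd\tau.
\end{equation*}

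The cake identity $\mathbbm{1}_{s<t} D_+(s,t) = \int_0^T \psi_+(\theta) \mathbbm{1}_{(0,\theta)}(s) \mathbbm{1}_{(\theta,T)}(t)\dd\theta$ and Fubini rewrite $\int_{\Delta_+} D_+ K u\bar v\dd s \dd t$ as $\int_0^T \psi_+(\theta) \int_{I^2} K(s,t)(P_\theta u)(s)\overline{(Q_\theta v)(t)}\dd s\dd t\dd\theta$, with $P_\theta,Q_\theta$ as in \eqref{eq:proj-PQ}. Applying hypothesis \eqref{eq:mul-chi-Hnu-hyp} and the uniform bound $\nsob{P_\theta u}{\nu}+\nsob{Q_\theta u}{\nu} \leq C_\nu \nsob{u}{\nu}$ (explained below) controls this contribution by $T M_\chi C_K C_\nu^2 \nsob{u}{\nu}\nsob{v}{\nu}$. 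For the remainder, a double cake identity
\begin{equation*}
    R_+(s,t) = -\iint_{\{\sigma<\tau\}\subset I^2} \mathbbm{1}_{(0,\sigma)}(s)\mathbbm{1}_{(\tau,T)}(t)\,\partial_{12}\chi(\sigma,\tau)\dd\sigma\dd\tau
\end{equation*}
yields, by the very same projection argument, a bound by $\tfrac{T^2}{2}\|\partial_{12}\chi\|_{L^\infty(\Delta_+)} C_K C_\nu^2 \nsob{u}{\nu}\nsob{v}{\nu}$. Summing with the symmetric contributions from $\Delta_-$ delivers \eqref{eq:mul-chi-Hnu-ccl}.

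The only technical input needed is that, for $|\nu|<1/2$, multiplication by $\mathbbm{1}_{(a,b)}$ defines a bounded operator on $H^\nu(\R)$ with norm depending only on $\nu$---a classical fact on the $\sobC{\nu}$ scale, see e.g.\ \cite[Section 1.4]{Grisvard1985}---which yields the required uniform-in-$\theta$-and-$T$ boundedness of $P_\theta,Q_\theta$ on $\sobC{\nu}$. This is where the restriction $|\nu|<1/2$ enters, and it is also the step I would be most careful with: the Taylor expansion above is engineered precisely so that in the principal term the weight on $u$ and $v$ remains an indicator function, rather than a general $W^{1,\infty}$ function whose multiplication on $\sobC{\nu}$ would require extra compatibility with the endpoints of $(0,T)$.
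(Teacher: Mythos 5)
Your proof is correct and takes essentially the same route as the paper's: both Taylor-expand $\partial_2\chi$ at the diagonal to replace the first-order weight by its diagonal trace $\psi_+$ plus a $\partial_{12}\chi$ remainder, then apply the cake decomposition so that $u$ and $v$ are only multiplied by indicators, and finally invoke the uniform $\sobC{\nu}$-boundedness of the projections $P_\theta,Q_\theta$. The only cosmetic difference is that you split $\chi=D_+ + R_+$ first and cake-decompose each piece, whereas the paper substitutes the Taylor expansion directly into the single cake integral; the resulting nested integrals are identical.
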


\begin{proof}
    We argue as in the proof of \cref{lem:mul-chi-L2}.
    Note that, since $\chi$ has $W^{2,\infty}$ regularity within each triangle, $\partial_1 \chi$ and $\partial_2 \chi$ have traces on each side of the diagonal.

    The key additional argument is that there exists a constant $c_\nu > 0$ such that, for all $0 < \theta < T$ and $\phi \in L^2 \cap \sobC{\nu}$, $\nsob{P_\theta \phi}{\nu} \leq c_\nu \nsob{\phi}{\nu}$ and $\nsob{Q_\theta \phi}{\nu} \leq c_\nu \nsob{\phi}{\nu}$, where $P_\theta$ and $Q_\theta$ are defined in \eqref{eq:proj-PQ}.
    Indeed, by \eqref{eq:Hnu} and \cref{lem:mul-1-Hnu}, denoting by $\widetilde{\phi}$ the extension of $\phi$ by $0$ on $\R \setminus (0,T)$, $\nsob{P_\theta \phi}{\nu} = \| P_\theta \widetilde{\phi} \|_{H^\nu(\R)} \leq c_\nu \| \widetilde{\phi} \|_{H^\nu(\R)} = c_\nu \nsob{\phi}{\nu}$.

    To circumvent the lack of regularity of $\chi$ across the diagonal, we write, for $0 < s < \theta < T$,
    \begin{equation}
        \partial_2 \chi(s,\theta) = \partial_2 \chi(\theta-0,\theta) - \int_0^\theta \partial_{12} \chi(\tau,\theta) \mathbbm{1}_{(0,\tau)}(s) \dd \tau.
    \end{equation}
    Thus, starting from \eqref{eq:proj-PQ}, we obtain
    \begin{equation}
        \begin{split}
            \int_{\Delta_+} \chi(s,t) K(s,t) & u(s) \bar{v}(t) \dd s \dd t
            = \int_0^T \partial_2 \chi(\theta-0,\theta) \int_{I^2} K(s,t) (P_\theta u)(s) (Q_\theta \bar{v})(t) \dd s \dd t \dd \theta \\
            & - \int_0^T \int_0^\theta \partial_{12}\chi(\tau,\theta) \int_{I^2} K(s,t) (P_{\min (\tau, \theta)} u)(s) (Q_\theta \bar{v})(t) \dd s \dd t \dd \tau \dd \theta
        \end{split}
    \end{equation}
    Using assumption \eqref{eq:mul-chi-Hnu-hyp}, layer by layer, for each fixed $\theta$ and $\tau$, we obtain
    \begin{equation}
        \Bigg| \int_{\Delta_+} \chi(s,t) K(s,t) u(s) \bar{v}(t) \dd s \dd t \Bigg| \leq
        c_\nu^2 \nsob{u}{\nu} \nsob{v}{\nu} \left( T M_\chi + T^2 \| \partial_{12} \chi \|_{L^\infty} \right).
    \end{equation}
    Together with a similar estimate on $\Delta_-$, this gives the conclusion with $C_\nu := 2 c_\nu^2$.
\end{proof}

\begin{corollary}
    \label{lem:mul-chi-mixte}
    Let $\nu \in (-\frac 12,\frac12)$.
    There exists a constant $C_\nu > 0$ such that the following property holds.
    Let $T \in (0,1]$, $K \in L^2(I^2;\C)$ and $C_K,D_K > 0$ such that, for all $u, v \in L^2 \cap \sobC{\nu}$,
    \begin{equation}
        \label{eq:mul-chi-hyp}
        \left| \int_{I^2} K(s,t) u(s) \bar{v}(t) \dd s \dd t \right|
        \leq C_K \| u \|_{L^2} \| v \|_{L^2} + D_K \nsob{u}{\nu} \nsob{v}{\nu}.
    \end{equation}
    Let $\chi \in W^{1,\infty}(I^2;\C)$ vanishing on the diagonal, with $\chi \in W^{2,\infty}(\Delta_\pm;\C)$ (but not across the diagonal).
    There exists $C_\chi > 0$ depending only on its $W^{2,\infty}$ norm such that, for all $u, v \in L^2 \cap \sobC{\nu}$,
    \begin{equation}
        \label{eq:mul-chi-ccl}
        \Bigg| \int_{I^2} \chi(s,t) K(s,t) u(s) \bar{v}(t) \dd s \dd t
        \Bigg|
        \leq T C_\nu C_\chi \left( C_K \| u \|_{L^2} \| v \|_{L^2} + D_K \nsob{u}{\nu} \nsob{v}{\nu} \right).
    \end{equation}
\end{corollary}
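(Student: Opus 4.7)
The plan is to run the same structural argument as in \cref{lem:mul-chi-L2} and \cref{lem:mul-chi-Hnu}, the only novelty being that the hypothesis \eqref{eq:mul-chi-hyp} is a sum of two bilinear bounds rather than a single one. By symmetry I focus on the contribution over $\Delta_+$. Since $\chi$ vanishes on the diagonal and is $W^{2,\infty}$ up to the diagonal from each side, I reuse the refined decomposition from the proof of \cref{lem:mul-chi-Hnu}, splitting the factor $\partial_2\chi(s,\theta)$ (for $s<\theta$) as $\partial_2\chi(\theta-0,\theta) - \int_0^\theta \partial_{12}\chi(\tau,\theta)\mathbbm{1}_{(0,\tau)}(s)\, d\tau$. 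With $P_\theta,Q_\theta$ the truncation projections of \eqref{eq:proj-PQ}, Fubini's theorem rewrites $\int_{\Delta_+} \chi K u \bar v$ as the sum of a single integral over $\theta\in(0,T)$ weighted by $\partial_2\chi(\theta-0,\theta)$ and a double integral over $(\tau,\theta)$ with $0<\tau<\theta<T$ weighted by $\partial_{12}\chi(\tau,\theta)$, each whose integrand is the inner bilinear form $\int_{I^2} K\,(P_{\cdot}u)(Q_\theta\bar v)$.

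For each fixed $\theta$ (and $\tau$), the mixed hypothesis \eqref{eq:mul-chi-hyp} applied to that inner bilinear form yields two contributions: a $C_K$-part bounded by $\|P_{\cdot} u\|_{L^2}\|Q_\theta v\|_{L^2}\leq \|u\|_{L^2}\|v\|_{L^2}$ (indicator multiplication is a contraction on $L^2$), and a $D_K$-part bounded by $\nsob{P_{\cdot} u}{\nu}\nsob{Q_\theta v}{\nu}\leq c_\nu^2\nsob{u}{\nu}\nsob{v}{\nu}$, where the last majorization is exactly the uniform multiplier estimate for $\mathbbm{1}_{(0,\theta)}$ and $\mathbbm{1}_{(\theta,T)}$ on $\sobC{\nu}$ already used in the proof of \cref{lem:mul-chi-Hnu} (i.e.\ \cref{lem:mul-1-Hnu}), which is precisely the point requiring $|\nu|<\frac12$.

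It remains to integrate. The $\theta$-integral produces the announced factor $T$ multiplied by $M_\chi$, while the $(\tau,\theta)$-integral produces a factor $T^2$ multiplied by $\|\partial_{12}\chi\|_{L^\infty}$. Using $T\leq 1$ to absorb $T^2$ into $T$, collecting all $W^{2,\infty}$-norms of $\chi$ into a single constant $C_\chi$, and handling $\Delta_-$ by the symmetric decomposition via $\partial_1\chi$, yields the claimed inequality \eqref{eq:mul-chi-ccl} with the same $C_\nu=2c_\nu^2$ as in \cref{lem:mul-chi-Hnu}.

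Essentially nothing here is an obstacle: the argument is mechanical once one notices that the proofs of \cref{lem:mul-chi-L2,lem:mul-chi-Hnu} depend on the kernel $K$ only through the bilinear form $(u,v)\mapsto\int K u\bar v$, so any linear combination of bilinear bounds propagates unchanged through the two layers of integral representation of $\chi$. The assumption $K\in L^2(I^2;\C)$ serves solely to legitimize Fubini and the layer-by-layer estimation, exactly as in the proofs of the two source propositions.
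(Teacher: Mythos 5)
Your proof is correct and follows the same approach as the paper, which just states that the result is "easily obtained by working as in the proofs of" \cref{lem:mul-chi-L2} and \cref{lem:mul-chi-Hnu}, applying the mixed estimate \eqref{eq:mul-chi-hyp} on each layer and using $T\leq 1$ to absorb the $T^2$ factor. Your write-up correctly identifies that one should reuse the refined two-term decomposition of $\partial_2\chi$ from \cref{lem:mul-chi-Hnu} (not the cruder one from \cref{lem:mul-chi-L2}), since it simultaneously keeps the inner bilinear form in the clean shape $\int K\,(P\cdot u)(Q\cdot\bar v)$ needed for both the $L^2$-contraction bound and the $\sobC{\nu}$-multiplier bound of \cref{lem:mul-1-Hnu}.
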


\begin{proof}
    This result is easily obtained by working as in the proofs of \cref{lem:mul-chi-L2} and \cref{lem:mul-chi-Hnu}, using the mixed continuity estimate \eqref{eq:mul-chi-hyp} on each layer, to obtain estimate \eqref{eq:mul-chi-ccl}.
    The assumption $T \leq 1$ is used to bound the constant $M_\chi + T \| \partial_{12} \chi \|_{L^\infty}$ appearing in \eqref{eq:mul-chi-Hnu-ccl}.
\end{proof}

\subsection{Multiplying a bounded operator acting on primitives}
\label{sec:mul-H-1}

In this paragraph, we only consider controls $u,v \in \mathcal{H}$ (see \eqref{eq:cH}), i.e.\ such that $u_1(T) = v_1(T) = 0$, where $u_1$ and $v_1$ are the primitives of $u$ and $v$ vanishing at $0$.

We now handle a kernel $K$ for which the natural continuity assumption is expressed at the level of the primitives $u_1$ and $v_1$ of the controls.
We prove a counterpart of \cref{lem:mul-chi-mixte} in this setting.

\begin{proposition}
    \label{p:R-chi}
    Let $\nu \in (-\frac 12,\frac12)$.
    There exists a constant $C_\nu > 0$ such that the following property holds.
    Let $T \in (0,1]$, $K \in H^1(I^2;\C)$ and $C_K,D_K > 0$ such that, for all $u, v \in \mathcal{H}$,
    \begin{equation}
        \label{eq:R-chi-hyp}
        \left| \int_{I^2} K(s,t) u(s) \bar{v}(t) \dd s \dd t \right|
        \leq C_K \| u_1 \|_{L^2} \| v_1 \|_{L^2} + D_K \nsob{u_1}{\nu} \nsob{v_1}{\nu}.
    \end{equation}
    Let $\chi \in W^{1,\infty}(I^2;\C)$ vanishing on the diagonal, with $\chi \in W^{2,\infty}(\Delta_\pm;\C)$ (but not across the diagonal).
    There exists $C_\chi > 0$ depending only on its $W^{2,\infty}$ norm such that, for all $u, v \in \mathcal{H}$,
    \begin{equation}
        \label{eq:R-chi-ccl}
        \begin{split}
            \Bigg| \int_{I^2} \chi(s,t) K(s,t) u(s) \bar{v}(t) \dd s \dd t
            & - \int_I w u_1 \bar{v}_1
            - \int_{\Delta_+\cup\Delta_-} \Upsilon(s,t) u_1(s) \bar{v}_1(t) \dd s \dd t \Bigg| \\
            & \leq T C_\nu C_\chi \left( C_K \| u_1 \|_{L^2} \| v_1 \|_{L^2} + D_K \nsob{u_1}{\nu} \nsob{v_1}{\nu} \right).
        \end{split}
    \end{equation}
    where $w(s) := (\partial_1 \chi(s,s+0)-\partial_1 \chi(s,s-0)) K(s,s)$ and $\Upsilon := K \partial_{21}\chi + \partial_2 \chi \partial_1 K + \partial_1 \chi \partial_2 K$.
\end{proposition}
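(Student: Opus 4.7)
The plan is to perform two successive integrations by parts — first in $s$, then in $t$ — on each of the two triangles $\Delta_\pm$, exploiting three vanishing conditions: $\chi$ vanishes on the diagonal (which kills the diagonal boundary contribution in the first IBP); $u_1(0) = u_1(T) = 0$ from $u \in \mathcal H$; and the analogous conditions on $v_1$. The first IBP in $s$ produces no diagonal contribution because $\chi(t,t) = 0$, so it simply replaces $u$ by $-u_1 \partial_1[\chi K]$. The second IBP in $t$ does produce a diagonal boundary term: since $\chi$ and $\partial_i K$ are continuous across the diagonal but $\partial_1 \chi$ jumps, combining the contributions at $t = s+0$ (from $\Delta_+$) and $t = s-0$ (from $\Delta_-$) yields exactly the weight $w(s) = (\partial_1 \chi(s, s+0) - \partial_1 \chi(s, s-0)) K(s,s)$.

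Carrying this out first under the temporary assumption that $K$ is smooth, so that $\partial_{12} K$ is a genuine $L^2$ function, and using the product rule $\partial_{12}[\chi K] = \partial_{21} \chi \cdot K + \partial_1 \chi \cdot \partial_2 K + \partial_2 \chi \cdot \partial_1 K + \chi \cdot \partial_{12} K = \Upsilon + \chi\, \partial_{12} K$, the two integrations by parts give the identity
\begin{equation*}
\int_{I^2} \chi K\, u \bar v \dd s \dd t = \int_I w\, u_1 \bar v_1 \dd s + \int_{\Delta_+ \cup \Delta_-} \Upsilon\, u_1 \bar v_1 \dd s \dd t + R,
\end{equation*}
with remainder $R := \int_{I^2} \chi\, \partial_{12} K\, u_1 \bar v_1 \dd s \dd t$. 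To control $|R|$, the key observation is that hypothesis \eqref{eq:R-chi-hyp}, applied with $u = f'$ and $v = g'$ for arbitrary $f, g \in C^\infty_c((0,T);\C)$ (so that $f', g' \in \mathcal H$ with $(f')_1 = f$ and $(g')_1 = g$), recasts via distributional IBP as
\begin{equation*}
\left| \int_{I^2} \partial_{12} K(s,t)\, f(s)\, \bar g(t) \dd s \dd t \right| \leq C_K \|f\|_{L^2} \|g\|_{L^2} + D_K \nsob{f}{\nu} \nsob{g}{\nu}.
\end{equation*}
By density this extends to $f, g \in L^2 \cap \sobC{\nu}$, which is precisely hypothesis \eqref{eq:mul-chi-hyp} of \cref{lem:mul-chi-mixte} applied to the kernel $\partial_{12} K$. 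Applying that lemma with the multiplier $\chi$ (which vanishes on the diagonal and is $W^{2,\infty}$ on each triangle) then yields exactly the bound on $|R|$ claimed in \eqref{eq:R-chi-ccl}.

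The main obstacle is to remove the smoothness assumption on $K$: when $K$ is merely in $H^1(I^2)$, $\partial_{12} K$ exists only as a distribution, so $R$ is not a priori a Lebesgue integral. I would resolve this via a regularization argument: approximate $K$ by smooth kernels $K_n \to K$ in $H^1(I^2)$, verify that the traces $K_n(s,s) \to K(s,s)$ in $H^{1/2}$ of the diagonal (so that $w_n \to w$) and that $\partial_i K_n \to \partial_i K$ in $L^2$ (so that $\Upsilon_n \to \Upsilon$), and crucially arrange that the bound on $|R_n|$ passes to the limit with the continuity constants $C_K, D_K$ attached to the original $K$ rather than to $K_n$. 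This last step is the delicate one; it can be carried out either by a careful choice of regularization that inherits a uniform version of \eqref{eq:R-chi-hyp}, or, as suggested in the remark following \cref{lem:mul-chi-L2}, by reformulating the multiplier lemmas in the abstract bilinear-form setting on $L^2 \otimes L^2$, which avoids any $L^2$-kernel representation altogether.
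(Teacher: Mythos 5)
Your overall strategy matches the paper's: perform two integrations by parts to produce the diagonal weight $w$ and the kernel $\Upsilon$, observe that $\partial_{12}K$ inherits the mixed $L^2$/$\sobC{\nu}$ continuity from \eqref{eq:R-chi-hyp}, apply \cref{lem:mul-chi-mixte} to $\chi\,\partial_{12}K$, and then remove the extra smoothness on $K$ by approximation. Your integration-by-parts computation (including the sign bookkeeping giving $w(s)=(\partial_1\chi(s,s{+}0)-\partial_1\chi(s,s{-}0))K(s,s)$) is correct and is exactly the content of \cref{lem:IPP-Armand}, and your dualization of \eqref{eq:R-chi-hyp} via $u=f'$, $v=g'$ is the same step the paper makes.

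The genuine gap is that the regularization step, which you rightly flag as the delicate one, is not actually carried out, and the obvious candidate fails. Convolving $K$ with a mollifier transfers to convolving $u$ and $v$, which destroys the support constraint $\supp u \subset (0,T)$ and the zero-mean condition $u_1(T)=0$; so the approximated kernel is not readily seen to satisfy the hypothesis \eqref{eq:R-chi-hyp} with uniform constants for $u,v\in\mathcal H$. One needs an approximation that commutes cleanly with the orthogonality structure of $\mathcal H$ and with the fractional norm $\nsob{\cdot}{\nu}$. The paper achieves this with the Neumann spectral truncation $K_N := (\mathbb P_N\otimes\mathbb P_N)K$, which gives $K_N\to K$ in $H^1(I^2)$ while $\int K_N u\bar v = \int K (\mathbb P_N u)\overline{(\mathbb P_N v)}$ with $\mathbb P_N u,\mathbb P_N v \in \mathcal H$; the required uniformity then reduces to showing $\nsob{(\mathbb P_N u)_1}{\nu}\lesssim\nsob{u_1}{\nu}$ with a constant independent of $N$ and $T$, which is the content of \cref{lem:P_N-H_nu} and uses the two-sided comparison between $\nsob{\cdot}{\nu}$ and the spectral norm of \cref{sec:spectral}. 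Without some argument of this kind — or the alternative abstract $L^2\otimes L^2$ reformulation you mention, which would itself need to be written out — the passage to $K\in H^1(I^2)$ is not established.
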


\begin{proof}
    \step{Preliminary comments on \eqref{eq:R-chi-ccl}.}
    All terms in the left-hand side of \eqref{eq:R-chi-ccl} make sense for $K \in H^1(I^2;\C)$.
    Indeed, let $u,v \in \mathcal{H}$.
    First, $\int_{I^2} \chi K (u \otimes \bar{v})$ makes sense since $\chi \in L^\infty(I^2)$, $K \in L^2(I^2)$ and $u \otimes v \in L^2(I^2)$.
    Second, $\partial_1 \chi$ and $K$ are in $H^1$ of each triangle, so have $L^2$ traces on each side of the diagonal, so $w \in L^1(I)$, and $u_1 \bar{v}_1 \in L^\infty(I)$.
    Third, since $u_1 \otimes \bar{v}_1 \in L^\infty(I^2)$, it suffices to check that $\Upsilon \in L^1(I^2)$, which is the case.
    Thus, the left-hand side of \eqref{eq:R-chi-ccl} in fact depends continuously on $K \in H^1(I^2;\C)$, and we can use a regularization argument.

    \step{Case of a regular kernel $K \in H^2(I^2;\C)$.}
    Then $\chi K \in C^0([0,T]^2;\C)$ and $\chi K$ is $H^2$ in $\Delta_\pm$.
    Thus, we can apply \cref{lem:IPP-Armand} to the kernel $\chi K$.
    Fix $u,v \in \mathcal{H}$.
    Using that $u_1(T) = v_1(T) = 0$, equality \eqref{eq:IPP-Armand} reads
    \begin{equation}
        \label{eq:R-chi-ipp}
        \begin{split}
            \int_{I^2} \chi(s,t) K(s,t) u(s) \bar{v}(t) \dd s \dd t = \int_{I} w u_1 \bar{v}_1 & + \int_{\Delta_+ \cup \Delta_-} \Upsilon(s,t) u_1(s) \bar{v}_1(t) \dd s \dd t
            \\ & + \int_{\Delta_+ \cup \Delta_-} \chi(s,t) \partial_{21}K(s,t) u_1(s) \bar{v}_1(t) \dd s \dd t,
        \end{split}
    \end{equation}
    where $w(s) = \partial_1 (\chi K)(s,s+0) - \partial_1(\chi K)(s,s-0) = (\partial_1 \chi(s,s+0)-\partial_1 \chi(s,s-0)) K(s,s)$ using that $\chi$ vanishes on the diagonal, and we used that $\partial_{21} (\chi K) = \Upsilon + \chi \partial_{21} K$.

    Moreover, for any $f,g \in C^\infty_c((0,T);\R)$, integrating by parts,
    \begin{equation}
        \begin{split}
            \Bigg| \int_{I^2} \partial_{21} K(s,t) f(s) \bar{g}(t) \dd s \dd t \Bigg|
            & =
            \Bigg| \int_{I^2} K(s,t) f'(s) \bar{g}'(t) \dd s \dd t \Bigg| \\
            & \leq C_K \| f \|_{L^2} \| g \|_{L^2} + D_K \nsob{f}{\nu} \nsob{g}{\nu}
        \end{split}
    \end{equation}
    by hypothesis \eqref{eq:R-chi-hyp}.
    By density, this estimate is preserved for $f,g \in L^2 \cap \sobC{\nu}$
    and the kernel $\partial_{21} K \in L^2$ satisfies the assumption \eqref{eq:mul-chi-hyp}.
    Thus, by \cref{lem:mul-chi-mixte} (using $T \leq 1$),
    \begin{equation}
        \begin{split}
            \Bigg| \int_{\Delta_+ \cup \Delta_-} \chi(s,t) \partial_{21} K(s,t) & u_1(s) \bar{v}_1(t) \dd s \dd t \Bigg| \\
            & \leq T C_\nu C_\chi \left( C_K \| u_1 \|_{L^2} \| v_1 \|_{L^2} + D_K \nsob{u_1}{\nu} \nsob{v_1}{\nu} \right),
        \end{split}
    \end{equation}
    which, together with \eqref{eq:R-chi-ipp}, yields the conclusion \eqref{eq:R-chi-ccl}.

    \step{Regularization argument.}
    We now construct a sequence $K_N \in H^2(I^2;\C)$ such that $K_N \to K$ in $H^1(I^2;\C)$ and, for each $N \in \N^*$, $K_N$ satisfies assumption \eqref{eq:R-chi-hyp} with constants $(C_K, D_K) \gets (C_K,c_\nu^2 D_K)$ (see below).
    By the second step, \eqref{eq:R-chi-ccl} holds for each $K_N$.
    By the first step, the left-hand side of \eqref{eq:R-chi-ccl} corresponding to $K_N$ tends to the one corresponding to $K$.

    Consider the 1D Neumann Laplacian on $L^2(I)$.
    It possesses an orthonormal basis of eigenfunctions $(\phi_k)_{k=0}^\infty$ in $L^2(I)$, given by $\phi_0(t) = 1/\sqrt{T}$ and $\phi_k(t) = \sqrt{2/T} \cos(k\pi t/T)$ for $k \geq 1$ with eigenvalues $\beta_k^2 = (k\pi/T)^2 \geq 0$.
    For $N \in \N$, let $\mathbb{P}_N$ be the orthogonal projector in $L^2(I)$ onto the subspace spanned by $\{\phi_0, \dotsc, \phi_N\}$.
    We define
    \begin{equation}
        K_N := (\mathbb{P}_N \otimes \mathbb{P}_N) K \in C^\infty([0,T]^2;\C).
    \end{equation}
    Since $K \in H^1(I^2;\C)$, we have $K_N \to K$ in $H^1(I^2;\C)$.
    Moreover, for all $u,v\in \mathcal{H}$,
    \begin{equation}
        \begin{split}
            \bigg| \int_{I^2} K_N(s,t) u(s) \bar{v}(t) & \dd s \dd t \bigg|
            = \bigg|\int_{I^2} K(s,t) (\mathbb{P}_N u)(s) (\overline{\mathbb{P}_N v})(t) \dd s \dd t \bigg|
            \\ & \leq C_K \| (\mathbb{P}_N u)_1 \|_{L^2} \| (\mathbb{P}_N v)_1 \|_{L^2} + D_K \nsob{(\mathbb{P}_N u)_1}{\nu} \nsob{(\mathbb{P}_N v)_1}{\nu},
        \end{split}
    \end{equation}
    by hypothesis \eqref{eq:R-chi-hyp} since $\mathbb{P}_N u, \mathbb{P}_N v \in \mathcal{H}$.
    For $u \in \mathcal{H}$, $\| (\mathbb{P}_N u)_1 \|_{L^2} \leq \| u_1 \|_{L^2}$ and by \cref{lem:P_N-H_nu}, $\nsob{(\mathbb{P}_N u)_1}{\nu} \leq c_\nu \nsob{u_1}{\nu}$, so all approximate kernels $K_N$ satisfy \eqref{eq:R-chi-hyp} with the same constants $(C_K,D_K) \gets (C_K, c_\nu^2 D_K)$ which are independent of $N$.
\end{proof}

\begin{lemma}
    \label{lem:P_N-H_nu}
    Let $\nu \in (-\frac12,\frac12)$.
    There exists $c_\nu > 0$ such that, with the notations of the proof of \cref{p:R-chi}, for every $N \in \N$ and $u \in \mathcal{H}$, $\nsob{(\mathbb{P}_N u)_1}{\nu} \leq c_\nu \nsob{u_1}{\nu}$.
\end{lemma}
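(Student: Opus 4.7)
My plan is to reduce the lemma to the uniform boundedness of a truncated Dirichlet sine projector on $\sobC{\nu}$, and then obtain the full range $\nu \in (-\frac12,\frac12)$ by complex interpolation and duality from the trivial operator bounds at the endpoints $\nu=0$ and $\nu=1$.

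\textbf{Step 1 (reduction to sine--series truncation).} Introduce the Dirichlet eigenfunctions $\psi_k(t) := \sqrt{2/T}\sin(\alpha_k t)$ with $\alpha_k := k\pi/T$, which form an orthonormal basis of $L^2((0,T);\C)$. A direct integration gives $\int_0^t \phi_0 = t/\sqrt{T}$ and $\int_0^t \phi_k = \alpha_k^{-1}\psi_k$ for $k \geq 1$. Expanding $u = \sum_{k\geq 0} a_k \phi_k$ with $a_k := \langle u,\phi_k\rangle_{L^2}$, the assumption $u \in \mathcal{H}$ amounts to $a_0 = u_1(T)/\sqrt{T} = 0$. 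Setting $b_k := a_k/\alpha_k$ for $k \geq 1$ therefore yields
\[
u_1 = \sum_{k \geq 1} b_k \psi_k \qquad \text{and} \qquad (\mathbb{P}_N u)_1 = \sum_{k=1}^N b_k \psi_k,
\]
so $(\mathbb{P}_N u)_1 = \widetilde{\mathbb{P}}_N u_1$, where $\widetilde{\mathbb{P}}_N$ denotes the $L^2$--orthogonal projection onto $\mathrm{span}(\psi_1,\dots,\psi_N)$. The lemma reduces to a uniform-in-$N$ bound $\|\widetilde{\mathbb{P}}_N\|_{\sobC{\nu}\to\sobC{\nu}} \leq c_\nu$.

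\textbf{Step 2 (endpoints).} At $\nu=0$, $\widetilde{\mathbb{P}}_N$ is an $L^2$--orthogonal projection, hence of norm $\leq 1$. At $\nu=1$, I use $\sobC{1} = H^1_0((0,T);\C)$ together with the identity $\psi_k' = \alpha_k \phi_k$: for $v = \sum b_k \psi_k \in H^1_0$ one has $v' = \sum_{k \geq 1} b_k \alpha_k \phi_k$ in $L^2$ (the $\phi_0$--component vanishes since $v(0)=v(T)=0$), so $(\widetilde{\mathbb{P}}_N v)' = \sum_{k=1}^N b_k \alpha_k \phi_k$. Parseval in the cosine basis then yields $\|(\widetilde{\mathbb{P}}_N v)'\|_{L^2} \leq \|v'\|_{L^2}$, and combined with the $L^2$ bound this gives $\|\widetilde{\mathbb{P}}_N\|_{\sobC{1}\to\sobC{1}} \leq 1$.

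\textbf{Step 3 (interpolation and duality).} For $\nu \in (0,\frac12)$, complex interpolation applied to $\widetilde{\mathbb{P}}_N$, together with the classical identification $[\sobC{0},\sobC{1}]_\nu = \sobC{\nu}$ (valid for $\nu \in (0,1) \setminus \{\frac12\}$; see \cite{Grisvard1985,McLean2000}), transfers the endpoint bound to $\sobC{\nu}$. For $\nu \in (-\frac12,0)$, I set $\mu := -\nu \in (0,\frac12)$. The excerpt's definition gives $\sobC{-\mu} = (H^\mu((0,T);\C))^*$; since $\sobC{\mu} = H^\mu((0,T);\C)$ for $\mu \in (0,\frac12)$ and $\widetilde{\mathbb{P}}_N$ is $L^2$--self-adjoint, its boundedness on $\sobC{\mu}$ transfers by duality to $\sobC{-\mu}$, concluding the proof. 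The main delicate point is the interpolation identity $[\sobC{0},\sobC{1}]_\nu = \sobC{\nu}$, which degenerates at $\nu = \frac12$; however, the lemma only requires $\nu$ strictly below $\frac12$, so this borderline value is safely avoided.
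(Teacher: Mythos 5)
Your Step 1 is exactly the paper's starting point: after identifying $(\mathbb{P}_N u)_1$ with the truncation of the Dirichlet sine expansion of $u_1$, the claim becomes a uniform bound for the spectral truncation $\widetilde{\mathbb{P}}_N$ on $\sobC{\nu}$. The endpoint bounds in Step 2 are also correct. The gap is in Step 3. The lemma is used in \cref{p:R-chi}, where the final constant must be uniform over $T\in(0,1]$, so $c_\nu$ here must be $T$-independent. The interpolation identity $[\sobC{0},\sobC{1}]_\theta=\sobC{\theta}$ that you cite from Grisvard and McLean is a statement for a \emph{fixed} interval, with constants that depend on the domain. There is no scaling argument to reduce to a unit interval, because $\nsob{\cdot}{\nu}$ is built from the inhomogeneous weight $\langle\omega\rangle^\nu$ and is not homogeneous under dilations of $(0,T)$. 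The usual retraction/coretraction route also fails here: multiplication by $\mathbbm{1}_{(0,T)}$ gives a $T$-uniform projection onto the subspace only for $\nu\in(-\tfrac12,\tfrac12)$, not at the endpoint $\nu=1$ that your interpolation uses.

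In fact, proving the interpolation identity with $T$-uniform constants is essentially equivalent to proving that $\nsob{\cdot}{\nu}$ is, uniformly in $T$, equivalent to the spectral Dirichlet norm $N_{\mathrm{sp},\nu}(v)^2:=\sum_{k\geq1}\langle\beta_k\rangle^{2\nu}|\langle v,e_k\rangle|^2$. Once that equivalence is available, both your interpolation step (via the analytic family $F(z)=\sum_k b_k\langle\alpha_k\rangle^{\theta-z}\psi_k$) and the lemma itself become immediate. This $T$-uniform spectral-norm equivalence is precisely what the paper establishes by hand in \cref{lem:Hspec<=Hnu} and \cref{lem:H-nu<=Hspec}, using direct Fourier estimates and the multiplier bound \cref{lem:mul-t}. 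So your approach is not wrong as a strategy, but Step 3 as written silently assumes a nontrivial quantitative fact that the paper has to prove with a dedicated argument; citing the classical interpolation theory does not by itself deliver the required $T$-uniformity.
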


\begin{proof}
    For $u \in \mathcal{H}$, write $u = \sum_{k\geq1} c_k \phi_k$ ($\langle u, \phi_0 \rangle = 0$ since $u \in \mathcal{H}$).
    Thus $u_1 = \sum_{k\geq1} (c_k/\beta_k) e_k$ where $e_k(t) := \sqrt{2/T} \sin (\beta_kt)$ and $(\mathbb{P}_N u)_1 = \sum_{k=1}^N (c_k/\beta_k) e_k$.
    Now we use that the Sobolev norm $\nsob{\cdot}{\nu}$ can also be computed spectrally (see \cref{sec:spectral}).
    Hence, applying \cref{lem:H-nu<=Hspec} and \cref{lem:Hspec<=Hnu}
    \begin{equation}
        \nsob{(\mathbb{P}_N u)_1}{\nu}^2
        \leq c_\nu \sum_{k=1}^N \langle \beta_k \rangle^{2\nu} \left|\frac{c_k}{\beta_k}\right|^2
        \leq c_\nu \sum_{k=1}^\infty \langle \beta_k \rangle^{2\nu} \left|\frac{c_k}{\beta_k}\right|^2
        \leq c_\nu^2 \nsob{u_1}{\nu}^2,
    \end{equation}
    which concludes the proof.
\end{proof}

\subsection{Application to our case}
\label{sec:mul-our-case}

In this paragraph, we only consider controls $u,v \in \mathcal{H}$ (see \eqref{eq:cH}), i.e.\ such that $u_1(T) = v_1(T) = 0$, where $u_1$ and $v_1$ are the primitives of $u$ and $v$ vanishing at $0$.

\begin{lemma}
    \label{lem:kernel-abc}
    Let $\alpha, \beta, \gamma \in \C$.
    There exists $C > 0$ such that, for all $T \in (0,1]$ and $u,v \in \mathcal{H}$,
    \begin{equation}
        \Bigg| \int_{I^2} \big( e^{i \gamma |t-s|} - 1 \big) \big(\alpha |t-s| + \beta \big) u(s) \overline{v}(t) \dd s \dd t + 2 i \gamma \beta \langle u_1, v_1 \rangle \Bigg| \leq C \nsob{u_1}{-\frac12}\nsob{v_1}{-\frac12}.
    \end{equation}
\end{lemma}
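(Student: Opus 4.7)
My plan is to integrate by parts twice to expose the diagonal Dirac contribution coming from the jump of $K'$ at the origin, and then to estimate the remaining convolution against the bounded kernel $K''$ by a direct Fourier computation.

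Set $K(\sigma) := (e^{i\gamma|\sigma|}-1)(\alpha|\sigma|+\beta)$. This function is continuous on $\R$ with $K(0)=0$, smooth on $\R\setminus\{0\}$, and a direct calculation gives the jump
\begin{equation*}
K'(0^-)-K'(0^+) = -2i\gamma\beta
\end{equation*}
as well as, for $\sigma\neq 0$,
\begin{equation*}
K''(\sigma) = e^{i\gamma|\sigma|}\bigl(2i\gamma\alpha - \gamma^2\beta - \gamma^2\alpha|\sigma|\bigr),
\end{equation*}
which extends continuously across the origin, so that $K''$ is bounded on $[-T,T]$ by a constant depending only on $\alpha,\beta,\gamma$ for $T\leq 1$. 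In particular $K \in C^0(\R;\C) \cap H^2((-T,0);\C) \cap H^2((0,T);\C)$ and \cref{lem:IPP-Armand} applies. Since $u,v\in\mathcal{H}$ forces $u_1(T)=v_1(T)=0$, every boundary contribution in the resulting identity vanishes and the formula reduces to
\begin{equation*}
\int_{I^2} K(t-s)u(s)\overline{v(t)}\,\dd s\,\dd t = -2i\gamma\beta\langle u_1,v_1\rangle_{L^2} - \int_{I^2} K''(t-s)u_1(s)\overline{v_1(t)}\,\dd s\,\dd t.
\end{equation*}
Thus the lemma reduces to bounding the last integral by $C\nsob{u_1}{-\frac12}\nsob{v_1}{-\frac12}$.

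Because $u_1$ and $v_1$ are supported in $[0,T]$, I may replace $K''$ by the truncation $K''_T := K''\cdot\mathbbm{1}_{[-T,T]}$, which lies in $L^1 \cap L^\infty(\R;\C)$ with norms bounded uniformly in $T\in(0,1]$. Plancherel's theorem together with Cauchy--Schwarz then yields
\begin{equation*}
\Bigl|\int_{I^2} K''_T(t-s)u_1(s)\overline{v_1(t)}\,\dd s\,\dd t\Bigr| \leq \bigl\|\langle\omega\rangle\,\widehat{K''_T}\bigr\|_{L^\infty(\R)}\,\nsob{u_1}{-\frac12}\nsob{v_1}{-\frac12},
\end{equation*}
so it suffices to establish the pointwise bound $\langle\omega\rangle|\widehat{K''_T}(\omega)|\lesssim 1$, uniformly in $T\in(0,1]$. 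Splitting the defining integral at $\sigma=0$ and folding the negative half onto the positive one gives $\widehat{K''_T}(\omega) = J(\gamma-\omega)+J(\gamma+\omega)$, where
\begin{equation*}
J(\mu) := \int_0^T (A+B\sigma)e^{i\mu\sigma}\,\dd\sigma, \qquad A := 2i\gamma\alpha-\gamma^2\beta, \qquad B := -\gamma^2\alpha.
\end{equation*}
An elementary explicit evaluation of $J$, together with $|e^{i\mu T}-1|\leq 2$, yields $|J(\mu)| \leq C_{\alpha,\beta,\gamma}/|\mu|$ whenever $|\mu|\geq 1$, uniformly in $T\leq 1$.

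The main subtlety is that this decay estimate for $J$ alone degenerates near the resonant values $\omega=\pm\gamma$, so I will handle the pointwise bound via a two-regime dichotomy. On the bounded region $|\omega|\leq 2(|\gamma|+1)$, the crude estimate $|\widehat{K''_T}(\omega)|\leq \|K''_T\|_{L^1}\leq T\|K''\|_{L^\infty([-1,1])}\leq C_{\alpha,\beta,\gamma}$ combined with the upper bound on $\langle\omega\rangle$ in this region suffices. On $|\omega|>2(|\gamma|+1)$, one has $|\gamma\pm\omega|\geq|\omega|/2\geq |\gamma|+1>1$, so the decay estimate on $J$ applies to both summands and yields $|\widehat{K''_T}(\omega)|\leq C_{\alpha,\beta,\gamma}/|\omega|$. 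Patching the two regimes together gives the required uniform bound on $\|\langle\omega\rangle\,\widehat{K''_T}\|_{L^\infty(\R)}$ and completes the proof.
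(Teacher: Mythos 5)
Your proof is correct and shares the paper's first step verbatim: apply \cref{lem:IPP-Armand} to the even kernel $K(\sigma)=(e^{i\gamma|\sigma|}-1)(\alpha|\sigma|+\beta)$, use $u_1(T)=v_1(T)=0$ to kill the boundary terms, and read off the jump $-2i\gamma\beta$. Where you diverge is in how you handle the remaining integral against $K''(|t-s|)$. The paper observes that $K''(|\cdot|)$ is locally Lipschitz and invokes \cref{lem:convol-holder} with Hölder exponent $\alpha=1$, which in turn rests on the qualitative Fourier-decay bound of \cref{lem:fourier-holder}; this abstraction is then reused once more later with a merely $\frac12$-Hölder kernel in \cref{p:Qk-Q}. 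You instead re-derive the same Fourier decay concretely: you truncate to $K''_T=K''\mathbbm{1}_{[-T,T]}$, reduce by Plancherel to a uniform bound on $\langle\omega\rangle|\widehat{K''_T}(\omega)|$, compute $\widehat{K''_T}(\omega)=J(\gamma+\omega)+J(\gamma-\omega)$ for an explicit one-sided affine-exponential moment $J$, and split into a compact region (crude $L^1$ bound) and a large-frequency region (explicit $1/|\mu|$ decay of $J$). This is in effect an explicit proof of the $\alpha=1$ case of \cref{lem:fourier-holder} for this specific kernel. Both are valid; the paper's choice is the more economical one because the same lemma is reused for a kernel for which your explicit computation would not be available. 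One small inaccuracy worth noting: since $\gamma\in\C$ is allowed, $\mu=\gamma\pm\omega$ need not be real, so the bound $|e^{i\mu T}-1|\le 2$ should be replaced by $|e^{i\mu T}-1|\le 1+e^{|\Im\gamma|T}\le 1+e^{|\Im\gamma|}$ for $T\le1$; the resulting constant still depends only on $\alpha,\beta,\gamma$, so the conclusion is unaffected.
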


\begin{proof}
    We apply \cref{lem:IPP-Armand} to the kernel $K(s,t) := f(|t-s|)$ where $f(\sigma) := ( e^{i \gamma \sigma} - 1 ) (\alpha \sigma + \beta)$.
    By \cref{lem:IPP-Armand}, recalling that $u_1(T) = v_1(T) = 0$ since $u,v \in \mathcal{H}$,
    \begin{equation}
        \int_{I^2} K(s,t) u(s) \overline{v}(t) \dd s \dd t = - 2 f'(0) \langle u_1, v_1 \rangle - \int_{\Delta_- \cup \Delta_+} f''(|t-s|) u_1(s) \overline{v}_1(t) \dd s \dd t.
    \end{equation}
    We have $f'(0) = i \gamma \beta$.
    Since $f''(|\cdot|) \in C^{0,1}_{\mathrm{loc}}(\R;\C)$, \cref{lem:convol-holder} implies that
    \begin{equation}
        \Bigg| \int_{I^2} f''(|t-s|) u_1(s) \overline{v}_1(t) \dd s \dd t \Bigg| \leq C \nsob{u_1}{-\frac12}\nsob{v_1}{-\frac12},
    \end{equation}
    which proves the claim.
\end{proof}

\begin{proposition}
    \label{p:Qk-Q}
    Let $\nu \in [0,\frac14)$.
    There exists $C > 0$ such that, for all $T \in (0,1]$ and $u,v \in \mathcal{H}$,
    \begin{equation}
        \label{eq:Qk-Q}
        | Q_k(u,v) - Q(u,v) + i \lambda_k K(0) \langle u_1, v_1 \rangle | \leq C T^2 \| u_1 \|_{L^2} \| v_1 \|_{L^2} + C \nsob{u_1}{-\nu} \nsob{v_1}{-\nu}.
    \end{equation}
\end{proposition}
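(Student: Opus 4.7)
The plan is to write
\[
Q_k(u,v) - Q(u,v) = \int_{I^2} \chi(s,t)\, K(t-s)\, u(s)\, \overline{v(t)} \,\mathrm{d}s\,\mathrm{d}t
\]
with the modulator $\chi(s,t) := e^{i\lambda_k |t-s|/2} - 1$, which vanishes on the diagonal, and then to apply the abstract multiplication result \cref{p:R-chi}. The convolution kernel $(s,t) \mapsto K(t-s)$ belongs to $H^1(I^2;\C)$ because $\lambda_j c_j \in \ell^2$ by \eqref{cj_asympt}; the multiplier $\chi$ is Lipschitz on $I^2$ and $W^{2,\infty}$ on each triangle $\Delta_\pm$, with all derivatives controlled by $\lambda_k^2$; and the continuity hypothesis \eqref{eq:R-chi-hyp} is supplied by \cref{p:Q-a}.

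To obtain the announced $T^2$ factor (rather than merely $T$) on the $\|u_1\|_{L^2}\|v_1\|_{L^2}$ term, I will first subtract from $K$ the simplest kernel that reproduces both the value $K(0)$ and the jump of $K'$ at the origin. Set
\[
K_{\mathrm{main}}(\sigma) := K(0) - ia|\sigma|, \qquad K_{\mathrm{reg}} := K - K_{\mathrm{main}},
\]
so that $K_{\mathrm{reg}}(0) = 0$ and $K_{\mathrm{reg}}'(0^\pm) = K'(0^\pm) \pm ia = 0$. A direct integration-by-parts (using $u_1(T) = v_1(T) = 0$ for $u,v \in \mathcal{H}$) yields $\int_{I^2} K_{\mathrm{main}}(t-s)\, u\, \overline{v}\,\mathrm{d}s\,\mathrm{d}t = 2ia\, \langle u_1, v_1\rangle$, so \cref{p:Q-a} implies that $K_{\mathrm{reg}}$ satisfies \eqref{eq:R-chi-hyp} with the improved constants $C_K \lesssim T$ and $D_K \lesssim 1$.

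The $K_{\mathrm{main}}$ contribution is then computed in closed form by two applications of \cref{lem:kernel-abc} with $\gamma = \lambda_k/2$: with $(\alpha,\beta) = (0,1)$ it produces exactly the announced correction $-i\lambda_k K(0)\langle u_1, v_1\rangle$ up to a $\nsob{u_1}{-1/2}\nsob{v_1}{-1/2}$ remainder; with $(\alpha,\beta) = (1,0)$ it disposes of the $-ia|\sigma|$ piece, yielding the same type of remainder. Both fit into the $C\nsob{u_1}{-\nu}\nsob{v_1}{-\nu}$ budget, since $\nu \in [0, 1/2)$. Applying \cref{p:R-chi} to $\chi K_{\mathrm{reg}}$, the bulk term $\int_I w\, u_1\, \overline{v_1}$ vanishes (because $w(s)$ is proportional to $K_{\mathrm{reg}}(0) = 0$), and the error $E$ is bounded by $T(C_K \|u_1\|_{L^2}\|v_1\|_{L^2} + D_K \nsob{u_1}{-\nu}\nsob{v_1}{-\nu}) \lesssim T^2 \|u_1\|_{L^2}\|v_1\|_{L^2} + \nsob{u_1}{-\nu}\nsob{v_1}{-\nu}$ (using $T \leq 1$).

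The main obstacle is the treatment of the explicit ``$\Upsilon_{\mathrm{reg}}$'' contribution from \cref{p:R-chi}, namely $\int_{\Delta_+ \cup \Delta_-} \Upsilon_{\mathrm{reg}}(s,t)\, u_1(s)\, \overline{v_1(t)}\,\mathrm{d}s\,\mathrm{d}t$. By construction $\Upsilon_{\mathrm{reg}}$ is a convolution kernel in $|t-s|$ that vanishes at the diagonal, but only at the slow rate $|\sigma|^{1/2}$, reflecting the fact that $\lambda_j^2 c_j = O(1)$ fails to be $\ell^1$-summable. A direct Cauchy--Schwarz estimate already yields $\lesssim T^{3/2} \|u_1\|_{L^2}\|v_1\|_{L^2}$, and the remaining gap to the claimed $T^2$ bound is closed by a Fourier-domain analysis of $\Upsilon_{\mathrm{reg}}$ in the spirit of \cref{sec:quad} (Dirac, principal-value and regular parts), with the extra decay absorbed into the $\nsob{u_1}{-\nu}\nsob{v_1}{-\nu}$ term.
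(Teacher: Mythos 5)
Your decomposition and the first three steps follow the paper's route exactly: split $K = K_{\mathrm{main}} + K_{\mathrm{reg}}$ with $K_{\mathrm{main}}(\sigma) = K(0) - ia|\sigma|$, dispose of $\chi K_{\mathrm{main}}$ via \cref{lem:kernel-abc} (which indeed produces the $-i\lambda_k K(0)\langle u_1,v_1\rangle$ correction), note that $K_{\mathrm{reg}}$ satisfies \eqref{eq:R-chi-hyp} with $C_K \lesssim T$ via \cref{p:Q-a}, and apply \cref{p:R-chi} to $\chi K_{\mathrm{reg}}$ with the bulk term vanishing because $K_{\mathrm{reg}}(0)=0$. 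That much is correct and matches the paper.

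The gap is in your treatment of the $\Upsilon$ contribution, and it reflects a misreading of which bound is actually needed. You compute a $T^{3/2}\|u_1\|_{L^2}\|v_1\|_{L^2}$ Cauchy--Schwarz estimate and then say you will ``close the gap to the $T^2$ bound.'' But $T^{3/2}\|u_1\|_{L^2}\|v_1\|_{L^2}$ is not dominated by either term on the right-hand side of \eqref{eq:Qk-Q}: it is not $\lesssim T^2\|u_1\|_{L^2}\|v_1\|_{L^2}$, and it is not $\lesssim \nsob{u_1}{-\nu}\nsob{v_1}{-\nu}$ either (the latter can be far smaller than $T^{3/2}\|u_1\|_{L^2}\|v_1\|_{L^2}$ for highly oscillatory $u_1$, since \cref{lem:H-beta-T-beta} runs only one way). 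So the crude estimate must be discarded entirely; what is needed is a bound of the form $C\nsob{u_1}{-\nu}\nsob{v_1}{-\nu}$ directly, not an improvement of the $T$-power. Moreover, the route you propose for this --- ``a Fourier-domain analysis of $\Upsilon_{\mathrm{reg}}$ in the spirit of \cref{sec:quad} (Dirac, principal-value and regular parts)'' --- is not the right tool: $\Upsilon$ does not have the Dirac-plus-principal-value structure of $\widehat{K}$. The paper's actual (and much shorter) argument is that $\Upsilon(s,t) = \rho(t-s)$ for an explicit convolution kernel $\rho$, that $\rho$ is locally $\frac12$-Hölder (this is the one non-trivial computation you omit: write $f'(\sigma_1)-f'(\sigma_2) = \sum_j \lambda_j c_j(e^{-i\lambda_j\sigma_2}-e^{-i\lambda_j\sigma_1})$, bound each factor by $\min(2,\lambda_j|\sigma_1-\sigma_2|)$, and split the sum at $j\approx|\sigma_1-\sigma_2|^{-1/2}$ using \eqref{cj_asympt}), and then \cref{lem:convol-holder} immediately yields $\nsob{u_1}{-\frac14}\nsob{v_1}{-\frac14}$, which fits the budget for any $\nu\in[0,\frac14)$.
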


\begin{proof}
    For $u, v \in \mathcal{H}$, by definition \eqref{eq:Q_k} of $Q_k$ and \eqref{eq:Q_0} of $Q$,
    \begin{equation}
        Q_k(u,v) - Q(u,v) = \int_{I^2} \left(e^{i\frac{\lambda_k}{2}|t-s|} - 1\right) K(t-s) u(s) \overline{v}(t) \dd s \dd t.
    \end{equation}
    To apply \cref{p:R-chi}, we need to replace $K$ (up to a remainder term smaller than the right-hand side of \eqref{eq:Qk-Q}) with a kernel that satisfies a continuity estimate of the form \eqref{eq:R-chi-hyp}.
    By \cref{lem:kernel-abc}, there exists $C_1 > 0$ such that, for all $T \in (0,1]$,
    \begin{equation}
        \label{eq:Qk-Q-1}
        \begin{split}
            \Bigg| \int_{I^2} \left( e^{i \frac{\lambda_k}{2} |t-s|} - 1 \right) \left( - i a |t-s| + K(0) \right) u(s) \overline{v}(t) \dd s \dd t & + i \lambda_k K(0) \langle u_1, v_1 \rangle \Bigg| \\
            & \leq C_1 \nsob{u_1}{-\frac12} \nsob{v_1}{-\frac12}.
        \end{split}
    \end{equation}
    Let $R(s,t) := K(t-s) - K(0) + i a |t-s| \in H^1(I^2;\C)$.
    Let $\nu \in [0,\frac14)$.
    By \cref{p:Q-a} (and \cref{ex:kernel-ab}), there exists $C_{R,\nu} > 0$ such that, for all $T \in (0,1]$ and $u,v \in \mathcal{H}$,
    \begin{equation}
        \Bigg| \int_{I^2} R(s,t) u(s) \overline{v}(t) \dd s \dd t \Bigg|
        \leq C_{R,\nu} T \| u_1 \|_{L^2} \| v_1 \|_{L^2} + C_{R,\nu} \nsob{u_1}{-\nu} \nsob{v_1}{-\nu}
    \end{equation}
    so $R$ satisfies the continuity estimate \eqref{eq:R-chi-hyp} with constants $(C_K,D_K) = (C_{R,\nu} T, C_{R,\nu})$.
    
    Let $\chi(s,t) := \exp( i \frac{\lambda_k}{2}|t-s| ) - 1$.
    Then $\chi \in W^{1,\infty}((0,T)^2;\C)$ with $\chi \in W^{2,\infty}(\Delta_\pm;\C)$.
    Applying \cref{p:R-chi}, we obtain the existence of $C_{\chi,\nu}$ such that, by \eqref{eq:R-chi-ccl}, for all $u, v \in \mathcal{H}$,
    \begin{equation}
        \label{eq:Qk-Q-2}
        \begin{split}
            \Bigg| \int_{[0,T]^2} \chi(s,t) R(s,t) u(s) \bar{v}(t) \dd s \dd t
            & - \int_{\Delta_+\cup\Delta_-} \Upsilon(s,t) u_1(s) \bar{v}_1(t) \dd s \dd t \Bigg| \\
            & \leq T C_{\chi,\nu} C_{R,\nu} \left( T \| u_1 \|_{L^2} \| v_1 \|_{L^2} + \nsob{u_1}{-\nu} \nsob{v_1}{-\nu} \right),
        \end{split}
    \end{equation}
    where $\Upsilon := R \partial_{21} \chi + \partial_2 \chi \partial_1 R + \partial_1 \chi \partial_2 R$ and we used that $R(s,s) = 0$ on $[0,T]$.

    Expanding the definitions yields, for $(s,t) \in [0,T]^2$, $\Upsilon(s,t) = \rho(t-s)$ where
    \begin{align}
        \rho(\sigma) & := e^{i \frac{\lambda_k}{2} |\sigma|} \left( \frac{\lambda_k^2}{4} f(|\sigma|) - i \lambda_k f'(|\sigma|) \right), \\
        f(\sigma) & := \sum_{j \in \N} c_j \left( e^{-i \lambda_j \sigma} - 1 + i \lambda_j \sigma \right).
    \end{align}
    One checks easily that $\rho$ is locally $\frac 12$-Hölder on $\R$.
    The key estimate is that, for $\sigma_1,\sigma_2 \in \R$,
    \begin{equation}
        |f'(\sigma_1) - f'(\sigma_2)| = \left|\sum_{j\in \N} \lambda_j c_j (e^{-i\lambda_j \sigma_2} - e^{-i\lambda_j \sigma_1})\right|.
    \end{equation}
    Writing $|e^{-i\lambda_j \sigma_2} - e^{-i\lambda_j \sigma_1}| \leq \min (2, \lambda_j|\sigma_2-\sigma_1|)$, we obtain recalling \eqref{eq:lambda_j} and \eqref{cj_asympt}
    \begin{equation}
        |f'(\sigma_1) - f'(\sigma_2)| \leq C \sum_{j \geq 1} \frac{\min (1, j^2|\sigma_2-\sigma_1|)}{j^2}.
    \end{equation}
    We conclude by splitting the sum at $j \approx |\sigma_2-\sigma_1|^{-\frac12}$.

    Since $\rho \in C^{0,\frac12}_{\mathrm{loc}}(\R;\C)$, by \cref{lem:convol-holder}, there exists $C_\rho > 0$ such that, for all $T \in (0,1]$,
    \begin{equation}
        \label{eq:Qk-Q-3}
        \Bigg| \int_{\Delta_+\cup\Delta_-} \Upsilon(s,t) u_1(s) \bar{v}_1(t) \dd s \dd t \Bigg| \leq C \nsob{u_1}{-\frac14}\nsob{v_1}{-\frac14}.
    \end{equation}
    The conclusion follows by combining \eqref{eq:Qk-Q-1}, \eqref{eq:Qk-Q-2} and \eqref{eq:Qk-Q-3}.
\end{proof}

\section{Remainder estimates for the non-linear system}
\label{sec:remainder}

We prove estimates quantifying the discrepancy between the solution $\psi(t)$ to the nonlinear system~\eqref{eq:psi} and its Taylor approximations.
In \cref{sec:estimates-classical}, we start with straightforward classical energy estimates expressed in terms of $\| u \|_{L^2}$.
Then, in \cref{sec:aux}, we present an auxiliary system which will allow to obtain estimates involving $\| u_1 \|_{L^2}$ instead of $\| u \|_{L^2}$.
Since this auxiliary system involves inhomogeneous Neumann boundary conditions, we derive general energy estimates for such systems in \cref{sec:inhomogeneous-new}.
Thanks to this tool, we derive linear, quadratic and cubic estimates respectively in \cref{sec:estimates-lin,sec:estimates-quad,sec:estimates-cub}.

We use the notation $\lesssim$ of \cref{sec:notations} for constants independent of time and controls.

\subsection{Classical estimates}
\label{sec:estimates-classical}

The following estimates are straightforward and classical.
They are estimates in the strong spatial norm $H^2(0,1)$, but hence involve the strong time norm of the control $\|u\|_{L^2}$.

\begin{lemma}
    \label{p:estimates-classical}
    Let $T \in (0,1]$ and $u \in L^2((0,T);\R)$ such that $\|u\|_{L^2} \leq 1$.
    Then
    \begin{align}
        \label{eq:est-psi-H2}
        \| \psi \|_{L^\infty H^2} & \lesssim 1, \\
        \label{eq:est-psi-phi0-H2}
        \| \psi - \varphi_0 \|_{L^\infty H^2} & \lesssim \| u \|_{L^2}, \\
        \label{eq:est-psi-phi0-psi1-H2}
        \| \psi - \varphi_0 - \psi_1 \|_{L^\infty H^2} & \lesssim \| u \|_{L^2}^2, \\
        \label{eq:est-psi-phi0-psi1-psi2-H2}
        \| \psi - \varphi_0 - \psi_1 - \psi_2 \|_{L^\infty H^2} & \lesssim \| u \|_{L^2}^3.
    \end{align}
\end{lemma}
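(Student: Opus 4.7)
My plan is to prove the four estimates sequentially, each reducing to a linear Schrödinger energy estimate applied to the residual of the Taylor expansion at one order lower. The first estimate \eqref{eq:est-psi-H2} is immediate from \cref{p:WP}: the bound $\|\psi\|_{L^\infty H^2} \leq C_\mu(T,\|u\|_{L^2})$, combined with the monotonicity of $C_\mu$ and the hypotheses $T \leq 1$, $\|u\|_{L^2}\leq 1$, yields a universal constant.

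For the remaining estimates, I would write the PDE satisfied by each residual. Since $-\partial_x^2 \varphi_0 = 0$, the function $r_1 := \psi - \varphi_0$ solves $i\partial_t r_1 + \partial_x^2 r_1 = -u\mu r_1 - u\mu \varphi_0$ with Neumann boundary conditions and $r_1(0)=0$. Subtracting \eqref{eq:psi1} and \eqref{eq:psi2}, the higher-order residuals $r_2 := r_1 - \psi_1$ and $r_3 := r_2 - \psi_2$ satisfy analogous Schrödinger equations with the same bilinear potential $-u\mu$ and sources $-u\mu r_1$ and $-u\mu r_2$ respectively, both with zero initial data. Cascading an abstract energy estimate of the form $\|w\|_{L^\infty H^2} \leq C(\|u\|_{L^2}) \|f\|_X$ for the inhomogeneous problem $i\partial_t w + \partial_x^2 w + u\mu w = f$ with $w(0)=0$, I obtain successively $\|r_1\|_{L^\infty H^2} \lesssim \|u\|_{L^2}$ from the source $u\mu\varphi_0$, then $\|r_2\|_{L^\infty H^2} \lesssim \|u\|_{L^2}^2$ from $u\mu r_1$, then $\|r_3\|_{L^\infty H^2} \lesssim \|u\|_{L^2}^3$ from $u\mu r_2$.

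The main technical point I anticipate is the $H^2$-regularity of the sources: since $\mu \in H^2$ but $\mu$ need not belong to $H^2_N$ (its boundary derivatives may not vanish), the product $\mu r_j$ does not preserve the Neumann boundary condition, and one cannot directly iterate $\partial_x^2$ within $H^2_N$. This is exactly the difficulty already handled in the well-posedness proof of \cref{p:WP}, and it is overcome by the smoothing lemma \cref{lem:smoothing}, which trades the lack of boundary compatibility of the source for a half-derivative gain in time regularity (equivalently, allows to treat the source in an $L^2_t$-type norm instead of the endpoint $L^1_t H^2_x$). Modulo this standard device and the routine bilinear bound $\|u\mu r_j\|_{L^2 H^2} \lesssim \|u\|_{L^2} \|r_j\|_{L^\infty H^2}$, the argument reduces to bookkeeping, and the hypothesis $\|u\|_{L^2}\leq 1$ absorbs all prefactors $C(\|u\|_{L^2})$ into universal constants hidden in $\lesssim$.
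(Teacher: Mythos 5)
Your proof is correct and follows essentially the same strategy as the paper: derive \eqref{eq:est-psi-H2} from \cref{p:WP}, then iterate a Duhamel representation for the successive residuals and invoke the smoothing lemma \cref{lem:smoothing} to handle the lack of Neumann compatibility of the source $\mu\psi$. Two small remarks on the presentation. First, you route the iteration through an abstract energy estimate for the full bilinear equation $i\partial_t w + \partial_x^2 w + u\mu w = f$, whereas the paper simply writes the Duhamel formula with the free propagator $e^{-iA(t-s)}$ (so that the whole nonlinearity sits in the source) and applies \cref{lem:smoothing} once; the two are equivalent, but the paper's route avoids re-deriving the absorption argument already hidden in the proof of \cref{p:WP}. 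Second, there is a bookkeeping slip in your list of sources: once the potential term $-u\mu w$ is moved to the left-hand side, the sources for $r_2$ and $r_3$ are $-u\mu\psi_1$ and $-u\mu\psi_2$, not $-u\mu r_1$ and $-u\mu r_2$ (the latter are the correct sources only if Duhamel is taken with respect to the free group, in which case there is no potential on the left). Since $\|\psi_1\|_{L^\infty H^2}$ and $\|r_1\|_{L^\infty H^2}$ are both $O(\|u\|_{L^2})$, and likewise $\|\psi_2\|_{L^\infty H^2}$ and $\|r_2\|_{L^\infty H^2}$ are $O(\|u\|_{L^2}^2)$, the mix-up is harmless and the final estimates stand.
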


\begin{proof}
    By \cref{p:WP}, $\|\psi\|_{L^\infty H^2} \leq C_\mu(T,\|u\|_{L^2})$, where $C_\mu$ is a non-decreasing function of both of its arguments. This implies \eqref{eq:est-psi-H2}. Using the initial condition $\psi(0) = \varphi_0$, we have the Duhamel formula
    \begin{equation}
        \psi(t) - \varphi_0 = i \int_0^t u(s) e^{-i A (t-s)} \mu \psi(s) \dd s
    \end{equation}
    which implies $\| \psi - \varphi_0 \|_{L^\infty H^2} \lesssim \| u \|_{L^2} \| \psi \|_{L^\infty H^2} \lesssim \| u \|_{L^2}$ by \cref{lem:smoothing}.
    Writing the Duhamel formula for $\psi - \varphi_0 - \psi_1$ and $\psi - \varphi_0 - \psi_1 - \psi_2$, one then iteratively obtains from \cref{lem:smoothing} the estimates~\eqref{eq:est-psi-phi0-psi1-H2} and \eqref{eq:est-psi-phi0-psi1-psi2-H2} in a bootstrap manner.
\end{proof}

\subsection{Auxiliary system}
\label{sec:aux}

To obtain energy estimates involving $\|u_1\|_{L^2}$, we study an \emph{auxiliary system} for a conjugate unknown~$\widetilde{\psi}$, whose evolution equation only involves the primitive $u_1$ (and not $u$).
This change of unknown is classical in this setting (see \cite[Section 7]{BeauchardLeBorgneMarbach2023} for the case of ODEs and \cite[Section 4.2]{Bournissou2023_Quad} for the Schrödinger PDE with Dirichlet boundary conditions). 
However, here, the estimates are of different nature (compare \cref{eq:cubic-rem} and \cref{cub_i}) and their proof is more subtle.

\medskip

From now on $T > 0$, $u \in L^2((0,T);\R)$ and $\psi$ is the solution to \eqref{eq:psi}. 
We set
\begin{equation}
    \widetilde{\psi}(t,x) := \psi(t,x) e^{-i u_1(t) \mu(x)}.
\end{equation}
Then $\widetilde{\psi} \in C^0([0,T];H^2 \cap \sphere) \cap H^1((0,T);L^2)$ because so does $\psi$ (see \cref{p:WP}). 
Since $\partial_t \widetilde{\psi}=(\partial_t \psi - i u \mu \psi)e^{-iu_1 \mu}=i (\partial_x^2 \psi) e^{-iu_1 \mu} \in C^0([0,T];L^2)$, one also has $\widetilde{\psi} \in C^1([0,T];L^2)$.
Thus $\widetilde{\psi}$ is a strong solution to the following auxiliary system
\begin{equation}
    \label{eq:tilde-psi}
    \begin{cases}
        i \partial_t \widetilde{\psi} = - \partial_x^2 \widetilde{\psi} - i u_1 ( 2 \mu^\prime \partial_x \widetilde{\psi} + \mu^{\prime \prime} \widetilde{\psi} ) + (u_1)^2 (\mu^\prime)^2 \widetilde{\psi} \\
        (\partial_x \widetilde{\psi} + i u_1 \mu^\prime \widetilde{\psi})_{\vert x = 0,1} = 0
    \end{cases}
\end{equation}
with initial data $\widetilde{\psi}(0) = \varphi_0$.
By \emph{strong solution}, we mean that the first equation is an equality in $C^0([0,T];L^2)$ and the traces at $x = 0$ and $x = 1$ hold pointwise in time.

\medskip

Let $\psi_1, \psi_2$ be the solutions to \eqref{eq:psi1}, \eqref{eq:psi2}.
We set
\begin{align}
    \label{eq:psi1-psit1}
    \widetilde{\psi}_1 & := \psi_1 - i u_1 \mu, \\
    \widetilde{\psi}_2 & := \psi_2 - i u_1 \mu \psi_1 - \frac 12 (u_1)^2 \mu^2.
\end{align}
which are the terms of order $1$ and $2$ in the power series expansion of $\widetilde{\psi}$ with respect to the control~$u$.
For the same reasons, $\widetilde{\psi}_1 , \widetilde{\psi}_2 \in C^0([0,T];H^2) \cap C^1([0,T];L^2)$ are strong solutions to the systems
\begin{equation}
    \label{eq:tilde-psi1}
    \begin{cases}
        i \partial_t \widetilde{\psi}_1 = - \partial_x^2 \widetilde{\psi}_1 - i u_1 \mu^{\prime \prime} \\
        (\partial_x \widetilde{\psi}_1 + i u_1 \mu^\prime)_{\vert x = 0,1} = 0
    \end{cases}
\end{equation}
and
\begin{equation}
    \label{eq:tilde-psi2}
    \begin{cases}
        i \partial_t \widetilde{\psi}_2 = - \partial_x^2 \widetilde{\psi}_2 - i u_1 ( 2 \mu^\prime \partial_x \widetilde{\psi}_1 + \mu^{\prime \prime} \widetilde{\psi}_1 ) + (u_1)^2 (\mu^\prime)^2 \\
        (\partial_x \widetilde{\psi}_2 + i u_1 \mu^\prime \widetilde{\psi}_1)_{\vert x = 0,1} = 0
    \end{cases}
\end{equation}
with initial conditions $\widetilde{\psi}_1(0) = \widetilde{\psi}_2(0) = 0$.

\paragraph{Remainders.}
Our goal is to prove energy estimates for the remainders between $\widetilde{\psi}$ and its Taylor approximations.
Therefore, we set $R_{-1} = R_0 := \widetilde{\psi}$, $R_1 := \widetilde{\psi} - \varphi_0$, $R_2 := \widetilde{\psi} - \varphi_0 - \widetilde{\psi}_1$ and $R_3 := \widetilde{\psi} - \varphi_0 - \widetilde{\psi}_1 - \widetilde{\psi}_2$.
With these notations, the successive remainders $R_1$, $R_2$ and $R_3$
belong to $C^0([0,T];H^2) \cap C^1([0,T];L^2)$ and are strong solutions to
\begin{equation}
    \label{eq:RN}
    \begin{cases}
        i \partial_t R_n = - \partial_x^2 R_n + F_n \\
        (\partial_x R_n + i u_1 \mu^\prime R_{n-1})_{\vert x = 0,1} = 0
    \end{cases}
\end{equation}
with initial data $R_n(0) = 0$ and where
\begin{equation}
    F_n := - i u_1 \left( 2 \mu^\prime \partial_x + \mu^{\prime \prime} \right) R_{n-1} + (u_1)^2 (\mu^\prime)^2 R_{n-2}.
\end{equation}

\subsection{Solutions with inhomogeneous Neumann conditions}
\label{sec:inhomogeneous-new}

We derive an \emph{a priori} estimate in $L^\infty L^2$ for solutions with inhomogeneous Neumann conditions.

\begin{proposition}[A priori estimate]
    \label{lem:inhomo}
    Let $T \in (0,1]$, $h_0, h_1 \in L^2((0, T); \C)$, $f \in L^1((0, T); L^2)$ and $\phi \in C^0([0,T];H^2) \cap C^1([0,T];L^2)$. 
    Assume that $\phi$ is a strong solution to
    \begin{equation}
        \label{eq:inhomo}
        \begin{cases}
            i \partial_t \phi = - \partial_x^2 \phi + f \\
            \partial_x \phi(\cdot, 0) = h_0 \\
            \partial_x \phi(\cdot, 1) = h_1
        \end{cases}
    \end{equation}
    with initial data $\phi(0) = 0$.
    Then
    \begin{equation}
        \label{eq:inhomo-bound}
        \| \phi \|_{L^\infty L^2} \lesssim \| h_0 \|_{L^2} + \| h_1 \|_{L^2} + \| f \|_{L^1 L^2}.
    \end{equation}
\end{proposition}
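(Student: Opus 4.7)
The plan is to proceed by transposition: dualize against the free backward homogeneous Neumann Schrödinger evolution, and reduce the desired $L^\infty L^2$ bound to a hidden-regularity (trace) estimate at the endpoints $x=0,1$ for that free evolution.

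First, fix $t_0\in(0,T]$ and $v\in H^2_N((0,1);\C)$ with $\|v\|_{L^2}=1$; by density of $H^2_N$ in $L^2$, controlling $|\langle\phi(t_0),v\rangle|$ for such $v$ is enough to recover $\|\phi(t_0)\|_{L^2}$. Let $\eta\in C^0([0,t_0];H^2_N)\cap C^1([0,t_0];L^2)$ be the strong solution of $i\partial_t\eta=-\partial_x^2\eta$, $\partial_x\eta(\cdot,0)=\partial_x\eta(\cdot,1)=0$, with final data $\eta(t_0)=v$. Explicitly,
\begin{equation}
    \eta(t) = \sum_{j\in\N}\langle v,\varphi_j\rangle\, e^{i\lambda_j(t_0-t)}\,\varphi_j,
\end{equation}
so that $\|\eta(t)\|_{L^2}=\|v\|_{L^2}$ for every $t\in[0,t_0]$.

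Second, differentiating $t\mapsto\langle\phi(t),\eta(t)\rangle$ using \eqref{eq:inhomo} and the equation for $\eta$, and integrating by parts in $x$, the boundary contributions reduce, thanks to the Neumann data of $\eta$, to
\begin{equation}
    \frac{\dd}{\dd t}\langle\phi(t),\eta(t)\rangle
    = i\bigl(h_1(t)\overline{\eta(t,1)}-h_0(t)\overline{\eta(t,0)}\bigr)-i\langle f(t),\eta(t)\rangle.
\end{equation}
Integrating from $0$ to $t_0$ using $\phi(0)=0$ and $\eta(t_0)=v$, then applying Cauchy–Schwarz in $t$, gives
\begin{equation}
    |\langle\phi(t_0),v\rangle|\leq\|h_0\|_{L^2}\|\eta(\cdot,0)\|_{L^2(0,t_0)}+\|h_1\|_{L^2}\|\eta(\cdot,1)\|_{L^2(0,t_0)}+\|f\|_{L^1 L^2}\|v\|_{L^2}.
\end{equation}

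The main obstacle is the hidden-regularity estimate $\|\eta(\cdot,x_b)\|_{L^2(0,t_0)}\lesssim\|v\|_{L^2}$ for $x_b\in\{0,1\}$, uniform in $t_0\in(0,T]$ and $T\in(0,1]$. Setting $c_j:=\langle v,\varphi_j\rangle$ and using $|\varphi_j(x_b)|\leq\sqrt 2$, I expand
\begin{equation}
    \|\eta(\cdot,x_b)\|_{L^2(0,t_0)}^2
    = t_0\sum_{j\in\N}|c_j\varphi_j(x_b)|^2
    +\sum_{j\neq k}c_j\bar c_k\varphi_j(x_b)\varphi_k(x_b)\frac{e^{i(\lambda_j-\lambda_k)t_0}-1}{i(\lambda_j-\lambda_k)}.
\end{equation}
The diagonal is bounded by $2t_0\|v\|_{L^2}^2\leq 2\|v\|_{L^2}^2$. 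The off-diagonal kernel is bounded by $2/|\lambda_j-\lambda_k|$ uniformly in $t_0$, and since $|\lambda_j-\lambda_k|=\pi^2|j-k|(j+k)$, a direct computation yields $\sup_j\sum_{k\neq j}|\lambda_j-\lambda_k|^{-1}<\infty$ (with an $O(\log\langle j\rangle/\langle j\rangle)$ bound per row). Schur's test then controls the off-diagonal part by $C\|v\|_{L^2}^2$, completing the trace estimate.

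Substituting this bound into the step-2 inequality, taking the supremum over $v$ with $\|v\|_{L^2}=1$ to recover $\|\phi(t_0)\|_{L^2}$, and finally the supremum over $t_0\in(0,T]$, yields \eqref{eq:inhomo-bound} with an absolute constant (independent of $T\in(0,1]$).
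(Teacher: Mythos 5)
Your proof is correct, and it takes a genuinely different route from the paper's.

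The paper argues entirely in the "forward" direction: it projects the equation onto each $\varphi_j$, applies the Duhamel formula to obtain $\langle\phi(t),\varphi_j\rangle = i\int_0^t e^{-i\lambda_j(t-s)}(\varphi_j(1)h_1(s)-\varphi_j(0)h_0(s)-\langle f(s),\varphi_j\rangle)\dd s$, and then sums the squares. The boundary contributions reduce to a Bessel-type inequality $\sum_j\bigl|\int_0^T g(t)e^{i\lambda_j t}\dd t\bigr|^2\leq(1+T)\|g\|_{L^2}^2$ (\cref{lem:bessel}), which the paper proves by a neat trick: extend the interval to $T_0=2N/\pi$ with $N=\lceil\pi T/2\rceil$ so that the map $j\mapsto j^2N$ sends the $\lambda_j$ into integer multiples of $2\pi/T_0$, embedding the exponentials $e^{i\lambda_j t}$ into an honest orthonormal exponential system on $(0,T_0)$. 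You instead proceed by transposition: pair $\phi$ against the adjoint free Neumann evolution $\eta$ with prescribed final data, integrate by parts once in time and twice in space, and reduce the estimate to a hidden-regularity (trace) bound $\|\eta(\cdot,x_b)\|_{L^2(0,t_0)}\lesssim\|v\|_{L^2}$ for $x_b\in\{0,1\}$, which you then prove by expanding the Gram matrix and controlling the off-diagonal part by Schur's test using $|\lambda_j-\lambda_k|=\pi^2|j-k|(j+k)$ and the resulting $O(\log\langle j\rangle/\langle j\rangle)$ row-sum bound. Both proofs ultimately exploit the same quadratic growth of $\lambda_j$; your trace estimate and the paper's Bessel lemma are exactly dual statements. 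The paper's extension-to-$T_0$ trick is arguably slicker and more self-contained (it delivers an explicit constant $1+T$ and avoids the logarithmic bookkeeping), whereas your transposition argument is the standard hidden-regularity route familiar from the control-theory literature and generalizes more readily to other boundary geometries where the exact arithmetic structure of the spectrum is not available. Two small remarks: the kernel modulus in your Schur test is actually $|\varphi_j(x_b)\varphi_k(x_b)|\cdot 2/|\lambda_j-\lambda_k|\leq 4/|\lambda_j-\lambda_k|$ (you wrote $2$, immaterial); and the Schur argument needs both row and column sums, which coincide here since the kernel modulus is symmetric in $(j,k)$.
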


\begin{proof}
    Since $\phi$ is a strong solution to \eqref{eq:inhomo}, for $t \in [0,T]$ and $j \in \N$,
    \begin{equation}
        \begin{split}
            \frac{\dd}{\dd t} \langle \phi(t), \varphi_j \rangle
            & = i \int_0^1 \partial_x^2 \phi(t,x) \varphi_j(x) \dd x - i \langle f(t), \varphi_j \rangle \\
            & = i [\partial_x \phi(t) \varphi_j]_0^1 - i [\phi(t) \partial_x \varphi_j]_0^1 + i \int_0^1 \phi(t,x) \partial_x^2 \varphi_j(x) \dd x - i \langle f(t), \varphi_j \rangle \\
            & = i \left( \varphi_j(1) h_1(t) - \varphi_j(0) h_0(t) \right) - i \lambda_j \langle \phi(t), \varphi_j \rangle - i \langle f(t), \varphi_j \rangle.
        \end{split}
    \end{equation}
    By the Duhamel formula, since $\phi(0) = 0$,
    \begin{equation}
        \langle \phi(t), \varphi_j \rangle = i \int_0^t e^{-i \lambda_j(t-s)} \left( \varphi_j(1) h_1(s) - \varphi_j(0) h_0(s) - \langle f(s), \varphi_j \rangle \right) \dd s.
    \end{equation}
    Using that $|\varphi_j| \leq \sqrt{2}$, for any $t \in [0,T]$,
    \begin{equation}
        \| \phi(t) \|_{L^2}^2 \leq
        6 \sum_{k \in \{0,1\}} \sum_{j \in \N} \left| \int_0^t e^{i \lambda_j s} h_k(s) \dd s \right|^2
        + 3 \sum_{j \in \N} \left| \int_0^t e^{i \lambda_j s} \langle f(s), \varphi_j \rangle \dd s \right|^2.
    \end{equation}
    The first two sums are bounded by Bessel's inequality of \cref{lem:bessel}.
    Let $F(t) := \int_0^t e^{i A s} f(s) \dd s$.
    We have
    \begin{equation}
        \sum_{j \in \N} \left| \int_0^t e^{i \lambda_j s} \langle f(s), \varphi_j \rangle \dd s \right|^2
        = \sum_{j \in \N} \left| \langle F(t), \varphi_j \rangle \right|^2
        = \| F(t) \|_{L^2}^2
        \leq \| f \|_{L^1 L^2}^2
    \end{equation}
    by triangular inequality since $e^{i A s}$ is an isometry on $L^2((0,1);\C)$.
\end{proof}

\begin{remark}
    In \cite{BatalOzsari2015}, for the Schrödinger equation on a half-line, the authors obtain an $L^\infty H^{\frac 12}$ bound on $\phi$ for $L^2$ boundary data.
    The relationship between the regularity of $\phi$ and that of $h_0$ and $h_1$ obtained in \eqref{eq:inhomo-bound} on the interval $[0,1]$ is weaker.
    This fact was already observed for Schrödinger equations with inhomogeneous Dirichlet boundary conditions (see \cite[(2.23) and (2.25)]{BonaSunZhang2016}).
\end{remark}

\subsection{Linear estimates}
\label{sec:estimates-lin}

\begin{lemma}
    \label{lem:estimate-psi1}
    Let $T \in (0,1]$ and $u \in L^2((0,T);\R)$.
    Then
    \begin{align}
        \label{eq:estimate-psi1t-L2}
        \| \widetilde{\psi}_1 \|_{L^\infty L^2} & \lesssim \| u_1 \|_{L^2}, \\
        \label{eq:estimate-psi1t-H1}
        \| \widetilde{\psi}_1 \|_{L^\infty H^1} & \lesssim \| u_1 \|_{L^2}^{\frac 12} \| u \|_{L^2}^{\frac 12}, \\
        \label{eq:estimate-psi1t-H2}
        \| \widetilde{\psi}_1 \|_{L^\infty H^2} & \lesssim \| u \|_{L^2}.
    \end{align}
\end{lemma}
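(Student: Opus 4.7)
The plan is to combine direct application of the inhomogeneous estimate of \cref{lem:inhomo} for the $L^\infty L^2$ bound, a Duhamel/smoothing argument for $\psi_1$ to derive the $L^\infty H^2$ bound on $\widetilde{\psi}_1$, and interpolation for the intermediate $L^\infty H^1$ bound.

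First, to prove \eqref{eq:estimate-psi1t-L2}, I would apply \cref{lem:inhomo} directly to the system \eqref{eq:tilde-psi1} satisfied by $\widetilde{\psi}_1$, which fits its framework with
\begin{equation}
    f(t,x) = -i u_1(t) \mu''(x), \qquad h_k(t) = -i u_1(t) \mu'(k), \quad k \in \{0,1\}.
\end{equation}
Since $\mu \in H^2((0,1);\R)$, the traces $\mu'(0)$ and $\mu'(1)$ are finite, so $\|h_k\|_{L^2} \lesssim \|u_1\|_{L^2}$. Moreover, by Cauchy--Schwarz and $T \leq 1$,
\begin{equation}
    \|f\|_{L^1 L^2} \leq \|\mu''\|_{L^2} \|u_1\|_{L^1(0,T)} \lesssim T^{\frac12} \|u_1\|_{L^2} \lesssim \|u_1\|_{L^2}.
\end{equation}
Plugging these into \eqref{eq:inhomo-bound} yields \eqref{eq:estimate-psi1t-L2}.

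Second, to prove \eqref{eq:estimate-psi1t-H2}, I would return to $\psi_1$ itself via Duhamel's formula \eqref{linearise_explicit}:
\begin{equation}
    \psi_1(t) = i \int_0^t u(s) e^{-iA(t-s)} \mu \varphi_0 \dd s.
\end{equation}
Applying the smoothing result \cref{lem:smoothing} gives $\|\psi_1\|_{L^\infty H^2} \lesssim \|u\|_{L^2} \|\mu \varphi_0\|_{H^2_N} \lesssim \|u\|_{L^2}$. Since $\widetilde{\psi}_1 = \psi_1 - i u_1 \mu$, and $\|u_1\|_{L^\infty(0,T)} \leq T^{\frac12} \|u\|_{L^2} \lesssim \|u\|_{L^2}$ for $T \leq 1$, the triangle inequality gives
\begin{equation}
    \|\widetilde{\psi}_1\|_{L^\infty H^2} \leq \|\psi_1\|_{L^\infty H^2} + \|u_1\|_{L^\infty} \|\mu\|_{H^2} \lesssim \|u\|_{L^2}.
\end{equation}

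Finally, \eqref{eq:estimate-psi1t-H1} follows from standard interpolation $\|v\|_{H^1} \lesssim \|v\|_{L^2}^{\frac12} \|v\|_{H^2}^{\frac12}$ applied pointwise in time and combining \eqref{eq:estimate-psi1t-L2} and \eqref{eq:estimate-psi1t-H2}. There is no real obstacle here: the only non-trivial point is the $L^\infty L^2$ bound, which is handled by \cref{lem:inhomo} via a Bessel-type argument on Fourier coefficients; the extension to $H^2$ cannot be obtained directly from \cref{lem:inhomo} because the required regularity of the boundary data would be too strong, which is precisely why one goes through $\psi_1$.
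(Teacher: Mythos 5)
Your proof is correct and follows essentially the same path as the paper: apply \cref{lem:inhomo} to system \eqref{eq:tilde-psi1} for the $L^\infty L^2$ bound, use the identity $\widetilde{\psi}_1 = \psi_1 - i u_1 \mu$ together with \cref{lem:smoothing} for the $H^2$ bound, and conclude \eqref{eq:estimate-psi1t-H1} by the Gagliardo--Nirenberg interpolation $\|v\|_{H^1}\lesssim\|v\|_{L^2}^{1/2}\|v\|_{H^2}^{1/2}$. The only nit is that $\|\mu\varphi_0\|_{H^2_N}$ should just read $\|\mu\varphi_0\|_{H^2}$ since $\mu$ need not satisfy the Neumann conditions (the smoothing lemma only requires $f\in L^2 H^2$), but this does not affect the argument.
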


\begin{proof}
    First, by applying \cref{lem:inhomo} to system \eqref{eq:tilde-psi1}, we obtain \eqref{eq:estimate-psi1t-L2}. Second, \eqref{eq:estimate-psi1t-H2} follows from equality \eqref{eq:psi1-psit1} and the estimate $\| \psi_1 \|_{L^\infty H^2} \lesssim \| u \|_{L^2}$, which is \cref{lem:smoothing} applied to \eqref{eq:psi1}.
    Third, for \eqref{eq:estimate-psi1t-H1}, we write $\| \widetilde{\psi}_1(t) \|_{H^1}^2 \lesssim \| \widetilde{\psi}_1(t) \|_{L^2} \| \widetilde{\psi}_1(t) \|_{H^2}$.
\end{proof}

\begin{lemma}
    \label{lem:estimate-R1}
    Let $T \in (0,1]$ and $u \in L^2((0,T);\R)$ be such that $\left\Vert u \right\Vert_{L^2} \leq 1$.
    Then
    \begin{align}
        \label{eq:estimate-R1-L2}
        \| R_1 \|_{L^\infty L^2} & \lesssim \| u_1 \|_{L^2}, \\
        \label{eq:estimate-R1-H1}
        \| R_1 \|_{L^\infty H^1} & \lesssim \| u_1 \|_{L^2}^{\frac 12} \| u \|_{L^2}^{\frac 12}.
    \end{align}
\end{lemma}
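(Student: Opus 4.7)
The plan is to apply the inhomogeneous a priori estimate \cref{lem:inhomo} directly to $R_1$, which by \eqref{eq:RN} solves a Schrödinger equation with source $F_1 = -iu_1(2\mu'\partial_x + \mu'')\widetilde{\psi} + (u_1)^2 (\mu')^2 \widetilde{\psi}$ and inhomogeneous Neumann data $h_j(t) = -iu_1(t)\mu'(j)\widetilde{\psi}(t,j)$ for $j \in \{0,1\}$. The main preliminary input is a bound on $\widetilde{\psi}$ in sufficiently strong norms. Since $|\widetilde{\psi}| = |\psi|$, \cref{p:estimates-classical} yields $\|\widetilde{\psi}\|_{L^\infty L^\infty} \lesssim 1$, and differentiating $\widetilde{\psi} = \psi e^{-iu_1 \mu}$ together with $|u_1| \leq \sqrt{T}\|u\|_{L^2} \leq 1$ gives $\|\widetilde{\psi}\|_{L^\infty H^1} \lesssim 1$.

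Armed with these bounds, the boundary data obey $\|h_j\|_{L^2((0,T))} \lesssim \|u_1\|_{L^2}\|\widetilde{\psi}\|_{L^\infty L^\infty} \lesssim \|u_1\|_{L^2}$, while the source satisfies
\begin{equation}
    \|F_1\|_{L^1 L^2} \lesssim \|\mu\|_{H^2}\bigl(\|u_1\|_{L^1(0,T)} + \|u_1\|_{L^2(0,T)}^2\bigr)\|\widetilde{\psi}\|_{L^\infty H^1} \lesssim \sqrt{T}\|u_1\|_{L^2} + \|u_1\|_{L^2}^2 \lesssim \|u_1\|_{L^2},
\end{equation}
using $T \leq 1$ and $\|u_1\|_{L^2} \leq 1$. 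Applying \cref{lem:inhomo} then gives \eqref{eq:estimate-R1-L2} at once.

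For \eqref{eq:estimate-R1-H1}, I would use the standard 1D interpolation inequality $\|\phi\|_{H^1} \lesssim \|\phi\|_{L^2}^{1/2}\|\phi\|_{H^2}^{1/2}$. It thus suffices to show that $\|R_1\|_{L^\infty H^2} \lesssim \|u\|_{L^2}$. Writing $R_1 = (\psi - \varphi_0) e^{-iu_1 \mu} + (e^{-iu_1\mu} - 1)\varphi_0$, both terms are controlled directly: the first by $\|\psi - \varphi_0\|_{L^\infty H^2} \lesssim \|u\|_{L^2}$ from \eqref{eq:est-psi-phi0-H2} (with $H^2$ multiplier estimates involving $\mu \in H^2$ and the bounded quantity $|u_1|$), and the second by expanding $e^{-iu_1\mu} - 1 = O(u_1 \mu)$ with smooth derivatives, yielding a contribution of order $|u_1(t)| \lesssim \sqrt{T}\|u\|_{L^2}$. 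Combining with \eqref{eq:estimate-R1-L2} then gives \eqref{eq:estimate-R1-H1}.

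The step I expect to require the most care is verifying $\|\widetilde{\psi}\|_{L^\infty L^\infty}$ and $\|\widetilde{\psi}\|_{L^\infty H^1}$ with constants that are genuinely uniform in $u$ (as opposed to depending implicitly on $\|u\|_{L^2}$ through the constant $C_\mu$ of \cref{p:WP}); however, the restriction $\|u\|_{L^2} \leq 1$ makes this straightforward by absorbing that factor into the implicit constant. The rest of the argument is a direct bookkeeping of the contributions of $u_1$ in source and boundary terms.
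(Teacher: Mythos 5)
Your proof is correct and follows essentially the same route as the paper: apply the inhomogeneous a priori estimate \cref{lem:inhomo} to the system \eqref{eq:RN} for $R_1$ to get the $L^2$ bound, then establish an $H^2$ bound by decomposing $R_1$ (you write $R_1 = (\psi-\varphi_0)e^{-iu_1\mu} + (e^{-iu_1\mu}-1)\varphi_0$, the paper writes $R_1 = \psi(e^{-iu_1\mu}-1) + (\psi-1)$, which is the same thing since $\varphi_0=1$) and interpolate.
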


\begin{proof}
    First, for \eqref{eq:estimate-R1-L2}, since $R_1$ is the solution to \eqref{eq:RN} with $n=1$, by \cref{lem:inhomo}, one has
    \begin{equation}
        \| R_1 \|_{L^\infty L^2} \lesssim \| u_1 (\mu' \widetilde{\psi})_{\vert x=0} \|_{L^2} + \| u_1 (\mu' \widetilde{\psi})_{\vert x=1} \|_{L^2} + \| F_1 \|_{L^1 L^2}.
    \end{equation}
    Moreover, since $F_1 = - i u_1 ( 2 \mu^\prime \partial_x \widetilde{\psi} + \mu^{\prime \prime} \widetilde{\psi} ) + (u_1)^2 (\mu^\prime)^2 \widetilde{\psi}$,
    \begin{equation}
        \| F_1 \|_{L^1 L^2} \lesssim \| u_1 \|_{L^2} \left( \| \mu' \|_{L^\infty} \| \widetilde{\psi} \|_{L^\infty H^1} + \| \mu'' \|_{L^2} \| \widetilde{\psi} \|_{L^\infty L^\infty} \right) + \| u_1 \|_{L^2}^2 \| \mu' \|_{L^\infty}^2 \| \widetilde{\psi} \|_{L^\infty L^2}.
    \end{equation}
    Using the embedding $H^1(0,1) \hookrightarrow L^\infty(0,1)$, we obtain
    \begin{equation}
        \| R_1 \|_{L^\infty L^2} \lesssim \| u_1 \|_{L^2} \| \widetilde{\psi} \|_{L^\infty H^1} \lesssim \| u_1 \|_{L^2} \| \psi \|_{L^\infty H^1} \lesssim \| u_1 \|_{L^2}
    \end{equation}
    using \cref{p:WP}, which proves \eqref{eq:estimate-R1-L2}.

    Second, to prove \eqref{eq:estimate-R1-H1}, we write $R_1 = \widetilde{\psi} - 1 = \psi(e^{-iu_1 \mu}-1) + (\psi-1)$.
    Hence,
    \begin{equation}
        \label{eq:estimate-R1-H2}
        \| R_1(t) \|_{H^2}
        \lesssim \|u_1\|_{L^\infty} \|\mu\|_{H^2} \| \psi(t) \|_{H^2} + \| \psi(t) - 1 \|_{H^2}
        \lesssim \| u_1 \|_{L^\infty} + \| u \|_{L^2} \lesssim \| u \|_{L^2}
    \end{equation}
    by \eqref{eq:est-psi-H2} and \eqref{eq:est-psi-phi0-H2}.
    Since $\| R_1(t) \|_{H^1} \lesssim \| R_1(t) \|_{L^2}^{\frac 12} \| R_1(t) \|_{H^2}^{\frac 12}$, \eqref{eq:estimate-R1-L2} and \eqref{eq:estimate-R1-H2} entail \eqref{eq:estimate-R1-H1}.
\end{proof}

\subsection{Quadratic estimates}
\label{sec:estimates-quad}

\begin{lemma}
    \label{lem:estimate-psi2}
    Let $T \in (0,1]$ and $u \in L^2((0,T);\R)$ be such that $\| u \|_{L^2} \leq 1$.
    Then
    \begin{equation}
        \label{eq:estimate-psi2-L2}
        \| \widetilde{\psi}_2 \|_{L^\infty L^2} \lesssim \| u_1 \|_{L^2}^{\frac 32} \| u \|_{L^2}^{\frac 12}.
    \end{equation}
\end{lemma}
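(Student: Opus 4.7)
The plan is to apply the a priori estimate \cref{lem:inhomo} directly to the system \eqref{eq:tilde-psi2}, viewing it as an inhomogeneous Neumann Schrödinger equation with boundary data $h_k(t) := -i u_1(t) \mu'(k) \widetilde{\psi}_1(t,k)$ for $k \in \{0,1\}$ and source term
\begin{equation}
    f = - i u_1 \left( 2 \mu' \partial_x \widetilde{\psi}_1 + \mu'' \widetilde{\psi}_1 \right) + (u_1)^2 (\mu')^2,
\end{equation}
so that $\| \widetilde{\psi}_2 \|_{L^\infty L^2} \lesssim \| h_0 \|_{L^2} + \| h_1 \|_{L^2} + \| f \|_{L^1 L^2}$.

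For the boundary contributions I would pull out a supremum and use the Sobolev embedding $H^1(0,1) \hookrightarrow L^\infty(0,1)$, which yields $|\widetilde{\psi}_1(t,k)| \lesssim \| \widetilde{\psi}_1(t) \|_{H^1}$. Combining this with \eqref{eq:estimate-psi1t-H1} gives
\begin{equation}
    \| h_k \|_{L^2((0,T))} \lesssim \| u_1 \|_{L^2} \| \widetilde{\psi}_1 \|_{L^\infty H^1} \lesssim \| u_1 \|_{L^2}^{\frac 32} \| u \|_{L^2}^{\frac 12},
\end{equation}
which already matches the target bound.

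For the source, I would split $f$ into a part linear in $u_1$ and a part quadratic in $u_1$. The linear part is estimated by Cauchy--Schwarz in time together with $\mu \in H^2$ and the embedding $H^1 \hookrightarrow L^\infty$:
\begin{equation}
    \| u_1 (2\mu' \partial_x \widetilde{\psi}_1 + \mu'' \widetilde{\psi}_1) \|_{L^1 L^2} \lesssim T^{\frac 12} \| u_1 \|_{L^2} \| \widetilde{\psi}_1 \|_{L^\infty H^1},
\end{equation}
which is again $\lesssim \| u_1 \|_{L^2}^{\frac 32} \| u \|_{L^2}^{\frac 12}$ using \eqref{eq:estimate-psi1t-H1} and $T \leq 1$. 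The quadratic part gives $\| (u_1)^2 (\mu')^2 \|_{L^1 L^2} \lesssim \| u_1 \|_{L^2}^2$, and here the key observation is the elementary bound $\| u_1 \|_{L^2} \leq T \| u \|_{L^2} \leq \| u \|_{L^2}$ (valid for $T \leq 1$), which allows to split $\| u_1 \|_{L^2}^2 = \| u_1 \|_{L^2}^{\frac 32} \cdot \| u_1 \|_{L^2}^{\frac 12} \lesssim \| u_1 \|_{L^2}^{\frac 32} \| u \|_{L^2}^{\frac 12}$. Summing the contributions yields the claimed estimate \eqref{eq:estimate-psi2-L2}.

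There is no real obstacle here: the only subtlety is the asymmetric interpolation $\| u_1 \|_{L^2}^{\frac 32} \| u \|_{L^2}^{\frac 12}$, which I would obtain systematically by exploiting the already-proved $\| \widetilde{\psi}_1 \|_{L^\infty H^1}$ estimate (which has exactly this same split) and, for the purely quadratic term in $u_1$, by trading one factor $\| u_1 \|_{L^2}^{\frac 12}$ against a factor $\| u \|_{L^2}^{\frac 12}$ via the small-time bound $\| u_1 \|_{L^2} \leq T \| u \|_{L^2}$.
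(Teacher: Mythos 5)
Your proof is correct and follows essentially the same route as the paper's: apply \cref{lem:inhomo} to the system \eqref{eq:tilde-psi2} and bound the two boundary terms and the three source terms, invoking $H^1(0,1)\hookrightarrow L^\infty(0,1)$ and the already-proved $\|\widetilde{\psi}_1\|_{L^\infty H^1}\lesssim \|u_1\|_{L^2}^{1/2}\|u\|_{L^2}^{1/2}$. You are a bit more explicit than the paper on the purely quadratic source term $\|u_1^2(\mu')^2\|_{L^1L^2}\lesssim\|u_1\|_{L^2}^2$, which the paper's displayed intermediate bound silently absorbs: your trade $\|u_1\|_{L^2}^{1/2}\le\|u\|_{L^2}^{1/2}$ (via $\|u_1\|_{L^2}\le T\|u\|_{L^2}\le\|u\|_{L^2}$) is exactly the missing observation.
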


\begin{proof}
    Since $\widetilde{\psi}_2$ is the solution to \eqref{eq:tilde-psi2}, by \cref{lem:inhomo}, one has
    \begin{equation}
        \begin{split}
            \| \widetilde{\psi}_2 \|_{L^\infty L^2} & \lesssim \| u_1 \mu' \partial_x \widetilde{\psi}_1 \|_{L^1 L^2} + \| u_1 \mu'' \widetilde{\psi}_1 \|_{L^1 L^2} + \| u_1^2 (\mu')^2 \|_{L^1 L^2} \\
            & \qquad \qquad + \| u_1 (\mu' \widetilde{\psi}_1)_{\vert x = 0} \|_{L^2} + \| u_1 (\mu' \widetilde{\psi}_1)_{\vert x= 1} \|_{L^2} \\
            & \lesssim \| u_1 \|_{L^2} \| \widetilde{\psi}_1 \|_{L^\infty H^1} +\| u_1 \|_{L^2} \| \widetilde{\psi}_1 \|_{L^\infty L^\infty}.
        \end{split}
    \end{equation}
    Therefore, the conclusion follows from \cref{lem:estimate-psi1} and the embedding $H^1(0,1) \hookrightarrow L^\infty(0,1)$.
\end{proof}

\begin{lemma}
    \label{lem:estimate-R2}
    Let $T \in (0,1]$ and $u \in L^2((0,T);\R)$ be such that $\| u \|_{L^2} \leq 1$.
    Then
    \begin{align}
        \label{eq:estimate-R2-L2}
        \| R_2 \|_{L^\infty L^2} & \lesssim \| u_1 \|_{L^2}^{\frac 32} \| u \|_{L^2}^{\frac 12}, \\
        \label{eq:estimate-R2-Linf}
        \| R_2 \|_{L^\infty L^\infty} & \lesssim \| u_1 \|_{L^2}^{\frac 98} \| u \|_{L^2}^{\frac 78}.
    \end{align}
\end{lemma}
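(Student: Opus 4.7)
The strategy is to treat the two estimates in sequence. The $L^2$ bound will come directly from the a priori estimate of \cref{lem:inhomo} applied to the system \eqref{eq:RN} with $n=2$. The $L^\infty$ bound will then be obtained by interpolating this $L^2$ estimate against an $H^2$ bound on $R_2$ inherited from the classical remainder estimate \eqref{eq:est-psi-phi0-psi1-H2}.

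For the first estimate, \cref{lem:inhomo} applied to $R_2$ gives
\begin{equation}
    \| R_2 \|_{L^\infty L^2} \lesssim \|u_1 \mu'(0) R_1(\cdot,0)\|_{L^2} + \|u_1 \mu'(1) R_1(\cdot,1)\|_{L^2} + \|F_2\|_{L^1 L^2},
\end{equation}
where $F_2 = -iu_1(2\mu'\partial_x R_1 + \mu'' R_1) + u_1^2(\mu')^2 \widetilde{\psi}$. The boundary traces are handled via $H^1(0,1) \hookrightarrow L^\infty(0,1)$ and \eqref{eq:estimate-R1-H1}, yielding bounds of order $\|u_1\|_{L^2} \|R_1\|_{L^\infty H^1} \lesssim \|u_1\|_{L^2}^{3/2}\|u\|_{L^2}^{1/2}$. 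The linear part of $F_2$ is treated identically after Cauchy--Schwarz in time extracts $\|u_1\|_{L^1} \leq T^{1/2}\|u_1\|_{L^2}$. The quadratic part contributes $\|u_1\|_{L^2}^2 \|\widetilde{\psi}\|_{L^\infty L^\infty}$, which fits the target via $\|u_1\|_{L^2}^2 \lesssim \|u_1\|_{L^2}^{3/2}\|u\|_{L^2}^{1/2}$, which itself follows from $\|u_1\|_{L^2} \lesssim T\|u\|_{L^2} \leq \|u\|_{L^2}$ for $T \leq 1$.

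For the second estimate, I first establish the auxiliary bound $\|R_2\|_{L^\infty H^2} \lesssim \|u\|_{L^2}^2$. Expanding the definitions of $\widetilde{\psi}$ and $\widetilde{\psi}_1$ gives the decomposition
\begin{equation}
    R_2 = (\psi - \varphi_0 - \psi_1) - iu_1 \mu (\psi - \varphi_0) + \psi\bigl(e^{-iu_1\mu} - 1 + iu_1\mu\bigr).
\end{equation}
The first term is $O(\|u\|_{L^2}^2)$ in $H^2$ by \eqref{eq:est-psi-phi0-psi1-H2}. The second, using that $H^2(0,1)$ is a Banach algebra and \eqref{eq:est-psi-phi0-H2}, is bounded by $\|u_1\|_{L^\infty}\|\mu\|_{H^2}\|\psi-\varphi_0\|_{H^2} \lesssim T^{1/2}\|u\|_{L^2}^2$. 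The third, using $|e^{-iz}-1+iz| = O(z^2)$ and Moser-type composition estimates combined with $\|\psi\|_{L^\infty H^2} \lesssim 1$ from \eqref{eq:est-psi-H2}, is bounded by $\|u_1\|_{L^\infty}^2 \lesssim T\|u\|_{L^2}^2$. Applying the $1$D interpolation $\|f\|_{L^\infty} \lesssim \|f\|_{L^2}^{3/4}\|f\|_{H^2}^{1/4}$ pointwise in time then produces
\begin{equation}
    \|R_2\|_{L^\infty L^\infty} \lesssim \bigl(\|u_1\|_{L^2}^{3/2}\|u\|_{L^2}^{1/2}\bigr)^{3/4}\bigl(\|u\|_{L^2}^2\bigr)^{1/4} = \|u_1\|_{L^2}^{9/8}\|u\|_{L^2}^{7/8}.
\end{equation}

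The main technical point is the $H^2$ control of the exponential remainder $\psi(e^{-iu_1\mu}-1+iu_1\mu)$: one must exploit that $H^2(0,1)$ is an algebra so that composition by a smooth function of the $H^2$ function $u_1\mu$ stays in $H^2$, and that the quadratic vanishing of $e^{-iz}-1+iz$ at $z=0$ produces the $u_1^2$ prefactor. The rest of the argument is a careful bookkeeping of powers of $T$, $\|u_1\|_{L^2}$ and $\|u\|_{L^2}$, using the standing assumption $T \leq 1$ to collapse the time exponents.
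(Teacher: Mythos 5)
Your proof is correct and follows essentially the same strategy as the paper: apply \cref{lem:inhomo} to the $R_2$ system for the $L^2$ bound (with the boundary and source terms handled via $H^1\hookrightarrow L^\infty$ and \cref{lem:estimate-R1}), then interpolate with an $H^2$ bound $\|R_2\|_{L^\infty H^2}\lesssim\|u\|_{L^2}^2$ via Gagliardo--Nirenberg. The only genuine deviation is the algebraic route to the $H^2$ bound: the paper solves the identity $\psi-\varphi_0-\psi_1 = (e^{iu_1\mu}-1-iu_1\mu)\varphi_0 + (e^{iu_1\mu}-1)\widetilde\psi_1 + e^{iu_1\mu}R_2$ for $R_2$ (using \cref{eq:estimate-psi1t-H2} for the $\widetilde\psi_1$ term), whereas you express $R_2$ directly as $(\psi-\varphi_0-\psi_1)-iu_1\mu(\psi-\varphi_0)+\psi(e^{-iu_1\mu}-1+iu_1\mu)$ (using \cref{eq:est-psi-phi0-H2} instead); both identities check out and require exactly the same ingredients, so this is a cosmetic rather than substantive difference. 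One small remark: invoking ``Moser-type composition estimates'' for the exponential remainder is heavier than needed --- since $H^2(0,1)$ is a Banach algebra, the power-series $e^{-iz}-1+iz = \sum_{k\geq 2}(-iz)^k/k!$ directly gives $\|e^{-iu_1\mu}-1+iu_1\mu\|_{H^2}\lesssim \|u_1\mu\|_{H^2}^2\lesssim\|u_1\|_{L^\infty}^2$ once $\|u_1\|_{L^\infty}$ is small, which is the elementary bound the paper uses.
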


\begin{proof}
    First, for \eqref{eq:estimate-R2-L2}, since $R_2$ is the solution to \eqref{eq:RN} with $n=2$, by \cref{lem:inhomo}, one has
    \begin{equation}
        \begin{split}
            \| R_2 \|_{L^\infty L^2} & \lesssim \| u_1 (\mu' R_1)_{\vert x=0} \|_{L^2} + \| u_1 (\mu' R_1)_{\vert x=1} \|_{L^2} + \| F_2 \|_{L^1 L^2} \\
            & \lesssim \| u_1 \|_{L^2} \| R_1 \|_{L^\infty L^\infty} + \| F_2 \|_{L^1 L^2}.
        \end{split}
    \end{equation}
    Moreover, since $F_2 = - i u_1 ( 2 \mu^\prime \partial_x R_1 + \mu^{\prime \prime} R_1) + (u_1)^2 (\mu^\prime)^2 \widetilde{\psi}$,
    \begin{equation}
        \| F_2 \|_{L^1 L^2} \leq \| u_1 \|_{L^2} \left( 2 \| \mu' \|_{L^\infty} \| R_1 \|_{L^\infty H^1} + \| \mu'' \|_{L^2} \| R_1 \|_{L^\infty L^\infty} \right) + \| u_1 \|_{L^2}^2 \| \mu' \|_{L^\infty}^2 \| \widetilde{\psi} \|_{L^\infty L^\infty}.
    \end{equation}
    By \cref{p:WP}, $\| \widetilde{\psi} \|_{L^\infty H^2} \lesssim 1$.
    By~\eqref{eq:estimate-R1-H1} of \cref{lem:estimate-R1} and $H^1(0,1) \hookrightarrow L^\infty(0,1)$, we obtain
    \begin{equation}
        \| u_1 \|_{L^2} \| R_1 \|_{L^\infty L^\infty} + \| F_2 \|_{L^2 L^2} \lesssim \| u_1 \|_{L^2}^{\frac 32} \| u \|_{L^2}^{\frac 12},
    \end{equation}
    from which estimate \eqref{eq:estimate-R2-L2} follows.

    Second, we prove \eqref{eq:estimate-R2-Linf}.
    By Gagliardo--Nirenberg interpolation inequalities (see \cite{Gagliardo1959,Nirenberg1959}), there exists $C > 0$ such that, for every $f \in H^2(0,1)$, $\| f \|_{L^\infty} \leq C \| f \|_{H^2}^{\frac14} \| f \|_{L^2}^{\frac 34}$.
    Hence, for each $t \in [0,T]$, $\| R_2(t) \|_{L^\infty} \leq C \| R_2(t) \|_{H^2}^{\frac 14} \| R_2(t) \|_{L^2}^{\frac 34}$.
    Thus, it suffices to prove that
    \begin{equation}
        \label{eq:estimate-R2-H2}
        \| R_2(t) \|_{H^2} \lesssim \| u \|_{L^2}^2.
    \end{equation}
    One has
    \begin{equation}
        \label{eq:R2-R2t}
        \psi - \varphi_0 - \psi_1
        = \left(e^{i u_1 \mu} - 1 - i u_1 \mu\right) \varphi_0
        + \left(e^{i u_1 \mu} - 1 \right) \widetilde{\psi}_1
        + e^{i u_1 \mu} R_2.
    \end{equation}
    Since $\| e^{i u_1 \mu} - 1 - i u_1 \mu \|_{H^2} \lesssim \| u_1 \|_{L^\infty}^2 \lesssim \| u \|_{L^2}^2$ and $\| (e^{i u_1 \mu} - 1) \widetilde{\psi}_1 \|_{H^2} \lesssim \|u\|_{L^2}^2$ by \eqref{eq:estimate-psi1t-H2}, \eqref{eq:estimate-R2-H2} follows from \eqref{eq:est-psi-phi0-psi1-H2}.
    Thus, we deduce \eqref{eq:estimate-R2-Linf}.
\end{proof}

\begin{corollary}
    \label{cor:quad-rem}
    Let $T \in (0,1]$ and $u \in L^2((0,T);\R)$ be such that $\|u\|_{L^2} \leq 1$.
    Then
    \begin{equation}
        \label{eq:quad-rem}
        \| \psi(T) - \varphi_0 - \psi_1(T) \|_{L^2} \lesssim \| u_1 \|_{L^2}^{\frac 32} \| u \|_{L^2}^{\frac12} + |u_1(T)|^2.
    \end{equation}
\end{corollary}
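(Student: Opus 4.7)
The plan is to leverage the pointwise identity
\begin{equation}
    \psi - \varphi_0 - \psi_1
    = \bigl(e^{i u_1 \mu} - 1 - i u_1 \mu\bigr) \varphi_0
    + \bigl(e^{i u_1 \mu} - 1\bigr) \widetilde{\psi}_1
    + e^{i u_1 \mu} R_2,
\end{equation}
which was already established during the proof of \cref{lem:estimate-R2} (see \eqref{eq:R2-R2t}). This decomposition is well-suited because the phase factor $u_1$ is bounded in $L^\infty$ by $T^{1/2} \| u \|_{L^2} \leq 1$ and because $R_2$ has a strong $L^2$ bound expressed in terms of $\| u_1 \|_{L^2}$ in \eqref{eq:estimate-R2-L2}.

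The strategy is to evaluate this identity at $t = T$ and bound the three summands separately in $L^2$. For the first term, the elementary pointwise inequality $|e^{i\theta} - 1 - i\theta| \leq \theta^2/2$ yields a bound by $|u_1(T)|^2 \|\mu\|_{L^\infty}^2$, which is absorbed by the $|u_1(T)|^2$ contribution in the right-hand side. For the third term, one simply writes $\| e^{iu_1(T)\mu} R_2(T) \|_{L^2} \leq \|R_2(T)\|_{L^2}$ and invokes \eqref{eq:estimate-R2-L2} to get the bound $\| u_1 \|_{L^2}^{3/2}\|u\|_{L^2}^{1/2}$.

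The only slightly delicate term is the middle one. Using $|e^{i\theta}-1| \leq |\theta|$ and \eqref{eq:estimate-psi1t-L2}, one obtains
\begin{equation}
    \bigl\| \bigl(e^{iu_1(T)\mu} - 1\bigr) \widetilde{\psi}_1(T)\bigr\|_{L^2}
    \lesssim |u_1(T)|\, \|\widetilde{\psi}_1(T)\|_{L^2}
    \lesssim |u_1(T)|\, \|u_1\|_{L^2}.
\end{equation}
This mixed quantity must then be split by Young's inequality into $|u_1(T)|^2 + \|u_1\|_{L^2}^2$. The first summand is of the desired form, while for the second I would write $\|u_1\|_{L^2}^2 = \|u_1\|_{L^2}^{3/2} \cdot \|u_1\|_{L^2}^{1/2}$ and use that for $T \leq 1$, Cauchy--Schwarz gives $\|u_1\|_{L^2} \leq T \|u\|_{L^2} \leq \|u\|_{L^2}$, hence $\|u_1\|_{L^2}^{1/2} \leq \|u\|_{L^2}^{1/2}$. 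Thus this term is also controlled by $\|u_1\|_{L^2}^{3/2}\|u\|_{L^2}^{1/2}$. No step is really an obstacle here; the main point is simply to remember the identity \eqref{eq:R2-R2t} that links $R_2$ (which is the genuinely quadratic remainder for the \emph{auxiliary} variable $\widetilde{\psi}$) to the quantity of interest $\psi - \varphi_0 - \psi_1$ (the quadratic remainder for $\psi$), together with the two elementary inequalities for the exponential.
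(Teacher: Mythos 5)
Your proof is correct and follows exactly the route the paper intends: the decomposition \eqref{eq:R2-R2t} together with \eqref{eq:estimate-R2-L2} for $R_2$ and \eqref{eq:estimate-psi1t-L2} for $\widetilde{\psi}_1$. For the middle term you could also skip Young's inequality entirely by noting the interpolation $|u_1(T)|^2 = 2\int_0^T u_1 u \leq 2\|u_1\|_{L^2}\|u\|_{L^2}$, so that $|u_1(T)|\,\|u_1\|_{L^2} \leq \sqrt{2}\,\|u_1\|_{L^2}^{3/2}\|u\|_{L^2}^{1/2}$ directly, but your version is equally valid.
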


\begin{proof}
    This follows from the decomposition \eqref{eq:R2-R2t} and estimates \eqref{eq:estimate-R2-L2} and \eqref{eq:estimate-psi1t-L2}.
\end{proof}

\subsection{Cubic estimates}
\label{sec:estimates-cub}

We now deduce an estimate on $\langle R_3, \varphi_k \rangle$, which immediately entails the main objective of this section, the cubic estimate of \cref{cor:cubic-rem}.
We start with the following weak estimate.

\begin{lemma}
    \label{lem:R3-bis}
    Let $T \in (0,1]$ and $u \in L^2((0,T);\R)$ be such that $\|u\|_{L^2} \leq 1$.
    Then
    \begin{equation}
        \label{eq:R3-bis}
        | \langle R_3(T), \varphi_k \rangle | \lesssim \|u_1\|_{L^2}^{2+\frac18} \|u\|_{L^2}^{\frac 78}.
    \end{equation}
\end{lemma}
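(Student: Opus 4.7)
The plan is to obtain the estimate by a Duhamel-type argument in the spirit of the proof of \cref{lem:inhomo}, applied to the equation~\eqref{eq:RN} for $n=3$, which reads
\begin{equation}
    \begin{cases}
        i \partial_t R_3 = - \partial_x^2 R_3 + F_3, \\
        \partial_x R_3(\cdot,0) = - i u_1 \mu'(0) R_2(\cdot,0),
        \quad \partial_x R_3(\cdot,1) = - i u_1 \mu'(1) R_2(\cdot,1),
    \end{cases}
\end{equation}
with $F_3 = - i u_1 (2 \mu' \partial_x R_2 + \mu'' R_2) + u_1^2 (\mu')^2 R_1$ and $R_3(0) = 0$. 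Testing against $\varphi_k$ and integrating the ODE in $t$, exactly as in the proof of \cref{lem:inhomo}, yields
\begin{equation}
    \langle R_3(T),\varphi_k\rangle = i \int_0^T e^{-i\lambda_k(T-s)} \Big( \varphi_k(1) h_1(s) - \varphi_k(0) h_0(s) - \langle F_3(s),\varphi_k\rangle \Big) \dd s
\end{equation}
where $h_j := -i u_1 \mu'(j) R_2(\cdot,j)$ for $j\in\{0,1\}$.

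The central technical step is to handle the term involving $\partial_x R_2$ inside $\langle F_3,\varphi_k\rangle$, for which no good estimate is available. As is customary in the Neumann setting, I would integrate by parts in $x$, namely
\begin{equation}
    \int_0^1 \mu'(x) \partial_x R_2(s,x) \varphi_k(x) \dd x = \big[ \mu' \varphi_k R_2 \big]_{0}^{1} - \int_0^1 (\mu' \varphi_k)' R_2,
\end{equation}
trading the derivative of $R_2$ for boundary traces and a derivative on the known function $\mu' \varphi_k \in H^1$. Together with the boundary contribution $h_0, h_1$, this reduces the estimate to four types of integrals:
\begin{equation}
    \begin{aligned}
        A_1 & := \int_0^T |u_1(s)| \, \|R_2(s)\|_{L^\infty_x} \dd s,
        \quad
        A_2  := \int_0^T |u_1(s)| \, \|R_2(s)\|_{L^2_x} \dd s,
        \\
        A_3 & := \int_0^T |u_1(s)|^2 \, \|R_1(s)\|_{L^2_x} \dd s.
    \end{aligned}
\end{equation}

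Each is controlled by Cauchy--Schwarz in time together with the remainder bounds already established. Using $\|R_2\|_{L^\infty L^\infty} \lesssim \|u_1\|_{L^2}^{9/8}\|u\|_{L^2}^{7/8}$ (\eqref{eq:estimate-R2-Linf}) and $T \leq 1$ gives $A_1 \lesssim \|u_1\|_{L^2}^{17/8} \|u\|_{L^2}^{7/8}$, which is precisely the target exponent. The other two are stronger: \eqref{eq:estimate-R2-L2} and $T \leq 1$ give $A_2 \lesssim \|u_1\|_{L^2}^{5/2}\|u\|_{L^2}^{1/2}$, and \eqref{eq:estimate-R1-L2} gives $A_3 \lesssim \|u_1\|_{L^2}^3$. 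Both can then be absorbed into $\|u_1\|_{L^2}^{17/8}\|u\|_{L^2}^{7/8}$ using the elementary bound $\|u_1\|_{L^2} \lesssim T^{1/2}\|u\|_{L^2} \leq \|u\|_{L^2}$ (valid since $T \leq 1$). The main obstacle, if any, is really the bookkeeping: one must ensure that the $L^\infty_x$ bound on $R_2$---not merely the $L^2_x$ bound---is used for the boundary-trace contributions, as it is exactly the Gagliardo--Nirenberg interpolation in \eqref{eq:estimate-R2-Linf} that produces the critical $7/8$ exponent on $\|u\|_{L^2}$.
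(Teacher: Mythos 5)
Your proposal is correct and follows essentially the same route as the paper's proof: testing against $\varphi_k$, integrating by parts in $x$ to trade $\partial_x R_2$ for boundary traces, and using the crucial $L^\infty$ estimate \eqref{eq:estimate-R2-Linf} on $R_2$ to obtain the exponent $\frac{7}{8}$. The paper phrases the argument differentially via $\frac{\dd}{\dd t}\langle R_3,\varphi_k\rangle$ rather than with the explicit Duhamel representation, but the content and the exponent bookkeeping are identical.
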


\begin{proof}
    Since $R_3$ is a strong solution to \eqref{eq:RN} with $F_3 = \partial_x (-2 i u_1 \mu' R_2) + i u_1 \mu'' R_2 + u_1^2 (\mu')^2 R_1$, integrations by parts lead to
    \begin{equation}
        \begin{split}
            \frac{\dd}{\dd t} \langle R_3, \varphi_k \rangle & = \langle i \partial_x^2 R_3, \varphi_k \rangle - i \langle F_3, \varphi_k \rangle \\
            & = - i \lambda_k \langle R_3, \varphi_k \rangle - u_1 [ \mu' R_2 \varphi_k ]_0^1 + u_1 \langle R_2 , 2\mu' \varphi_k' + \mu'' \varphi_k \rangle
            - i u_1^2 \langle R_1 , (\mu')^2 \varphi_k \rangle.
        \end{split}
    \end{equation}
    Thus
    \begin{equation}
        | \langle R_3(T), \varphi_k \rangle | \lesssim \|u_1\|_{L^2} \| R_2 \|_{L^\infty L^\infty} + \|u_1\|_{L^2}^2 \|R_1\|_{L^\infty L^2},
    \end{equation}
    which implies the claimed estimate by \eqref{eq:estimate-R2-Linf} and \cref{eq:estimate-R1-L2}.
\end{proof}

\begin{corollary}
    \label{cor:cubic-rem}
    Let $T \in (0,1]$ and $u \in L^2((0,T);\R)$ be such that $\|u\|_{L^2} \leq 1$.
    Then
    \begin{equation}
        \label{eq:cubic-rem}
        | \langle (\psi - \varphi_0 - \psi_1 - \psi_2)(T), \varphi_k \rangle | \lesssim |u_1(T)|^3 + \|u_1\|_{L^2}^{2+\frac18} \|u\|_{L^2}^{\frac78}.
    \end{equation}
\end{corollary}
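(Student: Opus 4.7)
\noindent\textbf{Proof plan for \cref{cor:cubic-rem}.}
My plan is to transfer the estimate on $\langle R_3(T), \varphi_k \rangle$ provided by \cref{lem:R3-bis} to the original remainder $\psi(T) - \varphi_0 - \psi_1(T) - \psi_2(T)$ via the change of unknown $\widetilde{\psi} = \psi e^{-i u_1 \mu}$ introduced in \cref{sec:aux}. The discrepancy between the two remainders is governed entirely by boundary-type quantities involving $u_1(T)$, which must be reabsorbed as $|u_1(T)|^3$ or Youngs'd back into the main bound.

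The first step is algebraic. Writing $\psi(T) = \widetilde{\psi}(T) e^{i u_1(T) \mu}$ and expanding
\begin{equation}
    e^{i u_1(T) \mu} = 1 + i u_1(T) \mu - \tfrac{1}{2}(u_1(T))^2 \mu^2 + \eta(x),
    \qquad \|\eta\|_{L^\infty} \lesssim |u_1(T)|^3,
\end{equation}
while substituting $\widetilde{\psi}(T) = \varphi_0 + \widetilde{\psi}_1(T) + \widetilde{\psi}_2(T) + R_3(T)$ and using the definitions $\widetilde{\psi}_1 = \psi_1 - i u_1 \mu$ and $\widetilde{\psi}_2 = \psi_2 - i u_1 \mu \psi_1 - \tfrac12 u_1^2 \mu^2$, all terms up to order two in $u_1(T)$ cancel. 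What remains is
\begin{equation}
    \begin{split}
        \psi(T) - \varphi_0 - \psi_1(T) - \psi_2(T) = R_3(T) & + i u_1(T) \mu \bigl( \widetilde{\psi}_2(T) + R_3(T) \bigr) \\
        & - \tfrac{1}{2} u_1(T)^2 \mu^2 \bigl( \widetilde{\psi}_1(T) + \widetilde{\psi}_2(T) + R_3(T) \bigr) \\
        & + \eta \bigl( \varphi_0 + \widetilde{\psi}_1(T) + \widetilde{\psi}_2(T) + R_3(T) \bigr).
    \end{split}
\end{equation}

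The second step is to test against $\varphi_k$ and estimate each contribution. The principal term $\langle R_3(T), \varphi_k \rangle$ is bounded by $\|u_1\|_{L^2}^{17/8} \|u\|_{L^2}^{7/8}$ using \cref{lem:R3-bis}. For the $\eta$-contribution, the pointwise bound on $\eta$ and the uniform $L^2$ bounds on $\widetilde{\psi}_1, \widetilde{\psi}_2, R_3$ (coming from \cref{lem:estimate-psi1,lem:estimate-psi2,lem:estimate-R2} together with $\|R_3\|_{L^\infty L^2} \leq \|R_2\|_{L^\infty L^2} + \|\widetilde{\psi}_2\|_{L^\infty L^2}$) yield a bound by $|u_1(T)|^3$ as required. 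For the other terms, Cauchy--Schwarz with $\|\mu\|_{L^\infty}, \|\mu^2\|_{L^\infty} \lesssim 1$ and the estimates $\|\widetilde{\psi}_1(T)\|_{L^2} \lesssim \|u_1\|_{L^2}$, $\|\widetilde{\psi}_2(T)\|_{L^2} + \|R_3(T)\|_{L^2} \lesssim \|u_1\|_{L^2}^{3/2} \|u\|_{L^2}^{1/2}$ give control by $|u_1(T)| \|u_1\|_{L^2}^{3/2} \|u\|_{L^2}^{1/2}$ and $u_1(T)^2 \|u_1\|_{L^2}$.

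The final step is to reabsorb these mixed boundary-bulk quantities into $|u_1(T)|^3 + \|u_1\|_{L^2}^{17/8} \|u\|_{L^2}^{7/8}$ using Young's inequality together with the elementary fact that $\|u_1\|_{L^2} \leq T \|u\|_{L^2} / \sqrt{2} \leq \|u\|_{L^2}$ for $T \leq 1$. For instance, Young with exponents $(3, 3/2)$ gives $|u_1(T)| \cdot \|u_1\|_{L^2}^{3/2} \|u\|_{L^2}^{1/2} \lesssim |u_1(T)|^3 + \|u_1\|_{L^2}^{9/4} \|u\|_{L^2}^{3/4}$, and the second term is dominated by $\|u_1\|_{L^2}^{17/8} \|u\|_{L^2}^{7/8}$ since $\|u_1\|_{L^2}^{1/8} \leq \|u\|_{L^2}^{1/8}$. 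The term $u_1(T)^2 \|u_1\|_{L^2}$ is handled similarly with exponents $(3/2, 3)$. This is essentially bookkeeping; I do not expect any real obstacle, the substantive work having already been carried out in \cref{lem:R3-bis}.
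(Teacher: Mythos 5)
Your proof is correct and follows essentially the same route as the paper: you apply the change of unknown to write $\psi(T) = e^{i u_1(T)\mu}\widetilde{\psi}(T)$, expand via the definitions of $\widetilde{\psi}_1, \widetilde{\psi}_2$ to arrive at exactly the decomposition \eqref{eq:R3-R3t} (you just carry the Taylor remainder $\eta$ of the exponential explicitly), then estimate each piece using \cref{lem:estimate-psi1,lem:estimate-psi2,lem:estimate-R2,lem:R3-bis} and Young's inequality together with $\|u_1\|_{L^2}\leq\|u\|_{L^2}$. The only difference is cosmetic: the paper groups the exponential factors by order while you expand them fully, but the terms produced and the estimates applied are identical.
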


\begin{proof}
    One has
    \begin{equation}
        \label{eq:R3-R3t}
        \begin{split}
            \psi - \varphi_0 - \psi_1 - \psi_2
            = \left(e^{i u_1 \mu} - 1 - i u_1 \mu + \frac{u_1^2}{2} \mu^2\right) \varphi_0
            & + \left(e^{i u_1 \mu} - 1 - i u_1 \mu \right) \widetilde{\psi}_1 \\
            & + \left(e^{i u_1 \mu} - 1 \right) \widetilde{\psi}_2
            + e^{i u_1 \mu} R_3.
        \end{split}
    \end{equation}
    Hence \cref{lem:estimate-psi1} and \cref{lem:estimate-psi2} yield
    \begin{equation}
        \begin{split}
            | \langle (\psi - \varphi_0 & - \psi_1 - \psi_2)(T), \varphi_k \rangle | \\
            & \lesssim |u_1(T)|^3 + |u_1(T)|^2 \| \widetilde{\psi}_1(T) \|_{L^2} + |u_1(T)| \| \widetilde{\psi}_2(T) \|_{L^2} + | \langle (e^{i u_1 \mu} R_3)(T), \varphi_k \rangle | \\
            & \lesssim |u_1(T)|^3 + \| u_1 \|_{L^2}^{\frac 94} \| u \|_{L^2}^{\frac 34} + | \langle (e^{i u_1 \mu} R_3)(T), \varphi_k \rangle |.
        \end{split}
    \end{equation}
    One has
    \begin{equation}
        | \langle (e^{i u_1 \mu} R_3)(T), \varphi_k \rangle | \lesssim | \langle R_3(T), \varphi_k \rangle | + |u_1(T)| \| R_3(T) \|_{L^2}.
    \end{equation}
    As $R_3 = R_2 - \widetilde{\psi}_2$, \eqref{eq:estimate-psi2-L2} and \eqref{eq:estimate-R2-L2} imply $\| R_3(T) \|_{L^2} \lesssim \|u_1\|_{L^2}^{\frac32} \|u\|_{L^2}^{\frac12}$.

    Together with \cref{lem:R3-bis}, we obtain
    \begin{equation}
        | \langle (e^{i u_1 \mu} R_3)(T), \varphi_k \rangle | \lesssim \|u_1\|_{L^2}^{2+\frac18} \|u\|_{L^2}^{\frac78} + |u_1(T)| \|u_1\|_{L^2}^{\frac32} \|u\|_{L^2}^{\frac 12}
    \end{equation}
    and this completes the proof of \eqref{eq:cubic-rem}.
\end{proof}

\begin{remark}
    Assuming that $\mu \in H^3((0, 1);\R)$ and using a slightly different method (based on a more detailed analysis of the regularity of solutions with inhomogeneous Neumann conditions), one can replace $\|u_1\|_{L^2}^{2+\frac18} \|u\|_{L^2}^{\frac78}$ with $\|u_1\|_{L^2}^{2+\frac14} \|u\|_{L^2}^{\frac34}$ in the above estimates.
    See also \cref{sec:easy-estimates} below where the assumption $\mu \in H^3_N((0,1);\R)$ entails an estimate by $\|u_1\|_{L^2}^3$.
\end{remark}

\appendix

\section{Drift quantified by integer negative Sobolev norms}
\label{sec:drift_integer}

The goal of this section is to prove the following drift estimate, via the usual integration by parts approach.
The statement and its proof are adaptations to Neumann boundary conditions of \cite[Theorem 1.3]{Bournissou2023_Quad}, written for Dirichlet boundary conditions in the bilinear Schrödinger equation. 

For $m \in \N^*$, we use the notation 
\begin{equation}
    H^m_N((0,1);\C) := D(A^{\frac m 2}) = \left\{ \phi \in H^{m}((0,1);\C) \mid (\phi^{(2j+1)})_{\vert x = 0,1} = 0 \text{ for } 0 < 2j+1 < m \right\}.
\end{equation}
Extending the notation $u_1$ for the primitive of $u$, we let $u_{n+1}(t) := \int_0^t u_n$ for all $n \geq 0$ with $u_0 = u$.

\begin{theorem}
    \label{thm:integer_drift}
    Let $k \in \N$, $p\in\N^*$, $\mu \in H^{2p+1}_N((0,1);\R)$ such that $\langle \mu ,\varphi_k \rangle=0$.
    For $1 \leq n \leq p$, let
    \begin{equation}
        \label{Wp_Schro}
        a_{k}^n :=
        \sum_{j \in \N} c_j \lambda_j^{n-1}(\lambda_j-\lambda_k)^{n-1} (\lambda_j-\lambda_k/2),
        %\frac{(-1)^{n-1}}{2} \langle [ \ad_{A}^{n-1}(\mu) , \ad_{A}^n(\mu)] \varphi_0 , \varphi_k \rangle.
    \end{equation}
    where $c_j$ is defined by \eqref{def:cj_a}.
    We assume $p$ is the minimal integer for which $a_k^p \neq 0$.
    There exists $T^*,C>0$ such that, for every $T \in (0,T^*)$ and $u \in L^2 \cap H^{2p-3} ((0,T);\R)$ with $\|u\|_{L^2} \leq 1$,
    \begin{equation}
        \label{eq:drift-Wp}
        \left| \langle \psi(T) - \varphi_0, \varphi_k \rangle + i a_k^p \|u_p\|_{L^2}^2 \right|
        \leq C \Gamma_p(T, u) \|u_p\|_{L^2}^2 + C \|\psi(T)-\varphi_0\|_{L^2}^2,
    \end{equation}
    where $\Gamma_p(T, u) := T + \|u_1^{(2p-2)}\|_{L^2} + T^{2-2p} \|u_1\|_{L^2}$.
    % \label{drift_Schro_N-p=1} \left| \langle \psi(T) - \varphi_0, \varphi_k \rangle + i a_k^1 \|u_1\|_{L^2}^2 \right| \leq & C \left( (T+\|u_1\|_{L^{\infty}}) \|u_1\|_{L^2}^2 + \|\psi(T)-\varphi_0\|_{L^2}^2 \right) \text{ if } p=1,
    % \label{drift_Schro_N-p>1} \left| \langle \psi(T)-\varphi_0 , \varphi_k \rangle + i a_k^p \|u_p\|_{L^2}^2 \right| \leq & C \big( (T+T^{\nu}\|u\|_{H^{2p-3}} )\|u_p\|_{L^2}^2 + \|\psi(T)-\varphi_0\|_{L^2}^2 \big) \text{ if } p \geq 2.
\end{theorem}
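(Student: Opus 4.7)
\medskip

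\noindent\textbf{Plan of proof.}
The strategy follows the integration-by-parts approach presented in \cref{sec:no-ipp}, extended to higher orders as in \cite[Theorem 1.3]{Bournissou2023_Quad}, in contrast with the Fourier-based approach of \cref{sec:negative} (which is forced on us in \cref{thm:negative} by the lack of regularity of $K$). The extra assumption $\mu \in H^{2p+1}_N$ provides enough decay on the Fourier coefficients $\langle \mu,\varphi_j\rangle$ (and thus on the coefficients $c_j$ of \eqref{def:cj_a}) to ensure that the kernel $K_k$ of \eqref{eq:K_k} is of class $C^{2p-1}$ outside of $0$, with piecewise $H^1$ behavior of its $(2p-1)$-th derivative. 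This is the precise regularity needed to iterate \cref{lem:IPP-Armand} a total of $2p$ times on the bilinear form $Q_k(u\rho_k, u\overline{\rho_k})$ of \eqref{eq:Q_k}.

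\medskip

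The proof then proceeds along the same lines as \cref{sec:negative}. First, expand $\psi(T)-\varphi_0 = \psi_1(T)+\psi_2(T)+R(T)$ and project onto $\varphi_k$. The hypothesis $\langle\mu,\varphi_k\rangle = 0$ kills $\langle \psi_1(T),\varphi_k\rangle$ thanks to \eqref{linearise_explicit}, and \eqref{eq:psi2=Qk} gives
\begin{equation}
    \langle \psi_2(T),\varphi_k\rangle
    = -\tfrac{1}{2} e^{-i\lambda_k T} Q_k(u\rho_k, u\overline{\rho_k}).
\end{equation}
For controls $u$ in a suitable subspace of zero-mean type (with vanishing iterated primitives at $T$), perform $2p$ balanced integrations by parts in the integral defining $Q_k$: each pair of integrations shifts one power of the kernel's derivative to the two arguments, producing a factor of the form $u_n$ on each side after $n$ iterations. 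The bulk term produced after $2n$ IPP carries the jump coefficient
\begin{equation}
    [K_k^{(2n-1)}]_0 := K_k^{(2n-1)}(0^-)-K_k^{(2n-1)}(0^+),
\end{equation}
which a direct calculation, using $K_k(\sigma) = \sum_j c_j \exp(-i(\lambda_j-\lambda_k/2)|\sigma|)$ and the identity $\lambda_j(\lambda_j-\lambda_k) = (\lambda_j-\lambda_k/2)^2 - \lambda_k^2/4$, shows to be proportional to $a_k^n$ after summing the binomial expansions. The minimality assumption kills these bulk terms for $1 \leq n \leq p-1$; only the $p$-th step produces the non-vanishing bulk contribution $2ia_k^p \|u_p\|_{L^2}^2$, plus boundary terms involving $u_j(T)$ for $1 \leq j \leq p$, plus a remainder bounded by $T\|u_p\|_{L^2}^2 + \|u_1^{(2p-2)}\|_{L^2}\|u_p\|_{L^2}^2$. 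The modulation factor $\rho_k$ introduces only corrections of lower order that can be absorbed (as in \cref{lem:Q_k-conj-best}).

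\medskip

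It then remains to: (i) control the cubic remainder $\langle R(T),\varphi_k\rangle$ by $o(\|u_p\|_{L^2}^2)$, which requires refining the estimates of \cref{sec:remainder} by exploiting the extra regularity of $\mu$ to iterate the auxiliary-system argument $p$ times and obtain bounds involving $\|u_p\|_{L^2}$ rather than $\|u_1\|_{L^2}$; (ii) establish closed-loop estimates for the boundary terms $u_j(T)$ with $1 \leq j \leq p$, by iteratively testing $\psi(T)-\varphi_0$ against well-chosen eigenmodes $\varphi_{j_1},\dots,\varphi_{j_p}$ with $\langle\mu,\varphi_{j_\ell}\rangle \neq 0$, adapting \cref{p:closed-loop} to recover each $u_j(T)$ modulo lower-order terms and a controlled contribution from $\|\psi(T)-\varphi_0\|_{L^2}$. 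The combination yields \eqref{eq:drift-Wp}. The main obstacle is the bookkeeping for the iterated IPP: matching each extracted jump $[K_k^{(2n-1)}]_0$ to the algebraic expression \eqref{Wp_Schro} of $a_k^n$, and tracking all the boundary terms so that each $|u_j(T)|^2$ can be reabsorbed into $\|\psi(T)-\varphi_0\|_{L^2}^2$ and $\|u_p\|_{L^2}^2$ via closed-loop estimates.
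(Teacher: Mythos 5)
Your plan for the coercivity of the second-order term matches the paper's strategy (iterated integrations by parts on the kernel $H(t,s)=e^{i\lambda_k t}K(t-s)$, identifying the $n$-th jump with $a_k^n$, and tracking boundary terms $u_q(T)$ to be handled by a Vandermonde-type closed-loop estimate). That part is correct. However, there is a genuine gap in your account of the cubic remainder and of where the prefactor $\Gamma_p(T,u)$ comes from.

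You claim that the integration-by-parts procedure already produces a remainder bounded by $T\|u_p\|_{L^2}^2+\|u_1^{(2p-2)}\|_{L^2}\|u_p\|_{L^2}^2$, and then propose to bound the cubic remainder $\langle(\psi-\varphi_0-\psi_1-\psi_2)(T),\varphi_k\rangle$ by iterating the auxiliary-system argument $p$ times to extract $\|u_p\|_{L^2}$. Neither matches the actual mechanism, and the second would be considerably harder. After the $2p$ integrations by parts, the quadratic remainder is only of size $T\|u_p\|_{L^2}^2$ plus the boundary terms $\sum_q|u_q(T)|^2$; the terms $\|u_1^{(2p-2)}\|_{L^2}$ and $T^{2-2p}\|u_1\|_{L^2}$ in $\Gamma_p$ do not arise there. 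They come from a separate and more elementary step: since $\mu\in H^{2p+1}_N\subset H^3_N$, the cubic remainder is controlled directly by $\|u_1\|_{L^2}^3+|u_1(T)|^3$ (a bound in $\|u_1\|_{L^2}$, not $\|u_p\|_{L^2}$, requiring no iteration of the auxiliary system beyond the third order), and one then \emph{converts} $\|u_1\|_{L^2}^3$ into a multiple of $\|u_p\|_{L^2}^2$ using a Gagliardo--Nirenberg interpolation on $(0,T)$:
\begin{equation}
    \|u_1\|_{L^2}^3=\|u_p^{(p-1)}\|_{L^2}^3
    \lesssim \big(\|u_1^{(2p-2)}\|_{L^2}+T^{2-2p}\|u_1\|_{L^2}\big)\|u_p\|_{L^2}^2,
\end{equation}
which is exactly what produces the form of $\Gamma_p$ (and explains why the hypothesis $u\in H^{2p-3}$ appears in the statement: it makes $\|u_1^{(2p-2)}\|_{L^2}$ finite). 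Without this interpolation step, your argument does not recover the stated estimate, and the $p$-fold iteration of the auxiliary system that you propose is both unnecessary and unjustified --- you would need to prove new higher-order inhomogeneous Neumann energy estimates to make it work, none of which are set up in the paper.

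A secondary remark: you restrict to controls with vanishing iterated primitives at $T$ and then reintroduce the boundary terms via closed-loop estimates. The paper instead keeps all boundary terms $u_q(T)$ throughout the inductive integration-by-parts formula and treats them at the end. This is merely a presentational difference, but note that if you restrict to a subspace you must still explicitly justify the passage back to general $u$, so nothing is saved.
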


As in \cref{rq:imp_motion}, estimate \eqref{eq:drift-Wp} prevents particular motions in $\vect_{\C} \{ \varphi_0,\varphi_k \}$, when $T$ and $u$ are small enough so that $\Gamma_p(T,u) < |a^p_k|$.
In the case $p=1$, we recover \cref{thm:negative} under the stronger assumption $\mu \in H^3_N((0,1);\R)$, which allows a much simpler proof.

The assumption $\mu \in H^{2p+1}_N$ implies that $\lambda_j^p \langle \mu , \varphi_j \rangle = \langle A^p \mu , \varphi_j \rangle$. 
An additional integration by parts justifies $\lambda_j^p \langle \mu , \varphi_j \rangle = o(1/j)$.
This estimate also holds with $\mu \leftarrow \mu \varphi_k \in H^{2p+1}_N$.
Thus
\begin{equation}
    \label{cj=1/Lj^2p+1}
    c_j = \underset{j \to \infty}{o} ( 1/ \lambda_j^{2p+1} )
\end{equation}
and, for any $1 \leq n \leq p$, the series $a_k^n$ of \eqref{Wp_Schro} converges absolutely. 

The following lemma gives an interpretation of $a_k^n$ in terms of Lie brackets (with the convention $\ad_{A}(\mu) = [A, \mu] = A \mu - \mu A$). 
We recover the usual drift direction of ODEs (see \cite[Theorem 6.1]{BeauchardMarbach2022}).

\begin{lemma}
    If $\mu \in C^{\infty}_c((0,1);\R)$, then, for every $k \in \N$, $n \in \N^*$,
    \begin{equation}
        a_{k}^n = \frac{(-1)^{n-1}}{2} \langle [ \ad_{A}^{n-1}(\mu) , \ad_{A}^n(\mu)] \varphi_0 , \varphi_k \rangle.
    \end{equation}
\end{lemma}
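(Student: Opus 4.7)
The plan is to compute everything spectrally in the orthonormal basis $(\varphi_j)_{j \in \N}$. Since $\mu \in C^\infty_c((0,1);\R)$, the function $\mu \varphi_j$ belongs to $C^\infty_c((0,1);\R) \subset \bigcap_m D(A^m)$, so all iterated commutators $\ad_A^m(\mu)$ act unambiguously on each $\varphi_j$, and on any finite linear combination thereof. Set
\begin{equation}
    \mu_{il} := \langle \mu \varphi_l, \varphi_i \rangle, \qquad i,l \in \N,
\end{equation}
which is real-valued and symmetric ($\mu_{il} = \mu_{li}$) since $\mu$ and the $\varphi_j$ are real. In this notation, the coefficient of the series reads $c_j = \mu_{j0} \mu_{jk}$.

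The first step is to prove by induction on $m \in \N$ the formula
\begin{equation}
    \label{eq:matrix-adm}
    \langle \ad_A^m(\mu)\, \varphi_l, \varphi_i \rangle = (\lambda_i - \lambda_l)^m \mu_{il}.
\end{equation}
The case $m=0$ is tautological. The induction step follows from $\ad_A(B) = AB - BA$ and the self-adjointness of $A$ on $D(A)$: since $\ad_A^{m}(\mu)\varphi_l \in D(A)$ (which is where one has to use the compact support of $\mu$), one may write $\langle A [\ad_A^m(\mu)\varphi_l], \varphi_i \rangle = \lambda_i \langle \ad_A^m(\mu) \varphi_l, \varphi_i\rangle$ and $\langle \ad_A^m(\mu) A \varphi_l, \varphi_i\rangle = \lambda_l \langle \ad_A^m(\mu)\varphi_l, \varphi_i\rangle$. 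This multiplies the matrix coefficient by $(\lambda_i - \lambda_l)$, establishing \eqref{eq:matrix-adm}.

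Next, I expand the commutator. Writing $B := \ad_A^{n-1}(\mu)$ and $C := \ad_A^n(\mu)$, and inserting the spectral expansion $\mathrm{Id} = \sum_j \varphi_j \langle \cdot, \varphi_j \rangle$, one has $C \varphi_0 = \sum_j \lambda_j^n \mu_{j0} \varphi_j$ (using $\lambda_0 = 0$) and similarly $B\varphi_0 = \sum_j \lambda_j^{n-1} \mu_{j0} \varphi_j$. Applying \eqref{eq:matrix-adm} once more yields
\begin{align}
    \langle BC\varphi_0, \varphi_k \rangle & = \sum_{j \in \N} (\lambda_k - \lambda_j)^{n-1} \mu_{kj}\, \lambda_j^n \mu_{j0}, \\
    \langle CB\varphi_0, \varphi_k \rangle & = \sum_{j \in \N} (\lambda_k - \lambda_j)^n \mu_{kj}\, \lambda_j^{n-1} \mu_{j0}.
\end{align}
Subtracting and factoring out the common prefactor $\lambda_j^{n-1}(\lambda_k - \lambda_j)^{n-1} \mu_{j0}\mu_{kj} = (-1)^{n-1}(\lambda_j - \lambda_k)^{n-1}\lambda_j^{n-1} c_j$ gives
\begin{equation}
    \langle [B,C] \varphi_0, \varphi_k \rangle = (-1)^{n-1} \sum_{j \in \N} c_j\, \lambda_j^{n-1}(\lambda_j - \lambda_k)^{n-1} \bigl[\lambda_j - (\lambda_k - \lambda_j)\bigr] = 2(-1)^{n-1} a_k^n,
\end{equation}
which is the claimed identity.

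The main technical point is the rearrangement of the double sum implicit in expanding $BC\varphi_0$ and $CB\varphi_0$, which relies on absolute convergence. This is not automatic from $\mu \in C^\infty_c$ alone without a quantitative decay; however, the bound \eqref{cj=1/Lj^2p+1} (which applies to all $n$ when $\mu \in C^\infty_c$ since one may iterate integrations by parts indefinitely) ensures that all series above converge absolutely, legitimizing the use of Fubini in the spectral expansion.
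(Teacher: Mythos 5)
Your proof is correct and takes essentially the same approach as the paper: both hinge on the matrix-element identity $\langle \ad_A^m(\mu)\varphi_\ell, \varphi_j\rangle = (\lambda_j - \lambda_\ell)^m\langle \mu\varphi_\ell, \varphi_j\rangle$ followed by elementary algebra on the coefficients $c_j$. The only organizational difference is that the paper first establishes the operator identity $\ad_A^m(\mu)\varphi_\ell = (A-\lambda_\ell)^m(\mu\varphi_\ell)$ and then combines Parseval with $(\ad_A^q(\mu))^* = (-1)^q\ad_A^q(\mu)$, whereas you obtain the matrix elements by induction on $m$ and expand $[B,C]\varphi_0$ directly in the eigenbasis; the underlying computation is the same.
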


\begin{proof}
    The assumption $\mu \in C^{\infty}_c((0,1);\R)$ guarantees that the iterated Lie brackets are well defined: $A$ is only applied to functions in $D(A)=H^2_N$.
    For every $\ell \in \N$ and $m \in \N$ we have
    \begin{equation}
        \ad_{A}^m(\mu) \varphi_{\ell} 
        = \ad_{A-\lambda_{\ell}}^m(\mu) \varphi_{\ell} \\
        = \sum_{q=0}^m (-1)^q \binom{m}{q} (A-\lambda_{\ell})^{m-q} \mu (A-\lambda_{\ell})^q \varphi_{\ell}
        = (A-\lambda_{\ell})^m(\mu \varphi_{\ell}).
    \end{equation}
    Thus for every $j \in \N$,
    \begin{equation}
        (\lambda_j-\lambda_{\ell})^m \langle \varphi_j , \mu \varphi_{\ell} \rangle
        = \langle (A-\lambda_{\ell})^m \varphi_j , \mu \varphi_{\ell} \rangle
        = \langle \varphi_j , (A-\lambda_{\ell})^m (\mu \varphi_{\ell}) \rangle
        = \langle \varphi_j , \ad_{A}^m(\mu) \varphi_{\ell} \rangle,
    \end{equation}
    where the second equality results from integrations by parts and the boundary terms vanish because $\mu \in C^{\infty}_c(0,1)$. 
    Therefore, using $2(\lambda_j-\lambda_k/2)=\lambda_j + (\lambda_j-\lambda_k)$ and Parseval's identity, we obtain
    \begin{equation}
        2 a_k^n = \langle \ad_{A}^{n}(\mu) \varphi_0 , \ad_{A}^{n-1}(\mu)\varphi_k \rangle + \langle \ad_A^{n-1}(\mu)\varphi_0 , \ad_{A}^{n}(\mu)\varphi_k\rangle
    \end{equation}
    which concludes the proof using that $(\ad_A^q(\mu))^* = (-1)^q \ad_A^q(\mu)$.
\end{proof}

\subsection{Coercivity of the second order expansion}
\label{subsec:coer_app}

\begin{proposition}
    \label{Prop:coer}
    Let $k \in \N$, $p\in\N^*$, $\mu \in H^{2p+1}_N((0,1);\R)$ such that $p$ is the minimal integer for which $a_k^p \neq 0$.
    There exists $C>0$ such that, for every $T \in (0,1]$ and $u \in L^2((0,T);\R)$,
    \begin{equation}
        \left| \langle \psi_2(T), \varphi_k\rangle + i a_k^p \|u_p\|_{L^2}^2 \right| \leq C \left( T \|u_p\|_{L^2}^2 + |u_1(T)|^2 + \dotsb + |u_p(T)|^2 \right).
    \end{equation}
\end{proposition}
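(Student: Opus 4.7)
The plan is to analyze the second-order expansion via successive integrations by parts on the double integral representation of $Q_k$. Starting from \eqref{eq:psi2=Qk}, we have
\begin{equation}
    \langle \psi_2(T), \varphi_k \rangle = - \tfrac{1}{2} e^{-i\lambda_k T} Q_k(u\rho_k, u\bar\rho_k)
    = - \tfrac{1}{2} e^{-i\lambda_k T} \int_0^T \!\!\int_0^T K_k(t-s) u(t) u(s) e^{i\lambda_k(t+s)/2} \dd s \dd t.
\end{equation}
The hypothesis $\mu \in H^{2p+1}_N$, via the decay estimate \eqref{cj=1/Lj^2p+1}, ensures that the modulated kernel $K_k(\sigma) = \sum_j c_j e^{-i(\lambda_j-\lambda_k/2)|\sigma|}$ lies in $C^{2p-1}(\R \setminus \{0\})$ with derivatives up to order $2p$ bounded uniformly on each side of $0$. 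We will perform $2p$ integrations by parts, distributed symmetrically in the two variables $s$ and $t$, so as to transform both occurrences of $u$ into its $p$-th iterated primitive $u_p$ (vanishing at $0$).

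The algebraic heart of the proof lies in computing the jumps of $K_k^{(n)}$ at $\sigma = 0$. Differentiating separately on each side of the origin, the even-order derivatives turn out to be continuous whereas
\begin{equation}
    K_k^{(2m-1)}(0^+) - K_k^{(2m-1)}(0^-) = -2i (-1)^{m-1} \sum_{j \in \N} c_j (\lambda_j - \lambda_k/2)^{2m-1}.
\end{equation}
Under the change of variable $\nu := \lambda - \lambda_k/2$, the factorization $\lambda(\lambda - \lambda_k) = \nu^2 - \lambda_k^2/4$ shows that $\lambda_j^{l-1}(\lambda_j - \lambda_k)^{l-1}(\lambda_j - \lambda_k/2)$ becomes $\nu_j(\nu_j^2 - \lambda_k^2/4)^{l-1}$. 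Since the polynomials $\nu (\nu^2 - \lambda_k^2/4)^{l-1}$ for $l = 1, \ldots, m$ form a basis of the space of odd polynomials in $\nu$ of degree $\leq 2m-1$, there exist scalars $\alpha_{m,l}$ with $\alpha_{m,m}=1$ such that
\begin{equation}
    \sum_{j \in \N} c_j (\lambda_j - \lambda_k/2)^{2m-1} = \sum_{l=1}^{m} \alpha_{m,l} a_k^l.
\end{equation}
By the minimality assumption on $p$, this quantity vanishes for $m = 1, \ldots, p-1$ and equals $a_k^p$ for $m = p$. The IBP procedure therefore exhibits no drift at intermediate orders, and the first nontrivial diagonal contribution arises at IBP order $2p-1$ with coefficient proportional to $a_k^p$.

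Carrying out the $2p$ integrations by parts then produces three types of contributions. First, the leading diagonal term yields $-2i a_k^p \int_0^T u_p(t)^2 e^{i\lambda_k t} \dd t$, which combined with the prefactor $-\tfrac{1}{2} e^{-i\lambda_k T}$ and the bound $|e^{i\lambda_k(t-T)} - 1| \lesssim T$ gives $-i a_k^p \|u_p\|_{L^2}^2 + O(T \|u_p\|_{L^2}^2)$. Second, the remainder double integral after $2p$ IBPs involves $K_k^{(2p)}$, which is uniformly bounded; Cauchy--Schwarz bounds it by $O(T \|u_p\|_{L^2}^2)$. Third, the boundary terms generated at each step at $s, t \in \{0, T\}$ (together with the endpoint contributions when working on the triangular form $\{s < t\}$, evaluated via the values of $K_k$ and its derivatives) are controlled by $|u_1(T)|^2 + \dotsb + |u_p(T)|^2$. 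The main obstacle in executing this plan is the bookkeeping required to verify that each intermediate diagonal contribution really reduces to the form $\sum_j c_j (\lambda_j - \lambda_k/2)^{2m-1}$ rather than a more general polynomial expression in $\lambda_j$ and $\lambda_k$: this is the precise feature that lets the minimality hypothesis propagate all the way to order $2p-1$, and it emerges most cleanly by performing the IBP simultaneously in $s$ and $t$ on the symmetric kernel $K_k$ rather than on the triangular form.
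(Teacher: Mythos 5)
Your overall strategy -- iterated integrations by parts to reach $u_p \otimes u_p$, read the drift off the diagonal, kill all lower levels by minimality of $p$, absorb the rest in $T\|u_p\|_{L^2}^2$ and boundary terms -- is the same as the paper's (which executes it on the triangle $\{s<t\}$ with the non-symmetric kernel $H(t,s)=e^{i\lambda_k t}K(t-s)$, where $K(\sigma):=\sum_j c_j e^{-i\lambda_j\sigma}$ is genuinely $C^{2p}$, so there is no jump to compute at all; incidentally your claim of $C^{2p-1}$ regularity undersells what \eqref{cj=1/Lj^2p+1} gives). But your bookkeeping conflates two incompatible variants, and as written the argument does not close. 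To produce $u_p$ -- and to produce the factor $e^{i\lambda_k t}$ you write in the diagonal term -- you must IBP the full integrand $K_k(t-s)\,e^{i\lambda_k(s+t)/2}\,u(s)u(t)$. But then the diagonal quantity that appears at round $q$ is \emph{not} the jump of $K_k^{(2q-1)}$: after $q-1$ rounds the kernel (stripped of the modulation) is $M_{q-1}(\sigma):=\sum_j c_j\,\bigl(\lambda_j(\lambda_j-\lambda_k)\bigr)^{q-1}e^{-i(\lambda_j-\lambda_k/2)|\sigma|}$, because the Leibniz terms from $\partial_{s,t}$ acting on $e^{i\lambda_k(s+t)/2}$ exactly turn the $(\lambda_j-\lambda_k/2)^2$ produced by $K_k''$ into $\lambda_j(\lambda_j-\lambda_k)$ at every step. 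The jump of $M_{q-1}'$ at $0$ is $-2ia_k^q$ outright -- not a linear combination of lower $a_k^l$'s -- so the polynomial basis change $\nu^{2m-1}=\sum_l\alpha_{m,l}\,\nu(\nu^2-\lambda_k^2/4)^{l-1}$ is a detour that never has to be invoked. In other words, the modulation is precisely what makes the ``bookkeeping obstacle'' you flag disappear, and the paper's identity $\partial_1^{n-1}\partial_2^n H(t,t) - \tfrac12\tfrac{d}{dt}\bigl[\partial_1^{n-1}\partial_2^{n-1}H(t,t)\bigr] = i a_k^n e^{i\lambda_k t}$ is its clean expression.

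If instead you want to exploit the jumps of $K_k^{(2m-1)}$, as your computation actually does, then the object you are integrating by parts is $\iint K_k(t-s)\tilde u(s)\tilde u(t)$ with $\tilde u:=u\rho_k$, and what emerges is $\|\tilde u_p\|_{L^2}^2$, not $\|u_p\|_{L^2}^2$ (and no $e^{i\lambda_k t}$ under the integral): you would then need the additional -- elementary but not free -- estimate $\|\tilde u_p\|_{L^2}^2=\|u_p\|_{L^2}^2+O(T\|u_p\|_{L^2}^2)$. This ambiguity also produces the sign defects: the factor $(-1)^{m-1}$ in your jump formula disappears without explanation (it would be cancelled by the $(-1)^{m-1}$ accumulated from $\partial_{21}^{m-1}$ acting on $K_k(t-s)$, which you have not tracked), and your arithmetic $-\tfrac12\cdot(-2ia_k^p)=+ia_k^p$ gives $+ia_k^p\|u_p\|_{L^2}^2$, whereas the statement requires $\langle\psi_2(T),\varphi_k\rangle\approx -ia_k^p\|u_p\|_{L^2}^2$. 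Pick one of the two variants and carry it through coherently; the triangle-plus-$H$ formulation the paper uses avoids the diagonal singularity entirely, makes the basis-change lemma unnecessary, and propagates the minimality hypothesis through a single exact identity rather than a triangular change of basis.
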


\begin{proof}
    We deduce from \eqref{eq:psi2=Qk} that, for every $T >0$ and $u \in L^2((0,T);\R)$,
    \begin{equation}
        \label{def:H(t,s)}
        \langle \psi_2(T), \varphi_k e^{-i\lambda_k T}\rangle = - \int_0^T u(t) \int_0^t u(s) H(t,s) \dd s \dd t
        \quad \text{ where } \quad
        H(t,s):=e^{i \lambda_k t} K(t-s)
    \end{equation}
    and $K(\sigma):=\sum_{j\in\N} c_j e^{-i\lambda_j \sigma}$ defines $K \in C^{2p}(\R;\C)$ (see \cref{cj=1/Lj^2p+1}), thus $H \in C^{2p}(\R^2;\C)$.

    \medskip
    \step[st-Qn-1]{We prove by induction on $0 \leq n \leq p$, that, for every $T>0$, there exists a quadratic form $\widetilde{Q}_{n,T}$ on $\C^{2n}$, with uniformly bounded coefficients when $T \to 0$, such that, for all $u \in L^2((0,T);\R)$,
    \begin{equation}
        \label{H_IPP}
        \begin{aligned}
            \int_0^T u(t) \int_0^t u(s) H(t,s) \dd s \dd t
            =
            \sum_{q=1}^n \int_0^T u_q(t)^2 \left(
            \partial_1^{q-1} \partial_2^{q} H(t,t)
            - \frac{1}{2} \frac{\dd}{\dd t}\left[ \partial_1^{q-1} \partial_2^{q-1} H(t,t) \right]
            \right) \dd t
            \\
            + \int_0^T u_n(t) \int_0^t u_n(s) \partial_1^n \partial_2^n H(t,s) \dd s \dd t + \widetilde{Q}_{n,T}(u_1(T),\dots,u_n(T),\gamma_0^n,\dots,\gamma_{n-1}^n)
        \end{aligned}
    \end{equation}
    where, for $0 \leq q \leq n-1$, $\gamma_q^n := \int_0^T u_n(s) \partial_1^q \partial_2^n H(T,s) \dd s$.}
    The equality holds for $n=0$ with $\widetilde{Q}_{0,T}=0$.
    The heredity $n \to (n+1)$ relies on integrations by parts.
    In the double integral above, we perform two integrations by parts, to produce a double integral involving $u_{n+1}(t) u_{n+1}(s)$ and boundary terms involving $u_{n+1}(T)$ and $\gamma_n^{n+1}$. 
    In each $\gamma_q^n$, we perform one integration by parts to produce an integral involving $u_{n+1}(s)$ and a boundary term involving $u_{n+1}(T)$.

    \step[st-Qn-2]{Coercivity of the quadratic form.}
    We deduce from \eqref{def:H(t,s)} that for every $1 \leq n \leq p$,
    \begin{equation}
        \partial_1^{n-1} \partial_2^{n} H(t,t)
        - \frac{1}{2} \frac{\dd}{\dd t}\left[ \partial_1^{n-1} \partial_2^{n-1} H(t,t) \right] = i a_k^{n} e^{i\lambda_k t}.
    \end{equation}
    Using \cref{st-Qn-1} with $n=p$, $a_k^{m}=0$ for $m<p$ and the Cauchy--Schwarz inequality, we get $C>0$ such that, for every $T \in (0,1]$ and $u \in L^2((0,T);\R)$,
    \begin{equation}
        \left| \langle \psi_2(T),\varphi_k e^{-i\lambda_k T}\rangle + ia_k^p \int_0^T u_p(t)^2 e^{i\lambda_k t} \dd t \right|
        \leq C \left(
        T \|u_p\|_{L^2}^2 + \sum_{q=1}^{p} |u_q(T)|^2 \right)
    \end{equation}
    which implies the conclusion.
\end{proof}

\subsection{Quadratic and cubic estimates}
\label{sec:easy-estimates}

\begin{proposition}
    \label{Prop:cub}
    Let $\mu \in H^3_N((0,1);\R)$ and $k \in \N^*$. There exists $C>0$ such that, for every $T\in (0,1]$ and $u \in L^2((0,T);\R)$ with $\|u\|_{L^2} \leq 1$,
    \begin{align}
        \label{quad_i}
        \| (\psi-\varphi_0-\psi_1)(T) \|_{L^2} & \leq C\left( \|u_1\|_{L^2}^2 + |u_1(T)|^2 \right), \\
        \label{cub_i}
        | \langle (\psi-\varphi_0-\psi_1-\psi_2)(T), \varphi_k \rangle | & \leq C\left( \|u_1\|_{L^2}^3 + |u_1(T)|^3 \right).
    \end{align}
\end{proposition}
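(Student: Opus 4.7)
The plan is to exploit the auxiliary system of \cref{sec:aux} under the stronger regularity hypothesis $\mu \in H^3_N$. The essential simplification is that $\mu \in H^3_N((0,1);\R)$ forces $\mu'(0) = \mu'(1) = 0$, so every inhomogeneous Neumann trace $i u_1 \mu' R_{n-1}|_{x=0,1}$ appearing in \eqref{eq:tilde-psi1}, \eqref{eq:tilde-psi2} and \eqref{eq:RN} vanishes. Consequently $\widetilde{\psi}_1$, $\widetilde{\psi}_2$, $R_1$, $R_2$, $R_3$ all solve Schrödinger equations with \emph{homogeneous} Neumann conditions and sources depending polynomially on $u_1$ alone, which is exactly the structure needed for estimates involving only $\|u_1\|_{L^2}$ (and not $\|u\|_{L^2}$).

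First I would establish the following auxiliary estimates through Duhamel's formula, using that $e^{-iAt}$ preserves the $L^2$ and $H^1$ norms (the latter because $D(A^{1/2})=H^1$ with no boundary condition at that level): $\|\widetilde{\psi}_1\|_{L^\infty H^1} \lesssim \|u_1\|_{L^2}$, $\|R_1\|_{L^\infty H^1} \lesssim \|u_1\|_{L^2}$, $\|\widetilde{\psi}_2\|_{L^\infty L^2} \lesssim \|u_1\|_{L^2}^2$ and $\|R_2\|_{L^\infty L^2} \lesssim \|u_1\|_{L^2}^2$. Each one reduces to bounding $\|F_n\|_{L^1 L^2}$ or $\|F_n\|_{L^1 H^1}$ by Cauchy--Schwarz in time, combined with the \emph{a priori} bound $\|\widetilde{\psi}\|_{L^\infty H^2}\lesssim 1$ (from \cref{p:WP} and the multiplier $e^{-iu_1\mu}$) and the previously established estimate on $R_{n-1}$, together with $\|u\|_{L^2}\leq 1$ and $T\leq 1$.

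For \eqref{quad_i} I would use the decomposition \eqref{eq:R2-R2t}: the first term yields $|u_1(T)|^2$, the second $|u_1(T)|\|\widetilde{\psi}_1(T)\|_{L^2}\lesssim |u_1(T)|\|u_1\|_{L^2}\leq |u_1(T)|^2+\|u_1\|_{L^2}^2$, and the third $\|R_2(T)\|_{L^2}\lesssim \|u_1\|_{L^2}^2$. For \eqref{cub_i}, the analogous decomposition \eqref{eq:R3-R3t} reduces the problem to controlling $|\langle e^{i u_1(T)\mu}R_3(T),\varphi_k\rangle|\leq |\langle R_3(T),\varphi_k\rangle|+|u_1(T)|\,\|R_3(T)\|_{L^2}$. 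Since $R_3 = R_2 - \widetilde{\psi}_2$, the triangle inequality gives $\|R_3\|_{L^\infty L^2}\lesssim \|u_1\|_{L^2}^2$, and Young's inequality $|u_1(T)|\,\|u_1\|_{L^2}^2\leq \tfrac{1}{3}|u_1(T)|^3+\tfrac{2}{3}\|u_1\|_{L^2}^3$ handles that contribution.

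The delicate term is $|\langle R_3(T),\varphi_k\rangle|$. I would integrate in time the scalar ODE
\begin{equation}
    \tfrac{\dd}{\dd t}\langle R_3,\varphi_k\rangle = -i\lambda_k\langle R_3,\varphi_k\rangle - i\langle F_3,\varphi_k\rangle
\end{equation}
and integrate by parts the leading summand of $\langle F_3,\varphi_k\rangle$:
\begin{equation}
    \langle \mu'\partial_x R_2,\varphi_k\rangle = [\mu' R_2 \overline{\varphi_k}]_0^1 - \langle R_2,\mu''\varphi_k+\mu'\varphi_k'\rangle = -\langle R_2,\mu''\varphi_k+\mu'\varphi_k'\rangle,
\end{equation}
where the boundary term vanishes precisely because $\mu'(0)=\mu'(1)=0$. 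This rewrites $\langle F_3(t),\varphi_k\rangle$ as a quantity pointwise bounded by $|u_1(t)|\|R_2(t)\|_{L^2}+u_1(t)^2\|R_1(t)\|_{L^2}$, so Cauchy--Schwarz in time delivers $|\langle R_3(T),\varphi_k\rangle|\lesssim \|u_1\|_{L^2}\cdot T^{1/2}\|R_2\|_{L^\infty L^2}+\|u_1\|_{L^2}^2\cdot \|R_1\|_{L^\infty L^2}\lesssim \|u_1\|_{L^2}^3$. The main obstacle is the bookkeeping needed so that every spatial derivative falling on a high-order remainder such as $R_2$ can be moved onto smooth factors via integration by parts; this is permitted exactly by the triple conjunction of the homogeneous Neumann condition on $R_2$, the Neumann eigenfunction $\varphi_k$, and the boundary vanishing of $\mu'$.
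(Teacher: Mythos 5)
Your proposal is correct and follows the paper's own strategy: exploit that $\mu \in H^3_N$ forces $\mu'(0)=\mu'(1)=0$, so the boundary conditions in \eqref{eq:tilde-psi1}, \eqref{eq:tilde-psi2}, \eqref{eq:RN} become homogeneous Neumann; establish the same chain of Duhamel estimates on $\widetilde{\psi}_1, R_1$ in $L^\infty H^1$ and $\widetilde{\psi}_2, R_2$ in $L^\infty L^2$; and handle the worst term of $F_3$ by integrating $\partial_x$ off $R_2$, which is licensed precisely by $\mu'\rvert_{x=0,1}=0$. The one variation is in the last step: the paper tests $R_3$ against the backward-propagated function $\phi(t) := e^{iA(T-t)}(\varphi_k\, e^{-iu_1(T)\mu})$ (using $L^*=-L$), which produces $\langle R_3(T)e^{iu_1(T)\mu},\varphi_k\rangle$ directly, whereas you test against $\varphi_k$ and then absorb the conjugation factor separately via $|u_1(T)|\,\|R_3(T)\|_{L^2}$, $R_3=R_2-\widetilde\psi_2$ and Young's inequality; both routes give \eqref{cub_i} with the same ingredients. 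Two small bookkeeping remarks: the integration by parts actually yields $2\mu'\varphi_k'$ rather than $\mu'\varphi_k'$ (harmless), and your $H^1$ Duhamel estimate for $R_1$ genuinely needs the $H^2$ bound on $\widetilde\psi$ (which you correctly cite), since $\|(2\mu'\partial_x+\mu'')\widetilde\psi\|_{H^1}$ involves $\partial_x^2\widetilde\psi$.
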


\begin{remark}
    The assumption $\mu \in H^3_N$ allows stronger quadratic and cubic estimates than in \cref{sec:remainder} (compare with \eqref{eq:quad-rem} and \eqref{eq:cubic-rem}), with simpler proofs.
\end{remark}

\begin{proof}
    We use the notation $\lesssim$ of \cref{sec:notations} for constants independent of time and controls.
    
    We use the auxiliary system introduced in \cref{sec:aux}.
    The key simplification is that the assumption $\mu \in H^3_N$ implies that $\mu'(0) = \mu'(1) = 0$, so that the boundary conditions for $\widetilde{\psi}$, $\widetilde{\psi}_1$, $\widetilde{\psi}_2$, $R_1$, $R_2$, $R_3$ in \eqref{eq:tilde-psi}, \eqref{eq:tilde-psi1}, \eqref{eq:tilde-psi2} and \eqref{eq:RN} are now homogeneous Neumann boundary conditions.
    Thus, we can write Duhamel formulas to compute the solutions from the source terms.

    From \eqref{eq:tilde-psi1}, using that $e^{iAs}$ is continuous on $H^1$, and $\mu'' \in H^1$, we obtain
    \begin{equation}
        \label{eq:magic-psit1}
        \| \widetilde{\psi}_1 \|_{L^\infty H^1} \lesssim \| u_1 \mu'' \|_{L^1 H^1} \lesssim \| u_1 \|_{L^2}.
    \end{equation}
    From \eqref{eq:tilde-psi2} and \eqref{eq:magic-psit1}, using that $e^{iAs}$ is an isometry on $L^2$, we obtain
    \begin{equation}
        \label{eq:magic-psit2}
        \| \widetilde{\psi}_2 \|_{L^\infty L^2} 
        \lesssim \| u_1 \|_{L^2} \| \widetilde{\psi}_1 \|_{L^\infty H^1} + \| u_1 \|_{L^2}^2
        \lesssim \| u_1 \|_{L^2}^2.
    \end{equation}
    From \eqref{eq:RN} for $R_1$, using that $\mu'' \in H^1$ we have
    \begin{equation}
        \label{eq:magic-R1}
        \| R_1 \|_{L^\infty H^1} \lesssim \| u_1 \|_{L^2} \| \widetilde{\psi} \|_{L^\infty H^1} + \| u_1 \|_{L^2}^2 \| \widetilde{\psi} \|_{L^\infty H^1} \lesssim \| u_1 \|_{L^2}.            
    \end{equation}
    From \eqref{eq:RN} for $R_2$ and \eqref{eq:magic-R1}, we obtain
    \begin{equation}
        \label{eq:magic-R2}
        \| R_2 \|_{L^\infty L^2} \lesssim \| u_1 \|_{L^2} \| R_1 \|_{L^\infty H^1} + \| u_1 \|_{L^2}^2 \| \widetilde{\psi} \|_{L^\infty L^2}
        \lesssim \| u_1 \|_{L^2}^2.
    \end{equation}
    Let $L := 2 \mu' \partial_x + \mu''$.
    Since $\mu'(0) = \mu'(1) = 0$, $L^* = - L$.
    From \eqref{eq:RN} for $R_3$, we obtain
    \begin{equation}
        \langle R_3(T) , \varphi_k e^{-i u_1(T)\mu} \rangle =
        \int_0^T \left( u_1(t) \langle R_2(t) , L \phi(t) \rangle
        - i u_1(t)^2 \langle (\mu')^2 R_1(t) , \phi(t) \rangle
        \right) \dd t
    \end{equation}
    where $\phi(t):=e^{iA(T-t)}( \varphi_k e^{-iu_1(T)\mu})$.
    We have $\phi \in C^0([0,T];H^1)$ because $\varphi_k e^{-iu_1(T)\mu} \in H^1$.
    Thus
    \begin{equation}
        \label{eq:magic-R3}
        \left| \langle R_3(T)e^{i u_1(T)\mu} , \varphi_k \rangle \right|
        \lesssim \| u_1 \|_{L^2} \| R_2 \|_{L^\infty L^2} + \| u_1 \|_{L^2}^2 \|R_1 \|_{L^\infty L^2}.
    \end{equation}
    Eventually, we get \eqref{quad_i} from the decomposition \eqref{eq:R2-R2t} and the estimates \eqref{eq:magic-psit1} and \eqref{eq:magic-R2}.
    We get \eqref{cub_i} from the decomposition \eqref{eq:R3-R3t} and the estimates \eqref{eq:magic-psit1}, \eqref{eq:magic-psit2} and \eqref{eq:magic-R3}.
\end{proof}

\subsection{Closed-loop estimate}

\begin{proposition}
    \label{Prop:CL}
    Let $k \in \N$, $p \in \N^*$, $\mu \in H^{2p+1}_N((0,1);\R)$ such that $p$ is the minimal integer for which $a_k^p \neq 0$. Then there exists $C,T^*>0$ such that, for every $T \in (0,T^*)$ and $u \in L^2((0,T);\R)$,
    \begin{equation}
        |u_1(T)| + \dotsb + |u_p(T)| \leq C \left( T^{\frac12} \|u_p\|_{L^2} + \|u_1\|_{L^2}^2 + \|\psi(T)-\varphi_0\|_{L^2} \right).
    \end{equation}
\end{proposition}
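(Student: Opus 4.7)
The plan is to extend the one-step argument of \cref{p:closed-loop} (which is the case $p=1$) to $p$ steps, by extracting $p$ independent linear moment equations from the state and then inverting a $p \times p$ Vandermonde-type system to recover $u_1(T), \ldots, u_p(T)$. As a preliminary observation, I would argue that the hypothesis forces the existence of $p$ pairwise distinct indices $j_1, \ldots, j_p \in \N$ with $\langle \mu, \varphi_{j_\ell}\rangle \neq 0$: if strictly fewer than $p$ of the coefficients $c_j$ were nonzero, a Vandermonde-type argument applied to the identities \eqref{Wp_Schro} for $a_k^1, \ldots, a_k^{p-1}$ would force all remaining $c_j$ to vanish, contradicting $a_k^p \neq 0$.

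Having fixed such indices, for each $\ell$ I would combine the explicit formula \eqref{linearise_explicit} for $\psi_1(T)$ with the quadratic remainder estimate \eqref{quad_i} of \cref{Prop:cub} (used under the implicit smallness assumption $\|u\|_{L^2} \leq 1$, as in \cref{p:closed-loop}) to obtain
\begin{equation}
    \left|\int_0^T u(t) e^{i\lambda_{j_\ell} t} \dd t\right| \lesssim \|\psi(T) - \varphi_0\|_{L^2} + \|u_1\|_{L^2}^2 + |u_1(T)|^2.
\end{equation}
Then I would perform $p$ successive integrations by parts on the left-hand side. Each integration by parts introduces one boundary term $(-i\lambda_{j_\ell})^{q-1} u_q(T) e^{i\lambda_{j_\ell} T}$ for $q = 1, \ldots, p$ and leaves behind $(-i\lambda_{j_\ell})^p \int_0^T u_p(t) e^{i\lambda_{j_\ell} t} \dd t$, which is bounded by $\lambda_{j_\ell}^p T^{1/2} \|u_p\|_{L^2}$ via Cauchy--Schwarz. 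Rearranging and absorbing the unimodular factor $e^{i\lambda_{j_\ell} T}$ into the implicit constant, I get for each $\ell$
\begin{equation}
    \Bigl| \sum_{q=1}^p (-i\lambda_{j_\ell})^{q-1} u_q(T) \Bigr| \lesssim \|\psi(T) - \varphi_0\|_{L^2} + \|u_1\|_{L^2}^2 + |u_1(T)|^2 + T^{1/2}\|u_p\|_{L^2}.
\end{equation}

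The $p \times p$ coefficient matrix $M_{\ell,q} := (-i\lambda_{j_\ell})^{q-1}$ is Vandermonde in the distinct nodes $-i\lambda_{j_1}, \ldots, -i\lambda_{j_p}$ (distinctness following from $\lambda_j = (j\pi)^2$ being strictly monotone), hence invertible with a $\mu$-dependent but $T$-independent inverse. Applying $M^{-1}$ entry by entry yields
\begin{equation}
    |u_1(T)| + \dotsb + |u_p(T)| \leq C\bigl( \|\psi(T) - \varphi_0\|_{L^2} + \|u_1\|_{L^2}^2 + |u_1(T)|^2 + T^{1/2}\|u_p\|_{L^2} \bigr).
\end{equation}
The remaining (minor) obstacle is the self-referential term $|u_1(T)|^2$ on the right. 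Using $|u_1(T)| \leq T^{1/2} \|u\|_{L^2} \leq T^{1/2}$, one has $C|u_1(T)|^2 \leq C T^{1/2} |u_1(T)|$, which can be absorbed into the left-hand side once $T < T^*$ is chosen so that $C T^{1/2} \leq 1/2$. This produces the claimed closed-loop estimate with a constant depending only on $\mu$ and on the choice of indices $j_1, \ldots, j_p$.
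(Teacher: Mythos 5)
Your proposal follows exactly the route the paper sketches (and delegates to \cite{Bournissou2023_Quad}): extract $p$ distinct indices with $\langle \mu,\varphi_j\rangle \neq 0$ via a Vandermonde argument on the vanishing $a_k^1,\dots,a_k^{p-1}$, bound the corresponding moments through \eqref{linearise_explicit} and the quadratic estimate \eqref{quad_i}, integrate by parts $p$ times in each moment, and invert the resulting Vandermonde matrix, finally absorbing the residual $|u_1(T)|^2$ for $T$ small. The proof is correct and coincides with the paper's approach; you merely make explicit the details the authors leave to the cited reference, including the implicit smallness hypothesis $\|u\|_{L^2}\leq 1$ needed to invoke \eqref{quad_i}.
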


\begin{proof}
    We follow exactly \cite[Lemma 5.3 and Proposition 5.4]{Bournissou2023_Quad}.
    The strategy consists in proving first that there exist at least $p$ values of $j \in \N^*$ for which $\langle \mu, \varphi_j \rangle \neq 0$.
    The associated $p$ moments $\int_0^T u(t) e^{-i\lambda_j(T-t) } \dd t$ are present in the expansion of $\psi_1$ (see \cref{linearise_explicit}).
    In each of them, we perform~$p$ integrations by parts, to produce $u_1(T), \dotsc, u_p(T)$ and moments of $u_p$.
    We get the conclusion thanks to a Vandermonde argument and the quadratic estimate \eqref{quad_i}.
\end{proof}

\subsection{Proof of the drift}

\begin{proof}[Proof of \cref{thm:integer_drift}]
    Let $k \in \N$, $p\in\N^*$, $\mu \in H^{2p+1}_N((0,1);\R)$ such that $\langle \mu ,\varphi_k \rangle=0$ and $p$ is the minimal integer for which $a_k^p \neq 0$.
    Gathering \cref{Prop:coer,Prop:cub,Prop:CL}, we obtain $C,T^*>0$ such that, for every $T\in(0,T^*)$ and $u \in L^2((0,T);\R)$ with $\| u \|_{L^2} \leq 1$,
    \begin{equation}
        \left| \langle \psi(T)-\varphi_0 , \varphi_k \rangle + i a_k^p \|u_p\|_{L^2}^2 \right| \leq C \left( T\|u_p\|_{L^2}^2 + \|u_1\|_{L^2}^3 + \|\psi(T)-\varphi_0\|_{L^2}^2 \right).
    \end{equation}
    By the Gagliardo--Nirenberg inequality (see \cite{Gagliardo1959,Nirenberg1959}), there exists $C>0$ such that, for every $T > 0$ and $u \in L^2((0,T);\R)$,
    \begin{equation}
        \begin{split}
            \|u_1\|_{L^2}^3 = \|u_p^{(p-1)}\|_{L^2}^3
            & \leq C \left( \|u_p^{(3(p-1))}\|_{L^2} \|u_p\|_{L^2}^2
            +T^{-3(p-1)} \|u_p\|_{L^2}^3 \right)
            \\ & \leq
            C \left( \|u_1^{(2p-2)}\|_{L^2} + T^{-2p+2} \|u_1\|_{L^2} \right) \|u_p\|_{L^2}^2,
        \end{split}
    \end{equation}
    which implies \eqref{eq:drift-Wp}.
\end{proof}

\section{Some classical lemmas}
\label{sec:appendix}

\subsection{Bessel--Parseval estimates}

We recall the following elementary inequality of Bessel--Parseval type.

\begin{lemma}
    \label{lem:bessel}
    For all $T > 0$ and $g \in L^2((0,T);\C)$,
    \begin{equation}
        \sum_{j=0}^{+\infty} \left| \int_0^T g(t) e^{i \lambda_j t} \dd t \right|^2 \leq (1+T) \| g \|_{L^2}^2.
    \end{equation}
\end{lemma}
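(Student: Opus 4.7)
The plan is to exploit the arithmetic structure of the spectrum $\lambda_j = (j\pi)^2$ to reduce the problem to a genuine Bessel inequality on a large enough interval. The key observation is that $\lambda_j T_0 = 2\pi j^2 \in 2\pi\Z$ when $T_0 := 2/\pi$, so every exponential $e^{i\lambda_j t}$ has period dividing $T_0$; accordingly the family $\{e^{i\lambda_j t}\}_{j \in \N}$ is orthogonal in $L^2((0,T_0);\C)$, because for distinct $j, k \in \N$ the integer $j^2 - k^2$ is nonzero and
\begin{equation*}
\int_0^{T_0} e^{i(\lambda_j - \lambda_k)t}\dd t = \frac{e^{2i\pi(j^2-k^2)} - 1}{i\pi^2(j^2-k^2)} = 0.
\end{equation*}

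Given an arbitrary $T > 0$, I would set $N := \lceil T/T_0 \rceil \in \N^*$ and extend $g$ by zero on $(T, NT_0)$. The same computation shows that $\{e^{i\lambda_j t}\}_{j \in \N}$ remains orthogonal on $(0, NT_0)$; in fact, since $\lambda_j = 2\pi (Nj^2)/(NT_0)$ with $Nj^2 \in \N$, this family is a subset of the standard Fourier orthonormal basis $\{e^{2i\pi n t/(NT_0)}/\sqrt{NT_0}\}_{n\in\Z}$ of $L^2((0, NT_0);\C)$. Bessel's inequality applied to $\overline{g}$ on $L^2((0, NT_0);\C)$ then yields
\begin{equation*}
\sum_{j=0}^{+\infty} \left| \int_0^T g(t) e^{i\lambda_j t}\dd t \right|^2
= \sum_{j=0}^{+\infty} \left| \int_0^{NT_0} \overline{g(t)} e^{-i\lambda_j t}\dd t \right|^2 \leq NT_0 \, \|g\|_{L^2}^2.
\end{equation*}

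It remains to observe that $NT_0 \leq T + T_0 \leq T + 1$, using $T_0 = 2/\pi < 1$, which gives the desired bound. I do not anticipate any significant obstacle: the whole argument hinges on choosing an orthogonality length $T_0$ compatible with the spectrum, after which the inequality reduces to a one-line application of Bessel's inequality with no remainder terms to control.
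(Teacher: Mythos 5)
Your proof is correct and takes essentially the same route as the paper: both set $T_0 = 2/\pi$ so that the $e^{i\lambda_j t}$ have period dividing $T_0$, extend $g$ by zero to an interval $[0,NT_0]$ with $N=\lceil T/T_0\rceil$ (the paper writes $N=\lceil \pi T/2\rceil$ and calls the extended interval $[0,T_0]$ with $T_0 = 2N/\pi$, which is the same interval), identify $\{e^{i\lambda_j t}\}$ as a subfamily of the standard Fourier orthonormal basis there, and conclude by Bessel's inequality with the same bound $NT_0 \leq T+1$.
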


\begin{proof}
    Let $N := \lceil \pi T/2 \rceil$ and set $T_0 := 2N / \pi$.
    Then $T \leq T_0 < T+1$.
    Extend $g$ by $0$ to $(0,T_0)$.
    This keeps $\|g\|_{L^2}$ and the integrals unchanged.
    Consider the orthonormal basis $\{\phi_k\}_{k\in\Z}$ of $L^2((0,T_0);\C)$ defined by $\phi_k(t) := T_0^{-\frac 12} e^{(2ik\pi t)/T_0}$.
    Since $\lambda_j = (j\pi)^2$ (see \eqref{eq:lambda_j}),
    \begin{equation}
        \int_0^T g(t) e^{i \lambda_j t} \dd t = T_0^{\frac 12} \langle g, \phi_{j^2 N} \rangle.
    \end{equation}
    Therefore, by Bessel (or Parseval) for the orthonormal basis $\{\phi_k\}$,
    \begin{equation}
        \sum_{j=0}^{\infty}\left|\int_0^{T_0} g(t) e^{i\lambda_j t}\dd t\right|^2
        = T_0 \sum_{j=0}^{\infty} |\langle g,\phi_{j^2N}\rangle|^2
        \le T_0 \sum_{k\in\mathbb Z} |\langle g,\phi_k\rangle|^2
        = T_0 \|g\|_{L^2(0,T_0)}^2.
    \end{equation}
    Since $\|g\|_{L^2(0,T_0)}=\|g\|_{L^2(0,T)}$ and $T_0 \leq T+1$, this concludes the proof.
\end{proof}

\subsection{Smoothing lemma}

The proofs of the well-posedness of \eqref{eq:psi} stated in \cref{p:WP} and of the estimates of \cref{p:estimates-classical} rely on the following lemma, which is an adaptation to our Neumann boundary conditions of the smoothing statement \cite[Lemma 1]{BeauchardLaurent2010} for Dirichlet boundary conditions.

\begin{lemma}
    \label{lem:smoothing}
    There exists $C > 0$ such that, for all $T > 0$ and $f \in L^2((0,T);H^2((0,1);\C))$, the function $F : t \mapsto \int_0^t e^{i A s} f(s) \dd s$ belongs to $C^0([0,T];H^2_N((0,1);\C))$ and moreover
    \begin{equation}
        \| F \|_{L^\infty H^2_N} \leq C (1+T) \| f \|_{L^2 H^2}.
    \end{equation}
\end{lemma}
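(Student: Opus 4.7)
The plan is to work in the spectral basis of $A$. Writing $f_j(s):=\langle f(s),\varphi_j\rangle$ and $F_j(t):=\int_0^t f_j(s) e^{i\lambda_j s}\dd s$, one has $F(t)=\sum_{j\in\N} F_j(t) \varphi_j$, and the claim reduces to the uniform-in-$t$ bound $\sum_{j\in\N}(1+\lambda_j)^2 |F_j(t)|^2 \lesssim \|f\|_{L^2 H^2}^2$. The low-frequency part $\sum_j |F_j(t)|^2=\|F(t)\|_{L^2}^2$ is immediate because $e^{iAs}$ is an $L^2$-isometry: $\|F(t)\|_{L^2}\leq \int_0^t \|f(s)\|_{L^2}\dd s \leq T^{\frac 12}\|f\|_{L^2 L^2}$.

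The core of the argument is the estimate on $\sum_j \lambda_j^2 |F_j(t)|^2$. The subtlety is that $f(s)$ is not assumed to belong to $H^2_N=\operatorname{Dom}(A)$, so the naive series $\sum_j \lambda_j^2 |f_j(s)|^2$ typically diverges for each $s$, and one cannot simply invoke Parseval inside the time integral. The strategy is to first integrate by parts twice in $x$ inside $f_j(s)$. Since the Neumann eigenfunctions satisfy $\varphi_j'(0)=\varphi_j'(1)=0$, only the traces coming from $\partial_x f\cdot \varphi_j$ survive and one obtains, for $j\geq 1$,
\begin{equation*}
    \lambda_j f_j(s)=\sqrt{2}\bigl((-1)^j \partial_x f(s,1)-\partial_x f(s,0)\bigr)-\langle \partial_x^2 f(s),\varphi_j\rangle,
\end{equation*}
so that $\lambda_j F_j(t)$ splits into three oscillating integrals.

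The first two contributions are of the form $\sqrt{2}(\pm 1)^j\int_0^t h(s) e^{i\lambda_j s}\dd s$ where $h$ is a boundary trace of $\partial_x f$, which lies in $L^2(0,T)$ by the trace embedding $H^1(0,1)\hookrightarrow L^\infty$ applied to $\partial_x f(s,\cdot)\in H^1(0,1)$. Their $\ell^2$-sum in $j$ is controlled by the Bessel-type inequality of \cref{lem:bessel}, yielding a bound of the order of $\|\partial_x f(\cdot,0)\|_{L^2(0,T)}^2+\|\partial_x f(\cdot,1)\|_{L^2(0,T)}^2\lesssim \|f\|_{L^2 H^2}^2$. The third contribution is handled by Cauchy--Schwarz in $s$ followed by Parseval in $j$: $\sum_j |\int_0^t \langle\partial_x^2 f(s),\varphi_j\rangle e^{i\lambda_j s}\dd s|^2 \leq T\int_0^T \|\partial_x^2 f(s)\|_{L^2}^2\dd s \leq T\|f\|_{L^2 H^2}^2$.

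Finally, the time-continuity $F\in C^0([0,T];H^2_N)$ follows from the continuity of each coefficient $t\mapsto F_j(t)$, combined with the uniform-in-$t$ bound just derived and a dominated-convergence argument in the $\ell^2$-weighted sum (or equivalently by density, approximating $f$ by smooth functions for which $F$ is manifestly $C^1$ in $H^2_N$). The main difficulty is the integration by parts step: because $f$ does not satisfy Neumann boundary conditions, the traces $\partial_x f(s,0)$ and $\partial_x f(s,1)$ do not vanish and produce contributions whose amplitudes in $j$ are merely bounded, not decaying. The smoothing phenomenon is recovered only after time integration, thanks to the oscillation of $e^{i\lambda_j s}$ and the Bessel-type control provided by \cref{lem:bessel}.
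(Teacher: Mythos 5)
Your proof is correct and follows essentially the same route as the paper: you integrate by parts twice against the Neumann eigenfunctions to write $\lambda_j f_j(s)$ in terms of the boundary traces of $\partial_x f$ and of $\langle \partial_x^2 f(s),\varphi_j\rangle$, then bound the boundary-trace contributions via the Bessel-type inequality of \cref{lem:bessel} and the remaining contribution via Cauchy--Schwarz in time combined with Parseval (equivalently, the $L^2$-isometry of $e^{iAs}$). The paper's argument is the same, presented slightly more compactly.
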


\begin{proof}
    Although $e^{iAs}$ preserves $H^2_N$, the difficulty is that $f(s)$ is only assumed to be in $H^2$ (not in $H^2_N$).
    First, by standard arguments, recalling \eqref{eq:H2N},
    \begin{equation}
        H^2_N((0,1);\C) = \left\{ \varphi \in L^2((0,1);\C) ; \enskip \sum_{j=1}^\infty | j^2 \langle \varphi, \varphi_j \rangle |^2 < \infty \right\}.
    \end{equation}
    Thus, it suffices to check that $\sum_{j=1}^\infty | j^2 \langle F(t), \varphi_j \rangle |^2$ is finite and tends to $0$ as $t \to 0$.

    For $s \in [0,T]$ and $j \geq 1$, integrating by parts,
    \begin{equation}
        \langle f(s), \varphi_j \rangle =
        \sqrt{2} \frac{(-1)^j \partial_x f(s,1) - \partial_x f(s,0)}{j^2 \pi^2} - \frac{1}{j^2 \pi^2} \langle \partial_x^2 f(s), \varphi_j \rangle.
    \end{equation}
    Thus, noting that $\frac{6}{\pi^4} \leq 1$ and using \cref{lem:bessel} for the first 2 terms,
    \begin{equation}
        \begin{split}
            \sum_{j=1}^\infty & | j^2 \langle F(t), \varphi_j \rangle |^2
            = \sum_{j=1}^\infty \left| j^2 \int_0^t e^{i \lambda_j s} \langle f(s), \varphi_j \rangle \dd s\right|^2 \\
            & \leq \sum_{j=1}^\infty \left| \int_0^t e^{i \lambda_j s} \partial_x f(s,1) \dd s\right|^2 + \left| \int_0^t e^{i \lambda_j s} \partial_x f(s,0) \dd s\right|^2 + \left| \int_0^t e^{i \lambda_j s} \langle \partial_x^2 f(s), \varphi_j \rangle \dd s\right|^2 \\
            & \leq \|\partial_x f(\cdot, 1) \|_{L^2(0,t)}^2
            + \|\partial_x f(\cdot, 0) \|_{L^2(0,t)}^2
            + t \|\partial_x^2 f \|_{L^2((0,t);L^2(0,1))}^2
        \end{split}
    \end{equation}
    which both yields the continuity and the claimed estimate.
\end{proof}

\subsection{Integrating integral operators by parts}

The following result is a minor generalization of the statement \cite[Lemma 3.8]{CoronKoenigNguyen2024} in a sesquilinear setting $(u,v)$ instead of the quadratic one $(u,u)$.
We use the notations of \eqref{eq:I-I2-Delta}.

\begin{lemma}
    \label{lem:IPP-Armand}
    Let $T > 0$ and $K \in C^0([0,T]^2;\C)$.
    Assume that $K \in H^2(\Delta_-)$ and $K \in H^2(\Delta_+)$.
    Let $w(s):=\partial_1 K(s,s+0)-\partial_1 K(s,s-0)$ for $s \in (0,T)$.
    Then, for $u, v \in L^2((0,T);\C)$,
    \begin{equation}
        \label{eq:IPP-Armand}
        \begin{split}
            \int_{I^2} & K(s,t) u(s) \bar{v}(t) \dd s \dd t
            =
            \int_I w(s)u_1(s)\bar{v}_1(s) \dd s
            + \int_{\Delta_+ \cup \Delta_-} \partial_{21} K(s,t)u_1(s)\bar{v}_1(t)\dd s \dd t
            \\ &
            - u_1(T) \int_I \partial_2 K(T,t) \bar{v}_1(t) \dd t
            - \bar{v}_1(T) \int_I \partial_1 K(s,T) u_1(s) \dd s
            + K(T,T)u_1(T)\bar{v}_1(T)
        \end{split}
    \end{equation}
    where $u_1(t) := \int_0^t u$ and $v_1(t) := \int_0^t v$.
\end{lemma}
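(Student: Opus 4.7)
The plan is a textbook ``two integrations by parts, one variable at a time'', with careful bookkeeping of the jump of $\partial_1 K$ across the diagonal. The only obstacle to overcome is that $K$ is only in $H^2$ on each of $\Delta_\pm$ separately, so every integral in a variable that crosses the diagonal must first be split into its $\Delta_+$ and $\Delta_-$ parts before integrating by parts. The continuity of $K$ across the diagonal (which is assumed) is what makes the boundary contributions at $\{s=t\}$ cancel at the first IBP, while the $L^2$ traces of $\partial_1 K$ on each side of the diagonal (which exist by the $H^2$ assumption and the 2D trace theorem) are what produce the term involving the jump $w$.

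First, I would fix $t \in (0,T)$ and perform an IBP in $s$, writing $u(s) = \partial_s u_1(s)$ and splitting $\int_0^T = \int_0^t + \int_t^T$. Since $K(\cdot,t)$ is $H^2$ on $(0,t)$ and on $(t,T)$ for a.e.\ $t$ by Fubini, and is continuous across $s=t$ by assumption, the two boundary contributions $\pm K(t\pm 0, t) u_1(t)$ cancel. What remains is
\begin{equation*}
    \int_0^T K(s,t) u(s) \dd s = K(T,t) u_1(T) - \int_0^T \partial_1 K(s,t) u_1(s) \dd s,
\end{equation*}
where the right-hand integral is understood as the sum over $(0,t)$ and $(t,T)$. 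Multiplying by $\bar v(t)$ and integrating then yields two contributions to treat separately.

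The first contribution, $u_1(T) \int_0^T K(T,t) \bar v(t) \dd t$, is handled by a single IBP in $t$: the trace $K(T,\cdot)$ is in $H^{3/2}(0,T)$ by the trace theorem applied to $K \in H^2(\Delta_+)$, hence in $H^1(0,T)$, so IBP with $\bar v(t) = \partial_t \bar v_1(t)$ gives the boundary term $K(T,T) u_1(T) \bar v_1(T)$ together with $-u_1(T) \int_0^T \partial_2 K(T,t) \bar v_1(t) \dd t$. For the second contribution I would apply Fubini and, for fixed $s$, split again $\int_0^T = \int_0^s + \int_s^T$ before integrating by parts in $t$. On each piece $\partial_1 K(s,\cdot)$ is $H^1$ and has a well-defined trace at $t=s\pm 0$, and the two traces produce exactly $-w(s) \bar v_1(s)$ by the definition of $w$; the boundary term at $t=T$ yields $-\bar v_1(T) \int_0^T \partial_1 K(s,T) u_1(s)\dd s$ after integration in $s$, and the remaining piece is the claimed $\int_{\Delta_+\cup\Delta_-} \partial_{21} K(s,t) u_1(s) \bar v_1(t)\dd s\dd t$.

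Finally, I would collect all five terms and check the signs: the quadratic boundary term $K(T,T) u_1(T) \bar v_1(T)$, the two ``side'' boundary terms involving $\partial_1 K(s,T)$ and $\partial_2 K(T,t)$ with coefficient $-\bar v_1(T)$ and $-u_1(T)$ respectively, the diagonal jump $\int_I w u_1 \bar v_1$, and the remaining double integral of $\partial_{21} K$. The justification of Fubini and of the pointwise IBPs for a.e.\ $t$ (resp.\ $s$) follows from the $L^2$-integrability of the second mixed partials on each $\Delta_\pm$ and from $u_1, \bar v_1 \in C^0([0,T];\C)$, which is automatic since $u,v \in L^2((0,T);\C)$.
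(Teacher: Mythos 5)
Your proposal is correct and follows essentially the same route as the paper: one integration by parts in $s$ (the paper does it directly on $(0,T)$ using that $K(\cdot,t)\in H^1(0,T)$, you do it by splitting at $s=t$ and using continuity of $K$ to cancel the diagonal traces — equivalent bookkeeping), then a second integration by parts in $t$ split over $\Delta_\pm$ to capture the jump $w$, plus the two boundary IBPs at $s=T$ and $t=T$. The five terms and their signs come out as in the lemma, so nothing is missing.
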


\begin{proof}
    As for \cite[Lemma 3.8]{CoronKoenigNguyen2024}, the proof consists in a few integration by parts.
    Since $K \in H^2(\Delta_\pm)$, its first-order derivatives have traces on each side of the diagonal.
    In particular, $w$ is well defined.
    First, integrating by parts in $s$, we obtain
    \begin{equation}
        \int_{I^2} K(s,t) u(s) \bar{v}(t) \dd s \dd t = - \int_{I^2} \partial_1 K(s,t) u_1(s) \bar{v}(t) \dd s \dd t + u_1(T) \int_I K(T,t) \bar{v}(t) \dd t.
    \end{equation}
    Integrating by parts in $t$ in the second term yields
    \begin{equation}
        u_1(T) \int_I K(T,t) \bar{v}(t) \dd t = K(T,T) u_1(T) \bar{v}_1(T) - u_1(T) \int_I \partial_2 K(T,t) \bar{v}_1(t) \dd t.
    \end{equation}
    We now focus on the first term.
    In $\Delta_+$, integrating by parts in $t$ leads to
    \begin{equation}
        \begin{split}
            - \int_{s=0}^T u_1(s) \int_{t=s}^T & \partial_1 K(s,t) \bar{v}(t) \dd t \dd s =
            \int_{\Delta_+} \partial_{21} K(s,t) u_1(s) \bar{v}_1(t) \dd s \dd t
            \\
            & - \bar{v}_1(T) \int_I u_1(s) \partial_1 K(s,T) \dd s
            + \int_I \partial_1 K(s,s+0) u_1(s) \bar{v}_1(s) \dd s.
        \end{split}
    \end{equation}
    Proceeding likewise in $\Delta_-$ and gathering all terms leads to the claimed equality.
\end{proof}

\begin{example}
    \label{ex:kernel-ab}
    Let $\alpha, \beta \in \C$.
    For all $u,v \in \mathcal{H}$ (see \eqref{eq:cH}),
    \begin{equation}
        \int_{I^2} \left(\alpha |t-s| + \beta \right) u(s) \overline{v}(t) \dd s \dd t
        = - 2 \alpha \langle u_1, v_1 \rangle.
    \end{equation}
\end{example}

\subsection{An estimate for a principal value integral}

\begin{lemma}
    \label{lem:pv_bound}
    For $\phi \in C^2(\R)$ and $M > 0$, one has
    \begin{equation}
        \label{eq:pv_bound}
        \left| \int_{-M}^{+M} \frac{\phi(\omega)-\phi(0)}{\omega} \dd \omega \right| \leq 2M |\phi'(0)| + M \int_{-M}^{+M} | \phi'' |
        \leq \int_{-M}^M |\phi'| + 2 M \int_{-M}^M |\phi''|.
    \end{equation}
\end{lemma}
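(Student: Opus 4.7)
The plan is to split the integrand by a first-order Taylor expansion of $\phi$ at $0$, which isolates the two terms appearing on the right-hand side.

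First, I would write, for $\omega \in [-M,M] \setminus \{0\}$,
\begin{equation}
    \phi(\omega) - \phi(0) = \phi'(0) \omega + R(\omega),
    \qquad
    R(\omega) := \int_0^\omega (\omega - t) \phi''(t) \dd t,
\end{equation}
so that $(\phi(\omega)-\phi(0))/\omega = \phi'(0) + R(\omega)/\omega$. The first piece integrates exactly to $2M \phi'(0)$, producing the term $2M|\phi'(0)|$ in the first bound of \eqref{eq:pv_bound}.

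For the remainder, I would use the crude pointwise estimate $|R(\omega)| \leq |\omega| \int_0^{|\omega|} |\phi''(t)| \dd t$ obtained from $|\omega - t| \leq |\omega|$ on the interval $[0,\omega]$ (or $[\omega,0]$). Dividing by $|\omega|$ and then computing the outer integral by Fubini on each half $[0,M]$ and $[-M,0]$ gives
\begin{equation}
    \int_0^M \int_0^\omega |\phi''(t)| \dd t \dd \omega = \int_0^M (M-t) |\phi''(t)| \dd t \leq M \int_0^M |\phi''|,
\end{equation}
and a symmetric bound on $[-M,0]$. Summing produces $\bigl|\int_{-M}^M R(\omega)/\omega \dd\omega\bigr| \leq M \int_{-M}^M |\phi''|$, which combined with the exact computation above yields the first inequality in \eqref{eq:pv_bound}.

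For the second inequality it suffices to estimate $|\phi'(0)|$ by integrals of $|\phi'|$ and $|\phi''|$. Writing $\phi'(0) = \phi'(\omega) - \int_0^\omega \phi''$ and averaging over $\omega \in [-M,M]$ gives
\begin{equation}
    2M |\phi'(0)| \leq \int_{-M}^M |\phi'| + \int_{-M}^M \int_0^\omega |\phi''(t)| \dd t \dd \omega \leq \int_{-M}^M |\phi'| + M \int_{-M}^M |\phi''|,
\end{equation}
where the last step reuses the Fubini computation above. Adding $M \int_{-M}^M |\phi''|$ to both sides yields the second inequality of \eqref{eq:pv_bound}.

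There is no real obstacle: the proof is a textbook Taylor-with-integral-remainder computation, the only mild care being to track the factor $M$ (as opposed to $2M$) in the $|\phi''|$ contribution, which comes from the factor $(M-|t|)$ appearing after Fubini.
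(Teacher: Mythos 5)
Your proof is correct and follows essentially the same route as the paper's: both use the integral-remainder Taylor expansion at $0$ to isolate the $2M\phi'(0)$ term, bound the remainder by the $\phi''$ contribution, and then express $\phi'(0)$ as an average of $\phi'(\omega) - \int_0^\omega \phi''$ over $[-M,M]$ to obtain the second inequality. The only cosmetic difference is that you make the Fubini step explicit, whereas the paper leaves the intermediate estimate implicit.
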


\begin{proof}
    Substituting Taylor's expansion $\phi(\omega) - \phi(0) = \omega \phi'(0) + \int_0^\omega (\omega - z) \phi''(z) \dd z$ into the integrand, we obtain
    \begin{equation}
        \int_{-M}^{+M} \frac{\phi(\omega)-\phi(0)}{\omega} \dd \omega = 2 M \phi'(0) + \int_{-M}^{+M} \int_0^\omega \left(1 - \frac{z}{\omega}\right) \phi''(z) \dd z \dd \omega,
    \end{equation}
    which implies the first inequality of \eqref{eq:pv_bound}.
    Moreover,
    \begin{equation}
        \phi'(0) = \frac{1}{2M} \int_{-M}^M \Big(\phi'(\omega) - \int_0^\omega \phi''(z) \dd z\Big) \dd \omega,
    \end{equation}
    which implies the second inequality.
\end{proof}

\subsection{Estimates with fractional Sobolev norms}

\begin{lemma}
    Let $\nu \in [0,\frac12)$.
    There exists $C_\nu > 0$ such that, for all $T > 0$,
    \begin{align}
        \label{eq:norm-1-H-nu}
        \nsob{\mathbbm{1}_{[0,T]}}{-\nu} & \leq C_\nu T^{\frac 12 + \nu}, \\
        \label{eq:norm-1-H+nu}
        \nsob{\mathbbm{1}_{[0,T]}}{+\nu} & \leq C_\nu T^{\frac 12} + C_\nu T^{\frac 12 - \nu}, \\
        \label{eq:norm-eit-H+nu}
        \nsob{e^{i \frac{\lambda_k}{2} t} \mathbbm{1}_{[0,T]}(t)}{+\nu} & \leq C_\nu T^{\frac 12} + C_\nu T^{\frac 12 - \nu}.
    \end{align}
\end{lemma}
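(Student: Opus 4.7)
All three estimates are direct computations based on the explicit formula
\begin{equation*}
\widehat{\mathbbm{1}_{[0,T]}}(\omega) = \frac{1-e^{-i\omega T}}{i\omega},
\qquad
|\widehat{\mathbbm{1}_{[0,T]}}(\omega)|^2 = \frac{4\sin^2(\omega T/2)}{\omega^2},
\end{equation*}
from which one immediately extracts the uniform bound
\begin{equation*}
|\widehat{\mathbbm{1}_{[0,T]}}(\omega)|^2 \leq \min\!\left(T^2, \frac{4}{\omega^2}\right).
\end{equation*}
The plan is then to plug this into the defining integral of $\|\cdot\|_{\widetilde H^{\pm\nu}}$ and to split the range of integration at $|\omega| = 1/T$, using the sharper bound $T^2$ when $|\omega| \leq 1/T$ (where the sinc is well approximated by $T$) and the sharper bound $4/\omega^2$ when $|\omega| > 1/T$.

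For the negative-order estimate \eqref{eq:norm-1-H-nu}, the low-frequency contribution is bounded by $T^2 \int_{-1/T}^{1/T} \langle\omega\rangle^{-2\nu}\, d\omega \lesssim T^{1+2\nu}$, while the high-frequency contribution is bounded by $\int_{|\omega|>1/T} \omega^{-2}\langle\omega\rangle^{-2\nu}\, d\omega$, which, since $\nu < \frac 12$, is also of order $T^{1+2\nu}$. Taking the square root yields the exponent $\frac 12 + \nu$. For the positive-order estimate \eqref{eq:norm-1-H+nu}, the same splitting yields a low-frequency contribution $\lesssim T^2 \cdot \max(1,1/T)$ and a high-frequency contribution controlled by $\int_{|\omega|>\max(1,1/T)} \omega^{2\nu-2}\, d\omega$, which again converges thanks to $\nu < \frac 12$; the two contributions produce respectively the terms $T^{1/2}$ and $T^{1/2-\nu}$ after taking the square root (one must treat the cases $T \leq 1$ and $T \geq 1$ slightly differently, which is where the two terms come from).

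For the modulated estimate \eqref{eq:norm-eit-H+nu}, we use that the Fourier transform of $e^{i\lambda_k t/2}\mathbbm{1}_{[0,T]}(t)$ is the translate $\widehat{\mathbbm{1}_{[0,T]}}(\cdot - \lambda_k/2)$. The change of variables $\omega \gets \omega - \lambda_k/2$ then gives
\begin{equation*}
\nsob{e^{i\frac{\lambda_k}{2}t}\mathbbm{1}_{[0,T]}}{+\nu}^2
= \frac{1}{2\pi}\int_{\R} \langle \omega + \lambda_k/2 \rangle^{2\nu}\,|\widehat{\mathbbm{1}_{[0,T]}}(\omega)|^2\, d\omega,
\end{equation*}
and one concludes by using the Peetre-type inequality $\langle \omega + \lambda_k/2 \rangle \leq (1+|\lambda_k|/2)\langle\omega\rangle$ to reduce to \eqref{eq:norm-1-H+nu}, absorbing the resulting factor $(1+|\lambda_k|/2)^{2\nu}$ into the constant $C_\nu$ (which is harmless here since $k$ is fixed throughout the paper). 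There is no real obstacle; the only mild subtlety is in \eqref{eq:norm-1-H+nu}, where one must be careful about the regime $T \geq 1$ to obtain the additive structure $T^{1/2} + T^{1/2-\nu}$ rather than a single power of $T$.
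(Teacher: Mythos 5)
Your proof is correct and uses essentially the same approach as the paper: start from the explicit Fourier transform of $\mathbbm{1}_{[0,T]}$, whose modulus is bounded by $\min(T,2/|\omega|)$, and track the low/high frequency contributions. The paper achieves the same estimates slightly more compactly by rescaling $\sigma = T\omega$ together with the elementary bounds $\langle\omega\rangle^{-2\nu}\le|\omega|^{-2\nu}$ and $\langle\omega\rangle^{2\nu}\le 1+|\omega|^{2\nu}$, which sidesteps the $T\lessgtr 1$ case distinction, and for the modulated estimate uses an additive Peetre bound $\langle\omega\rangle^{2\nu}\le 1+|\omega-\omega_0|^{2\nu}+|\omega_0|^{2\nu}$ in place of your multiplicative one.
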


\begin{proof}
    Let $f:= \mathbbm{1}_{[0,T]}$.
    A direct computation shows that $\widehat{f}(\omega) = T h(\omega T)$ where
    \begin{equation}
        h(\sigma) := \frac{i}{\sigma}(e^{-i \sigma} - 1)
    \end{equation}
    so that $h \in C^\infty(\R;\R)$ with $|h(\sigma)| \leq 1$ and $|\sigma h(\sigma)| \leq 2$.

    First, recalling \eqref{eq:Hnu}, and using the change of variables $\sigma = T \omega$, we write
    \begin{equation}
        \begin{split}
            \nsob{f}{-\nu}^2
            = \frac{T^2}{2 \pi} \int_\R h^2(\omega T) \langle \omega \rangle^{- 2 \nu} \dd \omega
            & \leq \frac{T^{1+2\nu}}{2 \pi} \int_\R h^2(\omega T) |T\omega|^{-2\nu} T \dd \omega
            \\ & \leq \frac{T^{1+2\nu}}{2 \pi} \int_\R h^2(\sigma) |\sigma|^{-2\nu} \dd \sigma,
        \end{split}
    \end{equation}
    where the integral converges near $0$ using $\nu < \frac 12$.

    Second, recalling \eqref{eq:Hnu}, and using the change of variables $\sigma = T \omega$, we write
    \begin{equation}
        \begin{split}
            \nsob{f}{\nu}^2
            = \frac{T^2}{2 \pi} \int_\R h^2(\omega T) \langle \omega \rangle^{2 \nu} \dd \omega
            & \leq \frac{T^2}{2 \pi} \int_\R h^2(\omega T) (1+|\omega|^{2\nu}) \dd \omega
            \\ & \leq \frac{T}{2 \pi} \int_\R h^2 + \frac{T^{1-2\nu}}{2 \pi} \int_\R h^2(\sigma) \sigma^{2\nu} \dd \sigma,
        \end{split}
    \end{equation}
    where the integral converges near $\pm \infty$ using $\nu < \frac 12$.

    Third, let $g(t) := e^{i \omega_0 t} \mathbbm{1}_{[0,T]}(t)$ with $\omega_0 := \frac{\lambda_k}{2}$.
    Then $\widehat{g}(\omega) = T h((\omega-\omega_0)T)$.
    Writing $\langle \omega \rangle^{2\nu} \leq 1 + |\omega-\omega_0|^{2\nu} + |\omega_0|^{2\nu}$ and proceeding as in the previous case yields \eqref{eq:norm-eit-H+nu}.
\end{proof}

\begin{lemma}
    Let $\nu \in [0,\frac12)$ and $\omega_0 \in \R$.
    There exists $C > 0$ such that, for all $T \in (0,1]$,
    \begin{equation}
        \nsob{e^{i \omega_0 t}}{\nu} \leq C.
    \end{equation}
\end{lemma}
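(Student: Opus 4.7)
The plan is to mimic the proof of \eqref{eq:norm-eit-H+nu} from the previous lemma, this time treating $\omega_0$ as a fixed parameter (absorbed into the constant), and then to observe that every power of $T$ appearing in the resulting bound is nonnegative, so that the bound is uniform in $T \in (0,1]$.

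More precisely, I would first set $g(t) := e^{i \omega_0 t} \mathbbm{1}_{[0,T]}(t)$, so that by \cref{def:Htilde-nu} one has $\nsob{e^{i \omega_0 t}}{\nu} = \|g\|_{H^\nu(\R)}$. A direct computation shows $\widehat{g}(\omega) = T h\big((\omega - \omega_0) T\big)$ where $h(\sigma) := \frac{i}{\sigma}(e^{-i\sigma} - 1)$ belongs to $L^\infty(\R;\C)$ and satisfies $|\sigma h(\sigma)| \leq 2$, so that $\int_\R h(\sigma)^2 |\sigma|^{2\nu} \dd\sigma < \infty$ thanks to $2\nu < 1$.

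Next, using the elementary inequality $\langle \omega \rangle \leq 1 + |\omega - \omega_0| + |\omega_0|$ together with the subadditivity of $x \mapsto x^{2\nu}$ on $\R_+$ (valid because $2\nu \leq 1$), one obtains $\langle \omega \rangle^{2\nu} \leq 1 + |\omega - \omega_0|^{2\nu} + |\omega_0|^{2\nu}$. Splitting the integral defining $\nsob{g}{\nu}^2$ according to this decomposition and performing the change of variables $\sigma = (\omega - \omega_0)T$ in each piece yields three contributions proportional to $T$, $T^{1-2\nu}$ and $|\omega_0|^{2\nu} T$, respectively, with constants depending only on $\nu$ (and the fixed $\omega_0$).

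Finally, since $T \in (0,1]$ and both exponents $1$ and $1 - 2\nu$ are nonnegative, each of $T$ and $T^{1-2\nu}$ is bounded by $1$, producing a constant $C$ depending only on $\nu$ and $\omega_0$. There is no real obstacle here — the computation is identical to the one already carried out for \eqref{eq:norm-eit-H+nu}, the only new point being that the bound $C_\nu T^{1/2} + C_\nu T^{1/2 - \nu}$ of the previous lemma is uniformly bounded on $(0,1]$ precisely under the assumption $\nu < \frac 12$.
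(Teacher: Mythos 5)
Your proposal is correct and matches the paper's proof in all essentials: both compute $\widehat{g}(\omega)=T\,h(T(\omega-\omega_0))$, change variables $\sigma=T(\omega-\omega_0)$, split $\langle\omega\rangle^{2\nu}$ into pieces controlled by $1$, $|\omega-\omega_0|^{2\nu}$ and $|\omega_0|^{2\nu}$, use $\nu<\tfrac12$ to make $\int|h(\sigma)|^2|\sigma|^{2\nu}\dd\sigma$ converge, and finish by observing that the resulting powers $T$ and $T^{1-2\nu}$ are bounded on $(0,1]$. The only cosmetic difference is that the paper uses the two-term split $\langle x+y\rangle^{2\nu}\leq 2^\nu(\langle x\rangle^{2\nu}+\langle y\rangle^{2\nu})$ rather than your three-term split, but the estimates and conclusion are the same.
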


\begin{proof}
    Let $f_T(t) := e^{i \omega_0 t} \mathbbm{1}_{[0,T]}(t)$.
    Its Fourier transform is $\widehat{f_T}(\omega) = \widehat{\mathbbm{1}_{[0,T]}}(\omega - \omega_0)$.
    A direct computation shows that $\widehat{\mathbbm{1}_{[0,T]}}(\sigma) = T h(T \sigma)$, where $h(\sigma) := i(e^{-i\sigma}-1)/\sigma$.
    Thus, $\widehat{f_T}(\omega) = T h(T(\omega - \omega_0))$.
    Using the definition of the Sobolev norm, and the change of variables $\sigma = T(\omega - \omega_0)$, we write
    \begin{equation}
        \begin{split}
            \nsob{f_T}{\nu}^2
            & = \frac{T^2}{2\pi} \int_\R |h(T(\omega - \omega_0))|^2 \langle \omega \rangle^{2\nu} \dd\omega
            \\ & = \frac{T}{2\pi} \int_\R |h(\sigma)|^2 \langle \omega_0 + \sigma/T \rangle^{2\nu} \dd\sigma.
        \end{split}
    \end{equation}
    Using the inequality $\langle x+y \rangle^{2\nu} \leq 2^\nu (\langle x \rangle^{2\nu} + \langle y \rangle^{2\nu})$, we can bound the norm by the sum of two terms:
    \begin{equation}
        \nsob{f_T}{\nu}^2 \leq \frac{T 2^\nu \langle \omega_0 \rangle^{2\nu}}{2\pi} \int_\R |h(\sigma)|^2 \dd\sigma + \frac{T 2^\nu}{2\pi} \int_\R |h(\sigma)|^2 \langle \sigma/T \rangle^{2\nu} \dd\sigma.
    \end{equation}
    The first integral is a constant. Since $T \leq 1$, the first term is bounded.
    For the second term, we write $\langle \sigma/T \rangle^{2\nu} = (1 + \sigma^2/T^2)^\nu = T^{-2\nu} (T^2 + \sigma^2)^\nu$. This gives
    \begin{equation}
        \frac{T 2^\nu}{2\pi} \int_\R |h(\sigma)|^2 \langle \sigma/T \rangle^{2\nu} \dd\sigma
        = \frac{T^{1-2\nu} 2^\nu}{2\pi} \int_\R |h(\sigma)|^2 (T^2 + \sigma^2)^\nu \dd\sigma.
    \end{equation}
    Since $T \in (0,1]$ and $1-2\nu > 0$, we have $T^2 \leq 1$ and $T^{1-2\nu} \leq 1$.
    Therefore, this second term is bounded by
    \begin{equation}
        \frac{2^\nu}{2\pi} \int_\R |h(\sigma)|^2 (1 + \sigma^2)^\nu \dd\sigma
        = \frac{2^\nu}{2\pi} \int_\R |h(\sigma)|^2 \langle \sigma \rangle^{2\nu} \dd\sigma.
    \end{equation}
    The function $h$ is bounded near the origin and decays as $1/|\sigma|$ for large $|\sigma|$. The integrand thus behaves like $|\sigma|^{2\nu-2}$ for large $|\sigma|$. As $\nu < 1/2$, the integral converges.
    Both terms are bounded by a constant independent of $T$, which concludes the proof.
\end{proof}

\begin{lemma}
    \label{lem:H-beta-T-beta}
    Let $\nu \in [0,\frac 12)$.
    There exists $C_\nu > 0$ such that, for all $T > 0$ and $u \in L^2((0,T);\C)$,
    \begin{equation}
        \label{eq:H-beta}
        \nsob{u}{-\nu} \leq C_\nu T^\nu \| u \|_{L^2}.
    \end{equation}
\end{lemma}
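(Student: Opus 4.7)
The plan is to work directly in the frequency domain from \cref{def:Htilde-nu} and exploit the two standard controls one has on $\widehat{\widetilde{u}}$: the global Plancherel identity $\|\widehat{\widetilde{u}}\|_{L^2(\R)}^2 = 2\pi \|u\|_{L^2}^2$, and the pointwise bound
\begin{equation}
    |\widehat{\widetilde{u}}(\omega)| = \Big| \int_0^T u(t) e^{-i\omega t} \dd t \Big| \leq T^{\frac 12} \|u\|_{L^2},
\end{equation}
obtained from Cauchy--Schwarz and the support condition. The idea is that the first bound is informative at high frequencies, whereas the second is informative at low frequencies, and the natural splitting threshold is $|\omega| \sim 1/T$.

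For the easy regime $T \geq 1$, the bound $\langle \omega \rangle^{-2\nu} \leq 1 \leq T^{2\nu}$ combined with Plancherel suffices directly. For $T \in (0,1)$, I would split
\begin{equation}
    \nsob{u}{-\nu}^2 = \frac{1}{2\pi} \int_{|\omega| \leq 1/T} \langle \omega \rangle^{-2\nu} |\widehat{\widetilde{u}}|^2 \dd \omega + \frac{1}{2\pi} \int_{|\omega| > 1/T} \langle \omega \rangle^{-2\nu} |\widehat{\widetilde{u}}|^2 \dd \omega.
\end{equation}
On the low-frequency region, I would use $|\widehat{\widetilde{u}}|^2 \leq T \|u\|_{L^2}^2$ and the elementary computation
\begin{equation}
    \int_{|\omega| \leq 1/T} \langle \omega \rangle^{-2\nu} \dd \omega \leq \int_{-1/T}^{1/T} (1+|\omega|)^{-2\nu} \dd \omega \leq \frac{c_\nu}{T^{1-2\nu}},
\end{equation}
where the convergence of the power-of-$T$ exponent relies precisely on $\nu < 1/2$. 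On the high-frequency region, the weight satisfies $\langle \omega \rangle^{-2\nu} \leq |\omega|^{-2\nu} \leq T^{2\nu}$, so Plancherel yields the same order $T^{2\nu} \|u\|_{L^2}^2$.

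The only point that needs care is the behavior at the critical value $\nu = \frac{1}{2}$, which is excluded from the hypothesis: this is precisely the threshold at which $\int \langle \omega \rangle^{-2\nu} \dd \omega$ diverges logarithmically over $|\omega| \leq 1/T$ and the exponent $T^{2\nu}$ would break down. As long as $\nu < \frac{1}{2}$, there is no obstacle, and no finer analysis or interpolation theorem is required.
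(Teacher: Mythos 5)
Your argument is correct. A small aside: the intermediate bound $\langle\omega\rangle^{-2\nu}\leq(1+|\omega|)^{-2\nu}$ is not quite literal (one has $\langle\omega\rangle\geq(1+|\omega|)/\sqrt 2$, so the inequality holds up to the harmless factor $2^\nu$), but this does not affect the conclusion.

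Your route is genuinely different from the paper's. The paper first proves the estimate on the mean-zero subspace $\mathcal H$ by exploiting $\widehat u(\omega)=i\omega\,\widehat{u_1}(\omega)$, which gives $\nsob{u}{-1}\leq\|u_1\|_{L^2}\leq T\|u\|_{L^2}$ directly; it then interpolates (Hölder in the Fourier integral) between the cases $\nu=0$ and $\nu=1$ to cover $\nu\in[0,1]$ on $\mathcal H$, and finally treats a general $u$ by writing $u=\PH u+\frac{u_1(T)}{T}\mathbbm{1}_{[0,T]}$ and invoking the separate bound \eqref{eq:norm-1-H-nu} on $\nsob{\mathbbm{1}_{[0,T]}}{-\nu}$ (which is where the restriction $\nu<\tfrac12$ enters there). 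Your proof instead works directly on the Fourier side with a frequency cutoff at $|\omega|\sim1/T$, using the pointwise Cauchy--Schwarz bound $|\widehat{\widetilde u}(\omega)|\leq T^{1/2}\|u\|_{L^2}$ on the low-frequency block and Plancherel on the high-frequency block; the threshold $\nu<\tfrac12$ appears as the integrability condition for $\langle\omega\rangle^{-2\nu}$ near zero. What each approach buys: yours is shorter, entirely self-contained, and makes the role of $\nu<\tfrac12$ transparent in a single computation; the paper's version reuses the $\mathcal H$/$\PH$ machinery and the indicator-function estimate that are already ubiquitous in the surrounding sections, and as a by-product records a sharper constant ($C_\nu=1$) and a wider range ($\nu\in[0,1]$) on the restricted subspace $\mathcal H$, which is also useful elsewhere.
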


\begin{proof}
    First, \eqref{eq:H-beta} holds for all $u \in \mathcal{H}$ (i.e.\ such that $u_1(T) = 0$).
    Indeed, for such $u$, one has $\widehat{u}(\omega) = i \omega \widehat{u_1}(\omega)$, so $\nsob{u}{-1} \leq \| u_1 \|_{L^2}$ using $|\omega \langle \omega \rangle^{-1} | \leq 1$.
    Moreover, $\nsob{u}{0} = \| u \|_{L^2}$ and $\| u_1 \|_{L^2} \leq T \| u \|_{L^2}$, so \eqref{eq:H-beta} holds for $\nu = 0$ and $\nu = 1$.
    By Hölder's inequality, for $\nu \in (0,1)$,
    \begin{equation}
        \nsob{u}{-\nu} \leq \nsob{u}{-1}^{\nu} \nsob{u}{0}^{1-\nu}
    \end{equation}
    so \eqref{eq:H-beta} follows for all $u \in \mathcal{H}$ and $\nu \in [0,1]$ with $C_\nu = 1$.

    Let $\nu \in [0,\frac 12)$.
    For a general $u \in L^2((0,T);\C)$, write $u = v + \frac{u_1(T)}{T} \mathbbm{1}_{[0,T]}$ where $v = \PH u \in \mathcal{H}$.
    Then
    \begin{equation}
        \nsob{u}{-\nu} \leq \nsob{v}{-\nu} + \frac{|u_1(T)|}{T} \nsob{\mathbbm{1}_{[0,T]}}{-\nu}
        \leq T^\nu \| v \|_{L^2} + C_\nu \frac{|u_1(T)|}{T} T^{\frac 12 + \nu}
    \end{equation}
    using the first step and \eqref{eq:norm-1-H-nu} (since $\nu < \frac 12$).

    We conclude using $\|v\|_{L^2} \leq \|u\|_{L^2}$ and $|u_1(T)| \leq T^{\frac 12} \|u\|_{L^2}$.
\end{proof}

\begin{lemma}
    \label{lem:mul-1-Hnu}
    Let $\nu \in (-\frac 12,\frac 12)$.
    There exists $C_\nu > 0$ such that, for any segment $(a,b) \subset \R$ and $u \in H^\nu(\R;\C)$, $\| \mathbbm{1}_{(a,b)} u \|_{H^\nu(\R)} \leq C_\nu \| u \|_{H^\nu(\R)}$.
\end{lemma}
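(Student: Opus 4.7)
The plan is to reduce the general case to the model case of the Heaviside function $H := \mathbbm{1}_{(0,\infty)}$, and then to handle the latter by direct estimates on the Slobodeckij seminorm.

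First, I would exploit the translation invariance of the $H^\nu(\R)$ norm (which follows from $|\widehat{\tau_\alpha u}(\omega)| = |\widehat{u}(\omega)|$) to assume $a = 0$ without loss of generality. Writing $\mathbbm{1}_{(0,b)} = H - \tau_b H$ and applying translation invariance once more to the operator norm, it suffices to prove that multiplication by $H$ is bounded on $H^\nu(\R;\C)$ for $\nu \in (-\frac12, \frac12)$.

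For $\nu \in (0, \frac 12)$ I would use the equivalent Slobodeckij norm
\begin{equation}
\| u \|_{H^\nu(\R)}^2 \sim \| u \|_{L^2(\R)}^2 + \iint_{\R^2} \frac{|u(x)-u(y)|^2}{|x-y|^{1+2\nu}} \dd x \dd y.
\end{equation}
Setting $v := H u$, clearly $\| v \|_{L^2} \leq \| u \|_{L^2}$. Splitting the double integral according to the signs of $x$ and $y$, the region $\{x,y<0\}$ contributes $0$, the region $\{x,y>0\}$ contributes at most the full seminorm of $u$, and the two cross-regions contribute (by symmetry)
\begin{equation}
2 \int_0^\infty |u(x)|^2 \int_{-\infty}^0 \frac{\dd y}{(x-y)^{1+2\nu}} \dd x
= \frac{1}{\nu} \int_0^\infty \frac{|u(x)|^2}{x^{2\nu}} \dd x.
\end{equation}
The main technical input is the fractional Hardy inequality, which bounds this last integral by $C_\nu \| u \|_{H^\nu(\R)}^2$ precisely when $\nu < \frac12$; this is classical and follows from the equivalent Slobodeckij characterization together with a Hardy-type argument (it is also stated explicitly in the references already invoked after \cref{def:Htilde-nu}).

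The case $\nu = 0$ is trivial with $C_0 = 1$. For $\nu \in (-\frac12, 0)$ I would argue by duality: since $H^\nu(\R)$ is the topological dual of $H^{-\nu}(\R)$ and the multiplication operator $M_H$ is formally self-adjoint (for real-valued $H$), its operator norm on $H^\nu$ equals its operator norm on $H^{-\nu}$, which has already been bounded by the previous step applied with $-\nu \in (0,\frac12)$. The main obstacle is the fractional Hardy estimate, which is where the restriction $|\nu| < \frac12$ enters sharply; the endpoints $\nu = \pm\frac12$ are known to fail, consistent with the fact that $\widetilde{H}^{1/2}((0,T);\C) \neq H^{1/2}_0((0,T);\C)$.
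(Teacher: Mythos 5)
Your proof is correct, but it follows a genuinely different route from the paper's. The paper disposes of the lemma in one line: it writes $\mathbbm{1}_{(a,b)} = \mathbbm{1}_{(a,\infty)} \mathbbm{1}_{(-\infty,b)}$ and cites Runst--Sickel for the fact that multiplication by a half-line indicator is bounded on $H^\nu(\R)$ when $|\nu| < \frac12$. You instead give a self-contained argument: translation invariance reduces the statement to the single multiplier $H = \mathbbm{1}_{(0,\infty)}$; for $\nu \in (0,\frac12)$ the Slobodeckij characterization isolates the only nontrivial contribution of $Hu$ as the cross term $\frac{1}{\nu}\int_0^\infty |u(x)|^2 x^{-2\nu}\dd x$, which is then absorbed by the fractional Hardy inequality; and the range $\nu \in (-\frac12,0)$ follows by duality, since $M_H$ is symmetric under the $L^2$ pairing that identifies $(H^\nu)^*$ with $H^{-\nu}$. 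The advantage of your route is that it localizes exactly where the restriction $|\nu|<\frac12$ enters (the convergence of the Hardy integral at the boundary) and keeps the proof self-contained; the advantage of the paper's route is brevity. One small caveat: you should attach a precise reference (or a short proof) for the one-dimensional fractional Hardy inequality $\int_0^\infty |u(x)|^2 x^{-2\nu}\dd x \lesssim \|u\|_{H^\nu(\R)}^2$ for $\nu \in (0,\frac12)$, since not all of the references quoted after \cref{def:Htilde-nu} state it in this exact form.
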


\begin{proof}
    It suffices to know that the restriction operator to a half-line acts continuously on $H^\nu(\R;\C)$ for $\nu \in (-\frac 12,\frac 12)$ (see \cite[Section 4.6.3, Theorem 1]{RunstSickel1996}) and write $\mathbbm{1}_{(a,b)} = \mathbbm{1}_{(a,\infty)} \mathbbm{1}_{(-\infty,b)}$.
\end{proof}

\begin{lemma}
    \label{lem:mul-t}
    Let $\nu \in (-\frac12,\frac12)$.
    There exists $C_\nu > 0$ such that, for every $T > 0$, and every control $u \in L^2 \cap \sobC{\nu}$,
    \begin{equation}
        \nsob{t u(t)}{\nu} \leq C_\nu T \nsob{u}{\nu}.
    \end{equation}
\end{lemma}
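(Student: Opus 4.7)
The plan is to split according to the sign of $\nu$; the common intuition is that the multiplier $t$ has size $T$ on the support of $u$, so the bound $C_\nu T$ amounts to dimensional bookkeeping. For $\nu = 0$ the estimate is trivial: $\|tu\|_{L^2(0,T)} \leq T\|u\|_{L^2(0,T)}$.

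For $\nu \in (0, \frac{1}{2})$, I would use the intrinsic description of $\sobC{\nu}$ valid in this range (see e.g.\ \cite[Thm.\ 1.4.4.4]{Grisvard1985}):
\[
\nsob{u}{\nu}^2 \approx \|u\|_{L^2}^2 + [u]_\nu^2 + \int_0^T \frac{|u(t)|^2}{t^{2\nu}}\,dt + \int_0^T \frac{|u(t)|^2}{(T-t)^{2\nu}}\,dt,
\]
with $[u]_\nu$ the Slobodeckij seminorm, and estimate each of the four contributions to $\nsob{tu}{\nu}^2$ by $T^2 \nsob{u}{\nu}^2$. The $L^2$ piece and the two boundary integrals yield a factor $t^2 \leq T^2$ at once. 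For the seminorm I would write $tu(t) - su(s) = (t-s)u(t) + s(u(t)-u(s))$; the second term contributes $T^2 [u]_\nu^2$ and the first contributes $T^{2-2\nu}\|u\|_{L^2}^2$, which I re-absorb into $T^2 \nsob{u}{\nu}^2$ using the lower bound $T^{-2\nu}\|u\|_{L^2}^2 \leq \int_0^T |u(t)|^2/t^{2\nu}\,dt \leq \nsob{u}{\nu}^2$ coming from the first boundary integral.

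For $\nu \in (-\frac{1}{2}, 0)$ I would pass to the Fourier side. Since $u$ is supported in $(0,T)$, its Fourier transform is entire of exponential type $T$ and $\widehat{tu}(\omega) = i\partial_\omega\widehat u(\omega)$, so the lemma reduces to the weighted Bernstein-type inequality
\[
\int_\R |\partial_\omega \widehat u(\omega)|^2 \langle\omega\rangle^{2\nu}\,d\omega \leq C_\nu^2 T^2 \int_\R |\widehat u(\omega)|^2 \langle\omega\rangle^{2\nu}\,d\omega.
\]
The weight $\langle\omega\rangle^{2\nu}$ belongs to the Muckenhoupt class $A_2(\R)$ exactly when $\nu \in (-\frac{1}{2}, \frac{1}{2})$, and the classical weighted Plancherel--Polya and Bernstein theorems for band-limited functions with $A_2$ weights deliver the estimate with $C_\nu$ depending only on the $A_2$ constant; density of $C^\infty_c((0,T);\C)$ concludes. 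The main obstacle is exactly this negative-$\nu$ case: a naive duality via $\sobC{\nu} = (H^{-\nu}((0,T);\C))^*$ only produces $\nsob{tu}{\nu} \lesssim T^{1+\nu}\nsob{u}{\nu}$, because multiplication by $t$ on $H^{-\nu}((0,T);\C)$ --- whose test functions need not vanish at the endpoints, as witnessed by the constant $v=1$ --- has operator norm $T^{1+\nu}$ rather than $T$; one must instead exploit the band-limited structure of $\widehat u$ to recover the sharp power.
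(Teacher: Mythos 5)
Your argument for $\nu\in(0,\tfrac12)$ is essentially the paper's: decompose $tu(t)-su(s)=(t-s)u(t)+s(u(t)-u(s))$ in the Gagliardo seminorm, bound the second piece by $T^2[u]_\nu^2$, and absorb the first piece using the boundary weight. One thing to watch: you invoke Grisvard's norm equivalence on $(0,T)$ as if its constants were automatically uniform in $T$, which is not stated in the reference. The paper sidesteps this by writing, for $\supp f\subset(0,T)$, the \emph{exact} identity
\begin{equation*}
    [f]_\nu^2 = \int_0^T\int_0^T\frac{|f(t)-f(s)|^2}{|t-s|^{1+2\nu}}\dd s\dd t+\frac{1}{\nu}\int_0^T |f(t)|^2\left(t^{-2\nu}+(T-t)^{-2\nu}\right)\dd t,
\end{equation*}
from which $\|f\|_{L^2}^2\leq \nu 4^{-\nu}T^{2\nu}[f]_\nu^2$ follows with a manifestly $T$-independent constant; you should either derive this identity or argue by scaling to fix the constants.

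For $\nu\in(-\tfrac12,0)$ you take a genuinely different route (weighted Bernstein in Fourier), and your reason for rejecting duality is not accurate. You observe correctly that multiplying by $t$ on the \emph{interval} space $H^{-\nu}((0,T);\C)$ only has norm $T^{1+\nu}$, with $v\equiv 1$ the obstruction. But the paper does not do that: it dualizes against $H^{-\nu}(\R;\C)$, replaces the test function $v$ by $w:=v\mathbbm{1}_{(0,T)}$ (legitimate because $\tilde u$ is supported in $(0,T)$), uses \cref{lem:mul-1-Hnu} to control $\|w\|_{H^{-\nu}(\R)}\lesssim\|v\|_{H^{-\nu}(\R)}$ for $|\nu|<\tfrac12$, and then applies the already-proved positive case to $w$ \emph{in the extension norm} $\nsob{\cdot}{-\nu}$. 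That norm carries the boundary weight, so the constant $v\equiv1$ is no longer cheap, and duality does recover the sharp power $T$. Your weighted Bernstein alternative is plausible ($\langle\omega\rangle^{2\nu}\in A_2$ precisely for $|\nu|<\tfrac12$, and differentiation of type-$T$ entire functions can be represented as a convolution to which the $A_2$ theory applies), but you should not call it ``classical'' without a reference that yields the constant $CT$ with $C$ depending only on the $A_2$ characteristic and uniform in $T$; the paper's duality argument has the advantage of being self-contained given what is already established.
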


\begin{proof}
    For $T > 0$ and $u \in L^2 \cap \sobC{\nu}$, let $\widetilde{u}$ denote its extension by $0$ to $\R$.
    By definition \eqref{eq:Hnu}, $\nsob{u}{\nu} =\|\widetilde{u}\|_{H^\nu(\R)}$ and $\nsob{tu(t)}{\nu}=\|t\widetilde{u}(t)\|_{H^\nu(\R)}$.
    First, for $\nu = 0$, it is clear that $\| t u(t) \|_{L^2} \leq T \| u \|_{L^2}$.
    Second, for a given $\nu \in (0,\frac 12)$, we know that the $H^\nu(\R)$ norm is equivalent (up to a constant depending on $\nu$) to $\| \cdot \|_{L^2} + [\cdot]_\nu$ where the Gagliardo seminorm is defined as
    \begin{equation}
        [f]_\nu^2 
        := \int_\R \int_\R \frac{|f(t)-f(s)|^2}{|t-s|^{1+2\nu}} \dd s \dd t
        = \int_0^T \int_0^T \frac{|f(t)-f(s)|^2}{|t-s|^{1+2\nu}} \dd s \dd t + \frac{1}{\nu} \int_0^T |f(t)|^2 A_T(t) \dd t
    \end{equation}
    when $\supp f \subset (0,T)$, where $A_T(t) := t^{-2\nu} + (T-t)^{-2\nu}$.
    Thus, we have
    \begin{equation}
        \begin{split}
        [t u(t)]_\nu^2 & \leq \int_0^T \int_0^T \frac{|tu(t)-su(s)|^2}{|t-s|^{1+2\nu}} \dd s \dd t + \frac{1}{\nu} \int_0^T |tu(t)|^2 A_T(t) \dd t \\
        & \leq \int_0^T \int_0^T \frac{t^2 |u(t)-u(s)|^2}{|t-s|^{1+2\nu}}+ \frac{|t-s|^2 |u(s)|^2}{|t-s|^{1+2\nu}} \dd s \dd t + \frac{1}{\nu} \int_0^T |t u(t)|^2 A_T(t) \dd t \\
        & \leq T^2 [u]_\nu^2 + 2 T^{2-2\nu} \| u \|_{L^2}^2
        \end{split}
    \end{equation}
    so we conclude using $\| f \|_{L^2}^2 \leq \nu 4^{-\nu} T^{2\nu} [ f ]_\nu^2$ (which holds since $A_T(t) \geq (T/2)^{-2\nu}$).

    Third, we proceed by duality for $u \in L^2 \cap H^{-\nu}(\R;\C)$ with $\nu \in (0,\frac12)$.
    Then
    \begin{equation}
        \nsob{t u(t)}{-\nu}
        = \| t\widetilde{u}(t) \|_{H^{-\nu}(\R)}
        = \underset{v \in H^\nu(\R), \|v\|_{H^\nu(\R)} = 1}{\sup} \int_\R t \widetilde{u}(t) v(t) \dd t.
    \end{equation}
    For such $v \in H^\nu(\R)$, let $w := v \mathbbm{1}_{(0,T)} \in L^2(0,T)$.
    By \cref{lem:mul-1-Hnu}, $\| w \|_{H^\nu(\R)} \leq C_\nu \| v \|_{H^\nu(\R)}$.
    By the previous case $\| t w(t) \|_{H^\nu(\R)} \leq T C_\nu \| w \|_{H^\nu(\R)}$.
    Thus $\nsob{t u(t)}{-\nu} \leq T C_\nu \nsob{u}{-\nu}$.
\end{proof}

\begin{lemma}
    \label{lem:fourier-holder}
    Let $\alpha \in [0,1]$ and $\rho \in C^{0,\alpha}_{\mathrm{loc}}(\R;\C)$.
    There exists $C > 0$ such that, for all $T \in (0,1]$,
    \begin{equation}
        | \mathcal{F}(\rho \mathbbm{1}_{[-T,T]})(\omega) | \leq C \langle \omega \rangle^{-\alpha} .
    \end{equation}
\end{lemma}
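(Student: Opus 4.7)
The plan is to exploit the Hölder regularity of $\rho$ via the classical shift-by-half-period trick. Writing $f_T := \rho \mathbbm{1}_{[-T,T]}$ and $h := \pi/\omega$, the substitution $t \mapsto t + h$ combined with $e^{-i\omega h} = -1$ yields the identity
\begin{equation}
    2 \mathcal{F}(f_T)(\omega) = \int_\R \big(f_T(t) - f_T(t+h)\big) e^{-i\omega t} \dd t.
\end{equation}

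I would split the analysis according to the size of $|\omega|$. For $|\omega| \leq 1$, the trivial estimate $|\mathcal{F}(f_T)(\omega)| \leq 2T \|\rho\|_{L^\infty([-1,1])}$ suffices, since $\langle\omega\rangle^{-\alpha}$ is bounded below on $|\omega|\leq 1$ and $T \leq 1$. For $|\omega| \geq 1$, one has $|h| \leq \pi$, and I decompose $\R$ into an \emph{interior region} $I := [-T,T-h] \cap [-T-h, T]$ (if $h > 0$; symmetric otherwise) on which both indicators equal $1$, and a \emph{boundary region} $B := \big([-T,T] \cup [-T-h,T-h]\big) \setminus I$.

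On $I$ the integrand equals $\rho(t) - \rho(t+h)$, which is bounded by $C_\rho |h|^\alpha$ by the local Hölder regularity of $\rho$ on $[-2,2]$ (using $T \leq 1$ and $|h| \leq \pi$). Its contribution is thus at most $2T C_\rho |h|^\alpha \leq 2 C_\rho |h|^\alpha$. On $B$ the integrand is bounded by $\|\rho\|_{L^\infty([-2,2])}$ and the total measure of $B$ is $\min(2|h|, 4T)$. The key bookkeeping step is then the elementary inequality $\min(|h|, T) \lesssim |h|^\alpha$ uniformly for $T \in (0,1]$, $|h| \leq \pi$, $\alpha \in [0,1]$: if $|h| \leq 2T$ one uses $|h| \leq \pi^{1-\alpha} |h|^\alpha$, while if $|h| > 2T$ one uses $T \leq T^\alpha \leq (|h|/2)^\alpha$.

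Combining the two contributions gives $|\mathcal{F}(f_T)(\omega)| \lesssim |h|^\alpha = \pi^\alpha |\omega|^{-\alpha} \lesssim \langle\omega\rangle^{-\alpha}$ for $|\omega| \geq 1$, which together with the small-frequency bound concludes the proof. No serious obstacle arises; the only subtle point is ensuring that the boundary contribution, which is naïvely of linear order in $|h|$, gets absorbed into the Hölder rate $|h|^\alpha$, which crucially uses the uniform bounds $T \leq 1$ and $|h| \leq \pi$.
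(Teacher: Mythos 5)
Your proof is correct and follows essentially the same route as the paper: shift by $h \propto 1/\omega$ to create a difference $f_T(t)-f_T(t+h)$, then split the support into an interior part (both indicators active, Hölder bound $|h|^\alpha$) and a boundary part (measure $O(|h|)$, $L^\infty$ bound). The only difference is cosmetic: you take $h=\pi/\omega$ to get the exact factor $e^{-i\omega h}=-1$, while the paper takes $h=\operatorname{sgn}(\omega)/|\omega|$ and uses only that $|e^{i}-1|>0$; the paper's choice gives $|h|\leq 1$ and lets one write $|h|\leq|h|^\alpha$ directly, which makes your intermediate step $\min(|h|,T)\lesssim|h|^\alpha$ (and the associated case distinction) unnecessary — the boundary measure bound $2|h|$ already suffices without invoking $T$. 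This is a minor stylistic point; the proof is sound.
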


\begin{proof}
    For $T \in (0,1]$, let $\rho_T := \rho \mathbbm{1}_{[-T,T]}$.
    For $|\omega| \leq 1$, we just use $|\widehat{\rho_T}(\omega)| \leq 2 T \| \rho \|_{L^\infty(-1,1)}$.
    For $|\omega| \geq 1$, let $h := (\sign \omega) / |\omega|$.
    A change of variables yields
    \begin{equation}
        \int_\R (\rho_T(t+h) - \rho_T(t)) e^{-i \omega t} \dd t = (e^{i\omega h} -1 ) \widehat{\rho_T}(\omega)
        = (e^i - 1) \widehat{\rho_T}(\omega).
    \end{equation}
    Since $|e^i-1| > 0$, it suffices to bound the left-hand side.
    If $-T+|h| < t < T-|h|$ (an interval of length at most $2T$), we have
    \begin{equation}
        |\rho_T(t+h) - \rho_T(t)| = |\rho(t+h)-\rho(t)| \leq |h|^\alpha \| \rho \|_{C^{0,\alpha}([-1,1])} = |\omega|^{-\alpha} \| \rho \|_{C^{0,\alpha}([-1,1])}.
    \end{equation}
    If $t < -T-|h|$ or $t > T+|h|$, the integrand vanishes.
    On the remaining intervals $[-T-|h|,-T+|h|]$ and $[T-|h|,T+|h|]$, we use the crude bound $|\rho_T(t+h) - \rho_T(t)| \leq 2 \|\rho\|_{L^\infty(-1,1)}$, but they are of length $2|h| = 2 |\omega|^{-1} \leq 2 |\omega|^{-\alpha}$.
    Gathering all bounds proves the claimed estimate.
\end{proof}

\begin{lemma}
    \label{lem:convol-holder}
    Let $\alpha \in [0,1]$ and $\rho \in C^{0,\alpha}_{\mathrm{loc}}(\R;\C)$.
    There exists $C > 0$ such that, for $T \in (0,1]$ and $u,v \in L^2((0,T);\C)$,
    \begin{equation}
        \bigg| \int_0^T \int_0^T \rho(t-s) u(s) \bar{v}(t) \dd s \dd t \bigg| \leq C \nsob{u}{-\alpha/2} \nsob{v}{-\alpha/2}.
    \end{equation}
\end{lemma}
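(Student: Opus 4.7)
The plan is to translate the double integral into a convolution and then apply Plancherel's theorem, reducing the estimate to the Fourier decay bound on $\rho$ provided by the preceding \cref{lem:fourier-holder}.

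First, I would extend $u, v$ by zero outside $(0,T)$ to obtain $\widetilde{u}, \widetilde{v} \in L^2(\R;\C)$. Since the arguments $(s,t)$ lie in $[0,T]^2$, one has $t - s \in [-T,T]$, so $\rho(t-s) = \rho_T(t-s)$ where $\rho_T := \rho \mathbbm{1}_{[-T,T]}$. The integral can then be rewritten as
\begin{equation}
    \int_0^T \int_0^T \rho(t-s) u(s) \bar v(t) \dd s \dd t
    = \int_\R (\rho_T \star \widetilde u)(t)\, \overline{\widetilde v(t)} \dd t.
\end{equation}
Since $\rho_T \in L^\infty(\R;\C)$ with compact support, $\rho_T \in L^2(\R;\C)$, so $\widehat{\rho_T}$ is a bounded function, and all Fourier expressions below are well defined.

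Next, by Plancherel's theorem applied to the convolution above,
\begin{equation}
    \int_\R (\rho_T \star \widetilde u)\, \overline{\widetilde v}
    = \frac{1}{2\pi} \int_\R \widehat{\rho_T}(\omega)\, \widehat{\widetilde u}(\omega)\, \overline{\widehat{\widetilde v}(\omega)} \dd\omega.
\end{equation}
Applying the pointwise bound $|\widehat{\rho_T}(\omega)| \leq C \langle \omega \rangle^{-\alpha}$ from \cref{lem:fourier-holder} (valid for $T \in (0,1]$ with a constant $C$ depending only on $\rho$ and $\alpha$), and then the Cauchy--Schwarz inequality with weight $\langle \omega \rangle^{-\alpha}$ split symmetrically as $\langle \omega \rangle^{-\alpha/2} \cdot \langle \omega \rangle^{-\alpha/2}$, I obtain
\begin{equation}
    \bigg| \int_\R \widehat{\rho_T}\, \widehat{\widetilde u}\, \overline{\widehat{\widetilde v}} \bigg|
    \leq C \bigg( \int_\R \langle \omega \rangle^{-\alpha} |\widehat{\widetilde u}|^2 \bigg)^{\frac12}
    \bigg( \int_\R \langle \omega \rangle^{-\alpha} |\widehat{\widetilde v}|^2 \bigg)^{\frac12}.
\end{equation}

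Finally, recalling the definition \eqref{eq:Hnu} of the Sobolev norm on $\sobC{-\alpha/2}$ and that $\widehat{\widetilde u}, \widehat{\widetilde v}$ are exactly the Fourier transforms used there, each factor above equals $\sqrt{2\pi}\, \nsob{u}{-\alpha/2}$ and $\sqrt{2\pi}\, \nsob{v}{-\alpha/2}$ respectively, yielding the claimed bound after absorbing the prefactor into $C$. The only nontrivial input is \cref{lem:fourier-holder}, so there is no real obstacle here; the only mild care needed is to justify the Plancherel identity, which holds since $\rho_T \in L^2(\R)$ and $\widetilde u, \widetilde v \in L^2(\R)$ ensure that $\rho_T \star \widetilde u \in L^2(\R)$.
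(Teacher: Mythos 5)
Your proof is correct and follows essentially the same route as the paper's: rewrite the double integral as a convolution against the truncated kernel $\rho_T$, apply Plancherel, split the weight $\langle\omega\rangle^{-\alpha}$ symmetrically, and invoke \cref{lem:fourier-holder} for the pointwise Fourier decay of $\widehat{\rho_T}$. No substantive differences.
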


\begin{proof}
    For $T \in (0,1]$, let $\rho_T := \rho \mathbbm{1}_{[-T,T]}$.
    For $u,v\in L^2((0,T);\C)$, keeping the same notation for their extension by $0$ to $\R$, we have
    \begin{equation}
        I := \int_0^T \int_0^T \rho(t-s) u(s) \bar{v}(t) \dd s \dd t
        = \int_\R \int_\R \rho_T(t-s) u(s) \bar{v}(t) \dd s \dd t
        = \langle \rho_T \star u, v \rangle_{L^2(\R)}.
    \end{equation}
    Thus, by Plancherel's formula
    \begin{equation}
        I = \frac{1}{2 \pi} \int_\R \widehat{\rho_T}(\omega) \widehat{u}(\omega) \widehat{v}(\omega) \dd \omega
        = \frac{1}{2 \pi} \int_\R \widehat{\rho_T}(\omega) \langle \omega\rangle^\alpha \widehat{u}(\omega) \overline{\widehat{v}}(\omega) \langle \omega\rangle^{-\alpha} \dd \omega.
    \end{equation}
    Thus, by \eqref{eq:Hnu} and Cauchy--Schwarz,
    \begin{equation}
        |I| \leq \left(\sup_{\omega \in \R} | \widehat{\rho_T}(\omega) \langle \omega\rangle^\alpha | \right) \nsob{u}{-\alpha/2} \nsob{v}{-\alpha/2}.
    \end{equation}
    Hence, the conclusion follows from \cref{lem:fourier-holder}.
\end{proof}

\subsection{Spectral Sobolev norm}
\label{sec:spectral}

In this paragraph, we define a fractional Sobolev norm using spectral theory, which is used in the proof of \cref{p:R-chi}, and equivalent to the norm \eqref{eq:Hnu} -- defined by extending a function from $(0,T)$ to $\R$ by $0$, then using the usual fractional Sobolev norm on $\R$ in Fourier space.

Let $T > 0$.
Consider the Dirichlet Laplacian on $(0,T)$, its eigenvalues $\beta_k^2 := (k \pi / T)^2$ for $k \in \N^*$ and its basis of orthonormal eigenfunctions $e_k \in L^2((0,T);\C)$.
For $\nu \in \R$ and $u \in C^\infty_c((0,T);\C)$, set
\begin{equation}
    N_{\mathrm{sp},\nu}(u) := \left(\sum_{k = 1}^{\infty} \langle \beta_k \rangle^{2\nu} |\langle u, e_k\rangle|^2\right)^{\frac 12}
    \quad \text{where} \quad
    e_k(t) := \sqrt{\frac 2 T} \sin (\beta_k t).
\end{equation}

\begin{lemma}
    \label{lem:Hspec<=Hnu}
    Let $\nu \in (-\frac12,\frac12)$.
    There exists $C_\nu > 0$ such that, for $T > 0$ and $u \in C^\infty_c((0,T);\R)$,
    \begin{equation}
        N_{\mathrm{sp},\nu}(u) \leq C_\nu \nsob{u}{\nu}.
    \end{equation}
\end{lemma}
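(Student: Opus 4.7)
My plan is to combine a direct computation of the Dirichlet coefficients $\langle u,e_k\rangle$ in terms of the Fourier transform $\widehat{u}$ with a sampling-theorem-style averaging argument, followed by the application of \cref{lem:mul-t} to absorb a derivative of $\widehat{u}$.

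First, since $u$ is real-valued and $e_k(t)=\sqrt{2/T}\sin(\beta_k t)$, writing the sine in exponential form and using that $\widehat{u}(-\beta_k)=\overline{\widehat{u}(\beta_k)}$ gives $\langle u,e_k\rangle=-\sqrt{2/T}\,\Im \widehat{u}(\beta_k)$, hence $|\langle u,e_k\rangle|^2\leq(2/T)|\widehat{u}(\beta_k)|^2$. It therefore suffices to bound the sampled sum $\sum_{k\geq1}(2/T)\langle\beta_k\rangle^{2\nu}|\widehat{u}(\beta_k)|^2$ by $\nsob{u}{\nu}^2$. To convert the samples (spaced by $\pi/T$) into a continuous integral, I introduce the disjoint intervals $I_k:=[\beta_k-\pi/(2T),\beta_k+\pi/(2T)]\subset(0,\infty)$ of length $\pi/T$. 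Using the identity $\tfrac{\dd}{\dd\sigma}\widehat{u}(\sigma)=-i\widehat{tu(t)}(\sigma)$, for any $\omega\in I_k$ I write $\widehat{u}(\beta_k)-\widehat{u}(\omega)=-i\int_\omega^{\beta_k}\widehat{tu(t)}(\sigma)\dd\sigma$, apply Cauchy--Schwarz, and then average over $\omega\in I_k$ to obtain
\begin{equation}
    |\widehat{u}(\beta_k)|^2 \;\leq\; \tfrac{2T}{\pi}\int_{I_k}|\widehat{u}|^2\dd\omega \;+\; \tfrac{\pi}{T}\int_{I_k}|\widehat{tu(t)}|^2\dd\omega.
\end{equation}

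Next, I sum over $k\geq1$. For $\omega\in I_k$ one has $\beta_k/2\leq\omega\leq 3\beta_k/2$, hence $(1/2)\langle\beta_k\rangle\leq\langle\omega\rangle\leq2\langle\beta_k\rangle$, so $\langle\beta_k\rangle^{2\nu}\leq 4^{|\nu|}\langle\omega\rangle^{2\nu}$. Using also the disjointness of the $I_k$ and the evenness of $|\widehat{u}|$ and $|\widehat{tu(t)}|$ (since $u$ is real), this yields
\begin{equation}
    \sum_{k=1}^{\infty}\frac{2}{T}\langle\beta_k\rangle^{2\nu}|\widehat{u}(\beta_k)|^2 \;\lesssim\; \int_\R\langle\omega\rangle^{2\nu}|\widehat{u}|^2\dd\omega \;+\; \frac{1}{T^2}\int_\R\langle\omega\rangle^{2\nu}|\widehat{tu(t)}|^2\dd\omega.
\end{equation}
The first term equals $2\pi\nsob{u}{\nu}^2$. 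The second term equals $(2\pi/T^2)\nsob{tu(t)}{\nu}^2$, which by \cref{lem:mul-t} (valid precisely for $\nu\in(-\tfrac12,\tfrac12)$) is bounded by $2\pi C_\nu^2\nsob{u}{\nu}^2$, so the claimed inequality follows.

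The argument is essentially a sampling-type comparison between a discrete and a continuous frequency norm, and is mostly routine once structured this way. The only subtle point, and the source of the restriction $\nu\in(-\tfrac12,\tfrac12)$, is the application of \cref{lem:mul-t}, which itself reflects the sharp regularity beyond which the extension by zero (implicit in the definition of $\sobC{\nu}$) ceases to act continuously on $H^\nu(\R)$.
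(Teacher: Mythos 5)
Your proof is correct and follows essentially the same route as the paper's: relate $\langle u,e_k\rangle$ to $\widehat{u}(\pm\beta_k)$, convert the sampled sum into an integral over disjoint intervals of length $\pi/T$ via a Taylor-expansion-and-averaging argument, compare $\langle\beta_k\rangle^{2\nu}$ with $\langle\omega\rangle^{2\nu}$ on each interval, and absorb the extra factor of $T^{-2}$ arising from $\partial_\omega\widehat{u}$ using \cref{lem:mul-t}. The only cosmetic differences are your use of the real-valuedness of $u$ to write $\langle u,e_k\rangle=-\sqrt{2/T}\,\Im\widehat{u}(\beta_k)$ (the paper just bounds the two frequencies separately) and your choice of intervals centered at $\beta_k$ rather than starting there; neither changes the substance.
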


\begin{proof}
    Let $u \in C^\infty_c((0,T);\R)$.
    By definition,
    \begin{equation}
        \begin{split}
            N_{\mathrm{sp},\nu}^2(u) & = \frac{2}{T} \sum_{k=1}^{\infty} \langle\beta_k\rangle^{2\nu} \left| \int_0^T u(t) \sin(\beta_k t) \dd t \right|^2 \\
            & \leq \frac{1}{T} \sum_{k=1}^{\infty} \langle\beta_k\rangle^{2\nu} \left( |\widehat{u}(-\beta_k)|^2 + |\widehat{u}(\beta_k)|^2 \right),
        \end{split}
    \end{equation}
    since the coefficients are related to the Fourier transform $\widehat{u}(\omega) := \int_0^T u(t) e^{-i\omega t} \dd t$ by the identity
    \begin{equation}
        \int_0^T u(t) \sin(\beta_k t) \dd t = \frac{\widehat{u}(-\beta_k) - \widehat{u}(\beta_k)}{2i}.
    \end{equation}
    Let $I_k := [\frac{k\pi}{T}, \frac{(k+1)\pi}{T})$ for $k \in \N^*$.
    Since the length of each interval $I_k$ is $\pi/T$, we can write $\frac{1}{T} = \frac{1}{\pi} \int_{I_k} \dd\omega$.
    Moreover, for $\omega \in I_k$, $\beta_k \leq \omega \leq 2 \beta_k$ and thus there exists $C_\nu > 0$ such that $\langle \beta_k \rangle^{2\nu} \leq C_\nu \langle\omega\rangle^{2\nu}$.
    By the triangular inequality,
    \begin{equation}
        \begin{split}
            N_{\mathrm{sp},\nu}^2(u) & \leq \frac{2 C_\nu}{\pi} \sum_{k = 1}^\infty \int_{I_k} \langle\omega\rangle^{2\nu} (|\widehat{u}(\omega)|^2 + |\widehat{u}(-\omega)|^2) \dd \omega \\
            & \quad + \frac{2}{\pi} \sum_{k=1}^{\infty} \langle\beta_k\rangle^{2\nu} \int_{I_k} \left( |\widehat{u}(-\beta_k)-\widehat{u}(-\omega)|^2 + |\widehat{u}(\beta_k)-\widehat{u}(\omega)|^2 \right) \dd\omega.
        \end{split}
    \end{equation}
    The first term is bounded above by $4 C_\nu \nsob{u}{\nu}^2$.
    Let us focus on bounding the remainder involving positive frequencies.
    We write using Cauchy--Schwarz,
    \begin{equation}
        |\widehat{u}(\beta_k) - \widehat{u}(\omega)|^2 = \left| \int_{\beta_k}^\omega \partial_\omega \widehat{u}(\omega') \dd \omega' \right|^2
        \leq \frac{\pi}{T} \int_{I_k} | \partial_\omega \widehat{u}(\zeta)|^2 \dd \zeta.
    \end{equation}
    Thus
    \begin{equation}
        \langle\beta_k\rangle^{2\nu} \int_{I_k} |\widehat{u}(\beta_k) - \widehat{u}(\omega)|^2 \dd \omega \leq C_\nu \frac{\pi^2}{T^2} \int_{I_k} \langle \zeta\rangle^{2\nu} | \partial_\omega \widehat{u}(\zeta)|^2 \dd \zeta.
    \end{equation}
    Hence
    \begin{equation}
        \frac{2}{\pi} \sum_{k=1}^{\infty} \langle\beta_k\rangle^{2\nu} \int_{I_k} \left( |\widehat{u}(-\beta_k)-\widehat{u}(-\omega)|^2 + |\widehat{u}(\beta_k)-\widehat{u}(\omega)|^2 \right) \dd\omega
        \leq \frac{2 C_\nu \pi}{T^2} \int_\R \langle \zeta\rangle^{2\nu} | \partial_\omega \widehat{u}(\zeta)|^2 \dd \zeta.
    \end{equation}
    Since $\partial_\omega \widehat{u}(\omega) = \mathcal{F}[-itu(t)](\omega)$, we have
    \begin{equation}
        N_{\mathrm{sp},\nu}^2(u) \leq 4 C_\nu \nsob{u}{\nu}^2 + \frac{4 C_\nu \pi^2}{T^2} \nsob{t u(t)}{\nu}^2.
    \end{equation}
    By \cref{lem:mul-t}, for any $T > 0$, we have $\nsob{t u(t)}{\nu} \leq C_\nu T \nsob{u}{\nu}$, which concludes the proof.
\end{proof}

\begin{lemma}
    \label{lem:H-nu<=Hspec}
    Let $\nu \in (-\frac 12,\frac 12)$.
    There exists $C_\nu > 0$ such that, for $T > 0$ and $u \in C^\infty_c((0,T);\R)$
    \begin{equation}
        \nsob{u}{\nu} \leq C_\nu N_{\mathrm{sp},\nu}(u).
    \end{equation}
\end{lemma}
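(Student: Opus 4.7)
The plan is to verify the inequality at two integer-order endpoints ($\nu=0$ and $\nu=1$), to deduce the range $\nu \in (0,\tfrac12)$ by complex interpolation, and to handle $\nu \in (-\tfrac12,0)$ by duality.

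For $\nu = 0$: since $(e_k)_{k \geq 1}$ is orthonormal in $L^2((0,T);\C)$ and extension by zero preserves the $L^2$ norm, one immediately has $\nsob{u}{0}^2 = \|u\|_{L^2(0,T)}^2 = \sum_k |c_k|^2 = N_{\mathrm{sp},0}^2(u)$, where $c_k := \langle u, e_k\rangle$. For $\nu = 1$: the family $f_k(t) := \sqrt{2/T}\cos(\beta_k t)$ for $k \geq 1$ is likewise orthonormal in $L^2((0,T);\C)$, since a product-to-sum identity together with $\beta_k T = k\pi$ reduces all cross integrals to values of $\sin$ at multiples of $\pi$. Because $e_k' = \beta_k f_k$ and $u \in C^\infty_c((0,T);\R)$, one obtains $u' = \sum_k c_k \beta_k f_k$ in $L^2((0,T);\C)$, whence $\|u'\|_{L^2(\R)}^2 = \sum_k \beta_k^2 |c_k|^2$ and $\nsob{u}{1}^2 = \sum_k (1+\beta_k^2)|c_k|^2 = N_{\mathrm{sp},1}^2(u)$.

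For the range $\nu \in (0,1)$, I would consider the linear operator $S$ that sends a sequence $(c_k)_{k \geq 1}$ to the extension by zero of $\sum_k c_k e_k$. The two endpoint identities show that $S$ maps $\ell^2(\N^*;\C)$ continuously into $L^2(\R;\C)$ with norm $\leq 1$, and $\ell^2(\N^*, \langle\beta_k\rangle^2;\C)$ continuously into $H^1(\R;\C)$ with norm $\leq 1$. The classical complex interpolation identity for weighted $\ell^2$ spaces gives $[\ell^2,\ell^2(\langle\beta_k\rangle^2)]_\nu = \ell^2(\langle\beta_k\rangle^{2\nu})$, and $[L^2(\R),H^1(\R)]_\nu = H^\nu(\R)$ with a $T$-independent equivalence constant, yielding $\nsob{u}{\nu} \leq C_\nu N_{\mathrm{sp},\nu}(u)$ for all $\nu \in (0,1)$, in particular for $\nu \in (0,\tfrac12)$.

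For $\nu \in (-\tfrac12,0)$, I would argue by duality. By Plancherel,
\begin{equation}
    \nsob{u}{\nu} = \sup \left\{ \Big| \int_0^T u \bar\phi \Big| \;:\; \phi \in H^{-\nu}(\R;\C),\ \|\phi\|_{H^{-\nu}(\R)} \leq 1 \right\}.
\end{equation}
For each such $\phi$, I would set $\psi := \phi|_{(0,T)}$ and expand $\psi = \sum_k d_k e_k$ in $L^2((0,T);\C)$; orthonormality and Cauchy--Schwarz give $|\int_0^T u \bar\phi| \leq N_{\mathrm{sp},\nu}(u) \cdot N_{\mathrm{sp},-\nu}(\psi)$. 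Since $-\nu \in (0,\tfrac12)$, the extension by zero of $\psi$ belongs to $\sobC{-\nu}$ with norm $\lesssim \|\phi\|_{H^{-\nu}(\R)}$, and \cref{lem:Hspec<=Hnu} applied at exponent $-\nu$ (extended from $C^\infty_c((0,T);\C)$ to $\sobC{-\nu}$ by density and continuity) gives $N_{\mathrm{sp},-\nu}(\psi) \lesssim \|\phi\|_{H^{-\nu}(\R)}$; taking the supremum over $\phi$ concludes.

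The main difficulty is not analytic but bookkeeping in the duality step: one must rely on the trio of classical $H^s$-on-an-interval facts valid precisely for $|s|<\tfrac12$ -- density of $C^\infty_c((0,T);\C)$ in $H^s((0,T);\C)$, continuity of zero-extension $H^s((0,T);\C) \to H^s(\R;\C)$, and the resulting identification with $\sobC{s}$ -- together with the fact that the spectral norm $N_{\mathrm{sp},s}$ extends continuously from $C^\infty_c$ to $\sobC{s}$ thanks to \cref{lem:Hspec<=Hnu}. These are precisely the reasons for the constraint $\nu \in (-\tfrac12,\tfrac12)$ in the statement.
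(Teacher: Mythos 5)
Your proof is correct, and the interpolation half of it is a genuinely different route than the paper's, while the duality half essentially re-derives the paper's argument. The paper runs a single duality argument uniformly over $\nu \in (-\tfrac12,\tfrac12)$: test $\widetilde u$ against $v \in H^{-\nu}(\R)$, cut $v$ to $w := v\,\mathbbm 1_{(0,T)}$ via \cref{lem:mul-1-Hnu}, expand $w$ in the sine basis, apply weighted Cauchy--Schwarz, and close with \cref{lem:Hspec<=Hnu} at exponent $-\nu$. Your $\nu \in (-\tfrac12,0)$ branch is exactly this, with the minor cleanup that $\phi \in H^{-\nu}(\R)$ is automatically in $L^2$ so the sine expansion of $\phi|_{(0,T)}$ needs no density detour (which the paper glosses over when $\nu>0$ and the test function may be a genuine distribution). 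Your $\nu \in [0,\tfrac12)$ branch is the new contribution: the observation that $\nsob{\cdot}{\nu}$ and $N_{\mathrm{sp},\nu}$ agree \emph{exactly} at $\nu = 0$ and $\nu = 1$ (zero-extension commutes with differentiation on $H^1_0(0,T)$, and the cosines $f_k = e_k'/\beta_k$ are orthonormal), combined with interpolation of the coefficient-to-function map $S$ between $\ell^2 \to L^2(\R)$ and $\ell^2(\langle\beta_k\rangle^2) \to H^1(\R)$. Since both weighted $\ell^2$ and Fourier-$H^s$ scales interpolate with equal norms under the complex method, this gives $\nsob{u}{\nu} \leq N_{\mathrm{sp},\nu}(u)$ with constant exactly $1$ and uniformly in $T$ on $[0,\tfrac12)$, sharper than the paper's $C_\nu$; the price is that you invoke interpolation theory and still need the paper's two lemmas to cover the negative range, so the overall proof is hybrid rather than a replacement.
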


\begin{proof}
    Let $T > 0$ and $u \in C^\infty_c((0,T);\R)$.
    Let $\widetilde{u}$ denote its extension by $0$ to $\R$.
    Then
    \begin{equation}
        \nsob{u}{\nu}
        = \| \widetilde{u} \|_{H^\nu(\R)}
        = \underset{v \in H^{-\nu}(\R), \|v\|_{H^{-\nu}(\R)} = 1}{\sup} \int_\R \widetilde{u} v.
    \end{equation}
    For such $v \in H^{-\nu}(\R)$, let $w := v \mathbbm{1}_{(0,T)} \in L^2(0,T)$.
    By \cref{lem:mul-1-Hnu}, $\| w \|_{H^{-\nu}(\R)} \leq C_\nu \| v \|_{H^{-\nu}(\R)}$.
    Moreover,
    \begin{equation}
        \left| \int_\R \widetilde{u} v \right|
        = \left| \int_0^T u w \right|
        = \left| \sum_{k = 1}^\infty \langle u, e_k \rangle \langle w, e_k \rangle \right|
        \leq N_{\mathrm{sp},\nu}(u) N_{\mathrm{sp},-\nu}(w).
    \end{equation}
    By \cref{lem:Hspec<=Hnu}, $N_{\mathrm{sp},-\nu}(w) \leq C_\nu \| w \|_{H^{-\nu}(\R)} \leq C_\nu^2$, which proves the claimed estimate.
\end{proof}

\begin{remark}
    For $\nu = 0$ and $\nu = 1$, by Plancherel and Parseval, one has $\nsob{\cdot}{\nu} = N_{\mathrm{sp},\nu}$ for any $T > 0$.
    Unfortunately, we have not been able to use this fact prove the equivalence of both norms for $\nu \in (-\frac12,\frac12)$ with a constant independent of $T$ (at least as $T \to 0$).
    It is known that these norms cease to be exactly equal for $\nu \in (0,1)$ (see e.g.\ \cite[Example 4.15]{ChandlerWildeHewettMoiola2015} for $T = 1$).
\end{remark}

\section*{Acknowledgments}

All three authors acknowledge support from the Fondation Simone et Cino Del Duca -- Institut de France, which is the main sponsor of this work.

KB and FM also acknowledge support from grant ANR-20-CE40-0009 (Project TRECOS), while KB and TP also acknowledge support from grant ANR-11-LABX-0020 (Labex Lebesgue).

\bibliographystyle{plain}
\bibliography{biblio}

\end{document}